\documentclass[english]{amsart}

\usepackage{amssymb,amscd,amsthm, verbatim,amsmath,fancyhdr, mathrsfs}
\usepackage{graphicx}
\usepackage{turnstile}
\usepackage{url}
\usepackage{xcolor}
\usepackage{hyperref}
\usepackage{cleveref}
\usepackage[toc,page]{appendix}
\usepackage{tikz-cd}
\usepackage[shortlabels]{enumitem}

\setlist[enumerate,1]{label=(\roman*)}
\setlist[enumerate,2]{label=(\alph*)}

\newtheorem{theorem}{Theorem}[section]
\newtheorem{proposition}[theorem]{Proposition}
\newtheorem{lemma}[theorem]{Lemma}
\newtheorem{remark}[theorem]{Remark}
\newtheorem{corollary}[theorem]{Corollary}

\newtheorem{definition}[theorem]{Definition}
\newtheorem{conjecture}[theorem]{Conjecture}

\newtheorem{thmx}{Theorem}

\newenvironment{manualtheorem}[1]{%
  \manualtheoreminner
}{\endmanualtheoreminner}

  
  

\newcommand{\A}{\mathbb{A}}
\newcommand{\C}{\mathbb{C}}
\newcommand{\F}{\mathbb{F}}
\newcommand{\G}{\mathbb{G}}

\renewcommand{\P}{\mathbb{P}}
\newcommand{\Q}{\mathbb{Q}}
\newcommand{\R}{\mathbb{R}}
\newcommand{\T}{\mathbb{T}}
\newcommand{\Z}{\mathbb{Z}}

\newcommand{\Fp}{\F_p}
\newcommand{\Fpbar}{\overline{\mathbb{F}}_p}

\newcommand{\Qbar}{\overline{\Q}}
\newcommand{\Qpbar}{\overline{\Q}_p}

\newcommand{\bff}{\mathbf{f}}

\newcommand{\AF}{\A_F}
\newcommand{\Af}{\A_{\mathbf{f}}}
\newcommand{\AFf}{\A_{F,\mathbf{f}}}
\newcommand{\Afp}{\Af^{(p)}}
\newcommand{\AFfp}{\AFf^{(p)}}

\newcommand{\Fbar}{\overline{F}}

\newcommand{\CA}{\mathcal{A}}

\newcommand{\CC}{\mathcal{C}}

\newcommand{\CE}{\mathcal{E}}
\newcommand{\CF}{\mathcal{F}}

\newcommand{\CH}{\mathcal{H}}

\newcommand{\CJ}{\mathcal{J}}

\newcommand{\CL}{\mathcal{L}}

\newcommand{\CN}{\mathcal{N}}
\newcommand{\CO}{\mathcal{O}}

\newcommand{\CV}{\mathcal{V}}

\newcommand{\COF}{\CO_F}
\newcommand{\COFppx}{\CO_{F, (p),+}^\times}
\newcommand{\COFhat}{\widehat{\CO}_F}

\newcommand{\sfD}{\mathsf{D}}
\newcommand{\sfM}{\mathsf{M}}
\newcommand{\sfSigma}{\mathsf{\Sigma}}

\newcommand{\frakc}{\mathfrak{c}}
\newcommand{\frakd}{\mathfrak{d}}
\newcommand{\frakn}{\mathfrak{n}}
\newcommand{\frakm}{\mathfrak{m}}
\newcommand{\frakp}{\mathfrak{p}}
\newcommand{\frakH}{\mathfrak{H}}
\newcommand{\frakM}{\mathfrak{M}}

\newcommand{\ttS}{\mathtt{S}}
\newcommand{\ttT}{\mathtt{T}}

\newcommand{\Ximin}{\Xi_{\min}}
\newcommand{\Ximinplus}{\Ximin^{+}}

\newcommand{\tilY}{\widetilde{Y}}
\newcommand{\tilYUe}{\tilY_U^\epsilon}
\newcommand{\tilYUetor}{\tilY_U^{\epsilon,\tor}}
\newcommand{\tilYUesig}{\tilY_{U, \sfSigma}^\epsilon}
\newcommand{\tilYUemin}{\tilY_U^{\epsilon,\min}}
\newcommand{\YUtor}{{Y_U^{\tor}}}
\newcommand{\YUmin}{{Y_U^{\min}}}
\newcommand{\tilYUtor}{{\tilY_U^{\tor}}}
\newcommand{\tilYUmin}{{\tilY_U^{\min}}}
\newcommand{\dotw}{\widetilde{\omega}}
\newcommand{\dotdelta}{\widetilde{\delta}}

\newcommand{\Fr}{\mathrm{Fr}}
\newcommand{\Ha}{\mathrm{Ha}}
\newcommand{\Frob}{\mathrm{Frob}}
\newcommand{\Nm}{\mathrm{Nm}}
\newcommand{\JH}{\mathrm{JH}}
\newcommand{\Sym}{\mathrm{Sym}}
\renewcommand{\det}{\mathrm{det}}
\newcommand{\Symo}{\Sym_{[0]}}
\newcommand{\Syml}{\Sym_{[1]}}
\newcommand{\Szero}{\mathrm{S}_{[0]}}
\newcommand{\Sone}{\mathrm{S}_{[1]}}
\newcommand{\e}{\mathrm{e}}
\newcommand{\kmin}{k_{\min}}
\newcommand{\dR}{\mathrm{dR}}
\newcommand{\BDJ}{\mathrm{BDJ}}
\newcommand{\AH}{\mathrm{AH}}
\newcommand{\Tr}{\mathrm{Tr}}

\newcommand{\tor}{\mathrm{tor}}

\renewcommand{\min}{\mathrm{min}}

\newcommand{\HT}{\mathrm{HT}}

\newcommand{\JL}{\mathrm{JL}}
\newcommand{\Ind}{\mathrm{Ind}}
\newcommand{\Art}{\mathrm{Art}}
\newcommand{\univ}{\text{univ}}
\newcommand{\ab}{\text{ab}}

\newcommand{\tr}{\mathrm{tr}}
\newcommand{\PSL}{\mathrm{PSL}}
\newcommand{\PGL}{\mathrm{PGL}}

\DeclareMathOperator{\Gal}{Gal}
\DeclareMathOperator{\GL}{GL}
\DeclareMathOperator{\Proj}{Proj}
\DeclareMathOperator{\End}{End}

\DeclareMathOperator{\Res}{Res}

\title{On the geometric Serre weight conjecture for Hilbert modular forms}

\author{Siqi Yang}
\email[]{siqi.yang21@imperial.ac.uk}
\address{Department of Mathematics\\
  Imperial College London\\
  London SW7 2AZ\\ UK }

\date{\today}

\begin{document}

\begin{abstract}
  Let $p$ be a prime, $F$ be a totally real field in which $p$ is unramified and $\rho: \mathrm{Gal}(\overline{F}/F)\rightarrow \mathrm{GL}_2(\overline{\mathbb{F}}_p)$ be a totally odd, irreducible, continuous representation. The geometric Serre weight conjecture formulated by Diamond and Sasaki can be viewed as a geometric variant of the Buzzard-Diamond-Jarvis conjecture, where they have the notion of geometric modularity in the sense that $\rho$ arises from a mod $p$ Hilbert modular form and algebraic modularity in the sense of Buzzard-Diamond-Jarvis. Diamond and Sasaki conjecture that if $\rho$ is geometrically modular of weight $(k,l)\in \mathbb{Z}^\Sigma_{\geq 2}\times\mathbb{Z}^\Sigma$ and $k$ lies in the minimal cone, then $\rho$ is algebraically modular of the same weight, where $\Sigma$ is the set of embeddings from $F$ into $\overline{\mathbb{Q}}$. We prove the conjecture without parity hypotheses for real quadratic fields $F$ in which $p \geq 5$ is inert, and for totally real fields $F$ in which $p \geq \min\{5, [F:\mathbb{Q}]\}$ totally splits.
\end{abstract}

\maketitle

\tableofcontents

\section{Introduction}

Serre's Conjecture can be regarded as a mod $p$ Langlands correspondence, and the weight part of Serre's Conjecture can be viewed as a type of local-global compatibility at $p$ in a hypothetical mod $p$ Langlands Programme.
Let $p$ be a rational prime and $\rho: \Gal(\overline{\Q}/\Q) \rightarrow \GL_2(\Fpbar)$ be a continuous, odd, irreducible representation.
Serre conjectured in \cite{Serre87} that such mod $p$ representation $\rho$ is \emph{modular} in the sense that it is isomorphic to the mod $p$ representation attached to a modular eigenform.
Building on the work of many others, Serre's conjecture was proved by Khare and Wintenberger \cite{Khare-Wintenberger-I, Khare-Wintenberger-II}, and Kisin \cite{Kisin09}. 
Moreover, under the assumption that $\rho$ is modular, Serre predicted the minimal weight $k(\rho) \geq 2$ and level $N(\rho)$ prime to $p$ such that $\rho$ arises from an eigenform of weight
$k(\rho)$ and level $N(\rho)$.
In particular, the description of the minimal weight $k(\rho)$ 
is given by an explicit recipe in terms of the restriction of $\rho$ to the inertia subgroup at $p$, which we
refer to as \emph{the weight part of Serre's Conjecture}.
Edixhoven reformulated the weight part of Serre's conjecture in \cite{Edixhoven}, 
where he included the weight one modular forms
by considering mod $p$
modular forms as sections of certain line bundles on the modular curves in characteristic $p$,
and hence involving the weight one mod $p$ modular forms that do not lift to characteristic zero.
The weight part of Serre's conjecture and its refinement were proved by Edixhoven \cite{Edixhoven} for $p > 2$,
building on the work of
Ribet \cite{Ribet90}, Gross \cite{Gross90}, and Coleman-Voloch \cite{Coleman-Voloch}; 
the remaining case for $p = 2$ was completed by Khare, Wintenberger and Kisin \cite{Khare-Wintenberger-I,Khare-Wintenberger-II, Kisin09}.

There has been a significant amount of work on the generalisations of the
weight part of Serre's Conjecture (based on an algebraic reinterpretation of the weight).
The most general formulation to date was given by Gee, Herzig
and Savitt in \cite{Gee-Herzig-Savitt} on connected reductive groups unramified at $p$.
One of the directions to generalise the weight part of Serre's conjecture is replacing $\Q$ with a totally real field $F$ of degree $d > 1$ and replacing modular forms with Hilbert modular forms.
A conjecture in this setting was formulated in \cite{Buzzard-Diamond-Jarvis} under the assumption that $p$ is unramified in $F$,
and was generalised to the case when $p$ is ramified in $F$ in \cite{Schein08} and \cite{Gee11}.
Let $\rho: \Gal(\overline{F}/F) \rightarrow \GL_2(\Fpbar)$ be a continuous, totally odd, irreducible representation,
and $V$ be an $\Fpbar$-representation of $\GL_2(\CO_F/p\CO_F)$, 
where $\CO_F$ is the ring of integers of $F$.  
There is a notion of $\rho$ being \emph{modular of weight $V$} in \cite{Buzzard-Diamond-Jarvis} in the sense that 
$\rho$ arises in the \'etale cohomology of a certain quaternionic Shimura curve
over $F$ with coefficients in a lisse sheaf associated with $V$.
It was
proved in \cite{Buzzard-Diamond-Jarvis} that $\rho$ is
modular of weight $V$ if and only if $\rho$ is modular of  some Jordan-Holder factor of $V$.
Then to determine the weights $V$ for which $\rho$ is
modular, 
it suffices to consider the weights $V$ that are irreducible as representations of $\GL_2(\CO_F/p\CO_F)$ which we call \emph{Serre weights}. 
In the work of Buzzard, Diamond and Jarvis, under the assumption that $\rho$ is modular of some weight, they defined a set $W^\BDJ(\rho)$ of Serre weights in terms of the restriction of $\rho$ to the inertia groups at primes of $F$ above $p$, and conjectured that the weights contained in $W^\BDJ(\rho)$ are the Serre weights for which $\rho$ is modular.
Under mild Taylor-Wiles
hypotheses, the Buzzard-Diamond-Jarvis conjecture was proved for $p > 2$ 
in \cite{BLGG13} and \cite{Gee-Liu-Savitt-Unitary} for a version of the conjecture for unitary groups, 
in \cite{Gee-Kisin} and \cite{Newton14} for the case of indefinite quaternion algebras, where \cite{Gee-Kisin} also treated the case of definite
quaternion algebras,
and in \cite{Gee-Liu-Savitt-GL2} which included the case when $p$ is ramified in $F$.
While the description for the predicted set $W^\BDJ(\rho)$ in \cite{Buzzard-Diamond-Jarvis} is explicit, it is not practical for computations when $\rho$ is a non-split extension of characters.
An alternative recipe was given in \cite{Dembele-Diamond-Roberts},
where they defined a more computable set $W^\mathrm{AH}(\rho)$ and conjectured that it agrees with $W^\BDJ(\rho)$, which was later proved by \cite{CEGM17}.

It is also natural to consider a generalisation of Edixhoven's refinement conjecture for a totally real field $F$ of degree $d > 1$.
However, the geometric variant on the weight part of Serre's Conjecture generalizing Edixhoven's refinement is much less studied than the algebraic versions.
Viewing mod $p$ Hilbert modular forms as sections of certain line bundles on Hilbert modular varieties in characteristic $p$, 
Diamond and Sasaki formulated a geometric Serre weight conjecture for $p$ unramified in $F$ in \cite{Diamond-Sasaki}, 
and generalised it to the ramified case in their forthcoming paper \cite{Diamond-Sasaki_inprep}.
There is the notion of $\rho$ being \emph{geometrically modular of weight $(k,l)$} in the sense that $\rho$ arises from a mod $p$ Hilbert eigenform of such weight,
where $(k, l) \in \Z^\Sigma \times \Z^\Sigma$
and $\Sigma$ denotes the set of embeddings of $F \hookrightarrow \Qbar$.
Diamond and Sasaki conjectured and now proved (with mild technical hypotheses) in their forthcoming
paper that, if $\rho$ is geometrically modular of some weight, then there is a minimal weight $\kmin$ (depending on $l$) in the minimal cone $\Ximinplus$  such that 
$\rho$ is geometrically modular of weight $(k,l)$ if and only if $k\geq_{\Ha} \kmin$,
where 
$$
\Ximinplus := \left\{k \in \Z^\Sigma_{\geq 1} \mid pk_\tau \geq k_{\Fr^{-1}\circ \tau}, \text{ for all } \tau\in \Sigma \right\},
$$
and $\geq_{\Ha}$ is a partial order given by the weights of partial Hasse invariants.

In this paper we discuss the relation between the two notions of modularity in \cite{Buzzard-Diamond-Jarvis} and \cite{Diamond-Sasaki}, 
under the assumption that $p$ is unramified in $F$.
We say $\rho$ is \emph{algebraically modular of weight} $(k,l) \in \Z^\Sigma_{\geq 2} \times \Z^\Sigma$ if it is modular of weight 
$V_{k, -k-l}$ in the sense of \cite{Buzzard-Diamond-Jarvis},
where $V_{k, m}$ denotes the $\Fpbar$-representation of $\GL_2(\CO_F/p \CO_F)$ of the following form
$$
\bigotimes_{\tau\in \Sigma}\left( \det^{m_\tau}_{[\tau]} \otimes \Sym_{[\tau]}^{k_\tau-2}\Fpbar^2\right),
$$
where $\GL_2(\CO_F/p \CO_F)$ acts on the factor indexed by $\tau$ via the homomorphism to
$\GL_2(\Fpbar)$ induced by $\tau$.
Our definition for algebraic modularity is different from \cite[Definition 7.5.1]{Diamond-Sasaki} by a twist by determinant because
our convention for Hecke action does not include the Tate twist as in \cite[\S4.3]{Diamond-Sasaki}.
Let $(k,l) \in \Z^\Sigma_{\geq 2} \times \Z^\Sigma$.
It is conjectured in \cite[Conjecture 7.5.2]{Diamond-Sasaki} that
if $\rho$ is algebraically modular of weight $(k,l)$, then it is geometrically modular of weight $(k,l)$; the converse is conjectured to hold if $k\in \Ximinplus \cap \Z_{\geq 2}^\Sigma$.
When $k_\tau \geq 2$ has the same parity for all $\tau$, 
it has been proved in \cite{Diamond-Sasaki} that 
$\rho$ being algebraically modular of weight $(k,l)$
implies geometrically modular of the same weight,
and the result was generalised to non-paritious weight in \cite{Diamond-Sasaki_inprep}.
The other direction of the conjecture is much harder and very little result has been established. 
In this paper, we focus on the other direction and prove the following result when $p$ totally splits in $F$ (see \Cref{thm: goem-alg-split}), which (under a mild hypothesis) proves \cite[Conjecture 7.5.2]{Diamond-Sasaki} in the case $p \geq \min\{5, [F:\Q]\}$ totally splits in $F$.
\begin{thmx}\label{thmx: goem-alg}
  Let $p \geq d$, $p > 3$ and $F$ be a totally real field in which $p$ totally splits.
  Let $\rho: G_F \rightarrow \GL_2(\Fpbar)$ be an irreducible, continuous, totally odd representation.
  If $p = 5$, assume that the projective image of $\rho|_{G_{F(\zeta_p)}}$ is
  not isomorphic to $A_5$.
  If $\rho$ is geometrically modular of weight $(k,l)$,
  where $k_\tau \geq 2$ for all $\tau$, 
  then $\rho$ is algebraically modular of weight $(k,l)$.
\end{thmx}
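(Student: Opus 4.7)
My plan is to show that the Serre weight $V_{k,-k-l}$ has a Jordan--H\"older constituent in the Buzzard--Diamond--Jarvis set $W^\BDJ(\rho)$; by \cite{Buzzard-Diamond-Jarvis} together with the now-proved BDJ conjecture (Gee--Kisin, Newton, BLGG, Gee--Liu--Savitt), for which the $A_5$ hypothesis when $p=5$ is the standard Taylor--Wiles condition, this will yield algebraic modularity of weight $(k,l)$. Since $p$ totally splits, $\Fr$ acts trivially on $\Sigma$ and so $\Ximinplus = \Z_{\geq 1}^\Sigma$; moreover, both $V_{k,-k-l} \cong \bigotimes_\tau V_{k_\tau,-k_\tau-l_\tau}$ and $W^\BDJ(\rho)$ decompose as products indexed by $\Sigma$, so it suffices to produce, for each $\tau$, a Jordan--H\"older factor of the local piece $V_{k_\tau,-k_\tau-l_\tau}$ lying in the local Serre weight set for $\rho|_{G_{F_v}}$, with choices consistent across $\Sigma$.

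For the paritious case (all $k_\tau$ of common parity $\geq 2$), this reduces to a Deligne--Serre-style lifting argument: torsion-freeness of the relevant coherent cohomology allows the geometric eigenform $f$ of weight $(k,l)$ to lift to characteristic zero, giving a classical Hilbert eigenform whose associated Galois representation has mod $p$ reduction $\rho$. Combined with Jacquet--Langlands, this yields algebraic modularity of weight $(k,l)$ directly, essentially recovering the result of \cite{Diamond-Sasaki}.

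The new content lies in the non-paritious case, where $f$ has no characteristic-zero lift and the argument must stay in characteristic $p$. Here I would extract local information by applying theta operators $\Theta_\tau$ and studying filtrations induced by partial Hasse invariants: each $\Theta_\tau$ produces a new geometric eigenform of shifted weight still realising $\rho$, and iteratively applying these operators generates a combinatorial collection of weights witnessing $\rho$ geometrically. In the totally split case this collection decomposes across $\Sigma$ into $d$ independent pieces, each controlled by $\rho|_{G_{F_v}}$ viewed as a mod $p$ representation of $G_{\Q_p}$. Matching this combinatorial structure to the known description of local Serre weight sets for $G_{\Q_p}$ will produce the required Jordan--H\"older factors.

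The principal obstacle is pinning down the specific weight $(k,l)$ (rather than merely some related weight) in $W^\BDJ(\rho)$: this requires careful bookkeeping of how theta operators and Hasse invariants shift weights, ensuring that successive operators do not annihilate $f$ (the kernel of $\Theta_\tau$ is controlled by divisibility by partial Hasse invariants, which enables an inductive argument on weight), and confirming that the product-of-local-choices identification is uniformly consistent across $\Sigma$. The assumption $p \geq d$ likely enters in controlling the combinatorics of the $d$-fold tensor-product Jordan--H\"older decompositions, while $p > 3$ is needed both for the BDJ results cited above and to rule out small-prime pathologies in the local representation theory of $\GL_2(\F_p)$.
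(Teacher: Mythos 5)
Your high-level skeleton (reduce to showing $\JH(V_{k,-k-l})\cap W^\BDJ(\rho)\neq\varnothing$, exploit the product structure over $\Sigma$ in the split case, then invoke the proved BDJ conjecture) matches the paper, but there are three concrete gaps. First, you misplace the dividing line between the ``easy'' and ``hard'' cases: it is not paritious versus non-paritious, but whether $k$ lies in the liftable cone $p(k_\tau-2)>k_{\Fr^{-1}\circ\tau}-2$, which for split $p$ is simply $k_\tau\geq 3$ for all $\tau$. The paper handles \emph{all} such weights, non-paritious included, by twisting by a mod $p$ character $\xi$ with $\xi(\mu)=\mu^{l+k/2}$ to pass from $\CA^{k,l}_E$ to the bundle $\CL^k$, which is defined over $\CO$ for arbitrary $k$; liftability then follows from a cohomology vanishing theorem (Esnault--Viehweg plus the ampleness result of Yang), and this is where the hypothesis $p\geq d$ enters (the vanishing theorem requires $\dim\leq p$), not in any tensor-product combinatorics. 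Your plan keeps the whole non-paritious case in characteristic $p$, which makes the subsequent combinatorial step far heavier than it needs to be: the only weights genuinely requiring weight shifting are those with some $k_\tau=2$.

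Second, your weight-shifting step is too vague to close the argument. The paper's mechanism for $\tau$ with $k_\tau=2$ is a specific two-pronged attack: multiplying by $\Ha_\tau$ gives algebraic modularity in weight $p+1$ at $\tau$, which by the explicit $F=\Q$ weight recipe leaves four possibilities for $W_{v_\tau}(\rho)$, one of which (the tr\`es ramifi\'ee case $W_{v_\tau}(\rho)=\{\det^{m_\tau}\Sym^{p-1}\}$) does \emph{not} contain $\det^{m_\tau}\Sym^0$; this bad case is then ruled out by applying $\Theta_\tau$ instead, landing in weight $p+3$ with a determinant shift, and using the Grothendieck group relation $[\det^{-1}\Sym^{p+1}]=[\Sym^0]+[\det^{-1}\Sym^2]+[\det^{2-p}\Sym^{p-3}]$ to see that $\JH$ of the resulting weight is disjoint from $\{\det^{m_\tau}\Sym^{p-1}\}$. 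Nothing in your proposal identifies this obstruction or how to kill it. Third, and most seriously, the converse direction of the BDJ conjecture requires $\rho|_{G_{F(\zeta_p)}}$ to be irreducible, which your hypotheses do not guarantee (only the $A_5$ case at $p=5$ is excluded). When $\rho|_{G_{F(\zeta_p)}}$ is reducible, $\rho\simeq\Ind_{F'}^F\xi$ for a quadratic $F'/F$ inside $F(\zeta_p)$ that is ramified at every prime above $p$, and one must argue separately: comparing $\Ind_{I_{K'}}^{I_K}\varepsilon_{K'}^n$ with the shape of $\rho|_{I_K}$ forced by $\Sym^{k_\tau-2}\in W_{v_\tau}(\rho)$ shows $k_\tau\in\{(p+1)/2,(p+3)/2\}$, contradicting $k_\tau=2$ for $p>3$. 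Without this case analysis the proof is incomplete.
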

When $p$ is non-split, we treat the quadratic inert case (see \Cref{thm: geom-alg-allwt}), 
which (under a mild hypothesis) proves \cite[Conjecture 7.5.2]{Diamond-Sasaki} in the case $p > 3$ and $F$ is a real quadratic field in which $p$ is inert.
\begin{thmx}
  Let $p > 3$ and
  $F$ be a real quadratic field in which $p$ is inert.
  Let $(k_0,k_1) \in \Ximinplus \cap \Z_{\geq 2}^2$ and $(l_0, l_1) \in \Z^2$.
  Let $\rho : G_F \rightarrow \GL_2(\Fpbar)$ be an irreducible, continuous, totally odd representation.
  If $p = 5$, assume that the projective image of $\rho|_{G_{F(\zeta_p)}}$ is not isomorphic to $A_5$.
  If $\rho$ is geometrically modular of weight $((k_0,k_1), (l_0, l_1))$, then $\rho$ is algebraically modular of weight $((k_0,k_1), (l_0, l_1))$.
\end{thmx}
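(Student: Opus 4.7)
The plan is to bridge the geometric and algebraic notions of modularity by producing a characteristic zero lift of the witnessing mod $p$ Hilbert eigenform and then applying Jacquet--Langlands transfer to a definite quaternion algebra, where algebraic modularity becomes visible essentially by construction. Concretely, fix a prime-to-$p$ level $U$ small enough that $\rho$ contributes to the Hecke action on the relevant mod $p$ coherent cohomology of the Hilbert modular surface $\YUtor$, and let $f$ be a witnessing geometric eigenform of weight $((k_0,k_1),(l_0,l_1))$.

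The first step is to lift the Hecke eigensystem of $f$ to characteristic zero. Since $(k_0,k_1)\in\Ximinplus\cap\Z_{\geq 2}^2$ is ``sufficiently positive,'' one expects a Kodaira--Nakano type vanishing for higher coherent cohomology of the automorphic line bundle of weight $(k,l)$ on $\YUtor$; combined with flatness of the integral model this yields surjectivity of the reduction-mod-$p$ map on global sections, so the eigensystem of $f$ lifts to a characteristic zero Hilbert modular eigenform $\widetilde{f}$ of the same weight. The second step is to invoke classical Jacquet--Langlands to transfer $\widetilde{f}$ to a definite quaternion algebra $D/F$ ramified at exactly the two archimedean places, which is possible because $k_\tau\geq 2$ for both $\tau$. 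The resulting quaternionic eigenform, after fixing an integral structure and reducing modulo $p$, produces a nonzero Hecke eigenclass with coefficients in $V_{k,-k-l}\otimes\Fpbar$ whose associated Galois representation is $\rho$, which is exactly algebraic modularity of $\rho$ of weight $((k_0,k_1),(l_0,l_1))$.

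The main obstacle is the vanishing step for higher coherent cohomology of automorphic line bundles on $\YUtor$ in the genuinely two-dimensional inert setting, where Kodaira--Nakano is not immediately available in characteristic $p$ and the interaction with the Goren--Oort stratification and the toroidal boundary requires careful analysis; I expect the minimal cone condition $(k_0,k_1)\in\Ximinplus$ to be precisely the hypothesis that makes this vanishing accessible. If a direct vanishing proof is not within reach, the fallback would be to use the partial theta operators and partial Hasse invariants of Diamond--Sasaki to move $f$ to a weight where characteristic zero lifting is known, perform the transfer there, and then descend algebraic modularity back to the original weight $(k,l)$ by a Jordan--Hölder analysis of the relevant induced representations of $\GL_2(\F_{p^2})$, using the $A_5$-avoidance hypothesis at $p=5$ and standard Taylor--Wiles style input to control the algebraic Serre weights that appear.
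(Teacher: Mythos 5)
Your main plan breaks down at the lifting step. The cohomology vanishing that makes mod $p$ cusp forms liftable is not available throughout $\Ximinplus\cap\Z_{\geq 2}^2$: the Esnault--Viehweg argument needs ampleness of the relevant line bundle on the minimal compactification, and by the ampleness criterion of \Cref{thm: DedingYang} and \Cref{cor: Lk_ampleness} this forces the strictly stronger inequality $p(k_\tau-2) > k_{\Fr^{-1}\circ\tau}-2$. For instance $k=(2,k_1)$ lies in $\Ximinplus\cap\Z_{\geq 2}^2$ for all $2\leq k_1\leq 2p$ but never satisfies this inequality, so surjectivity of $S_{k}(U;\CO)\otimes_{\CO}E\to S_{k}(U;E)$ is not available and the eigensystem need not lift in weight $(k,l)$ itself. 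The minimal cone condition is thus \emph{weaker} than what the vanishing requires, not "precisely the hypothesis that makes it accessible"; closing that gap is the entire content of the theorem.

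Your fallback is the right idea for part of the remaining range, and it is what the paper does for $2\leq k_0\leq k_1\leq p$: apply powers of $\Ha_\tau$ and $\Theta_\tau$ to reach liftable weights, compute Jordan--H\"older factors via the Grothendieck group relations of Reduzzi, enumerate the possible sets $W^\BDJ(\rho)$ using Demb\'el\'e--Diamond--Roberts, and conclude with the proved BDJ conjecture. But two further issues remain that your sketch does not address. First, weight shifting provably fails at $k=(2,p+1)$: the two shifted weights share a Jordan--H\"older constituent not occurring in $V_{k,-k-l}$ that cannot be ruled out. The paper treats all weights with $pk_0\geq k_1\geq p+1$ by a different mechanism, restricting the eigenform to the vanishing locus of $\Ha_0$ and using the Tian--Xiao description of this Goren--Oort stratum as a $\P^1$-bundle over a definite quaternionic Shimura variety to produce a quaternionic eigenform directly, followed by a Grothendieck-group inequality (\Cref{lem: Groth_relation}) to return to weight $(k,l)$. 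Second, the modularity direction of BDJ requires $\rho|_{G_{F(\zeta_p)}}$ irreducible; when it is reducible, $\rho$ is induced from a character of a quadratic extension ramified at $p$, and one must separately compare the possible inertial types of such an induction with the shape forced by having a factor of $V_{k,-k-l}$ in $W^\BDJ(\rho)$ to see that $k$ is then forced back into the liftable region (this is where $p>3$ enters). Without these two supplements the argument does not cover all of $\Ximinplus\cap\Z_{\geq 2}^2$.
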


We now give an outline of our approach.
Let $\rho$ be geometrically modular of weight $(k,l)$, where $(k,l) \in \Z^\Sigma_{\geq 2} \times \Z^\Sigma$ and $k\in \Ximinplus$.
If $(k,l)$ is a \emph{paritious} weight in the sense that $k_\tau + 2l_\tau$ is independent of $\tau$ for all $\tau\in\Sigma$,
to prove $\rho$ being algebraically modular of weight $(k,l)$, 
it suffices to show
the liftability of mod $p$ Hilbert cusp forms of weight $(k,l)$ to characteristic zero.
However, if $(k,l)$ is non-paritious, 
there does not exist a Hilbert modular form of this weight in characteristic zero, 
so we cannot talk about liftability.
The way around it was specified in \cite{Diamond-Sasaki_inprep}, 
where they consider twisting by a non-algebraic character $\xi$ which exists in character $p$ but not necessarily in characteristic $0$.
For any mod $p$ Hilbert modular form of weight $(k,l)$, 
there is a twisted form of weight $k$ whose Hecke action is twisted by $\xi\circ\det$ (as in \eqref{eq: twist_xi_k/2}) and there are forms of such weight in characteristic zero.

The cohomology vanishing has been discussed in \cite{Lan-Suh_parallel} for parallel weights over the special fibre of PEL type Shimura varieties, and they improved the result to the weights that are not far from being parallel (different from parallel weights by a ``$p$-small'' part) in \cite{Lan-Suh_genrealPEL}.
We adapt the method in \cite{Lan-Suh_parallel} and then prove a cohomology vanishing result 
(\Cref{thm: my-vanish}) for not necessarily parallel weights over the special fibres of Hilbert modular varieties.
The key ingredients are the
vanishing theorem of Esnault-Viehweg and the ampleness result of the line bundles over the special fibres of Hilbert modular varieties proved by Deding Yang in \cite{Yang_ampleness}.
Then the liftability for weight $k$ forms
is directly given by the cohomology vanishing, if $k$ satisfies
\begin{align}\label{eq: liftcone-intro}
 p(k_\tau - 2) > k_{\Fr^{-1}\circ \tau} - 2, \text{ for all } \tau\in \Sigma.
\end{align}
Suppose $k$ satisfies
the above numerical condition. The Hecke eigensystem of the twisted mod $p$ representation $\xi \otimes \rho$ can be lifted to characteristic zero,
which corresponds to the Hecke eigensystem of some definite quaternionic automorphic forms or the cohomology of some Shimura curves by Jacquet-Langlands correspondence,
then we take the mod $p$ reduction and twist by $\xi^{-1}$,
hence we obtain $\rho$ algebraically modular of weight $(k,l)$.

We now treat $k \in \Ximinplus \cap \Z^\Sigma_{\geq 2}$ that does not satisfy \eqref{eq: liftcone-intro} with a weight shifting method.
Let $\Ha_\tau$ denote the partial Hasse invariants and $\Theta_\tau$ denote the partial Theta operators, where $\tau\in \Sigma$.
Suppose $\rho$ arises from a mod $p$ Hilbert modular eigenform $f$ of weight $(k,l)$.
We consider another eigenform $f' = \prod_{\tau\in \Sigma} \Ha_{\tau}^{s_\tau} \Theta_\tau^{t_\tau} f$ with same Hecke eigensystem,
where $s_\tau, t_\tau \geq 0$, 
and in particular, 
we choose $s_\tau$ and $t_\tau$ such that 
$f'$ has weight $(k',l')$, where $k'$ satisfies \eqref{eq: liftcone-intro}.
Note that we can choose $f$ so that $f'$ is non-zero by
\cite[Lemma 10.4.1]{Diamond-Sasaki}.
Then $\rho$ is algebraically modular of 
weight $(k', l')$ by the liftability result,
and hence is modular of a Jordan-Holder factor of 
$V_{k', -k'-l'}$.
Similarly, we can take another 
eigenform $f'' = \prod_{\tau\in \Sigma} \Ha_{\tau}^{u_\tau} \Theta_\tau^{v_\tau} f$ by wisely choosing $u_\tau, v_\tau \geq 0$, 
so that weight $(k'',l'')$ of $f''$ satisfies \eqref{eq: liftcone-intro} and 
$V_{k'', -k''-l''}$ has some Jordan-Holder factors that are not given by those of $V_{k', -k'-l'}$.
Using the Grothendieck group relation in \cite{Reduzzi15}, we can compute the set of Jordan-Holder factors 
$\JH(V_{k', -k'-l'})$ and $\JH(V_{k'', -k''-l''})$ explicitly.
By Buzzard-Diamond-Jarvis conjecture 
$W^\BDJ(\rho) \cap \JH(V_{k', -k' - l'}) \neq \varnothing$ and $W^\BDJ(\rho) \cap \JH(V_{k'', - k'' - l''}) \neq \varnothing$,
which allows us to compute all the possibilities of $W^\BDJ(\rho)$ by the explicit recipe in \cite{Dembele-Diamond-Roberts}.
We observe that, for the case when $p$ totally splits, 
we always have 
$\JH(V_{k,-k-l}) \cap W^\BDJ(\rho) \neq \varnothing$ for any of these possibilities of $W^\BDJ(\rho)$.
When $p$ is inert in the real quadratic field $F$, we also have the observation $\JH(V_{k,-k-l}) \cap W^\BDJ(\rho) \neq \varnothing$ if $k_\tau \leq p$ for all $\tau\in\Sigma$. 
Then $\rho$ is algebraically modular of weight $(k,l)$ by Buzzard-Jarvis-Diamond conjecture if it satisfies a mild Taylor-Wiles hypotheses; 
otherwise, $\rho$ is an induction of a character of a quadratic extension $F'/F$ that is ramified at $p$, and a further analysis gives algebraic modularity of weight $(k,l)$ if $p \geq 5$.   
We remark that the weight shifting method encounters a difficulty for weight $k = (2, p+1)$ in the inert quadratic case,
where the obstruction comes from a common Jordan-Holder factor contained in both $\JH(V_{k',-k'-l'})$ and $\JH(V_{k'',-k''-l''})$ which is not 
in $\JH(V_{k,-k-l})$
and we don't know how to rule it out.

We treat the remaining weights in the inert quadratic case by restricting the Hilbert eigenform $f$ that gives rise to $\rho$ to a stratum of the mod $p$ Hilbert modular variety.
We recall  
the Goren-Oort stratification on mod $p$ Hilbert modular varieties
described by Tian and Xiao in
\cite{TX16}, 
which identifies a stratum with a $\P^1$-bundle over the quaternionic Shimura variety associated to a quaternion algebra $B$ over $F$.
In particular, $B$ is a definite quaternionic algebra in the case when $p$ is inert in the real quadratic field $F$.
Assume $k = (k_0, k_1) \in \Ximinplus$, where $k_1 \geq k_0 \geq 2$.
By restricting $f$ to the zero locus of the partial Hasse invariant $\Ha_{\tau_0}$, we obtain a definite quaternionic form $f^B$ whose Hecke eigensystem also gives rises to $\rho$. 
It shows that $\rho$ is algebraically modular of another weight $(k^B, l^B)$, where the weight $(k^B, l^B)$ can be computed explicitly by the description of Goren-Oort stratification.
If $k$ is relatively big in the sense that $pk_0 \geq k_1 \geq p+1$ (note that $(2,p+1)$ satisfies it, but the weights dealt by weight shifting method don't),
the Grothendieck group relation shows that $V_{k^B, - k^B - l^B}$ is a subquotient of $V_{k, - k - l}$,
which 
proves that $\rho$ is 
algebraically modular of weight $(k, l)$.

For an arbitrary totally real field $F$ in which $p$ does not split, we expect the above two methods: weight shifting and restricting strata would also work (at least to give partial results), but some amount of work might be needed.
For the weight shifting method, we require explicit computations of the Jordan-Holder factors of $V_{k',l'}$ and of the set $W^\BDJ(\rho)$, 
while the way they are done now is kind of ad hoc, so we want a more systematic way to do it. 
For the method using the description of stratifications,
we expect an induction process on the number of infinite places where the quaternion algebra is ramified, 
which can lead to a generalisation of the geometric modularity with respect to an arbitrary quaternion algebra over the totally real field $F$.

\subsection*{Structure of the paper}
In \Cref{sec: Geom HMF}, we recall the definitions and some properties of Hilbert modular varieties and geometric Hilbert modular forms. 
We also consider definite quaternionic forms and the cohomology of Shimura curves, and recall the  Goren-Oort stratification on mod $p$ Hilbert modular varieties established in \cite{TX16}.
In \Cref{sec: wt_part_Serre}, we recall the weight part of Serre's conjecture for totally real fields formulated in \cite{Buzzard-Diamond-Jarvis} and a geometric variant in \cite{Diamond-Sasaki},
by which we define the algebraic and geometric modularity.
In \Cref{sec: alg_geo_mod}, we discuss the relation between algebraic and geometric modularity and prove the main theorems using the weight shifting method and the description of Goren-Oort stratification.

\subsection*{Acknowledgments}

I would especially like to thank Fred Diamond for suggesting the problem, sharing his insightful ideas and generously spending his time for discussions. I am grateful to Toby Gee for helpful discussions and his interest in my work. I appreciate the support of Lassina Demb\'el\'e. I thank George Boxer, Payman Kassaei, Shu Sasaki, Liang Xiao and Deding Yang for helpful conversations. I thank Kai-Wen Lan and Mladen Dimitrov for answering my questions about compactifications. I also thank Fred Diamond, Toby Gee, Martin Ortiz, and Yitong Wang for comments on an earlier draft. 
This work was supported by the Engineering and Physical Sciences Research Council [EP/S021590/1]. The EPSRC Centre for Doctoral Training in Geometry and Number Theory (The London School of Geometry and Number Theory), University College London, Kings College London and Imperial College London.

\subsection*{Notation}
Let $p$ be a rational prime.
We fix an algebraic closure $\overline{\Q}$ of $\Q$ and an algebraic closure $\overline{\Q}_p$ of $\Q_p$. 
Fix embeddings $\overline{\Q} \hookrightarrow \overline{\Q}_p$, and $\overline{\Q}\hookrightarrow \C$.

Let $F$ be a totally real field in which $p$ is unramified.
Let $G_F$ denote the absolute Galois group of $F$.
Let $\CO_F$ be the ring of integers of $F$, $\CO_{F, p}$ be the $p$-adic completion of $\CO_F$,
and $\CO_{F,(p)}$ be the localization
of $F$ at $p$. 
We identify the set of embeddings $\Sigma = \{F \rightarrow \Qbar\}$ with $\Sigma_\infty = \{F \rightarrow \R\}$, 
$\Sigma_p = \{F \rightarrow \Qpbar\}$ and
$\overline{\Sigma}_p = \{\CO_F/p \rightarrow \Fpbar\}$.
For a prime $v$ in $F$ above $p$,
let $\F_v = \CO_F/v$,
$f_v = [\F_v: \Fp]$, 
and $\Sigma_v$ be the subset of embeddings $\{\F_v \rightarrow \Fpbar\}$, which can be identified with  
the set of embeddings in $\overline{\Sigma}_p$ that factor through $\CO_F/v$.
Then $\Sigma = \overline{\Sigma}_p = \coprod_{v|p}\Sigma_v$.

Let $L$ be a finite extension
of $\Q_p$ in $\overline{\Q}_p$ that is sufficiently large to  contain the image of $F$ under every embedding in $\Sigma$. 
Assume in addition $L$ contains 
$\tau(\mu)^{1/2}$ for every $\mu\in \CO_{F, (p), +}^\times$ and $\tau\in \Sigma$,
where $\CO_{F, (p), +}^\times \subset \CO_{F, (p)}^\times$ consists of the totally positive elements.
Let $\CO$ be its ring of integers, and $E$ its residue field.

We use bold font for a tuple in $\Z^\Sigma$ with same numbers, for example, $\mathbf{0} := (0,0, \cdots, 0)$.
We denote $\prod_{\tau\in \Sigma} \tau(\mu)^{m_\tau}$ by $\mu^{m}$, for $m\in \frac{1}{2}\Z^\Sigma$ and $\mu\in \COFppx$.

Suppose $K$ is a finite extension of $\Q_p$. 
Let $G_K$ denote the absolute Galois group of $K$, $I_K$ be the inertia subgroup of $G_K$ and $W_K$ be the Weil group of $K$.
Let $\Art_K : K^\times \rightarrow W_K^{\ab}$ be the isomorphism given by local class field theory, and we normalize it such that uniformisers map to geometric Frobenius elements.
For each finite place $v$ of $F$, we fix a uniformiser $\varpi_v$ of $F_v$ and a lift $\Frob_v\in G_{F_v}$ of the geometric Frobenius element at $v$.
For $\tau\in \Sigma_v$, we denote by $\varepsilon_\tau$ the fundamental character of $I_{F_v}$, which is the composite:
\begin{align*}
  I_{F_v} \xrightarrow{\Art^{-1}_{F_v}} \CO_{F_v}^\times \rightarrow \F_v^\times \xrightarrow{\tau} \Fpbar^\times.
\end{align*} 
We have $\prod_{\tau\in\Sigma_v}\varepsilon_\tau = \varepsilon$, where $\varepsilon$ denotes the mod $p$ cyclotomic character.

Let $V$ be a de Rham representation of $G_K$ over $\Qpbar$.
Denote $\Sigma_K$ the set of embeddings from $K$ into $\Qpbar$.
For any $ \varsigma \in \Sigma_K$,
define $\HT_\varsigma(V)$ to be the set consisting of integer $i$ with multiplicity
$$\dim_{\Qpbar} (V \otimes_{K, \varsigma} \widehat{\overline{K}}(i))^{G_K},$$
where $\widehat{\overline{K}}(i)$ is the Tate twist of the completed algebraic closure of $K$.
For example, $\HT_\varsigma(\chi_{\mathrm{cyc}}) = \{-1\}$ for $\varsigma \in \Sigma_K$, where $\chi_{\mathrm{cyc}}$ is the $p$-adic cyclotomic character.
If $V$ is a crystalline representation of rank $2$, define the Hodge-Tate type of $V$ to be $(\lambda_1, \lambda_2) \in (\Z^{\Sigma_K})^2$, 
where $\lambda_{1,\varsigma} \geq \lambda_{2,\varsigma}$ are the integers contained in $\HT_\varsigma(V)$ for $\varsigma \in \Sigma_K$.

\section{Geometric Hilbert modular forms}\label{sec: Geom HMF}

\subsection{Hilbert modular varieties of level prime to \texorpdfstring{$p$}{p}}\label{subsec: HMV}
We let $\AFf$ denote the finite adeles of $F$, $\AFf^{(p)}$ denote the prime-to-$p$ finite adeles and $\COFhat^{(p)}$ denote the
prime-to-$p$ completion of $\COF$.
Let $U = U^p U_p$ be a sufficiently small\footnote{sufficiently small in the sense that \cite[Lemma 2.4.1]{Diamond-Sasaki} holds.} 
open compact subgroup of $\GL_2(\AFf)$ such that $U^p \subset \GL_2(\AFf^{(p)})$ and $U_p = \GL_2(\CO_{F,p})$.
Let $\tilY_U/\CO$ be the fine moduli scheme of level $U$ defined as in \cite[\S 2.1]{Diamond_cmpt}.
More specifically, for a locally Noetherian $\CO$-scheme $S$, 
$\tilY_U(S)$ is the set of isomorphism classes of tuples $(A, \iota, \lambda, \eta)$ such that
\begin{enumerate}
  \item $s: A \rightarrow S$ is an abelian scheme over $S$ of relative dimension $d$;
  \item $\iota : \COF \rightarrow \End_S(A)$ is an embedding such that $s_*\Omega^1_{A/S}$ is a locally free $\COF\otimes_{\Z_p}\CO_S$-module of rank one;
  \item $\lambda$ is an $\COF$-linear quasi-polarisation of $A$ such that for each connected component $S_i$ of $S$, $\lambda$ induces an isomorphism $\frakc_i \mathfrak{d} \otimes_{\COF} A_{S_i} \rightarrow A^\vee_{S_i}$ for some prime-to-$p$ fractional ideal $\frakc_i$ of $F$,
  where $\frakd$ is the different of $F$ over $\Q$;
  \item $\eta$ is a level $U^p$ structure on $A$. More precisely, we choose a geometric point $s_i$ on each connected component $S_i$, 
  $\eta$ is a collection of $\pi(S_i, s_i)$-invariant $U^p$-orbits of $\COFhat^{(p)}$-linear isomorphism
  $\eta_i: (\COFhat^{(p)})^2 \xrightarrow{\sim} T^{(p)}(A_{s_i})$,
  where $T^{(p)}$ denotes the product over $\ell \neq p$ of the $\ell$-adic Tate modules, and $g\in U^p$ acts on $\eta_i$ by right multiplication by $g^{-1}$.
 \end{enumerate}
The scheme $\tilY_U$ admits an action of $\CO_{F,(p),+}^\times$, such that $\psi_\mu \cdot (A, \iota, \lambda, \eta) = (A, \iota, \mu\lambda, \eta)$, for  $\mu\in \CO_{F,(p),+}^\times$. 
The action factors through the quotient $\COFppx/{(U\cap \COF^\times)}^2$ which acts freely if $U$ is sufficiently small. 

We define the \emph{Hilbert modular variety of level $U$} to be
$$
Y_U := 
\big(\COFppx \big/{(U\cap \COF^\times)}^2\big)
\big\backslash \tilY_U.
$$

The scheme $\tilY_U$ is an infinite disjoint union of PEL Shimura varieties $\widetilde{Y}_U^\epsilon$ 
for $\epsilon\in \big(\AFf^{(p)}\big)^\times/\big(\det U^p\big) \big(\widehat{\Z}^{(p)}\big)^\times$,
where $\widetilde{Y}_U^\epsilon$ denotes the open and closed subscheme of $\tilY_U$ which consists of isomorphism classes of $(A, \iota, \lambda, \eta)$ satisfying the following commutative diagram in \cite[\S 2.1.2]{DKS}:
    $$\begin{tikzcd}
    (\COFhat^{(p)})^2 \times (\COFhat^{(p)})^2  \arrow[r, "\wedge^2"] \arrow[d, "\eta_i\times \eta_i"'] & \AFf^{(p)} \arrow[d, "\epsilon \otimes \zeta"] \\ 
    T^{(p)}(A_{s_i})\times T^{(p)}(A_{s_i}) \arrow[d, "1\times \lambda"'] & \AFf^{(p)}(1) \arrow[d, "\Tr_{F/\Q}"] \\
    T^{(p)}(A_{s_i})\times(\Q\times T^{(p)}(A^\vee_{s_i}))\arrow[r, "\text{Weil}"] & \A_{\bff}^{(p)}(1)\, ,
    \end{tikzcd}$$
    where $\zeta: \hat{\Z}(p) \xrightarrow{\sim} \hat{\Z}(p)(1)$ is a fixed isomorphism.

Considering the action of $\CO_{F,(p),+}^\times$, 
$\tilY_U$ is a union of the orbits of finitely many $\tilY_U^\epsilon$ 
and the stabiliser of each $\tilY_U^\epsilon$ is the finite group $G_U := (\CO_{F,+}^\times \cap \det(U))/(U\cap \COF^\times)^2$.
Let $\CE$ be a finite set such that $\tilY_U = \coprod_{\epsilon\in\CE} (\CO_{F,(p),+}^\times)\cdot(\tilY_U^\epsilon)$.
Then 
$$
Y_U = \coprod_{\epsilon\in \CE} G_U\backslash\tilY_U^\epsilon.
$$
The definition for $Y_U$ coincides with the definition in \cite{Diamond-Sasaki}, where they take $Y_U$ to be the disjoint union of the finite quotient of $\frakc$-polarised Hilbert-Blumenthal abelian varieties, for prime-to-$p$ ideals $\frakc$ run through the representatives of narrow ideal class group.

Although Hilbert modular varieties $Y_U$ are of abelian type, not PEL type, 
each of its connected components admits a finite \'etale projection from a PEL Shimura variety.
Therefore, $Y_U$ is smooth and quasi-projective over $\CO$.

\begin{remark}
    (Relation with the model in Lan's thesis). 
    The scheme $\tilY_U^\epsilon$ is isomorphic to $\sfM_{\CH, \CO}$ in \cite{Lan-thesis} 
    with $\square = \{p\}$, $L$ and $\CH$ as in \cite[\S 2.4]{Diamond_cmpt}.
\end{remark}

\subsubsection{Toroidal and minimal compactifications}\label{subsub: tor&min}

The toroidal and minimal compactifications of the Hilbert-Blumenthal modular variety 
(which can be identified with a finite disjoint union of $\tilY_U^\epsilon$) 
have been discussed in \cite{Rapoport78} and \cite{Chai90}.
The arithmetic compactifications of Hilbert modular varieties of level $\Gamma_1(\mathfrak{c}, \mathfrak{n})$ are studied in \cite{Dimitrov2004}.
A general theory for compactifications of Shimura varieties of PEL type is studied in \cite{Faltings-Chai} and more recently in \cite{Lan-thesis}.

We consider the toroidal compactifications for each connected component $\tilY_U^\epsilon$ of $\tilY_U$. 
By \cite[\S 6.4.1, \S 7.3.3]{Lan-thesis}, we have the following results for $\tilY_U^\epsilon$:
\begin{enumerate}
    \item Let $\sfSigma = \{\sfSigma^\CC\}_{\CC}$ be an admissible rational polyhedral cone decomposition\footnote{admissible in the sense that, for every cusp $\CC$ of $\tilYUe$, $\sfSigma^\CC$ is invariant under the natural action of $(\CO_F^\times\cap U)^2$ and the orbit is finite.}, where $\CC$ runs through all the cusps of $\tilYUe$.
    We have the toroidal compactification $\tilYUetor := \tilYUesig$ depending on the choice of $\sfSigma$ and an open immersion from $\tilde{\jmath}:\tilYUe \hookrightarrow \tilYUetor$.
    \item If $\sfSigma$ is smooth (resp.\,projective) in the sense of \cite[Definition 6.1.1.14]{Lan-thesis} (resp.\,\cite[Definition 7.3.1.3]{Lan-thesis}), then $\tilYUesig$ is smooth (resp.\,projective).
    \item There exists an admissible rational polyhedral cone decomposition $\sfSigma$ that is both smooth and projective.
    \item Let $\sfSigma$ be smooth and projective. The complement $\widetilde{D}_\infty := \tilYUetor - \tilYUe$ is a simple normal crossing divisor.    
    \item The universal abelian scheme $A^\epsilon \rightarrow \tilYUe$ extends to a semi-abelian scheme $A^{\epsilon,\tor} \rightarrow \tilYUetor$.
    \item Let $\tilYUemin$ be the minimal compactification obtain by Stein factorization as in \cite[\S 7.2.2]{Lan-thesis}\footnote{The minimal compactification is independent of the choice of $\sfSigma$.
    We can identify the normal projective scheme $\tilY_U^{\epsilon,\min}$ with $\Proj(\oplus_{k\geq 0} \Gamma(\tilY_U^{\epsilon,\tor}, \omega^{\tor, \otimes k}))$, 
    where $\omega^{\tor}= \wedge^d e^*\Omega_{A^{\epsilon,\tor}/\tilYUetor}$.}. 
    We have a canonical morphism $\widetilde{\pi} : \tilYUetor \rightarrow \tilYUemin$ and an open immersion $\tilde{\imath}: \tilYUe \hookrightarrow \tilYUemin$ such that $\tilde{\imath} = \widetilde{\pi}\circ \tilde{\jmath}$.
    $$\begin{tikzcd}[row sep=tiny]
        & \tilYUetor \arrow[dd, "\widetilde{\pi}"] \\
          \tilYUe \arrow[ur, hook, "\tilde{\jmath}"] \arrow[dr, hook, "\tilde{\imath}"] &              \\
          & \tilYUemin
        \end{tikzcd}$$
        
    \item There is a $\widetilde{\pi}$-relatively ample line bundle $\CO_{\tilYUetor}(-\widetilde{D}')$ on $\tilYUetor$, 
    for some divisor $\widetilde{D}'$ supported on $\widetilde{D}_{\infty}$.
\end{enumerate}

The action of $G_U$ on $\tilYUe$ induces an action on the cone decomposition $\sfSigma$.
Now we obtain finitely many cone decompositions $g\cdot\sfSigma$, where $g\in G_U$.
As in \cite[Corollaire 7.5]{Dimitrov2004}, 
we can take a common refinement $\sfSigma'$ of
$g\cdot\sfSigma$ for all $g\in G_U$ that is smooth 
and compatible with the action of $G_U$, where compatible means that $g\sfSigma' = \sfSigma'$ for all $g\in G_U$. 
We can further subdivide $\sfSigma'$ into $\sfSigma''$ to make it projective while remaining smooth and compatible with $G_U$-action.

Then $\tilYUetor = \tilY_{U,\sfSigma''}^\epsilon$ is a toroidal compactification (depending on the choice of $\sfSigma''$) that admits an action of $G_U$.
Let 
$$
\tilYUtor := \coprod_{\epsilon} \tilYUetor
$$
be the toroidal compactification of $\tilY_U$, 
where $\epsilon \in \big(\AFf^{(p)}\big)^\times/\big(\det U^p\big) \big(\widehat{\Z}^{(p)}\big)^\times$. Then $\tilYUtor$ admits universal semi-abelian scheme $A^{\tor}$.
Let 
$$
\YUtor := \coprod_{\epsilon\in \CE} G_U\backslash\tilYUetor
$$
be the toroidal compactification of $Y_U$.
Then $\YUtor$ is a projective and smooth scheme over $\CO$.
We have an open immersion $\jmath: Y_U \hookrightarrow \YUtor$
 and 
the complement ${D}_\infty := \YUtor - Y_U$ is a simple normal crossing divisor that can be identified with the disjoint union the quotient of $\widetilde{D}_\infty$ by $G_U$.

The action of $G_U$ on $\tilYUe$
extends uniquely to an action on $\tilYUemin$ and the morphism $\widetilde{\pi} : \tilYUetor \rightarrow \tilYUemin$ is $G_U$-equivariant.
Define the minimal compactification of $\tilY_U$ to be 
$$
\tilYUmin = \coprod_{\epsilon} \tilYUemin,
$$
where $\epsilon\in\big(\AFf^{(p)}\big)^\times/\big(\det U^p\big) \big(\widehat{\Z}^{(p)}\big)^\times$.
Let the minimal compactification of $Y_U$ be $$
\YUmin = \coprod_{\epsilon\in \CE}G_U\backslash\tilYUemin.
$$
We have an induced morphism
$\pi: \YUtor \rightarrow \YUmin$ and an open immersion 
$\imath: Y_U \hookrightarrow \YUmin$ such that $\imath = \jmath \circ \pi$.
Let $D'$ be the disjoint union of quotient of $\widetilde{D}'$ by $G_U$.
Then $D'$ is a divisor whose support is the simple normal crossing divisor $ {D}_{\infty}$ and $\CO_{\YUtor}(-D')$ is $\pi$-relatively ample. 

\begin{definition}
  We define $Y_U^\infty$ to be the set of cusps of $Y_U$ as in \cite[\S 7.1]{Diamond_wtshift}:
  $$
  Y_U^\infty:= P(F)_+\backslash \GL_2(\AFf)/U = 
  P(\CO_{F,(p)})_+\backslash \GL_2(\AFfp)/U^p,
  $$
  where $P(F)_{+} \subset \GL_2(F)$ and $P(\CO_{F,(p)})_+\subset \GL_2(\AFfp)$ denote the subgroups of upper-triangular matrices with totally positive determinant.
\end{definition}
There are only finitely many cusps. 
We have a bijection between $Y_U^\infty$ and the set of connected components of $\YUmin - Y_U$, and the set of connected components of $D_\infty = \YUtor - Y_U$ by \cite[\S 2]{Diamond_cmpt},
and we will refer to them also as the \emph{set of cusps}.

\subsection{Automorphic line bundles}

For $\tau \in \Sigma$, we define the line bundles $\widetilde{\omega}_\tau$ and $\widetilde{\delta}_\tau$ on $\tilY_U$ as in \cite[\S 3.2]{Diamond_wtshift}, 
and define the line bundles $\widetilde{\omega}^{\tor}_\tau$ and $\widetilde{\delta}^{\tor}_\tau$ on $\tilYUtor$ as in \cite[\S 7.1]{Diamond_wtshift}.\footnote{
  We use $\dotw_\tau$ (resp.\,$\dotdelta_\tau$) to denote $\CL_\tau$ (resp.\,$\CN_\tau$) in \cite[\S 3.2]{Diamond_wtshift},
  and $\dotw^{\tor}_\tau$ (resp.\,$\dotdelta^{\tor} _\tau$) to denote $\widetilde{\CL}^{\tor}_\tau$ (resp.\,$\widetilde{\CN}^{\tor}_\tau$) in \cite[\S 7.1]{Diamond_wtshift}. }

Let $A$ be the universal abelian scheme over $S = \tilY_U$ and $s: A \rightarrow S$ the structure morphism.
We have an exact sequence of locally free $\CO_F\otimes \CO_S$-modules 
$$
0\rightarrow s_*\Omega^1_{A/S} \rightarrow
 \CH_\dR^1(A/S) \rightarrow R^1s_*\CO_A \rightarrow 0
$$
of rank one, two and one, respectively.
We have decompositions 
$\CH_\dR^1(A/S) = \bigoplus_{\tau\in \Sigma}\CH_\dR^1(A/S)_\tau$, 
$s_*\Omega^1_{A/S} = \bigoplus_{\tau\in \Sigma}\widetilde{\omega}_\tau$,
and $\wedge^2 \CH_\dR^1(A/S) = \bigoplus_{\tau\in \Sigma} \widetilde{\delta}_\tau$.
Let $A^{\tor}$ be the semi-abelian scheme over $S^\tor = \tilYUtor$ that extends the universal abelian scheme $A$.
The sheaf $\CL ie(A^\tor/S^\tor)$ is locally free over $\CO_F\otimes \CO_{S^\tor}$, hence it has a similar decomposition.
We denote 
$\widetilde{\omega}_\tau^\tor := \CH om_{\CO_{S^\tor}}(\CL ie(A^\tor/S^\tor)_\tau, \CO_{S^\tor})$ 
the locally free $\CO_{S^\tor}$-module of rank one which extends $\widetilde{\omega}_\tau \cong \CH om_{\CO_{S}}(\CL ie(A/S)_\tau, \CO_{S})$.
Similarly, we also have a line bundle $\widetilde{\delta}^\tor_\tau$ over $S^\tor$ that extends $\widetilde{\delta}_\tau$.
Note that the line bundle $\dotdelta_\tau$ (resp.\,$\dotdelta^{\tor}_\tau$) is a free $\CO_{\tilY_U}$-module (resp.\,$\CO_{\tilYUtor}$-module) of rank one, not just locally.

For tuples $k = (k_\tau), l = (l_\tau) \in \Z^\Sigma$,
let 
$\widetilde{\CA}^{k,l}$
denote the line bundle $\bigotimes_\tau \widetilde{\omega}_\tau^{\otimes k_\tau} \otimes \widetilde{\delta}_\tau^{\otimes l_\tau}$
and 
$\widetilde{\CA}^{\tor, k,l}$
denote $\bigotimes_\tau \widetilde{\omega}_\tau^{\tor,\otimes k_\tau} \otimes \widetilde{\delta}_\tau^{\tor, \otimes l_\tau}$.
The line bundles $\widetilde{\CA}^{k,l}$ and $\widetilde{\CA}^{\tor, k, l}$ admit an action of $\COFppx$. 
Recall $\psi_\mu$ is the underlying action of $\mu\in \COFppx$ on $\tilY_U$,
and we choose the cone decomposition so that the action extends to $\tilYUtor$, which we denote by $\psi_\mu^t$.
We have isomorphisms
$\alpha_\mu: \widetilde{\CA}^{k,l} \xrightarrow{\sim} \psi_\mu^*\widetilde{\CA}^{k,l}$
and 
$\alpha_\mu^t: \widetilde{\CA}^{\tor, k,l} \xrightarrow{\sim} \left(\psi_\mu^t\right)^*\widetilde{\CA}^{\tor, k,l}$.
If $\mu^{k+2l}$ is trivial in an $\CO$-algebra $R$ for all $\mu\in \COF^\times \cap U$, 
then the line bundle $\widetilde{\CA}^{k,l}_R$ (resp.\,$\widetilde{\CA}^{\tor, k,l}_R$) descends to a line bundle ${\CA}^{k,l}_R$ (resp.\,$\widetilde{\CA}^{\tor, k,l}_R$) on $Y_{U,R}$ (resp.\,$Y^\tor_{U, R}$),
where the subscript $R$ denotes the base change.

\begin{remark}\label{rmk: paritious}
  Assume $U$ is sufficiently small.
  If $k_\tau +2 l_\tau = w$ is independent of $\tau$, we call such weight $(k,l)$ paritious, in which case we can always define $\CA^{k,l}$ over $\CO$. 
  For non-paritious weights, we only consider $\CA^{k,l}_R$ where $R$ has characteristic $p$ in this paper.
\end{remark}

\begin{proposition} 
  We have the Kodaira-Spencer isomorphisms on $\tilY_U$ and $\tilYUtor$ \cite[\S 5.1]{Diamond_cmpt}, hence also on the quotient schemes 
  $Y_U$ and $\YUtor$: 
  $$
  \Omega_{Y_U/\CO}^d = 
  {\bigwedge}^d_{\CO_{Y_U}}\Omega^1_{Y_U/\CO} 
  \cong \CA^{\mathbf{2}, \mathbf{-1}},
  $$
  $$
  \Omega_{\YUtor/\CO}^d({D}_\infty) = 
  {\bigwedge}^d_{\CO_{\YUtor}}\left(\Omega^1_{\YUtor/\CO}(\log({D}_\infty))\right) 
  \cong \CA^{\tor, \mathbf{2}, \mathbf{-1}}.
  $$
\end{proposition}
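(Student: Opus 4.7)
The plan is to invoke the classical Kodaira-Spencer isomorphism on the PEL scheme $\tilY_U$ and its toroidal compactification $\tilYUtor$, and then descend to $Y_U$ and $\YUtor$ using the equivariance of the construction.

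First, let $s: A \to \tilY_U$ be the universal abelian scheme. Contracting the Hodge filtration on $\CH^1_\dR(A/\tilY_U)$ against the Gauss-Manin connection gives the Kodaira-Spencer map
$$
\mathrm{KS}: s_*\Omega^1_{A/\tilY_U} \otimes (R^1s_*\CO_A)^{\vee} \longrightarrow \Omega^1_{\tilY_U/\CO},
$$
which is an isomorphism by Serre-Tate/Grothendieck-Messing deformation theory for polarised abelian schemes with $\CO_F$-multiplication (cf.\ \cite[\S 5.1]{Diamond_cmpt}).

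Second, decomposing under the $\CO_F$-action, the left hand side becomes $\bigoplus_\tau \dotw_\tau \otimes (R^1s_*\CO_A)_\tau^{\vee}$, and the Hodge short exact sequence
$$
0 \to \dotw_\tau \to \CH^1_\dR(A/\tilY_U)_\tau \to (R^1s_*\CO_A)_\tau \to 0
$$
combined with $\dotdelta_\tau = \wedge^2 \CH^1_\dR(A/\tilY_U)_\tau$ yields $(R^1s_*\CO_A)_\tau \cong \dotdelta_\tau \otimes \dotw_\tau^{-1}$. Substituting and taking the top exterior power then produces
$$
\Omega^d_{\tilY_U/\CO} \cong \bigotimes_{\tau\in\Sigma} \dotw_\tau^{\otimes 2} \otimes \dotdelta_\tau^{-1} = \widetilde{\CA}^{\mathbf{2},\mathbf{-1}}.
$$
For the toroidal version I would replicate the construction with the universal semi-abelian scheme $A^\tor \to \tilYUtor$ and the log de Rham complex of \cite[\S 6.4]{Lan-thesis}, which replaces $\dotw_\tau, \dotdelta_\tau$ by their $\tor$ counterparts and $\Omega^1$ by $\Omega^1(\log \widetilde{D}_\infty)$, yielding the analogous identity $\Omega^d_{\tilYUtor/\CO}(\log \widetilde{D}_\infty) \cong \widetilde{\CA}^{\tor,\mathbf{2},\mathbf{-1}}$.

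Finally, since the entire construction is functorial in the universal (semi-)abelian scheme, it is equivariant under the $\COFppx$-action $\psi_\mu$ and the induced $G_U$-action on each disjoint component $\tilY_U^\epsilon$. Consequently the isomorphisms descend to $Y_U$ and $\YUtor$, giving the stated formulas. The main technical point is that the Kodaira-Spencer map extends to a \emph{log} isomorphism across the simple normal crossing divisor $\widetilde{D}_\infty$: locally near a cusp, $A^\tor$ arises from the Mumford construction and degenerates to a $1$-motive whose log differentials encode the degeneration data, and the log Kodaira-Spencer isomorphism is worked out in detail in \cite[\S 6.4]{Lan-thesis} (and, for the Hilbert case, in \cite{Chai90, Dimitrov2004}), which we simply invoke.
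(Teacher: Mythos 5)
Your proposal is correct and follows the same route as the paper, which gives no argument beyond citing \cite[\S 5.1]{Diamond_cmpt} (and the compactification literature) for the Kodaira--Spencer isomorphism on $\tilY_U$ and $\tilYUtor$ and then descending to the quotients by equivariance. Your fleshing-out of the $\CO_F$-decomposition, the identification $(R^1s_*\CO_A)_\tau \cong \dotdelta_\tau \otimes \dotw_\tau^{-1}$, and the log extension across $\widetilde{D}_\infty$ is exactly the content of the cited references, and the descent step is valid because the weight $(\mathbf{2},\mathbf{-1})$ is paritious and the quotient by the free $G_U$-action is \'etale.
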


Now we consider the sheaves on the minimal compactifications.
Recall the open immersion $\tilde{\imath}: \tilY_U \hookrightarrow \tilY_U^{\min}$ and the morphism $\widetilde{\pi}:\tilYUtor \rightarrow \tilY_U^{\min}$.
The Koecher Principle implies that
$\CO_{\tilYUmin} = \widetilde{\pi}_*\CO_{\tilYUtor} = \tilde{\imath}_*\CO_{\tilY_U}$, and by \cite[Theorem 2.5]{Lan-HigherKoecher},
we have $\widetilde{\pi}_*\widetilde{\CA}^{\tor, k,l} = \tilde{\imath}_*\widetilde{\CA}^{k,l}$.

\begin{remark}\label{rmk: linebundle/Ymin}
  The sheaf $\tilde{\imath}_*\widetilde{\CA}^{k,l}$ on $\tilYUmin$ is coherent, but not necessarily invertible.
  By \cite[(22)]{Diamond_wtshift},
  if we take level $U$ sufficiently small such that
  $\mu^l := \prod_{\tau\in\Sigma}\tau(\mu)^{l_\tau}$ is trivial in $R$ for all $\mu\in U\cap \COF^{\times}$, then 
  $i_*\widetilde{\CA}^{k,l}$ is a line bundle on $\tilY_{U,R}^{\min}$.
  In particular,
  when $R = E$, 
  we can choose $U$ sufficiently small
  that $\mu \equiv 1$ mod $p$ for all $\mu\in U\cap \COF^{\times}$. 
\end{remark}

We choose $U$ sufficiently small so that $\mu^l$
and $\mu^{k+2l}$ is trivial in $E$ for all $\mu\in U \cap \COF^\times $.
Denote $\dotw_{\tau, E}^{\min} = \tilde{\imath}_*\dotw_E$, $\dotdelta_{\tau, E}^{\min} = \tilde{\imath}_*\dotdelta_{\tau, E}$ and $\widetilde{\CA}^{\min, k, l}_E = \tilde{\imath}_*\widetilde{\CA}^{k,l}_E$.
The line bundle $\widetilde{\CA}^{\min, k, l}_E$ descends to a line bundle ${\CA}^{\min, k, l}_E$ on $Y_{U, E}^{\min}$.
By Koecher Principle,
we have a natural isomorphism 
$\pi_* {\CA}^{\tor, k, l}_E \xrightarrow{\sim} {\CA}^{\min, k, l}_E$.

\begin{lemma}\label{lem: Ltor and Lmin}
  Let $U$ be sufficiently small such that 
  $\mu^l = 1$ in $E$ for $\mu\in \COFppx$.
  Then 
  the natural map of sheaves on $Y^\tor_{U,E}$
  $$
   \pi^*\pi_*\CA^{\tor,k,l}_E \longrightarrow \CA^{\tor,k,l}_E
  $$
  is an isomorphism.
\end{lemma}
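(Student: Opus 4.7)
The plan is to check the adjunction map fiberwise along $\pi$. First, both $\CA^{\tor,k,l}_E$ and $\pi^*\pi_*\CA^{\tor,k,l}_E$ are line bundles on the smooth scheme $Y^\tor_{U,E}$: the former by construction, and the latter because $\pi_*\CA^{\tor,k,l}_E \cong \CA^{\min,k,l}_E$ was just observed to be a line bundle under the smallness assumption on $U$. Over the open dense subscheme $Y_{U,E}$ the morphism $\pi$ restricts to an isomorphism onto $Y_{U,E} \hookrightarrow Y^\min_{U,E}$, so both line bundles restrict to $\CA^{k,l}_E$ and the adjunction map is the identity there. Hence the map corresponds to a global section of a Hom line bundle that is nowhere vanishing on $Y_{U,E}$, and it remains to verify it is nowhere vanishing on the cuspidal locus $\pi^{-1}(Y^\min_{U,E} - Y_{U,E})$.

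Next, I would invoke the standard descent criterion: for a proper surjective morphism $\pi$ with $\pi_*\CO = \CO$ (which holds in our setting by the Koecher principle $\widetilde{\pi}_*\CO_{\tilYUtor} = \CO_{\tilYUmin}$ together with the finite quotient construction $\YUtor \to \YUmin$), a line bundle $\CL$ on the source satisfies $\pi^*\pi_*\CL \xrightarrow{\sim} \CL$ if and only if $\CL$ restricts trivially to every geometric fiber of $\pi$. Combined with the previous paragraph, it suffices to show that $\CA^{\tor,k,l}_E$ is trivial on each fiber $\pi^{-1}(c)$ over a cusp $c \in Y^\min_{U,E} - Y_{U,E}$.

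To verify this, I would appeal to the local structure of $Y^\tor_{U,E}$ near the cusps given by the Mumford / Faltings-Chai / Lan degeneration theory recalled in \S\ref{subsub: tor&min}. Above a cusp, the universal semi-abelian scheme $A^\tor$ is a split torus $T$, so the sheaf $\CL ie(A^\tor/S^\tor)$ restricted to the cuspidal fiber is constant as an $\CO_F \otimes \CO_{\pi^{-1}(c)}$-module, namely the base change of the cocharacter lattice of $T$ together with its $\CO_F$-action. Consequently each $\tau$-component $\widetilde{\omega}_\tau^\tor$ and $\widetilde{\delta}_\tau^\tor = \wedge^2 \CH_\dR^1(A^\tor/S^\tor)_\tau$ restricts trivially to $\pi^{-1}(c)$, and therefore so does every $\CA^{\tor,k,l}_E$.

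The main obstacle I would anticipate is the clean bookkeeping of the $\CO_F$-action on $\CL ie(A^\tor/S^\tor)$ along each cuspidal fiber: although the underlying triviality of the Lie algebra of the degenerate torus is standard, identifying its $\tau$-components as trivial line bundles on $\pi^{-1}(c)$ requires unpacking the universal semi-abelian scheme and its $\CO_F$-action on the toroidal charts associated with the chosen admissible cone decomposition. Once this is in hand, the descent criterion immediately yields the claim.
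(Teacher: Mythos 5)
Your overall strategy---reduce to the boundary and exploit the degeneration of the universal semi-abelian scheme---is in the right spirit, but the pivot of your argument, the ``standard descent criterion'' in your second paragraph, is not a standard theorem and is not justified as stated. For a proper morphism $\pi$ with $\pi_*\CO = \CO$ that is \emph{not flat} (and $\pi:\YUtor\to\YUmin$ contracts the boundary divisors onto the zero-dimensional cusps, so it is very far from flat), cohomology and base change is unavailable, and triviality of a line bundle on the geometric fibers does not in general imply that $\pi^*\pi_*\CL\to\CL$ is an isomorphism. By the theorem on formal functions, the stalk of $\pi_*\CL$ at a cusp is computed on the \emph{formal completion} of $\YUtor$ along the fiber, so what is actually needed is triviality of $\CL$ on that formal completion; this is strictly stronger than triviality on the reduced fiber whenever some $H^1\bigl(\pi^{-1}(c),\, I^n/I^{n+1}\bigr)$ is nonzero, since those groups obstruct lifting a trivialization through the infinitesimal neighborhoods. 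Your third paragraph establishes triviality of $\widetilde{\omega}^{\tor}_\tau$ and $\widetilde{\delta}^{\tor}_\tau$ only on the boundary divisor itself (where $A^{\tor}$ is a torus), which is exactly the insufficient fiberwise statement; it does not show the adjunction map is surjective at boundary points.

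The repair is what the paper does: by \cite{stacks-project} (Proposition 05ET) it suffices to check the map after completing along each boundary component $C$; both $\pi^*$ and $\pi_*$ commute with this completion (the latter by formal functions); and the construction of the toroidal compactification furnishes a \emph{canonical trivialization of $\widetilde{\CA}^{\tor,k,l}$ on the formal charts} $\widehat{S}$ (coming from the cocharacter lattice of the degenerating torus---precisely the object you identify, but available on the whole formal neighborhood rather than merely on the fiber), and this trivialization descends to $(\YUtor)^{\wedge}_C$. That reduces the claim to the structure sheaf, where it is the Koecher principle. So your final paragraph contains the right geometric input, but it must be upgraded from the fiber to the formal completion along the boundary, and the appeal to a fiberwise descent criterion must be replaced by the formal-functions reduction.
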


\begin{proof}
  The natural map $\pi^*\pi_*\CA^{\tor,k,l}_E \rightarrow \CA^{\tor,k,l}_E$ is an isomorphism on $Y_{U, E}$. 
  By \cite[\href{https://stacks.math.columbia.edu/tag/05ET}{Proposition 05ET}]{stacks-project},
  to prove the isomorphism, it suffices to show the isomorphism on the completion along each component $C$ of the cuspidal divisor $D_\infty$.
  Let $\widehat{\pi}: (Y^{\tor}_{U,E})^\wedge_{C} \rightarrow (Y^{\min}_{U,E})^\wedge_{C}$ be the completion along $C$.
  Note that $\pi^*$ and $\pi_*$ are compatible with completion (the latter by the theorem on formal functions),
  so it suffices to prove that the natural map
  $$
   (\widehat{\pi})^*(\widehat{\pi})_*(\CA^{\tor,k,l}_E)_C^\wedge \rightarrow (\CA^{\tor,k,l}_E)_C^\wedge
  $$
  is an isomorphism.
  The key observation is that there's a trivialization of the completion of $({\CA}^{\tor, k, l})_{C}^\wedge$.\footnote{By the construction of the toroidal compactification, $\tilYUtor \cong \widehat{S}/V_N^2$, for a formal scheme $\widehat{S}$,
  where $V_N = \ker(\CO_F^\times \rightarrow (\CO_F/ N\CO_F)^\times)$.
  Let $\xi: \widehat{S} \rightarrow \tilYUtor$, then there's a canonical
  trivialization $\xi^*\widetilde{\CA}^{\tor, k, l} \cong D_{k,l}\otimes \CO_{\widehat{S}}$ that descends to $(\tilYUtor)_C^\wedge$,
  where $D_{k, l}$ is free of rank one over $\CO$
  (\emph{cf}.\,\cite[\S 7.1]{Diamond_wtshift}).} 
  Therefore, we only need to prove the result for the structure sheaf
  $$
  (\widehat{\pi})^*(\widehat{\pi})_* (\CO_{\YUtor, E})_C^\wedge \rightarrow (\CO_{\YUtor, E})_C^\wedge,
  $$
  where the isomorphism is given by the Koecher principle
  $\pi_*\CO_{\YUtor, E} \xrightarrow{\sim} \CO_{\YUmin, E}$.
\end{proof}

\subsection{Hilbert modular forms}

\begin{definition}
  Let $(k, l) \in \Z^\Sigma \times \Z^\Sigma$.
  Assume $\mu^{k+2l}$ is trivial in the $\CO$-algebra $R$ for all $\mu\in \COF^\times \cap U$.
  We define the space of Hilbert modular forms over $R$ of weight $(k,l)$ and level $U$ to be  
  $$
  M_{k, l}(U;R) := H^0(Y_{U,R}^{\tor}, \CA_R^{\tor, k, l})$$ 
  and the subspace of cusp forms to be 
  $$
  S_{k, l}(U;R) := H^0(Y_{U,R}^{\tor}, \CA_R^{\tor, k, l}(-D_\infty)).
  $$
  In particular, if $R=E$, we call it the space of mod $p$ Hilbert modular (reps. cusp) forms.
\end{definition}

By \Cref{rmk: paritious},
we can talk about Hilbert modular forms of paritious weights over $\CO$, but for non-paritious weights, we  only consider forms over $R$, where $R$ has characteristic $p$.

\begin{remark}
  By Koecher Principle,
  $
  H^0(Y_{U,R}^{\tor}, \CA_R^{\tor, k, l})
  = 
  H^0(Y_{U,R}, \CA_R^{k, l}).
  $
\end{remark}

For $f\in M_{k,l}(U;R)$, define the $q$-expansion of $f$ at each cusp $C$ as in \cite[\S 9.1]{Diamond-Sasaki}.
When $U = U_1(\frakn)$ for some ideal $\frakn$ prime to $p$, the set of connected components of $Y_U$ is in bijection with the strict class group $\CC l(F)^+$ of $F$,
where
\begin{align*}
  U_1(\frakn) = \left\{
  \left(\begin{matrix}
    a & b \\ c & d
  \end{matrix}\right) \in \GL_2(\widehat{\CO}_F) \mid c\in \frakn \widehat{\CO}_F, d-1 \in \frakn\widehat{\CO}_F
  \right\}.
\end{align*}
The $q$-expansion principle \cite[Lemma 9.2.1]{Diamond-Sasaki} tells us the following $q$-expansion\footnote{This $q$-expansion depends on a choice of basis for a free $R$-module of rank one. A canonical description is given in \cite[\S9.1]{Diamond-Sasaki}} map is injective
\begin{align}\label{eq: q-expn}
M_{k,l}(U_1(\frakn);R) \rightarrow \bigoplus_{\frakc \in \CC l(F)^+} 
\left\{ 
  \sum_{m\in \frakc_+ \cup \{0\}} r_m q^m \ \middle\vert \ r_m\in R, r_{\nu m} = \nu^{-l}r_m, \forall \nu\in \CO_{F,+}^\times
\right\}.
\end{align}

\subsubsection{Hecke operators}
We follow the convention in \cite{Diamond-Sasaki_inprep} to define the Hecke operators, which is different from \cite{Diamond-Sasaki} by a twist.

For sufficiently small open compact subgroups $U$ and $U'$ such that $g^{-1}U'g \subset U$, 
we have morphism $\rho_g: Y_{U', R} \rightarrow Y_{U, R}$ and $\rho_g^* \CA^{k,l}_R \xrightarrow{\sim} \CA'^{k,l}_R$ as in \cite[\S 4.1, \S 4.2]{Diamond-Sasaki}, 
for $g\in \GL_2(\AFf)$ such that $g_p\in \GL_2(\CO_{F,p})$.
We define the $R$-linear map 
$[U'gU]: M_{k, l}(U;R) \rightarrow M_{k, l}(U';R)$ to be 
the composite
\begin{align}\label{eq: Hecke_pullback}
  H^0(Y_{U,R}, \CA^{k,l}_R) \rightarrow 
  H^0(Y_{U', R}, \rho_g^* \CA^{k,l}_R)
  \xrightarrow{\sim}
  H^0(Y_{U', R}, \CA'^{k,l}_R).
\end{align}
Let $M_{k, l}(R) := \varinjlim_U M_{k, l}(U;R)$ and $S_{k,l}(R) := \varinjlim_U S_{k, l}(U;R)$.
Then we have the action of $\GL_2(\AFfp)$ on $M_{k, l}(R)$ and $S_{k, l}(R)$.

For a place $v$ of $F$ such that $v\nmid p$ and $\GL_2(\CO_{F,v})\subset U$, we define the Hecke operators $T_v$ and $S_v$ on $M_{k,l}(U;R)$ by
\begin{align*}
  T_v = \left[ U \left( \begin{matrix}
    \varpi_v & 0 \\ 0 &1
  \end{matrix}\right) U \right], \ 
  S_v =\left[ U \left(  \begin{matrix}
    \varpi_v & 0 \\ 0 & \varpi_v
  \end{matrix}\right) U \right],
\end{align*}
as in \cite[\S 4.3]{Diamond-Sasaki} but without twisting by $\Nm_{F/\Q}(v)^{-1}$ and $\Nm_{F/\Q}(v)^{-2}$, respectively, where $\varpi_v$ is a uniformiser of $\CO_{F,v}$.

For $v \mid p$ and weight $(k,l)$ such that
$\sum_{\tau\in \Sigma_v} \min\{l_\tau+1, k_\tau+ l_\tau\} \geq 0$,
we define the Hecke operators $T_v$ as in \cite[\S 5.4]{Diamond_Kodaira-Spencer},
with $p^{-f_v}\psi^*$ replacing $\psi^*$.

\subsubsection{Weight shifting operators}
Let $R = E$. 
For $\tau\in\Sigma$, we define the partial Hasse invariants $\Ha_\tau \in M_{k_{\Ha_\tau}, 0}(E)$ as in \cite[\S 5.1]{Diamond-Sasaki},
where $k_{\Ha_\tau} \in \Z^\Sigma$ is as follows:
\begin{itemize}
  \item if $\Fr\circ \tau = \tau$, then $k_{{\Ha_\tau},\tau} = p-1$ and $k_{{\Ha_\tau},{\tau'}}=0$ for $\tau' \neq \tau$;
  \item if $\Fr\circ \tau \neq \tau$, then $k_{{\Ha_\tau},\tau} = -1$, $k_{{\Ha_\tau},\Fr^{-1}\circ\tau} = p$ and $k_{{\Ha_\tau},{\tau'}}=0$ for $\tau' \neq \tau,\ \Fr^{-1}\circ \tau$;
\end{itemize}
where $\Fr$ denotes the absolute Frobenius on $\Fpbar$, and we can compose it with $\tau$ by the identification $\Sigma \cong \overline{\Sigma}_p$.
For any $(k,l)\in \Z^\Sigma\times \Z^\Sigma$, we have an injective map by multiplying by $\Ha_\tau$ 
$$
M_{k, l}(U;E) \hookrightarrow M_{k+k_{\Ha_\tau}, l}(U;E),
$$
which is Hecke-equivariant in the sense that it commutes with $T_v$ and $S_v$ for $v\nmid p$ and $\GL_2(\CO_{F,v})\subset U$.

For any $\tau\in \Sigma$, let $\Theta_\tau$ be the partial Theta operator as in \cite[\S 8.2]{Diamond-Sasaki}.
Then $\Theta_\tau$ defines a Hecke-equivariant map
$$
M_{k, l}(U;E) \rightarrow M_{k', l'}(U;E),
$$
where $l'_\tau = l_\tau -1$, $l'_{\tau'} = l_\tau$ for $\tau'\neq \tau$, and $k'$:
\begin{itemize}
  \item if $\Fr\circ \tau = \tau$, then $k'_{\tau} = k_\tau + p+1$ and $k'_{{\tau'}}=k_{\tau'}$ for $\tau' \neq \tau$.
  \item if $\Fr\circ \tau \neq \tau$, then $k_{\tau} =  k_\tau + 1$, $k'_{\Fr^{-1}\circ\tau} = k_{\Fr^{-1}\circ\tau} + p$ and $k'_{{\tau'}}=k_{{\tau'}}$ for $\tau' \neq \tau,\ \Fr^{-1}\circ \tau$.
\end{itemize}

\begin{theorem}\cite[Theorem 8.2.2]{Diamond-Sasaki}\label{thm: Theta-Hasse-divisibility}
  For $(k,l)\in \Z^\Sigma \times \Z^\Sigma$ and $f\in M_{k,l}(U;E)$,
   $\Theta_\tau(f)$ is divisible\footnote{By $f$ being divisible by $\Ha_\tau$, we mean that $f = \Ha_\tau\cdot g$ for some $g\in M_{k-k_{\Ha_\tau},l}(U;E)$.} 
  by $\Ha_\tau$ if and only if $f$ is divisible by $\Ha_\tau$ or $p|k_\tau$. 
\end{theorem}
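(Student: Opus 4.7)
The plan is to reduce both divisibility statements to conditions about restriction to the Goren--Oort stratum $Z_\tau \subset Y_{U,E}$ defined by $\Ha_\tau = 0$. Since $\Ha_\tau$ cuts out $Z_\tau$ as a reduced effective Cartier divisor, a local section $g$ of any line bundle is divisible by $\Ha_\tau$ if and only if $g|_{Z_\tau} = 0$. Therefore the theorem is equivalent to the statement that $\Theta_\tau(f)|_{Z_\tau} = 0$ if and only if either $f|_{Z_\tau} = 0$ or $p \mid k_\tau$, and the entire question becomes one about the restriction of the partial Theta operator to $Z_\tau$.

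I would then invoke the geometric construction of $\Theta_\tau$ from the Gauss--Manin connection $\nabla$ on the relative de Rham cohomology of the universal (semi-)abelian scheme, which respects the $\tau$-decomposition $\CH^1_\dR(A/S) = \bigoplus_\tau \CH^1_\dR(A/S)_\tau$. Applied to a local section of $\widetilde{\omega}_\tau^{\otimes k_\tau} \otimes \widetilde{\delta}_\tau^{\otimes l_\tau}$, the connection lands in a larger Hodge-filtered sheaf, and projecting back onto a line bundle of the desired weight requires (formally) dividing by $\Ha_\tau$. The partial Theta operator is the operator obtained after clearing this denominator in characteristic $p$, and the clearing is integral only up to a correction depending on $k_\tau \bmod p$; this is the geometric origin of the dichotomy in the statement.

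For the ``if'' direction, the case $f = \Ha_\tau g$ follows from a Leibniz-type rule together with the identity $\Theta_\tau(\Ha_\tau) \in \Ha_\tau \cdot M_{*,*}(U;E)$, which is read off the Gauss--Manin description applied to $\Ha_\tau$ itself. For the case $p \mid k_\tau$, I would work in a local \'etale chart near the generic point of $Z_\tau$: writing $f = u \cdot s_\tau^{k_\tau}$ for a local trivialization $s_\tau$ of $\widetilde{\omega}_\tau$ and a function $u$, the derivation contribution to $\Theta_\tau(f)|_{Z_\tau}$ is proportional to $k_\tau$ (which vanishes in $E$), while every remaining contribution carries an explicit factor of $\Ha_\tau$.

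For the converse, assume $\Ha_\tau \mid \Theta_\tau(f)$ and $p \nmid k_\tau$. In the same local chart one obtains a congruence of the form
\[
\Theta_\tau(f) \equiv k_\tau \cdot (\partial u) \cdot s_\tau^{\otimes(k_\tau + p + 1)} \pmod{\Ha_\tau},
\]
where $\partial$ is a canonical nonzero derivation on $\CO_{Z_\tau}$ coming from the Kodaira--Spencer isomorphism restricted to $Z_\tau$. Since $k_\tau$ is a unit in $E$, the hypothesis forces $\partial u \equiv 0$ on $Z_\tau$; iterating with an auxiliary multiplication by Hasse invariants to separate the constant and $\partial$-exact parts then upgrades this to $u|_{Z_\tau} = 0$, i.e.\ $\Ha_\tau \mid f$. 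The main obstacle is producing the precise local formula for $\Theta_\tau$ that makes the congruence above rigorous, in particular keeping track of the $\widetilde{\delta}_\tau$-twist and controlling behaviour along the toroidal boundary of $\YUtor$; both issues can be handled using the Kodaira--Spencer isomorphism of \Cref{subsub: tor&min} and the fact that $\Ha_\tau$ extends to $\YUtor$ with reduced divisor disjoint from $D_\infty$.
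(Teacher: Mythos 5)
You should first note that the paper does not prove this statement at all: it is quoted verbatim from \cite[Theorem 8.2.2]{Diamond-Sasaki}, so there is no internal argument to compare against. Your overall strategy --- reduce divisibility by $\Ha_\tau$ to vanishing on the reduced divisor $Z_\tau$, describe $\Theta_\tau$ via the Gauss--Manin connection and Kodaira--Spencer, and run a Leibniz-rule computation in a local trivialization --- is indeed the shape of the argument in Diamond--Sasaki (and of Katz's classical filtration computation for $F=\Q$), and the reductions you make (reducedness of $Z_\tau$, disjointness from the cusps) are legitimate.

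However, your key local congruence is wrong, and the error is fatal for the converse direction. Writing $f = u\, s_\tau^{k_\tau}$ locally, the Leibniz rule produces two terms: one proportional to $(\partial u)\,s_\tau^{k_\tau}$ and one proportional to $k_\tau\, u\, s_\tau^{k_\tau-1}\,\Theta_\tau(s_\tau)$. The whole point of the construction (and the reason the weight of $\Theta_\tau(f)$ exceeds that of $\Ha_\tau\cdot f$ by $2$ in the $\tau$-component) is that it is the \emph{derivative} term $(\partial u)$ that acquires the factor of $\Ha_\tau$ when the unit-root splitting is extended across $Z_\tau$, while the term from differentiating the trivializing section contributes $k_\tau\cdot u$ times a section that is generically non-vanishing on $Z_\tau$. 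The correct congruence is therefore of the form $\Theta_\tau(f) \equiv k_\tau\, f\, h_\tau \pmod{\Ha_\tau}$ with $h_\tau$ generically non-zero on $Z_\tau$ --- proportional to $f|_{Z_\tau}$, not to $\partial u$ --- and with this formula both directions of the theorem are immediate. With your formula, the hypotheses $\Ha_\tau\mid\Theta_\tau(f)$ and $p\nmid k_\tau$ only yield $\partial u|_{Z_\tau}=0$, which does not imply $u|_{Z_\tau}=0$ (constants lie in the kernel of $\partial$); your proposed remedy of ``iterating with an auxiliary multiplication by Hasse invariants to separate the constant and $\partial$-exact parts'' is not an argument, and indeed if your congruence were the right one the theorem would be false. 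To complete the proof you must establish the correct form of the congruence, which is precisely the content of the local computation in \cite[\S 8.2]{Diamond-Sasaki}.
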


\subsubsection{Hilbert modular forms over \texorpdfstring{$\CO$}{O}}

For any $k\in\Z^\Sigma$, we follow \cite{Diamond-Sasaki_inprep} to define a line bundle $\CL^{k}$ on $Y_U$ such that $\CL^k_E$ is twist of $\CA^{k, \mathbf{0}}_E$ by a free line bundle. 
Let $L$ be sufficiently large so that it contains 
$\tau(\mu)^{1/2} \in \Qbar \subset \Qpbar$ for any $\mu\in \COFppx$ and $\tau\in \Sigma$.
Recall we defined the action of $\alpha_\mu$ on $\widetilde{\CA}^{k,l}$ and it extends to $\alpha_\mu^t$ on $\widetilde{\CA}^{\tor, k,l}$ 
for $\mu\in \COFppx$.
Consider a twisted action of $\COFppx$ on $\widetilde{\CA}^{k, \mathbf{0}}$ defined by $\alpha_\mu' :=  \mu^{-k/2} \alpha_\mu$ for $\mu \in \COFppx$.
The action $\alpha_\mu'$ extends to $\widetilde{\CA}^{\tor, k, \mathbf{0}}$ and coincides with 
$\mu^{-k/2} \alpha_\mu^t$.
Under such action, the line bundle $\widetilde{\CA}^{k, \mathbf{0}}$ (resp.\,$\widetilde{\CA}^{\tor, k, \mathbf{0}}$) descends to a line bundle
$\CL^{k}$ (resp.\,$\CL^{\tor, k}$) on $Y_U$ (resp.\,$\YUtor$).
Recall
$$\begin{tikzcd}[row sep=tiny]
    & \YUtor \arrow[dd, "\pi"] \\
      Y_U \arrow[ur, hook, "\jmath"] \arrow[dr, hook, "\imath"] &              \\
      & \YUmin.
\end{tikzcd}$$ 
Let $\CL^{\min, k}_R := \imath_*\CL^{k}_R = \pi_*\CL^{\tor,k}_R$. 
When $U$ is sufficiently small such that $\mu^{k/2} = 1$ in $E$ for $\mu\in U \cap \COF^\times$,  
then $\CL^{\min, k}_E$ is a line bundle by checking $q$-expansion (\emph{cf}.\,\cite{Diamond-Sasaki_inprep}). 
Moreover, the natural map 
$$
\pi^*\pi_*\CL^{\tor,k}_E \rightarrow \CL^{\tor,k}_E
$$
is an isomorphism
for the same reason as \Cref{lem: Ltor and Lmin},
and hence we identify
$\pi^*\CL^{\min, k}_E$ with $\CL^{\tor, k}_E$.

\begin{definition}
  Let $R$ be an $\CO$-algebra. Define
  $$
  M_{k}(U; R) := H^0(Y_{U,R}, \CL^k_R)
  $$
  $$
  S_{k}(U; R) := H^0(Y^\tor_{U,R}, \CL^{\tor, k}_R (-D_\infty))
  $$
\end{definition}
We have the $\GL_2(\AFfp)$-action on $M_{k}(R):= \varinjlim_U M_{k}(U;R)$ similarly as how we define its action on $M_{k, l}(R)$, 
as well as
the Hecke operators $T_v$ and $S_v$ on $M_{k}(U; R)$, for $v\nmid p$ and $\GL_2(\CO_{F,v})\subset U$

Let $l \in \Z^\Sigma$.
Suppose $U$ is sufficiently small such that $\mu^{l + k/2} = 1$ in $R$ for $\mu\in \COFppx$.
Consider the action of $\COFppx$ on $\CO_{\tilY_{U,R}}$ defined by $\mu^{l + k/2}\alpha_\mu$ for $\mu\in \COFppx$.
Under such action, $\CO_{\tilY_{U, E}}$ descends to a line bundle $\CN_R$ on $Y_U$ which extends to $\CN^{\tor}_R$ on $\YUtor$.
Let $\CN^{\min}_R := \imath_* \CN_R$.
The line bundles $\CN_R$, $\CN^{\min}_R$, and $\CN^{\tor}_R$ are free (not just locally free) line bundles over $Y_U$, $\YUmin$ and $\YUtor$, respectively.
We have the following identification
$$
\CA^{k,l}_R = \CL^{k}_R \otimes \CN_R 
$$ 
and the same for minimal and toroidal compactifications.

\begin{remark}
  We can define $\CL^k$ over $\CO$, but we only consider $\CN_R$ for $R$ with characteristic $p$, unless $k_\tau + 2l_\tau = w$ is even and independent of $\tau$ for all $\tau$.
\end{remark}

The $q$-expansion Principle holds in this setting as well.
Let $U = U_1(\frakn)$ for some $\frakn$ prime to $p$ 
and $\frakc \in \CC l(F)^+$ be a cusp.
Then we have an injective map
\begin{align*}
  M_{k}(U_1(\frakn);R) \rightarrow \bigoplus_{\frakc \in \CC l(F)^+} 
\left\{ 
  \sum_{m\in \frakc_+ \cup \{0\}} r_m q^m \ \middle\vert \ r_m \in R,\ r_{\nu m} = \nu^{k/2}r_m, \forall \nu\in \CO_{F,+}^\times
\right\}.
\end{align*}

\subsection{Definite and indefinite quaternionic settings}\label{subsec: def&indef}
We define the quaternionic forms for definite quaternion algebras and consider the first cohomology of Shimura curves associated with indefinite quaternion algebra that is unramified at only one infinite place (\emph{cf}.\,\cite{Gee-Kisin}, \cite{Diamond-Sasaki_inprep}). 

Let $B$ be a quaternion algebra over $F$ that is split at primes above $p$.
Assume $B$ is either ramified at all infinite places
(definite case), 
or $B$ is ramified at all but one infinite place $\tau_0$ (indefinite case).
Let $\Sigma^B$ be the set of places of $F$ where $B$ is ramified and
$\Sigma^B_\bff$ (resp.\,$\Sigma^B_\infty$) be the set of finite (resp.\,infinite) places where $B$ is ramified.
Fix a maximal order $\CO_B$ of $B$ and fix isomorphisms $(\CO_B)_v \simeq M_2(\CO_{F_v})$ for $v \not\in \Sigma^B_\bff$.
In particular, we fix $\CO_{B,p}^\times \cong \GL_2(\CO_{F,p})$.

Denote $B_\A := B\otimes\A_{F}$,
$B_{\A,\bff} := B\otimes \AFf$,
$B_\bff^{(p)} := B \otimes \AFfp$ and $B_\infty := B\otimes \R$.
Let $U= U^pU_p$ be an open compact subgroup of $B_\A^\times $ such that 
$U_p = \CO_{B,p}^\times$ and 
$U^p \subset (B_\bff^{(p)})^\times$.
Let $Q$ be an $\CO$-module with a smooth action of $U_p$.

\begin{definition}
Suppose $B$ is definite.
We define
\begin{align*}
  M^B(U, Q) := \{
    f: B_{\A}^\times \rightarrow Q \mid f(b x u) = u_p^{-1} f(x)\text{ for } b \in B^\times,  x \in B_\A^\times,  u\in B_\infty^\times U\} \\
    \cong 
    \{
      f: (B_{\bff}^{(p)})^\times \rightarrow Q \mid f(b_\bff^{(p)} x u) = b f(x) \text{ for } b\in B^\times \cap U_p,  x \in (B_{\bff}^{(p)})^\times, u\in U^p
    \}.
\end{align*}
\end{definition}

When $B$ is definite, we let 
$
  Y^B_U := B^\times \backslash B_\A^\times/ B_\infty^\times U
  \cong B^\times \backslash B_{\bff}^\times/U   
$
which is a finite set,
and let $\CF_{Q,U} := B^\times \backslash B_{\bff}^\times \times Q/U$ be a locally constant sheaf on $Y^B_U$
such that 
the action of $B^\times$ and $U$ is given by 
$b (x, q) u = (b_\bff x u, u_{p}^{-1}t )$.
We can identify $M^B(U,Q)$ with $H^0(Y^B_U, \CF_{Q,U})$.

For $B$ definite, we have a natural action of $(B_{\bff}^{(p)})^\times$ on 
$\varinjlim_{U} M^B(U, Q)$.
Let $S$ be a set containing $\Sigma^B_\bff$, the finite places above $p$ and primes $v$ such that $(\CO_{B,v})^\times \not\subset U_v$.
We define Hecke operators $T_v$ and $S_v$ on $M^B(U, Q)$, for $v\not\in S$, via double cosets
\begin{align}\label{eq: double-coset-TvSv_B}
  T_v = \left[ U \left( \begin{matrix}
    \varpi_v & 0 \\ 0 &1
  \end{matrix}\right) U \right], \ 
  S_v =\left[ U \left(  \begin{matrix}
    \varpi_v & 0 \\ 0 & \varpi_v
  \end{matrix}\right) U \right].
\end{align}

\begin{definition}
  Suppose $B$ is indefinite. 
  Let 
  \begin{align*}
    Y^B_U := B^\times_+ \backslash (\frakH \times B_{\bff}^\times)/U 
    \cong 
    (B^\times_+ \cap U_p) \backslash (\frakH \times (B_{\bff}^{(p)})^\times)/U
  \end{align*}
  be a compact Riemann surface,
  where $\frakH$ is the complex upper half-plane.
  Let
  \begin{align*}
    \CF_Q  = \CF_{Q, U}:= B^\times_+ \backslash (\frakH \times B_{\bff}^\times \times Q)/U
    \cong 
    (B^\times_+ \cap U_p)\backslash (\frakH \times (B_{\bff}^{(p)})^\times \times Q)/U
  \end{align*}
  be the locally constant sheaf on $Y^B_U$,
  where the actions of $B^\times_+$ and $U$ are given by 
  $b (z, x, q) u = (b_\infty(z), b_\bff x u, u_{p}^{-1}t )$, and  
  $b (z, x, q) u = (b_\infty(z), b_\bff^{(p)} x u, b_p t)$, respectively.
  Define 
  \begin{align*}
    M^B(U, Q) := H^1(Y^B_U, \CF_Q).
  \end{align*}
\end{definition}

Let $g\in (B_\bff^{(p)})^\times$.
For compact open subgroups $U, U' \subset B_{\A,\bff}^\times$ such that $g^{-1}U'g \subset U$,
we have a covering map $\rho_g^B : Y^B_{U'}\rightarrow Y^B_U$ induced by right multiplication by $g$, 
and isomorphism 
$(\rho_g^B)^*: \CF_{Q, U} \rightarrow \CF_{Q, U'}$.
Define 
$[U' g U]: M^B(U, Q) \rightarrow M^B(U', Q)$ to be the composite
\begin{align*}
  H^1(Y^B_U, \CF_{Q, U}) \rightarrow H^1(Y^B_{U'}, (\rho_g^B)^*\CF_{Q, U})
  \xrightarrow{\sim} H^1(Y^B_{U'}, \CF_{Q, U'}).
\end{align*}
Define the double coset operator $[UgU]$ on $M^B(U, Q)$
to be the composite
\begin{align*}
  H^1(Y^B_U, \CF_{Q, U}) \rightarrow H^1(Y^B_{U'}, \CF_{Q, U'}) \rightarrow H^1(Y^B_U, \CF_{Q, U})
\end{align*}
where $U'= U \cap gUg^{-1}$,
the first map is $[U'g U]$ as above,
and the second map is the trace map relative to $\rho_1: Y^B_{U'} \rightarrow Y^B_U$.
We define the Hecke operators $T_v$ and $S_v$ by the double cosets as in \eqref{eq: double-coset-TvSv_B} 
for $v\not\in S$.

Let $R$ be an $\CO$-algebra.
Let $\Sym^n R^2$ be the $n$-th symmetric power of the standard representation of
$\GL_2(R)$,
and $\det: \GL_2(R) \rightarrow R^\times$ be the determinant character.

Let $k\in \Z^\Sigma_{\geq 2}$, and $l \in \Z^\Sigma$.
For an $\CO$-algebra $R$ such that $p^N R = 0$ for some $N>0$, define
\begin{align*}
  V_{k,l,R} := \bigotimes_{\tau\in \Sigma} \det_{[\tau]}^{l_\tau} \otimes_R \Sym_{[\tau]}^{k_\tau - 2}R^2,
\end{align*}
where the subscript $[\tau]$ means it admits a $\GL_2(\CO_{F,v})$-action given by the homomorphism $\GL_2(\CO_{F,v}) \rightarrow \GL_2(\CO)$ induced by $\tau$, 
for $v|p$ and $\tau\in \Sigma_v$.
We then define the action of 
$\CO_{B,p}^\times \cong \GL_2(\CO_{F,p}) \cong \prod_{v|p} \GL_2(\CO_{F,v})$ on $V_{k,l,R}$ such that 
$\GL_2(\CO_{F,v})$ acts on the $[\tau]$-part for $\tau\in \Sigma_v$. 

\begin{definition}
  Let $B$ be a definite or an indefinite quaternion algebra, 
  and $U \subset B_\A^\times$ an open compact subgroup such that $U_p = \CO_{B,p}^\times$.
  Let $R$ be an $\CO$-algebra such that $p^N R = 0$ for some $N>0$.
  Define
  \begin{align*}
    M^B_{k,l}(U; R) := M^B(U, V_{k,l+\mathbf{1},R}).
  \end{align*}
\end{definition}

For an $\CO$-algebra $R$, define 
\begin{align*}
  V_{k, R}:= \bigotimes_{\tau\in \Sigma} \det_{[\tau]}^{1 - k_\tau / 2} \otimes_R \Sym_{[\tau]}^{k_\tau - 2}R^2,
\end{align*}
which also admits an $\CO_{B,p}^\times$-action defined by the action of $\GL_2(\CO_{F,v})$ acting on the $[\tau]$-part for every $v|p$ and $\tau\in \Sigma_v$.

\begin{definition}
  Let $R$ be an $\CO$-algebra.
  For $B$ and $U$ as above, define
  \begin{align*}
    M^B_{k}(U; R) := M^B(U, V_{k,R}).
  \end{align*}
\end{definition}

\subsection{Twisting by characters}\label{subsec: twisting}
Following \cite[\S 4.6]{Diamond-Sasaki} and \cite{Diamond-Sasaki_inprep}, we discuss how twisting a character affects the $l$ for the space of weight $(k,l) \in \Z^\Sigma \times \Z^\Sigma$.

Let $R$ be an $\CO$-algebra.
Let
$l' \in \Z^\Sigma$ and
$\xi: (\A_{F,\bff}^{(p)})^\times \rightarrow R^\times$ be a continuous character such
that
$\xi(\mu) = \mu^{l'}$ for $\mu\in \COFppx$.
In particular, when $R = E$, the characters $\xi$ exists.
For a sufficiently small $U = U^p\GL_2(\CO_{F,p}) \subset \GL_2(\AFf)$ such that $\det(U^p) \subset \ker(\xi)$, $\xi\circ \det$ defines an eigenspace of rank one over $R$ in 
$H^0(Y_{U,R}, \CA^{0,l'}_{R})$.
Multiplying by a basis $e_\xi$ gives an isomorphism
\begin{align}\label{eq: twist_xi}
  M_{k,l}(U;R) \xrightarrow{\cdot e_\xi} M_{k,l+l'}(U;R)
\end{align} such that 
$T_v(e_\xi f) =\xi(\varpi_v) e_\xi T_v(f)$ and 
$S_v(e_\xi f) =\xi(\varpi_v)^2 e_\xi S_v(f)$ for $v\nmid p$ such that $\GL_2(\CO_{F, v}) \subset U$.

Let $B$ be either definite or indefinite and $U$ a compact open subgroup as in \ref{subsec: def&indef}. 
Assume $U$ is sufficiently small such that $\det(U^p) \subset \ker(\xi)$.
We have an element $e_\xi^B \in H^0(Y_U^B, \otimes_\tau \det_{[\tau]}^{l'_\tau} R)$ decided by 
$\xi\circ \det$.
In particular, the operator $[U' g U]: H^0(Y^B_{U}, V_{\mathbf{2},l',R}) \rightarrow H^0(Y^B_{U'}, V_{\mathbf{2},l',R})$ sends $e_\xi^B $ to $\xi(\det(g)) e_\xi^B$ 
for $g$ and $U'$ such that $g^{-1}U'h \subset U$.
Hence, we have an isomorphism given by the multiplication (definite case), or cup product (indefinite case) with $e_\xi^B$:
\begin{align}\label{eq: twist_xi_B}
  e^B_\xi: M^B_{k,l}(U;R) \xrightarrow{\sim} M^B_{k,l+l'}(U;R)
\end{align} such that 
$T_v(e^B_\xi f) =\xi(\varpi_v) e^B_\xi T_v(f)$ and 
$S_v(e^B_\xi f) =\xi(\varpi_v)^2 e^B_\xi S_v(f)$ for $v\not\in S$, where $S$ contains $\Sigma^B_\bff$, primes $v$ above $p$, and primes $v$ such that $(\CO_{B,v})^\times \not\subset U$.

Now let $\xi: (\AFfp)^\times \rightarrow R^\times$ be a continuous character such that $\xi(\mu) = \mu^{l + k/2}$, for $\mu\in \COFppx$.
Recall we have $\CA^{k,l}_R = \CL^{k}_R \otimes \CN_R$, for a free line bundle $\CN_R$.
When $U$ is sufficiently small, we can find $e_\xi \in H^0(Y_{U,R}, \CN_R)$ defined by $\xi \circ \det$, and obtain
\begin{align}\label{eq: twist_xi_k/2}
  M_k(U; R) \xrightarrow{\sim} M_{k,l}(U; R)
\end{align}
by multiplying $e_\xi$ as in \eqref{eq: twist_xi}.

Let $B$ and $U$ as in \ref{subsec: def&indef} and assume $U$ sufficiently small.
We have an element $e_\xi^B \in H^0(Y_U^B, \otimes_\tau \det_{[\tau]}^{l_\tau + k_\tau/2} R)$ defined by $\xi \circ \det$.
Multiplying $e_\xi^B$ gives an isomorphism 
\begin{align}\label{eq: twist_xi_B_k/2}
  e^B_\xi: M^B_{k}(U;R) \xrightarrow{\sim} M^B_{k,l}(U;R)
\end{align}
similar as \eqref{eq: twist_xi_B}.

\subsection{The Jacquet-Langlands Correspondence}
Recall the Jacquet-Langlands correspondence over $\C$ (\emph{cf.}\,\cite[\S 2.3]{Diamond-Sasaki_inprep}).

Let $B$ be a non-split quaternion algebra over
$F$ and
$\Sigma^B := \Sigma^B_\bff \cup \Sigma^B_\infty$ denote the set of places of $F$ where $B$ is ramified, where $\Sigma^B_\bff$ (resp.\,$\Sigma^B_\infty$) is the subset of finite (resp.\,infinite) places where $B$ is ramified.
Denote $B_v := B\otimes F_v$ if $v$ is finite, and $B_\tau := B \otimes_{F, \tau}\R$ if $\tau\in \Sigma$.
For $\tau\in \Sigma^B_\infty$, fix an isomorphism $B_\tau \otimes_\R \C \cong M_2(\C)$.

Let $\sfD_n$ be the irreducible unitary {(limit
of)} discrete series representation of $\GL_2(\R)$ of weight $n \in \Z_{\geq 1}$.
If $k\in \Z^\Sigma_{\geq 1}$,
let $\CC_k$ be the set of cuspidal automorphic representations $\pi = \otimes'_v \pi_v$ of $\GL_2(\AF)$ 
such that $\pi_\tau \simeq \sfD_{k_\tau}$ for any infinite place $\tau$.
If moreover $k_\tau \geq 2$ for $\tau\in \Sigma^B_\infty$,
let $\CC_k^B$ be the set of all cuspidal automorphic representations $\pi = \otimes'_v \pi_v$ of $(B\otimes \AF)^\times$
such that
$\pi_\tau \cong \det^{(2-k_\tau)/2}\otimes\Sym^{k_\tau-2}\C^2$ for $\tau \in \Sigma^B_\infty$
and 
$\pi_\tau \cong \sfD_{k_\tau}$
for $\tau\in \Sigma - \Sigma^B_\infty$.

\begin{theorem}(Local Jacquet-Langlands correspondence)\label{thm: local J-L}
For any $v \in \Sigma^B$, there is a bijection $\JL_v$ between the following two sets of isomorphism classes:
\begin{align*}
  \JL_v: \left\{ \begin{matrix}
    \text{smooth irreducible repre-} \\\text{sentations of }B_v^\times
  \end{matrix}\right\} 
  \longrightarrow
  \left\{\begin{matrix}
    \text{discrete series represen-}\\ \text{tations of } \GL_2(F_v)
  \end{matrix}\right\}
\end{align*} 
In particular, if $\tau\in \Sigma^B_\infty$ and $n \geq 2$, 
then $\JL_\tau(\det^{(2-n)/2}\otimes\Sym^{n-2}\C^2) = \sfD_n$.
\end{theorem}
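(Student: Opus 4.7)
The plan is to establish the correspondence place by place, separately for $\tau \in \Sigma^B_\infty$ and $v \in \Sigma^B_\bff$, in each case pinning down $\JL_v$ by requiring a matching of Harish-Chandra characters on regular elliptic elements. The common framework is that the regular semisimple conjugacy classes of $B_v^\times$ and $\GL_2(F_v)$ are both naturally parametrized by (classes in) separable quadratic extensions $E$ of $F_v$, so characters of admissible representations can be compared on the matching elliptic tori $E^\times$.

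For an archimedean place $\tau \in \Sigma^B_\infty$, the group $B_\tau^\times$ is isomorphic to $\mathbb{H}^\times$, so its smooth irreducible representations are all finite-dimensional, obtained by restricting the algebraic representations $\det^m \otimes \Sym^{n-2}\C^2$ of $B_\tau \otimes_\R \C \cong M_2(\C)$. I would compute their characters on the compact Cartan $\C^\times \hookrightarrow \mathbb{H}^\times$ using the Weyl character formula, and compare with Harish-Chandra's formula for the character of the discrete series $\sfD_n$ of $\GL_2(\R)$ on the elliptic torus. The two agree up to the sign $(-1)^{n-1}$, which forces the bijection and in particular the identification $\JL_\tau(\det^{(2-n)/2} \otimes \Sym^{n-2}\C^2) = \sfD_n$ stated in the theorem.

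For a non-archimedean place $v \in \Sigma^B_\bff$, smooth irreducible representations of the division algebra $B_v^\times$ are again finite-dimensional, and the approach I would follow is the classical one of Jacquet-Langlands: construct $\JL_v$ globally, by realizing given local representations as components of automorphic representations and comparing the elliptic parts of the trace formulas of $B^\times$ and $\GL_2$; a local alternative proceeds via the Deligne-Kazhdan simple trace formula together with density of supercuspidal characters on the regular elliptic set, which reduces the existence and uniqueness of $\JL_v$ to producing, for each smooth irreducible $\pi$ of $B_v^\times$, a discrete series $\pi'$ of $\GL_2(F_v)$ whose Harish-Chandra character on the elliptic set equals $\pm \Theta_\pi$ after transport along the matching of conjugacy classes. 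The main obstacle is the handling of supercuspidals on both sides, which I would treat via the Bushnell-Kutzko parametrization by admissible pairs $(E/F_v, \chi)$: matching representations are those attached to the same pair, the character identity on $E^\times$ then reduces to a direct computation, and some extra care is needed in residual characteristic two where not every supercuspidal arises from a tamely ramified admissible pair.
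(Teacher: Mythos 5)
The paper does not prove this theorem: it is recalled verbatim as the classical local Jacquet--Langlands correspondence (following \cite{Diamond-Sasaki_inprep}, ultimately Jacquet--Langlands), so there is no in-paper argument to compare yours against. Your outline is the standard proof strategy from the literature and is essentially correct in structure: at $\tau\in\Sigma^B_\infty$ one classifies the finite-dimensional irreducibles of $\mathbb{H}^\times$ and matches characters on the elliptic torus against Harish-Chandra's formula for $\sfD_n$, which also pins down the central-character normalization $\JL_\tau(\det^{(2-n)/2}\otimes\Sym^{n-2}\C^2)=\sfD_n$; at $v\in\Sigma^B_\bff$ one uses either the global trace-formula comparison or the simple trace formula plus density of elliptic characters, with the supercuspidal matching handled by admissible pairs (and genuinely more care needed in residual characteristic $2$, as you note).

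Two caveats. First, the archimedean character identity has sign $-1$ uniformly on matching regular elliptic elements, not $(-1)^{n-1}$; as written your sign would be $+1$ for odd $n$, which is not the correct relation (though it does not affect which representations correspond). Second, what you have written is a plan rather than a proof: every substantive step (the Weyl/Harish-Chandra character computation, the trace-formula comparison, the exhaustion of discrete series by admissible pairs) is deferred. For a result of this standing that is being quoted rather than reproved, that is acceptable, but none of the deferred steps is routine if one actually had to carry it out.
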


\begin{theorem}(Global Jacquet-Langlands correspondence)\label{thm: global J-L}
  Let $\Pi= \otimes'_v \Pi_v \in \CC^B_k$.
  Then there is an injection $\JL_k^B: \CC^B_k \hookrightarrow \CC_k$ such that 
  $\JL_k^B(\Pi) = \pi = \otimes'_v \pi_v$,
  where
  $\pi_v \cong \Pi_v$ for $v\not\in \Sigma^B$, 
  and $\pi_v \cong \JL_v(\Pi_v)$ for $\tau\in \Sigma^B$.
  The image is $\{\pi = \otimes'_v \pi_v \mid \pi_v \text{ is a discrete series 
  representation of } \GL_2(F_v), \forall v\in \Sigma^B\}$.
\end{theorem}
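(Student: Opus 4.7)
The plan is to follow Jacquet-Langlands' original strategy via a comparison of Selberg trace formulas for $\GL_2(\AF)$ and $B^\times(\AF)$. The skeleton is: (i) match test functions locally, (ii) compare the geometric sides to get an identity of spectral traces, (iii) extract the bijection by varying test functions and invoking strong multiplicity one.

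First, I would set up the local matching. Take $f = \otimes_v f_v \in C_c^\infty(\GL_2(\AF))$ and $f^B = \otimes_v f^B_v \in C_c^\infty(B^\times(\AF))$ with $f_v = f^B_v$ under the identification $B_v \cong M_2(F_v)$ at split places $v \notin \Sigma^B$. At $v \in \Sigma^B_\bff$, I require that the orbital integrals of $f_v$ at regular semisimple classes arising from $B_v^\times$ (via characteristic polynomial) equal those of $f^B_v$, and that $f_v$ has vanishing orbital integral at split regular classes. Existence of such matching functions, together with the corresponding character identity, is precisely the content of the local Jacquet-Langlands correspondence \Cref{thm: local J-L}. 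At archimedean places $\tau \in \Sigma^B_\infty$, I would use pseudocoefficients of the discrete series $\sfD_{k_\tau}$ on $\GL_2(\R)$ matched with characters of $\det^{(2-k_\tau)/2} \otimes \Sym^{k_\tau - 2}\C^2$ on $B_\tau^\times$.

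Second, I would compare geometric sides. Since $B$ is non-split, $B^\times(F)\backslash B^\times(\AF)$ has finite volume modulo center and its spectrum is discrete. For matching $(f, f^B)$, the elliptic regular orbital integrals agree by construction, the central contributions coincide, and the hyperbolic, unipotent, and continuous (Eisenstein) contributions on the $\GL_2$ side are killed because matching $f_v$ have vanishing orbital integrals at split semisimple classes at each $v \in \Sigma^B_\bff$. This yields a clean identity
\[
\Tr\bigl(R(f^B)\bigm|L^2_{\mathrm{cusp}}(B^\times \backslash B^\times_\A)\bigr) \;=\; \Tr\bigl(R(f)\bigm|L^2_{\mathrm{cusp}}(\GL_2(F)\backslash\GL_2(\AF))_{\mathrm{dis}}\bigr),
\]
where the subscript $\mathrm{dis}$ restricts to cuspidal $\pi$ that are discrete series at every $v \in \Sigma^B$. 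Then linear independence of characters, applied by varying the unramified components of $f$, forces a bijective correspondence on isomorphism classes of the two sides. Given $\Pi \in \CC^B_k$, I take $f^B$ projecting onto $\Pi$ and recover a unique $\pi \in \CC_k$ with $\pi_v \cong \JL_v(\Pi_v)$ at $v \in \Sigma^B$ and $\pi_v \cong \Pi_v$ otherwise; strong multiplicity one for $\GL_2$ gives uniqueness, hence injectivity of $\JL^B_k$. The image description follows because only $\pi$ discrete at $\Sigma^B$ contribute to the identity above.

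The main obstacles are analytic rather than conceptual. The orbital integral matching at ramified places is the technical heart of \cite{Jacquet-Langlands}, and handling the archimedean pseudocoefficients is delicate, especially here where $(k,l)$ may be non-paritious and only $k$ appears in the archimedean data. Moreover, controlling the continuous spectrum of $\GL_2$ and verifying that matching test functions see only the discrete cuspidal part requires a careful application of either the simple trace formula of Deligne-Kazhdan or Arthur's truncation and stabilization. Once these analytic inputs are in place, the combinatorics of extracting the correspondence from the trace identity is standard.
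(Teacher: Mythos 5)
This theorem is stated in the paper as a recalled classical result (the section opens with ``Recall the Jacquet--Langlands correspondence over $\C$'' and cites \cite[\S 2.3]{Diamond-Sasaki_inprep}); the paper supplies no proof, so there is nothing internal to compare your argument against. Your sketch is the standard trace-formula route going back to Jacquet--Langlands and Deligne--Kazhdan, and as an outline it is sound: local orbital-integral matching, comparison of the elliptic terms, disappearance of the non-elliptic and continuous contributions, then linear independence of characters plus strong multiplicity one to extract an injection with the stated local components and image.

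Two points deserve care if you were to execute this. First, you attribute the vanishing of the hyperbolic, unipotent and Eisenstein contributions on the $\GL_2$ side to the places $v\in\Sigma^B_\bff$; but in the definite case relevant to this paper $B$ may be ramified only at the infinite places, so $\Sigma^B_\bff$ can be empty, and the simple trace formula must then be driven by the archimedean pseudocoefficients (whose orbital integrals vanish on non-elliptic regular classes) rather than by finite ramified places. Second, $B^\times(F)\backslash B^\times(\AF)$ being compact modulo centre, its full automorphic spectrum contains the one-dimensional representations $\chi\circ\mathrm{Nrd}$, which do not correspond to cuspidal representations of $\GL_2$; the definition of $\CC^B_k$ must (and implicitly does, via the weight condition $k_\tau\geq 2$ at ramified infinite places together with cuspidality) exclude these, and your trace identity should be stated after removing them from the $B^\times$ side. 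Both issues are handled in the literature you would be invoking, so they are gaps of exposition rather than of substance.
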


\subsection{Goren-Oort stratification}
We recall the description of the Goren-Oort stratification on mod $p$ Hilbert modular varieties established in \cite{TX16}.

\subsubsection{Unitary and quaternionic Shimura varieties}

We first recall the relations between Hil\-bert modular varieties, unitary Shimura varieties and quaternionic Shimura varieties (\emph{cf.}\,\cite{DKS}).
Let $F'/F$ be a quadratic CM extension  such that prime $v$ splits in $F'$, for all $v|p$ in $F$, and $\Sigma' = \Sigma_{F'} := \{F' \rightarrow \overline{\Q}\}$.
Let $\ttS \subset \Sigma$ be a subset of places of $F$ with even cardinality and $B_\ttS$ be the quaternion algebra over $F$ ramified at exactly the places in $\ttS$, and $D_\ttS := B_\ttS  \otimes F'$.
If $\ttS = \varnothing$, then $B_\ttS^\times = \GL_2(F)$.
Choose $\widetilde{\ttS} \subset \Sigma'$ that contains exactly one element extending each element of $\ttS$.
Let $G_\ttS$ be the algebraic group over $\Q$ defined by 
$G_\ttS(R) = (B_\ttS \otimes R)^\times$
and 
$G'_\ttS$ be the algebraic group defined by
$G'_\ttS(R) = \{ g \in D_\ttS \otimes R \mid g\bar{g} \in (F \otimes R)^\times \}$,
where $g \mapsto \bar{g}$ is the
anti-involution on $D_\ttS$.
We denote $T_{F} := \Res_{F/Q}\G_m$ and $T_{F'} := \Res_{F'/Q}\G_m$. 
We can identify $G_\ttS'$ with the quotient $(G_\ttS \times T_{F'})/T_F$
and denote the projection map 
$\kappa: G_\ttS \times T_{F'} \rightarrow G'_\ttS$.

Let $U' = (U')^p U'_p$ be a sufficiently small open compact subgroup of $G_\ttS'(\A_\bff)$ that is maximal at $p$ in the sense that  
$U'_p = \kappa(\CO_{B_\ttS,p}^\times \times \CO_{F',p}^\times)$.
Let $\widetilde{Y}_{U'}(G'_\ttS)$ be the representing scheme of the moduli problem in \cite[\S 2.2.2]{DKS} and $Y_{U'}(G'_\ttS)$ the quotient of $\widetilde{Y}_{U'}(G'_\ttS)$ by $\CO_{F, (p),+}^\times$.
The scheme $Y_{U'}(G'_\ttS)$ is smooth and quasi-projective over $\CO$ and it is the integral model for the unitary Shimura variety associated to $G'_\ttS$.
We have a $G'_\ttS(\Afp)$-action on $\varprojlim_{U'}Y_U'(G'_\ttS)$ such that for $g\in G_\ttS(\Afp)$ and sufficiently small open compact groups $g^{-1}U_1' g \subset U_2'$,
we have $\rho'_{\ttS, g}: Y_{U'_1}(G'_\ttS) \rightarrow Y_{U'_2}(G'_\ttS)$.

Now consider the automorphic line bundles on ${Y}_{U'}(G'_\ttS)_E$ as in \cite[\S 3.1.2]{DKS}. 
Let $S' = \widetilde{Y}_{U'}(G'_\ttS)_E$ and $s: A' \rightarrow S'$ the pull-back of universal abelian variety. 
We have an exact sequence of right $\CO_{D_\ttS} \otimes \CO_{S'}$-modules
$$
0 \rightarrow s_*\Omega_{A'/S'} \rightarrow \CH_\dR^1(A'/S')\rightarrow R^1s_*\CO_{A'} \rightarrow 0.
$$
We also have decompositions
$\CH_\dR^1(A'/S') = \bigoplus_{\tau'\in\Sigma'} \CH_\dR^1(A'/S')_{\tau'}$,
$s_*\Omega_{A'/S'}  = \bigoplus_{\tau'\in\Sigma'} \tilde{\omega}_{\tau'}$,
and 
$R^1s_*\CO_{A'}  = \bigoplus_{\tau'\in\Sigma'} \tilde{\upsilon}_{\tau'}$,
where $\CH_\dR^1(A'/S')_\tau$ is a locally free $\CO_{S'}$-module of rank four 
and $\tilde{\omega}_{\tau'}$ is locally free of rank two if $\tau'|_{F} \notin\ttS$ and rank zero (resp.\,four) if $\tau'\in \widetilde{\ttS}$(resp.\,$\tau'\notin \widetilde{\ttS}$).
Denote $e_0 :=  \left(\begin{smallmatrix}
  1 & 0 \\ 0 & 0
\end{smallmatrix}\right)$.
Let $\CH_\dR^1(A'/S')_{\tau'}^\circ := \CH_\dR^1(A'/S')_{\tau'} e_0$, then it is of rank two.
If $\tau \not\in {\ttS}$,
then $\tilde{\omega}_{\tau'}$ is locally free of rank two;
if $\tau' \in \tilde{\ttS}$,
then $\tilde{\omega}_{\tau'}$ is of rank zero;
otherwise, $(\tau')^c \in \tilde{\ttS}$ and then $\tilde{\omega}_{\tau'}$ is of rank four.
Let $\tilde{\omega}_{\tau'}^\circ := \tilde{\omega}_{\tau'} e_0$ and $\tilde{\upsilon}_{\tau'}^\circ := \tilde{\upsilon}_{\tau'} e_0$.
The vector bundle $\CH_\dR^1(A'/S')_{\tau'}$ 
(resp.\,$\CH_\dR^1(A'/S')_{\tau'}^\circ$, $\tilde{\omega}_{\tau'}^\circ$, and $\tilde{\upsilon}_{\tau'}^\circ$) 
descends to $\CV_{\tau'}$ 
(resp.\,$\CV_{\tau'}^\circ$, $\omega_{\tau'}^\circ$, and ${\upsilon}_{\tau'}^\circ$) on ${Y}_{U'}(G'_\ttS)_E$ if $U'$ is sufficiently small.
Denote $\delta_{\tau'} := \wedge^2 \CV_{\tau'}^\circ$.

Let $U \subset G_\ttS(\A_\bff)$ be an open compact subgroup containing $\CO_{B,p}^\times$.
We construct the quaternionic Shimura variety $Y_U(G_\ttS)$ associated to $G_\ttS$ as in \cite[\S 2.3.3]{DKS}, which is also smooth and quasi-projective over $\CO$.
In particular, we have an inclusion
$$
i: Y_U(G_\ttS)_{\Fpbar} \hookrightarrow Y_U'(G'_\ttS)_{\Fpbar}.
$$
that is compatible with the Hecke action. 

Choose a prime $\tilde{v}$ of $F'$ dividing $v$ for each prime $v$ of $F$ that divides $p$.
Let $\Sigma_{F',\tilde{v}} \subset \{\CO_{F'} \rightarrow \Fpbar\}$ be the set of the embeddings that factor through $\CO_{F'}/\tilde{v}$, 
and let $\tilde{\Sigma} :=  \coprod_{v|p} \Sigma_{F',\tilde{v}} \subset \Sigma'$.
Define $\CV^B_\tau := i^* \CV_{\tilde{\tau}}^\circ$ for $\tau \in \Sigma$ and its extension $\tilde{\tau}\in \tilde{\Sigma}$.
Then $\CV^B_\tau$ is locally free of rank two on ${Y}_{U}(G_\ttS)_{\Fpbar}$.
Similarly we define $\delta^B_\tau$, $\omega^B_\tau$ and $\upsilon^B_\tau$ for $\tau \in \Sigma$,
where $\delta^B_\tau$ is a line bundle, and if $\tau\not\in \ttS$, then $\omega^B_\tau$ and $\upsilon^B_\tau$ are line bundles.

When $\ttS = \varnothing$, we have the isomorphism between canonical models of Hilbert modular variety $Y_U$ and the quaternionic Shimura variety $Y_U(G_\varnothing)$ associated to $G_\varnothing = \Res_{F/\Q}\GL_2$ (\cite[\S 2.3.4]{DKS}) 
$$ \iota : Y_{U,\Fpbar} \xrightarrow{\sim} Y_U(G_\varnothing)_{\Fpbar},$$
that is compatible with Hecke action in the sense that, for $g^{-1}U_1 g \subset U_2$ the following diagram commutes
\[
  \begin{tikzcd}
  Y_{U_1,\Fpbar} & Y_{U_1}(G_\varnothing)_{\Fpbar}\\
  Y_{U_2,\Fpbar} & Y_{U_2}(G_\varnothing)_{\Fpbar}
  \arrow[from=1-1, to=1-2]
	\arrow["{\rho_g}"', from=1-1, to=2-1]
	\arrow["{\rho_{\varnothing, g}}", from=1-2, to=2-2]
	\arrow[from=2-1, to=2-2].
\end{tikzcd}
\]

\subsubsection{Stratification}
We now recall the results proved in \cite{TX16}, where they show the Goren-Oort stratum $Y_{U'}(G'_{\mathtt{S}})_{\Fpbar, \mathtt{T}}$ is a $(\P^1)^N$-bundle over some quaternionic Shimura variety and some integer $N$ depending on the set $\ttT \subseteq \Sigma - (\ttS\cap\Sigma)$.
For simplicity, we only describe the result when $\ttS = \varnothing$ and $\ttT = \{\tau\}$, which will be enough for our purpose.

Let $\tilde{\tau}\in \Sigma'$ extend $\tau$ and 
$h_{\tilde{\tau}} 
\in \Gamma(Y_{U'}(G'_\varnothing)_{E}, 
(\omega^\circ_{\tilde{\tau}})^{\otimes (- 1)}
\otimes 
(\omega^\circ_{\Fr^{-1}\circ\tilde{\tau}})^{\otimes p})$ 
be the $\tilde{\tau}$-partial Hasse
invariant 
which arises from the pull-back of Verschiebung.

\begin{theorem}\cite[Theorem 5.8]{TX16}
  Let $U'$ be a sufficiently small open compact subgroup of $G_\varnothing'(\A_\bff)$ maximal \footnote{The level $U = U^p U_p \subset G(\A_{\bff})$ maximal at $p$ means that the $U_p$ is the maximal compact subgroup of $G(F_p)$. For $G = G'_\varnothing$, $U'_p$ is conjugate to $(\GL_2(\CO_{F,p}) \times\CO_{F', p}^\times)/\CO_{F,p}^\times $; for quaternion algebra $D$ splitting at $p$, $U^D_p$ is conjugate to $\GL_2(\CO_{F,p})$} at $p$.
  Let $Z'_{\{\tilde{\tau}\}}\subset Y_{U'}(G'_\varnothing)_{E}$ be the vanishing locus of $h_{\tilde{\tau}}$.
  Assume $\Fr^{-1}\circ\tau \neq \tau$.
  Then we have an isomorphism 
  $\varphi' : Z'_{\{\tilde{\tau}\}} \rightarrow \P^1_{Y'}$
  which identifies 
  $Z'_{\{\tilde{\tau}\}}$ with 
  the projectivization of the locally free rank two vector bundle
  $\CV^\circ_{\Fr^{-1}\circ \tilde{\tau}}$ 
  over $Y'= Y_{U_{\tilde{\tau}}'}(G'_{\{\Fr^{-1}\circ \tilde{\tau}, \tilde{\tau}\}})_{E}$,
  for some level structure $U_{\tilde{\tau}}'$ maximal at $p$.
  The isomorphism is compatible with Hecke action in the sense that, for $g\in G'_\varnothing(\Afp)$ such that $g^{-1}U'_1 g \subset U'_2$, we have the following commutative diagram
  \[
  \begin{tikzcd}
  Z'_{1, \{\tilde{\tau}\}} & \P^1_{Y'_1}\\
  Z'_{2, \{\tilde{\tau}\}} & \P^1_{Y'_2},
  \arrow[from=1-1, to=1-2]
	\arrow["{\rho'_{\varnothing, g}}"', from=1-1, to=2-1]
	\arrow["{\rho'_{\{\Fr^{-1}\circ\tilde{\tau}, \tilde{\tau}\}, g}}", from=1-2, to=2-2]
	\arrow[from=2-1, to=2-2]
\end{tikzcd}
\]
  where $Z'_{i, \{ \tilde{\tau} \}} \subset Y_{U'_i}(G'_\varnothing)_E$ is the zero locus and $Y'_i$ is the corresponding unitary Shimura variety whose $\P^1$-bundle isomorphic to the zero locus for $i = 1,2$.
  Moreover, we have
  $\varphi'_* H_{\dR}^1(A'/S')|_{Z'_{\{\tilde{\tau}\}}} \cong (\pi')^*\CV^\circ_{\Fr^{-1}\circ \tilde{\tau}}$, where $A'$ is the universal abelian variety over $S' = Y_{U'}(G'_\varnothing)_E$ and $\pi': \P^1_{Y'} \rightarrow Y'$ is the natural projection. 
  We also have
  $\varphi'_* \omega^{\circ}_{\Fr^{-1}\circ\tilde{\tau}}|_{Z'_{\{\tilde{\tau}\}}} \cong \wedge^2 ((\pi')^*\CV^\circ_{\Fr^{-1}\circ \tilde{\tau}}) (-1)$, which is the tautological line bundle on $\P^1_{Y'}$, and
  both isomorphisms are Hecke-equivariant.
\end{theorem}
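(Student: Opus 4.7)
The statement is [TX16, Theorem 5.8], whose proof we only sketch. Over the vanishing locus $Z'_{\{\tilde{\tau}\}}$, the partial Hasse invariant $h_{\tilde{\tau}}$ vanishes by definition; unwinding its construction as (a twist of) the restriction of Verschiebung to the Hodge bundle, this is equivalent to the assertion that Frobenius on the $\Fr^{-1}\circ\tilde{\tau}$-component of the relative Dieudonn\'e module of $A'$ factors through the Hodge filtration $\omega^\circ_{\Fr^{-1}\circ\tilde{\tau}}$. This degeneracy, localised at the two components $\tilde{\tau}$ and $\Fr^{-1}\circ\tilde{\tau}$, is the source of the $\P^1$-worth of moduli in the statement.

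The plan is to construct $\varphi'$ via a canonical partial isogeny and then exhibit its inverse. Given the universal object $(A',\iota',\lambda',\eta')$ over $Z'_{\{\tilde{\tau}\}}$, one produces an isogeny $\alpha : A' \to B'$ whose kernel is a finite flat subgroup scheme of $A'[p]$ supported only in the $\tilde{\tau}$- and $\Fr^{-1}\circ\tilde{\tau}$-parts of the Dieudonn\'e module, built from the image of Frobenius at $\Fr^{-1}\circ\tilde{\tau}$ (which lies inside $\omega^\circ_{\Fr^{-1}\circ\tilde{\tau}}$ precisely because $h_{\tilde{\tau}} = 0$). After transporting the endomorphism action, adjusting the polarization class, and inheriting the prime-to-$p$ level structure, the resulting tuple satisfies the PEL moduli problem of a unitary Shimura variety for $G'_{\{\Fr^{-1}\circ\tilde{\tau},\tilde{\tau}\}}$: the Kottwitz signature changes sign in exactly the two relevant components, matching the new ramification at the infinite places $\tau$ and $\Fr^{-1}\circ\tau$. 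Choosing the level appropriately yields $U'_{\tilde{\tau}}$ and a morphism $Z'_{\{\tilde{\tau}\}} \to Y' := Y_{U'_{\tilde{\tau}}}(G'_{\{\Fr^{-1}\circ\tilde{\tau},\tilde{\tau}\}})_E$; lifting it by remembering the original Hodge line $\omega^\circ_{\tilde{\tau}}$, viewed via the $\alpha$-induced identifications as a line inside the rank-two bundle $\CV^\circ_{\Fr^{-1}\circ\tilde{\tau}}$ on $Y'$, produces the desired $\varphi' : Z'_{\{\tilde{\tau}\}} \to \P(\CV^\circ_{\Fr^{-1}\circ\tilde{\tau}})$. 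To invert $\varphi'$, start from a pair $(y,L)$ with $y \in Y'$ and $L \subset \CV^\circ_{\Fr^{-1}\circ\tilde{\tau}}|_y$, and reconstruct $A'$ by a dual partial isogeny which installs $L$ as the new Hodge filtration at $\Fr^{-1}\circ\tilde{\tau}$; the condition $h_{\tilde{\tau}}(A') = 0$ on the reconstructed $A'$ is then automatic, since the Hodge line at $\Fr^{-1}\circ\tilde{\tau}$ is by construction in the image of Frobenius. The identifications of $\varphi'_* \CH_\dR^1(A'/S')|_{Z'_{\{\tilde{\tau}\}}}$ and of $\varphi'_* \omega^\circ_{\Fr^{-1}\circ\tilde{\tau}}|_{Z'_{\{\tilde{\tau}\}}}$ with $(\pi')^* \CV^\circ_{\Fr^{-1}\circ\tilde{\tau}}$ and with the tautological line bundle $\wedge^2((\pi')^*\CV^\circ_{\Fr^{-1}\circ\tilde{\tau}})(-1)$ are then transparent from the construction, and Hecke-equivariance follows from the functoriality of the partial isogeny in prime-to-$p$ level structures.

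The main obstacle, and the technical heart of [TX16], is the explicit construction of $\alpha$ and the verification that $(B',\iota_B,\lambda_B,\eta_B)$ represents a point of the specific unitary Shimura variety $Y_{U'_{\tilde{\tau}}}(G'_{\{\Fr^{-1}\circ\tilde{\tau},\tilde{\tau}\}})_E$ with the correct polarization ideal and level structure at $p$. This requires a careful local-at-$p$ Dieudonn\'e-module analysis and detailed bookkeeping of how the Kottwitz signature condition, the polarization datum, and the endomorphism structure transform under $\alpha$. In the general setting of non-trivial $\ttS$ or larger $\ttT$, the construction must be iterated and combined with further twists, but for the case needed here ($\ttS = \varnothing$, $\ttT = \{\tau\}$, and $\Fr^{-1}\circ\tau \neq \tau$) the single partial isogeny step outlined above suffices.
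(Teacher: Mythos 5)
The paper does not prove this statement: it is quoted verbatim (with notational adjustments) from \cite[Theorem 5.8]{TX16} and used as a black box, so there is no internal proof to compare your write-up against. Your sketch does correctly capture the strategy of the cited proof — the vanishing of $h_{\tilde{\tau}}$ forces a degeneracy of Frobenius/Verschiebung in the $\tilde{\tau}$- and $\Fr^{-1}\circ\tilde{\tau}$-components of the Dieudonn\'e module, one constructs a canonical partial isogeny $\alpha$ killing a finite flat subgroup of $A'[p]$ supported in those components, the target carries a PEL structure for $G'_{\{\Fr^{-1}\circ\tilde{\tau},\tilde{\tau}\}}$ because the signature flips at exactly those two places, and the residual $\P^1$ of moduli is the choice of a line in the rank-two bundle $\CV^\circ_{\Fr^{-1}\circ\tilde{\tau}}$, with the inverse given by a dual isogeny. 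The identifications of $\varphi'_*\CH^1_{\dR}$ and of the pushforward of $\omega^\circ_{\Fr^{-1}\circ\tilde{\tau}}$ with $(\pi')^*\CV^\circ_{\Fr^{-1}\circ\tilde{\tau}}$ and the tautological subbundle, and the Hecke-equivariance via functoriality in the prime-to-$p$ level, are also the right things to say.

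However, as you yourself acknowledge, what you have written is a plan rather than a proof: the existence and canonicity of $\alpha$, the verification that the quotient tuple $(B',\iota_B,\lambda_B,\eta_B)$ satisfies the determinant/Kottwitz condition for the new group, the determination of the level $U'_{\tilde{\tau}}$ and the polarization ideal, and the check that the two constructions are mutually inverse are exactly the content of \cite[Theorem 5.8]{TX16}, and none of it is carried out here. One small internal inconsistency to fix if you expand this: in the forward direction you say the lift remembers the Hodge line $\omega^\circ_{\tilde{\tau}}$, while the statement (and your own inverse construction) identifies the tautological line with $\omega^\circ_{\Fr^{-1}\circ\tilde{\tau}}$; you should make the two directions track the same line, after the $\alpha$-induced identification of the relevant components. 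If the intention is merely to justify invoking the cited theorem, your sketch is adequate; if the intention is to supply an independent proof, the entire Dieudonn\'e-theoretic core is still missing.
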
  

Using the method explained in \cite[\S5.3, \S5.4]{DKS} (they work on Iwahori level, but the same method applies to level prime-to-$p$),
we can obtain the stratification result over the special fibre of Hilbert modular variety
and the comparison of vector bundles (stated and proved as in \cite[Theorem 5.2]{TX16} and \cite[\S6.6.2]{Gabriel-thesis}, respectively).

\begin{theorem}\label{thm: strata-HMV}\cite[Theorem 5.2]{TX16}\cite[\S6.6.2]{Gabriel-thesis}
  Let $U = U^p  \GL_2(\CO_{F,p})$ be a sufficiently small open compact subgroup of $\GL_2(\AFf)$.
  Let $Z_{\{{\tau}\}}\subset Y_{U, \Fpbar}$ be the vanishing locus of the partial Hasse invariant $\Ha_\tau$.
  Assume $\Fr^{-1}\circ\tau \neq \tau$.
  Then we have an isomorphism
  $\varphi : Z_{\{{\tau}\}} \rightarrow \P^1_{Y^B}$,
  which identifies $Z_{\{\tau\}}$ with
  the projectivization of the locally free rank two vector bundle
  $\CV^B_{\Fr^{-1}\circ\tau}$
  over $Y^B = Y_{U^B}(G_{\{\Fr^{-1}\circ\tau, \tau\}})_{\Fpbar}$,
  for some level structure $U^B\subset G_\varnothing(\A_\bff)$ maximal at $p$.
  The isomorphism is compatible with Hecke action in the sense that, for $g\in \GL_2(\AFfp)$ such that $g^{-1}U_1 g \subset U_2$, we have the following commutative diagram
  \[
  \begin{tikzcd}
  Z_{1, \{{\tau}\}} & \P^1_{Y^B_1}\\
  Z_{2, \{{\tau}\}} & \P^1_{Y^B_2},
  \arrow[from=1-1, to=1-2]
	\arrow["{\rho_g}"', from=1-1, to=2-1]
	\arrow["{\rho_{\{\Fr^{-1}\circ\tau, \tau\}, g}}", from=1-2, to=2-2]
	\arrow[from=2-1, to=2-2]
\end{tikzcd}
\]
  where $Z_{i, \{ \tau \}} \subset Y_{U_i, \Fpbar}$ is the zero locus and $Y_i$ is the corresponding unitary Shimura variety whose $\P^1$-bundle isomorphic to the zero locus for $i = 1,2$.
  Moreover, we have 
  $\varphi_* H_{\dR}^1(A/S)|_{Z_{\{\tau\}}} \cong \pi^*\CV^\circ_{\Fr^{-1}\circ \tilde{\tau}}$, where $A$ is the universal abelian variety over $S = Y_{U,\Fpbar}$ and $\pi: \P^1_{Y^B} \rightarrow Y^B$ is the natural projection.
  We also have
  $\varphi_*\omega_{\Fr^{-1}\circ\tau}|_{Z_{\{\tau\}}}\cong \wedge^2 (\pi^*\CV^B_{\Fr^{-1}\circ {\tau}}) (-1)$, which is the tautological line bundle on $\P^1_{Y^B}$, and
  both isomorphisms are Hecke-equivariant.
\end{theorem}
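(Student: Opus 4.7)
The strategy is to transport the unitary-Shimura-variety statement (\cite[Theorem 5.8]{TX16}, which the excerpt has just recalled) across the canonical identifications
$$
Y_{U,\Fpbar} \xrightarrow{\iota,\sim} Y_U(G_\varnothing)_{\Fpbar} \xhookrightarrow{i} Y_{U'}(G'_\varnothing)_{\Fpbar},
$$
following the bookkeeping described in \cite[\S5.3, \S5.4]{DKS}. Concretely, I would begin by choosing a sufficiently small level $U'\subset G'_\varnothing(\A_\bff)$, maximal at $p$, whose image under the projection $G'_\varnothing \to G_\varnothing$ (combined with the $\iota$-identification) is compatible with the given $U$. Under $\iota$, the partial Hasse invariant $\Ha_\tau$ corresponds, up to a unit, to the pullback of $h_{\tilde\tau}$ for any chosen $\tilde\tau\in\widetilde{\Sigma}$ extending $\tau$: this is exactly how $\Ha_\tau$ is constructed in the Hilbert setting, via Verschiebung on the universal abelian scheme, which matches the definition of $h_{\tilde\tau}$ on the unitary side after applying the idempotent $e_0$. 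Hence the vanishing locus $Z_{\{\tau\}}$ pulls back from $Z'_{\{\tilde\tau\}}\cap Y_U(G_\varnothing)_{\Fpbar}$ under $\iota\circ i$.

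Next, I would restrict the $\P^1$-bundle isomorphism $\varphi':Z'_{\{\tilde\tau\}}\xrightarrow{\sim} \P(\CV^\circ_{\Fr^{-1}\circ\tilde\tau})$ over $Y'=Y_{U'_{\tilde\tau}}(G'_{\{\Fr^{-1}\circ\tilde\tau,\tilde\tau\}})_E$ to the image of $i$. Since the embedding $i$ sits inside $Y_{U'}(G'_\varnothing)_{\Fpbar}$ as the quaternionic Shimura variety $Y_U(G_\varnothing)_{\Fpbar}$, and since the base $Y'$ likewise contains the quaternionic Shimura variety $Y_{U^B}(G_{\{\Fr^{-1}\circ\tau,\tau\}})_{\Fpbar}$ as an analogous subvariety (for a level $U^B$ inherited from $U'_{\tilde\tau}$), the restriction of $\varphi'$ yields an isomorphism
$$
\varphi : Z_{\{\tau\}} \xrightarrow{\sim} \P^1_{Y^B},
$$
realised as the projectivisation of $\CV^B_{\Fr^{-1}\circ\tau} := i^*\CV^\circ_{\Fr^{-1}\circ\tilde\tau}$. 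The $\tilde\tau$-independence of the resulting $Z_{\{\tau\}}$ side follows because $\Ha_\tau$ on the Hilbert side does not depend on a choice of $\tilde\tau$.

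I would then transport the two vector-bundle identifications. For the de Rham identification, the equality $\varphi'_*H^1_\dR(A'/S')|_{Z'_{\{\tilde\tau\}}}\cong (\pi')^*\CV^\circ_{\Fr^{-1}\circ\tilde\tau}$ restricts, after applying $e_0$ and then $i^*$, to $\varphi_*H^1_\dR(A/S)|_{Z_{\{\tau\}}}\cong \pi^*\CV^B_{\Fr^{-1}\circ\tau}$, using the standard comparison between the Hilbert-side Hodge filtration and the $\tau$-isotypic summand of the unitary-side Hodge filtration cut out by $e_0$. The tautological-bundle identification $\varphi_*\omega_{\Fr^{-1}\circ\tau}|_{Z_{\{\tau\}}} \cong \wedge^2(\pi^*\CV^B_{\Fr^{-1}\circ\tau})(-1)$ is deduced similarly by restricting the unitary statement and applying the $\wedge^2(-)(-1)$ description of the tautological subbundle of a $\P^1$-bundle.

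Finally, Hecke equivariance for $g\in\GL_2(\AFfp)$ follows formally from the Hecke-equivariance of each of the three maps $\iota$, $i$, and $\varphi'$ with respect to the appropriate translations of level subgroups, once one checks that the inclusion $G_\varnothing\hookrightarrow G'_\varnothing$ intertwines the Hecke correspondences $\rho_g$ and $\rho'_{\varnothing,g}$, which is the content of \cite[\S2.3.4, \S5.3]{DKS}. The only genuinely nontrivial step is verifying that the prime-to-$p$ Hecke action and the choice of $\tilde\tau$ (hence $U'$ and $U^B$) can be made compatible with $U$; I expect this to be the main obstacle, since one must trace through the quotients by $\CO_{F,(p),+}^\times$ and by the central tori to ensure that descending from $G'_\varnothing$ through $G_\varnothing$ and back to $\GL_2$ produces a quaternionic Shimura variety at the precise level $U^B$ claimed.
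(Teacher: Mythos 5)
Your proposal follows essentially the same route as the paper, which offers no detailed proof but simply invokes the method of \cite[\S5.3, \S5.4]{DKS} to transfer \cite[Theorem 5.8]{TX16} from the unitary Shimura variety to the Hilbert modular variety via the identifications $Y_{U,\Fpbar}\cong Y_U(G_\varnothing)_{\Fpbar}\hookrightarrow Y_{U'}(G'_\varnothing)_{\Fpbar}$, citing \cite[Theorem 5.2]{TX16} and \cite[\S6.6.2]{Gabriel-thesis} for the stratification and the vector-bundle comparison. Your write-up is a faithful (indeed more explicit) expansion of exactly that argument, including the correct identification of the level-compatibility bookkeeping as the only delicate point.
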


\section{Weight part of Serre's conjecture}\label{sec: wt_part_Serre}

We recall the weight part of Serre's conjecture for totally real fields $F$ formulated in \cite{Buzzard-Diamond-Jarvis} and a geometric variant in \cite{Diamond-Sasaki}.

\subsection{The Buzzard-Diamond-Jarvis Conjecture}\label{subsec: BDJ}
Let
$
  \rho: G_F \rightarrow \GL_2(\Fpbar)
$
be a continuous, irreducible representation and $V$ a finite dimensional $\Fpbar$-representation of $\GL_2(\COF/p)$.
For $v\mid p$,
let $\F_v = \CO_F/v$,
$f_v = [\F_v: \Fp]$, 
and $\Sigma_v$ be the subset of embeddings $\{\F_v \rightarrow \Fpbar\}$. 
Recall we have 
$\Sigma = \coprod_{v|p} \Sigma_v$.
Let $(k,l)\in \Z^\Sigma_{\geq 2}\times \Z^\Sigma$ and we call such weights \emph{algebraic weights}.
Let $V_{k,l}$ be the representation of 
$\GL_2(\CO_F/p) = \prod_{v|p} \GL_2(\F_v)$ of the form
\begin{align*}
  V_{k,l} := V_{k,l, E} \otimes \Fpbar 
  = \bigotimes_{\tau\in \Sigma} {\det}_{[\tau]}^{l_\tau} \otimes \Sym_{[\tau]}^{k_\tau -2} \Fpbar^2,
\end{align*}
where 
$\GL_2(\F_v)$ acts on $[\tau]$-part through the homomorphism 
$\GL_2(\F_v) \rightarrow \GL_2(\Fpbar)$ induced by $\tau$ for every $v|p$ and $\tau\in \Sigma_v$.

The irreducible representations of $\GL_2(\COF/p)$ over $\Fpbar$, which we call \emph{Serre weights}, are of the form $V_{k,l}$ with $2 \leq k_\tau \leq p+1$.
Two Serre weights $V_{k,l}$ and $V_{k',l'}$ are isomorphic if and only if 
$k = k'$ and 
$\sum_{i = 0}^{f_\tau}l_{\Fr^i\circ \tau}p^i = \sum_{i = 0}^{f_\tau}l'_{\Fr^i\circ \tau}p^i$ mod $p^{f_\tau} -1$ for every $\tau\in \Sigma$, 
where $f_\tau = f_v$ for the prime $v$ such that $\tau\in \Sigma_v$.

Let $B$ and $U$ be as in \Cref{subsec: def&indef}.
Let $M^B_{k,l}(U; \Fpbar) = H^0(Y^B_U, V_{k,l+1})$ (definite case), or $H^1(Y^B_U, \CF_{V_{k,l+1}})$ (indefinite case). We have a commuting
family of Hecke operators $T_v$ and $S_v$ for $v\not\in \Sigma_\bff^B$ such that $v\nmid p$ and $\CO_{B,v} \subset U$. 

Let $\rho: G_F\rightarrow \GL_2(\Fpbar)$ be a continous representation.
Let $T$ be a finite set of finite places of $F$ consisting of primes $v$ above $p$, 
the primes where $\rho$ or $B$ is ramified and 
primes $v$ such that $\GL_2(\CO_{F,v}) \not\subset U$.
Let ${\T}^{\univ}_T = \CO[T_v, S_v]_{v\not\in T}$.
Let $\frakm_\rho \subset \T^{\univ}_T$ be the maximal ideal
generated by the following elements
\begin{align*}
  T_v - \tr(\rho(\Frob_v)), \  \Nm_{F/\Q}(v)S_v - \det(\rho(\Frob_v))
\end{align*}
for all $v\not\in T$.
Its residue field is $E$.

\begin{definition}\label{def: mod of V}
  We say an irreducible representation $\rho : G_F \rightarrow \GL_2(\Fpbar)$ is modular for $B$ of weight $V_{k,l}$ if 
  $$
  M_{k,l}^B(U,\Fpbar)[\frakm_{\rho^\vee(-1)}] \neq 0,
  $$ 
  for some open compact subgroup $U$.

  We say $\rho$ is compatible with $B$ if $\rho|_{G_{F_v}}$ is either
  irreducible, or is a twist of an extension of the trivial character by the cyclotomic
  character, for any $v\in \Sigma^B_\bff$.

  We say $\rho$ is modular of weight $V_{k,l}$ if $\rho$ is compatible and modular for $B$ of weight $V_{k,l}$, for some definite or indefinite quaternion algebra $B$ as in \S\ref{subsec: def&indef}.
\end{definition}

\begin{remark}
  Our definition for being modular of weight $\sigma$ is different from \cite{Buzzard-Diamond-Jarvis}  because of the different conventions for the maximal ideals. 
  The maximal ideal $\frakm^{BDJ}_\rho$  
  in \cite[\S 4]{Buzzard-Diamond-Jarvis} 
  is generated by
  \begin{align*}
    T_v - S_v \tr(\rho(\Frob_v)), \ \Nm_{F/\Q}(v) - S_v\det(\rho(\Frob_v))
  \end{align*}
  for all $v\not\in T$.
  Hence, $\frakm^{BDJ}_\rho = \frakm_{\rho^\vee(-1)}$.
  Our convention is also different from the one in \cite{Gee-Kisin}, as
  we will see that the fundamental characters we take are the inverse of the ones in \cite{Gee-Kisin}.
\end{remark}

\begin{lemma}\label{lem: mod_of_JH}
  Let $\rho: G_F \rightarrow \GL_2(\Fpbar)$ be continuous, irreducible and totally odd.
  Then $\rho$ is modular of weight $V$ if and only if $\rho$ is modular of weight $W$ for some Jordan-Holder factor $W$ of $V$.
\end{lemma}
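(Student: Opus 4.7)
The plan is to proceed by induction on the length of a composition series of $V$, reducing to a short exact sequence $0 \to V' \to V \to V'' \to 0$ of $\GL_2(\CO_F/p)$-representations over $\Fpbar$ with $V''$ an irreducible Jordan-Holder factor, and to show that for each choice of quaternion algebra $B$ (definite or indefinite) compatible with $\rho$, we have $M^B(U, V)_{\frakm_{\rho^\vee(-1)}} \neq 0$ if and only if $M^B(U, V')_{\frakm_{\rho^\vee(-1)}} \neq 0$ or $M^B(U, V'')_{\frakm_{\rho^\vee(-1)}} \neq 0$, for $U$ sufficiently small. Since the spaces $M^B(U, \cdot)$ are finite-dimensional $\Fpbar$-vector spaces with a commuting action of the Hecke algebra $\T_T^{\univ}$, having a non-zero $\frakm$-eigenspace is equivalent to the localization at $\frakm$ being non-zero.

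In the \emph{definite} case, $Y_U^B$ is a finite set, so $M^B(U, \cdot) = H^0(Y_U^B, \CF_{(\cdot), U})$ is an exact functor on $\GL_2(\CO_F/p)$-representations. The short exact sequence of sheaves therefore yields a Hecke-equivariant short exact sequence
\begin{equation*}
0 \to M^B(U, V') \to M^B(U, V) \to M^B(U, V'') \to 0,
\end{equation*}
and exactness is preserved after localization at $\frakm_{\rho^\vee(-1)}$.

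In the \emph{indefinite} case, $M^B(U, \cdot) = H^1(Y_U^B, \CF_{(\cdot), U})$ on the compact Riemann surface $Y_U^B$, and the short exact sequence of sheaves yields a long exact sequence involving $H^0$, $H^1$, $H^2$. The key input is that after localizing at $\frakm_{\rho^\vee(-1)}$ with $\rho$ irreducible, the contributions from $H^0$ and $H^2$ vanish: Hecke eigensystems appearing in $H^0(Y_U^B, \CF_{W, U})$ are of Eisenstein type (coming from sections fixed by the arithmetic group, whose attached Galois pseudo-representations are sums of characters), and the $H^2$ contribution is controlled by $H^0$ via Poincaré duality on the compact Riemann surface $Y_U^B$ (with an appropriate twist). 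Since $\rho$ is irreducible, both of these localizations are zero, so the long exact sequence collapses to a short exact sequence in $H^1$ after localization, giving the same conclusion as in the definite case.

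Given the short exact sequence after localization, the middle term is non-zero exactly when at least one of the outer terms is, which translates into the assertion that $\rho$ is modular for $B$ of weight $V$ if and only if $\rho$ is modular for $B$ of weight $V'$ or of weight $V''$. Iterating this along a composition series of $V$ and using that the Jordan-Holder factors of $V'$ together with $V''$ exhaust the Jordan-Holder factors of $V$, we obtain the equivalence for each compatible $B$, hence the lemma as stated in \Cref{def: mod of V}. The main technical point is the vanishing of $H^0$ and $H^2$ localized at $\frakm_{\rho^\vee(-1)}$ in the indefinite case; the definite case is purely formal from the exactness of $H^0$ on a finite space.
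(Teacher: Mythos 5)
Your proposal is correct and is essentially the standard argument: the paper's own proof consists of citing \cite[Proposition 2.5]{Buzzard-Diamond-Jarvis} (indefinite case) and \cite[(4)]{Diamond-Reduzzi} (definite case), and what you have written out — exactness of $H^0$ on the finite set $Y^B_U$ when $U$ is small enough that the stabilizers are trivial, and in the indefinite case the long exact sequence together with the vanishing of the localizations of $H^0$ and $H^2$ at the non-Eisenstein maximal ideal $\frakm_{\rho^\vee(-1)}$ (via Poincar\'e duality for $H^2$) — is precisely the content of those cited results. So you have reproved the references rather than found a different route; no gap.
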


\begin{proof}
  By \cite[Proposition 2.5]{Buzzard-Diamond-Jarvis} for indefinite case
  and \cite[(4)]{Diamond-Reduzzi} for definite case (note that  the groups $\Gamma_i$ there are trivial if $U$ is sufficiently small), 
  we have $\rho$ is modular of weight $V$ if and only if $\rho$ is modular of weight $W$ for some Jordan-Holder factor $W$ of $V$. 
\end{proof}

In \cite{Buzzard-Diamond-Jarvis}, they define a set $W^\BDJ(\rho)$ of Serre weights and predict that $\rho$ is modular of a Serre weight $\sigma$ if and only if $\sigma \in W^\BDJ(\rho)$.
We now recall the definition of $W^\BDJ(\rho)$ in \cite{Buzzard-Diamond-Jarvis}.
Let $\varepsilon_\tau$ denote the fundamental character on $I_{F_v}$ corresponding to $\tau$, which is the 
composite of the maps induced by  $\tau$ and $\textrm{Art}^{-1}$ in local class field theory:
$$
\varepsilon_\tau: I_{F_v}\rightarrow\CO_{F,v}^\times\rightarrow \Fpbar.
$$
In our convention\footnote{In \cite{Gee-Kisin}, their convention is  $\prod_{\tau\in\Sigma_v}\varepsilon_\tau = \varepsilon^{-1}$. }, 
$\prod_{\tau\in\Sigma_v}\varepsilon_\tau = \varepsilon$, where $\varepsilon$ is the mod $p$ cyclotomic character.

Let Serre weight $V_{k,l} = \otimes_{v|p} V_{k,l}^{(v)}$, where $V^{(v)}_{k,l} := \bigotimes_{\tau\in \Sigma_v} {\det}^{l_\tau}_{[\tau]} \otimes \Sym^{k_\tau-2}_{[\tau]}$.

\begin{definition}\label{def: WBDJ}
  Let $W^\BDJ(\rho) := \prod_{v|p} W_v(\rho)$, where $W_v(\rho)$ are defined as follows for any $v|p$.
\begin{enumerate}
  \item  Suppose $\rho|_{G_{F_v}}$ is reducible. Then 
  \begin{align}\label{eq: reducible-case}
    \rho|_{G_{F_v}} \simeq \left(\begin{matrix}
      \chi_1 & c \\ 0 & \chi_2
    \end{matrix}\right),
  \end{align}
  where $c \in H^1(G_{F_v}, \Fpbar(\chi_1\chi_2^{-1}))$ is an extension class.
  Following the definitions in \cite[\S 3.2]{Buzzard-Diamond-Jarvis}, 
  we define $W'_v(\chi_1, \chi_2)$ be the set of pairs $(V_{b,d}^{(v)},J)$ such that 
  $$\chi_1|_{I_{F_v}} = \prod_{\tau\in J}\varepsilon_{\tau}^{b_\tau-1}\prod_{\tau\in \Sigma_v}\varepsilon_{\tau}^{d_\tau}, \text{ and } 
  \chi_2|_{I_{F_v}} = \prod_{\tau\not\in J}\varepsilon_{\tau}^{b_\tau-1}\prod_{\tau\in \Sigma_v}\varepsilon_{\tau}^{d_\tau}.$$ 
  Let $L_{V,J}$ be a certain
  distinguished subspace of $H^1(G_{F_v}, \Fpbar(\chi_1\chi_2^{-1}))$ defined as in \cite[\S3.2]{Buzzard-Diamond-Jarvis}.
  Then $V \in W_v(\rho)$ if and only if  $(V,J) \in W'_v(\chi_1, \chi_2)$ and $c\in L_{V,J}$ for some $J$.
  \item Suppose $\rho|_{G_{F_v}}$ is irreducible. 
  Let $K$ be a quadratic extension of $F$ in which $v$ is inert. 
  Let $\Sigma'_v$ be the set of embeddings of the residue field of $K_v$ into $\Fpbar$. 
  Then $V_{k,l}^{(v)} \in W_v(\rho)$ if and only if there is a subset $J'\subset \Sigma_v'$ containing exactly one element extending each element of $\Sigma_v$, such that
  \begin{align}\label{eq: irreducible-case}
    \rho|_{I_{F_v}} \simeq \prod_{\tau\in\Sigma_v} \varepsilon_\tau^{l_\tau}
    \left(\begin{matrix}
      \prod_{\tau'\in J'} \varepsilon_{\tau'}^{k_\tau-1} & 0 \\
      0 & \prod_{{\tau'} \not\in J'} \varepsilon_{\tau'}^{k_\tau-1}
    \end{matrix}\right).
  \end{align}
\end{enumerate}  

\end{definition}

By definition, $W^\BDJ(\rho)$ is non-empty and only depends on $\rho|_{I_{F_v}}$ for all $v|p$.

\begin{remark}\label{rmk: J=Sigmav}
  Let $p$ be an odd prime and $v\mid p$.
  Suppose $\rho|_{G_{F_v}} \simeq \left(\begin{smallmatrix}
    \chi_1 & * \\ 0 & \chi_2
  \end{smallmatrix}\right)$ and $(V, \Sigma_v) \in W'_v(\chi_1, \chi_2)$.
  Then $L_{V,\Sigma_v} = H^1(G_{F_v}, \Fpbar({\chi}_1{\chi}_2^{-1}))$ unless $V$ is trivial up to a twist, 
  in which case $L_{V,\Sigma_v}$ has codimension one.
  We have $V\in W_v(\rho)$ unless $V = \bigotimes_{\tau\in \Sigma_v}{\det}_{[\tau]}^{d_\tau}$ for some $d\in \Z^{\Sigma_v}$, in which case $\bigotimes_{\tau\in\Sigma_v} \det_{[\tau]}^{d_\tau} \Sym_{[\tau]}^{p-1} \in W_v(\rho)$.
\end{remark}

Under the assumption that $\rho$ is modular of some weight,
it is conjectured by Buzzard-Diamond-Jarvis that a Serre weight $\sigma$ belongs to $W^\BDJ(\rho)$ if and only if $\rho$ is modular of weight $\sigma$, which we refer to as \emph{BDJ conjecture}.
BDJ conjecture holds if $F = \Q$ (\emph{cf.}\,\cite[Theorem 3.17]{Buzzard-Diamond-Jarvis}).
For general totally real fields, BDJ conjecture was proved by a series of works, with a mild Taylor-Wiles assumption.

\begin{theorem}(BDJ conjecture, \cite{Gee-Liu-Savitt-Unitary, Gee-Liu-Savitt-GL2, Gee-Kisin, Newton14})\label{thm: BDJ-conj}
    Let $p$ be an odd prime and $F$ a totally real field in which $p$ is unramified. 
    Let $\rho: G_F \rightarrow \GL_2(\Fpbar)$ be a continuous representation that is modular of some weight. 
    If $\rho$ is modular of a Serre weight $\sigma$,
    then $\sigma\in W^\BDJ(\rho)$.

    Now assume that $\rho|_{G_{F(\zeta_p)}}$ is irreducible, and  
    when $p = 5$, further assume that the projective image of $\rho|_{G_{F(\zeta_5)}}$ in $\mathrm{PGL}(\overline{\F}_5)$ is
    not isomorphic to $A_5$.
    If $\sigma\in W^\BDJ(\rho)$, then
    $\rho$ is modular of weight $\sigma$.
\end{theorem}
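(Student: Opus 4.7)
The plan is to treat the two directions separately, since they are of quite different character and are handled by disjoint machinery in the cited works.

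For the necessity direction (modularity of $\sigma$ implies $\sigma \in W^\BDJ(\rho)$), I would reduce to a purely local statement at each prime $v\mid p$ via local--global compatibility. The construction of Carayol and Taylor (and its generalizations in the definite unitary setting) shows that a mod $p$ Hecke eigensystem contributing to $M^B_{k,l}(U; \Fpbar)[\frakm_{\rho^\vee(-1)}]$ admits a characteristic-zero lift whose associated Galois representation $\tilde{\rho}$ is crystalline at every $v\mid p$, with Hodge--Tate weights and determinant determined by $(k,l)$. The question then becomes: which mod $p$ local representations $\rho|_{G_{F_v}}$ arise as reductions of crystalline lifts of the prescribed type? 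In the Fontaine--Laffaille range this is classical; in general one resolves it by computing the irreducible components of the universal crystalline deformation ring of fixed Hodge--Tate type via Breuil--Kisin modules. The resulting set of reductions matches precisely $W_v(\rho)$; in practice one matches against the alternative description $W^{\AH}(\rho)$ of \cite{Dembele-Diamond-Roberts} and invokes \cite{CEGM17} to identify $W^{\AH}(\rho) = W^{\BDJ}(\rho)$. This is the content of \cite{Gee-Liu-Savitt-Unitary, Gee-Liu-Savitt-GL2}.

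For the sufficiency direction I would invoke the Taylor--Wiles--Kisin patching machinery. Given $\sigma = V_{k,l} \in W^{\BDJ}(\rho)$, the defining conditions on $\rho|_{G_{F_v}}$ guarantee the existence of a potentially crystalline lift $\tilde{\rho}$ of $\rho$ of the correct Hodge--Tate type: in the reducible case \eqref{eq: reducible-case} one uses the explicit description of admissible extension classes via the subspaces $L_{V,J}$ of \Cref{def: WBDJ}; in the irreducible case \eqref{eq: irreducible-case} one writes $\rho$ as an induction from a quadratic extension of $F_v$ and lifts each inertial character separately. Since $\rho$ is already modular of \emph{some} weight, one transfers (after a solvable totally-real or CM base change, adjusting the quaternion algebra and applying Jacquet--Langlands when necessary) to obtain an automorphic lift of $\rho$ on a suitable unitary group, and then runs the modularity lifting theorem for the crystalline deformation ring of the prescribed Hodge--Tate type to conclude that $\tilde{\rho}$ itself is automorphic. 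Descending and retransferring gives modularity of $\rho$ of weight $\sigma$ on an appropriate definite or indefinite quaternion algebra, as in \Cref{def: mod of V}; the case of indefinite quaternion algebras over $F$ is the subject of \cite{Newton14} and is also treated in \cite{Gee-Kisin}, which simultaneously handles the definite case.

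The main obstacle — and the reason for the stated Taylor--Wiles hypotheses — is the big-image condition required by patching: one must be able to choose enough auxiliary Taylor--Wiles primes at which the residual representation is sufficiently generic, which forces $\rho|_{G_{F(\zeta_p)}}$ to be irreducible and, for the small prime $p=5$, to avoid the exceptional $A_5$ projective image in $\PGL_2(\overline{\F}_5)$. A secondary technical difficulty is the bookkeeping required to pass between the four automorphic incarnations (definite quaternionic, indefinite quaternionic, definite unitary, and the Hilbert setting) and to verify that the compatibility condition between $\rho$ and $B$ in \Cref{def: mod of V} is preserved throughout; once that is done, \Cref{lem: mod_of_JH} allows one to pass freely between the weight $V_{k,l}$ and its Jordan--H\"older factors, so it suffices to prove the conjecture on the level of Serre weights.
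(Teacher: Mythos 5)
This theorem is not proved in the paper at all: it is imported as a black box from the cited references (\cite{Gee-Liu-Savitt-Unitary, Gee-Liu-Savitt-GL2, Gee-Kisin, Newton14}), so there is no internal proof to compare against. Your outline is an accurate summary of how those references establish the two directions — crystalline lifts plus the local analysis of reductions (and the identification $W^{\AH}(\rho)=W^{\BDJ}(\rho)$ via \cite{CEGM17}) for necessity, and Taylor--Wiles--Kisin patching with base change and Jacquet--Langlands for sufficiency, with the big-image hypothesis entering only in the second direction, exactly as the paper's remark notes. It is of course a sketch rather than a proof, but that is the same status the statement has in the paper itself.
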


\begin{remark}
    The technical condition on $\rho|_{G_{F(\zeta_p)}}$ in \Cref{thm: BDJ-conj} is slightly relaxed than the adequacy condition (see \cite[Proposition A.2.1]{BLGG13}).
    Also, note that we do not require this technical condition on $\rho|_{G_{F(\zeta_p)}}$ for one of the directions of the BDJ conjecture to hold.
\end{remark}

\begin{definition}
  Let $(k,l)\in \Z^\Sigma_{\geq 2}\times \Z^\Sigma$. 
  We say a continuous, irreducible, totally odd representation $\rho: G_F \rightarrow \GL_2(\Fpbar)$ is algebraically modular of weight $(k, l)$ if it is modular of weight
  $V_{k, -k-l}$ in the sense of \Cref{def: mod of V}.
  Equivalently, there is a definite or indefinite quaternion algebra $B$ and an open compact subgroup $U$ as in \S\ref{subsec: def&indef} such that $M^B_{k,l}(U, \Fpbar)[\frakm_\rho] \neq 0$.

\end{definition}

\begin{remark}
  To reconcile our convention with \cite{Diamond-Sasaki} (which has a normalised factor $||\det(g)||$ in the definition of Hecke actions), we use $V_{k, -k-l}$ instead of $V_{k, 1-k-l}$ as in \cite[Definition 7.5.1]{Diamond-Sasaki}.
\end{remark}

\subsection{Geometric variant}\label{subsec: geom variant}

It was proved in \cite{Diamond-Sasaki} that we can associate Galois representations to mod
$p$ Hilbert modular eigenforms of arbitrary weight.

\begin{theorem}[{\cite[Theorem 6.1.1]{Diamond-Sasaki},\,\cite{Diamond-Sasaki_inprep}}]
  Let $S$ be a finite set containing primes $v$ in $F$ such that $v|p$ or $\GL_2(\CO_{F,v})\not\subset U$.
  Let $f\in M_{k,l}(U; E)$ be an eigenform for $T_v$ and $S_v$, $v\not\in S$.
  Let $T_v f = a_v f$ and $S_v f = d_v f$, where $a_v,d_v\in E$.
  Then there is a Galois representation\footnote{Due to the difference in convention of Hecke operators, the Galois representation here is the one in \cite[Theorem 6.1.1]{Diamond-Sasaki} twisting by the inverse of the mod $p$ cyclotomic character.}
  $
  \rho_f: G_F \rightarrow \GL_2(E)
  $
  that is unramified at $v\not\in S$ and the characteristic polynomial of $\rho_f(\Frob_v)$ is 
  $$
  X^2 - a_vX + d_v\Nm_{F/\Q}(v).
  $$
\end{theorem}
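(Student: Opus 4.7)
The plan is to reduce to the classical case of Galois representations attached to characteristic zero Hilbert eigenforms (due to Carayol, Wiles, Taylor and Blasius--Rogawski), via a sequence of weight-shifting and lifting maneuvers. The main novelty over the paritious situation is that $(k,l)$ may be non-paritious, so no direct characteristic zero lift of weight $(k,l)$ exists; this is circumvented by working in the ``$l$-free'' space $M_{k}(U;R)$ defined by the line bundle $\CL^{k}$, which is defined already over $\CO$ because $L$ contains $\tau(\mu)^{1/2}$ for all $\mu\in \COFppx$ and $\tau\in \Sigma$.

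First I would pick a continuous character $\xi : (\AFfp)^\times \to E^\times$ with $\xi(\mu) = \mu^{l+k/2}$ on $\COFppx$ and apply the twisting isomorphism \eqref{eq: twist_xi_k/2} to transfer $f$ to a Hecke eigenform $f' \in M_{k}(U;E)$ whose eigenvalues are $\xi(\varpi_v)^{-1}a_v$ and $\xi(\varpi_v)^{-2}d_v$. Next, to move to a weight in the liftable cone \eqref{eq: liftcone-intro}, I would multiply $f'$ by a suitable monomial $\prod_\tau \Ha_\tau^{s_\tau}$ in partial Hasse invariants; as multiplication by $\Ha_\tau$ is Hecke-equivariant for $v\nmid p$, this preserves the away-from-$p$ eigensystem while producing an eigenform $f'' \in M_{k''}(U;E)$ with $p(k''_\tau - 2) > k''_{\Fr^{-1}\circ \tau} - 2$ for every $\tau\in \Sigma$.

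At such a weight $k''$, the cohomological vanishing alluded to in the introduction (\Cref{thm: my-vanish}, based on the Esnault--Viehweg vanishing theorem combined with D.\ Yang's ampleness result on the mod $p$ fibre of the Hilbert modular variety) implies surjectivity of the reduction map $H^0(\YUtor, \CL^{\tor,k''}_\CO(-D_\infty)) \otimes_\CO E \twoheadrightarrow S_{k''}(U;E)$. A Deligne--Serre style density argument, applied to the finite $\CO$-lattice of such lifts, then produces (after possibly enlarging $\CO$) a characteristic zero Hecke eigenform $F \in H^0(\YUtor_{\CO'}, \CL^{\tor,k''}_{\CO'}(-D_\infty))$ whose Hecke eigensystem reduces modulo the maximal ideal to that of $f''$. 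Unwinding the $\mu^{-k''/2}$ convention hidden in $\CL^{k''}$, $F$ corresponds to a classical Hilbert cuspidal eigenform (in general of half-integral central character over the reflex-type extension), to which the classical constructions attach a continuous $\rho_F : G_F \to \GL_2(\Qpbar)$ unramified outside $S$ and with the expected Frobenius characteristic polynomial. Reducing $\rho_F$ modulo the maximal ideal and twisting back by the Galois character associated to $\xi$ via class field theory yields a representation $\rho_f$ with $\tr\rho_f(\Frob_v) = a_v$ and $\det\rho_f(\Frob_v) = d_v\Nm_{F/\Q}(v)$, as required.

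The principal obstacle is the cohomological vanishing at weight $k''$, which is the technical heart of the paper; the remaining steps are essentially bookkeeping, but careful attention must be paid to normalizations so that the factor $\Nm_{F/\Q}(v)$ in the determinant emerges correctly (this reflects the Tate twist convention distinguishing the present setup from that of \cite{Diamond-Sasaki}).
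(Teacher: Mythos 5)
This theorem is not proved in the paper at all: it is imported verbatim (up to a cyclotomic twist recorded in the footnote) from \cite[Theorem 6.1.1]{Diamond-Sasaki} and \cite{Diamond-Sasaki_inprep}, so there is no internal argument to compare yours against, and your proposal has to stand on its own as a proof of the cited result. As such it has a genuine gap. The load-bearing step is the cohomology vanishing at the shifted weight $k''$, for which you invoke \Cref{thm: my-vanish}; that theorem rests on the Esnault--Viehweg vanishing theorem, whose hypothesis $\dim X \le p$ forces the standing assumption $p \ge [F:\Q]$. The statement you are proving carries no such restriction (and Diamond--Sasaki prove it for every totally real $F$ in which $p$ is unramified), so your argument only establishes the theorem when $p \ge d$. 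The actual proof in \cite{Diamond-Sasaki} does not go through Esnault--Viehweg at all: following the Emerton--Reduzzi--Xiao strategy, one shows by a d\'evissage on the minimal compactification (using only ampleness of the parallel-weight bundle and congruences/partial Hasse invariants) that every mod $p$ Hecke eigensystem occurs in a space of paritious, liftable weight, and then applies the classical constructions. Your route, by contrast, is essentially the paper's later argument for \Cref{thm: my-lift-goem-alg}, which is designed for a different (and weaker) purpose and inherits the dimension restriction.

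Two smaller points. First, $f$ is only assumed to lie in $M_{k,l}(U;E)$, not in the cuspidal subspace, whereas \Cref{cor: my-liftability} (and the Deligne--Serre argument you apply to the resulting $\CO$-lattice) concerns $S_{k''}(U;\CO)\to S_{k''}(U;E)$; Eisenstein eigensystems must be treated separately (their $\rho_f$ is a sum of two characters built from class field theory), and your sketch does not address this. Second, the reduction steps themselves (twisting by $\xi$ via \eqref{eq: twist_xi_k/2}, multiplying by $\prod_\tau \Ha_\tau^{s_\tau}$ with all $s_\tau$ equal and large to enter the cone \eqref{eq: liftcone-intro}, and untwisting by $\rho_{\xi'}$ at the end) are correct and do produce the stated characteristic polynomial $X^2 - a_vX + d_v\Nm_{F/\Q}(v)$; the bookkeeping is fine. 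The problem is solely that the geometric input you chose is not strong enough to give the theorem in the stated generality.
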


\begin{definition}\label{def: geom-modular}
  Let $(k,l)\in\Z^\Sigma\times\Z^\Sigma$.
  We say a continuous, irreducible, totally odd representation $\rho: G_F \rightarrow \GL_2(\Fpbar)$ is geometrically modular of weight $(k, l)$ if there is an open compact $U\subset \GL_2(\widehat{\CO}_F)$ and an eigenform form $f\in M_{k,l}(U, E)$ such that $\rho$ is equivalent to the extension of scalars of $\rho_f$.
\end{definition}

We also consider the Galois representation associated to a twisted eigenform.
For any ideal $\frakM\subset \COF$, we denote $V_{\frakM}: = \ker(\widehat{\CO}_F^\times \rightarrow (\CO_F/\frakM)^\times)$. 
Let $k, l, l' \in \Z^\Sigma$ and $\frakm, \frakn$ be the ideals of $\COF$ prime to $p$ such that $\frakm | \frakn$.
We say a character
$$\xi: \{\AFf^{\times} \mid a_p\in \CO_{F,p}^\times\}/ V_\frakm \rightarrow E^\times,$$ 
is a character of weight $l'$ if
$\xi(a) = \bar{a}^{l'}$ for every $a\in F^\times \cap \CO_{F,p}^\times$.
Then we define $\xi': \A_F^\times /F^\times F_{\infty, + }^\times V_{\frakm p} \rightarrow E^\times$ by 
$\xi'(\alpha z a) = \xi(a)\bar{a}_p^{-l'}$, where $\alpha\in F^\times, z\in F_{\infty,+}^\times, a\in \AFf^\times$ with $a_p\in \CO_{F,p}^\times$.
Let $\rho_{\xi'}$ be the corresponding character by class field theory and that 
$\rho_{\xi'}(\Frob_v) = \xi(\varpi_v)$, for $v\nmid \frakm p$.
Suppose $f \in M_{k,l}(U_1(\frakn); E)$ is an eigenform for $T_v$ and $S_v$, $v\nmid p\frakn$.
We can define a twisted form 
$f_\xi\in  M_{k,l+l'}(U_1(\frakn\frakm^2); E)$ as in \cite[Lemma 10.3.2]{Diamond-Sasaki}.
Moreover, the associated Galois representation $\rho_{f_\xi}$ satisfies $\rho_{f_\xi} \simeq \rho_{\xi'}\otimes \rho_{f}$. In particular 
$$
\rho_{f_\xi} (\Frob_v) = \xi(\varpi_v)\rho(\Frob_v) \text{ for } v\nmid p\frakn\frakm
$$
$$  
\rho_{f_\xi}|_{I_{F_v}} \simeq
\Big(\prod_{\tau\in \Sigma_v} \varepsilon_\tau^{-l_\tau'} \Big)
\otimes \rho_{f}|_{I_{F_v}} \text{ for } v\mid p.
$$

\subsubsection{Normalised eigenform}

We recall the definition of (stabilised, strongly stabilised) normalised eigenform. 

\begin{definition}\cite[Definition 10.5.1]{Diamond-Sasaki}
  Let $(k,l)\in \Z^{\Sigma}_{\geq 2}\times \Z^{\Sigma}$. 
  We say $f \in M_{k,l}(U_1(\frakn); E)$ is a normalised eigenform if 
  \begin{enumerate}
    \item $r_1^{\CO_F}(f) = 1$, where we write $r_m^J(f)$
    for the coefficient of $q^m$ in the $J$-component of its $q$-expansion as in \eqref{eq: q-expn}.
    \item $f$ is an eigenform for $T_v$ for $v\nmid p$ and $S_v$ for $v\nmid p\frakn$.
    \item For any character $\xi$ of weight $-l-\mathbf{1}$ and conductor $\frakm$ prime ot $p$, $f_\xi$ is an eigenform for $T_v$ for all $v|p$.
  \end{enumerate}
\end{definition}
In particular, if $f$ is a normalised eigenform, then $\Theta_\tau(f) \neq 0$ for all $\tau\in\Sigma$.

\begin{proposition}\cite[Lemma 10.4.1, Proposition 10.5.2]{Diamond-Sasaki} \label{lem: Theta(f)nonzero}
  If $\rho$ is geometrically modular
  of weight $(k, l)$, 
  then $\rho$ arises from an eigenform $f\in M_{k,l}(U_1(\frakn); E)$
  for some $\frakn$ prime to $p$, 
  such that
  $\Theta_\tau(f) \neq 0$ for all $\tau \in \Sigma$.
  If in addition that $k_\tau \geq 2$ for all $\tau$, then we can take the eigenform $f$ to be a normalised eigenform.
\end{proposition}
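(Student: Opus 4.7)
The plan is to use the partial Hasse invariants $\Ha_\tau$ and partial theta operators $\Theta_\tau$, together with their Hecke-equivariance and character twists \eqref{eq: twist_xi}, to modify an arbitrary eigenform giving rise to $\rho$ into one with $\Theta_\tau$ non-vanishing for every $\tau$, keeping the weight equal to $(k, l)$.

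By geometric modularity, fix an eigenform $f_0 \in M_{k,l}(U_1(\frakn_0); E)$ with $\rho_{f_0} \simeq \rho$. If $\Theta_\tau(f_0) = 0$ for some $\tau$, then $\Theta_\tau(f_0)$ is trivially divisible by $\Ha_\tau$, so \Cref{thm: Theta-Hasse-divisibility} gives two alternatives: either $f_0 = \Ha_\tau g_0$ for some $g_0$ of weight $(k - k_{\Ha_\tau}, l)$, or $p \mid k_\tau$. In the first alternative, $g_0$ is again an eigenform with the same Hecke eigenvalues at primes $v \nmid p \frakn_0$ (multiplication by $\Ha_\tau$ being Hecke-equivariant), so $\rho_{g_0} \simeq \rho$; I would iterate, which must terminate since $f_0$ has finite order of vanishing along the zero locus of $\Ha_\tau$. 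Repeating for each $\tau$, we obtain an eigenform $g$ of some weight $(k', l)$ with $\rho_g \simeq \rho$ and $g$ not divisible by any $\Ha_\tau$. By the converse direction of \Cref{thm: Theta-Hasse-divisibility}, $\Theta_\tau(g) \neq 0$ for every $\tau$ with $p \nmid k'_\tau$. To return to weight $(k, l)$, I would multiply $g$ by a monomial $\prod_\tau \Ha_\tau^{n_\tau}$ chosen so the result has weight $k$; using that $\Theta_\tau$ acts as a derivation on $q$-expansions together with $\Theta_{\tau'}(\Ha_\tau) = 0$ (since $\Ha_\tau$ has constant $q$-expansion at each cusp), one obtains $\Theta_{\tau'}(\Ha_\tau h) = \Ha_\tau \Theta_{\tau'}(h)$ for every $\tau'$, so non-vanishing of each $\Theta_{\tau'}$ is preserved under the lift.

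The main obstacle is the residual case $p \mid k'_\tau$, where the divisibility theorem is vacuous and cannot force a reduction. I would try to handle it by analysing $q$-expansions: $\Theta_\tau(g) = 0$ together with $g$ not divisible by $\Ha_\tau$ should force the support of the $q$-expansion of $g$ to lie in the prime $v$ associated to $\tau$, identifying $g$ with a Frobenius pullback of a form of smaller weight and reducing to the previous argument after a character twist. For the second statement, once $k_\tau \geq 2$ for all $\tau$, the $q$-expansion principle \eqref{eq: q-expn} combined with $\Theta_\tau(f) \neq 0$ for every $\tau$ forces the existence of nonzero coefficients at ideals not contained in any prime above $p$, so $f$ may be rescaled to achieve $r_1^{\CO_F}(f) = 1$. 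The $T_v$-eigenform condition for $v \mid p$ on each twist $f_\xi$ of weight $-l - \mathbf{1}$ should then follow from the rigidity of the Hecke eigensystem attached to the irreducible $\rho$, plus the observation that the twist operation commutes with $T_v$ up to scalars determined by $\xi(\varpi_v)$ as in \eqref{eq: twist_xi}.
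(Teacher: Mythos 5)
The paper does not prove this statement: it is imported verbatim from \cite[Lemma 10.4.1, Proposition 10.5.2]{Diamond-Sasaki}, so the only comparison available is with that source. Measured against it, your proposal has a genuine gap, and the gap sits exactly where the real content of the lemma is. The Leibniz identity you invoke in the last step, $\Theta_{\tau'}(\Ha_\tau h)=\Ha_\tau\Theta_{\tau'}(h)$ (which is correct, since $\Ha_\tau$ has constant $q$-expansion $1$ and $\Theta_{\tau'}$ acts on $q$-expansions by $r_m\mapsto \bar{\tau}'(m)r_m$), shows that multiplying or dividing by partial Hasse invariants never changes whether $\Theta_{\tau'}$ annihilates a form. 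Consequently, if $\Theta_\tau(f_0)=0$, then $\Theta_\tau(g)=0$ for the Hasse-reduced form $g$ as well, and since $g$ is not divisible by $\Ha_\tau$, \Cref{thm: Theta-Hasse-divisibility} \emph{forces} $p\mid k'_\tau$. In other words, your reduction always terminates in the ``residual case'' you defer; the case $p\nmid k'_\tau$ with $\Theta_\tau(f_0)=0$ never occurs, and the whole Hasse-invariant iteration (steps 2--6) is vacuous for the purpose of the lemma. The entire proof is the deferred case, for which you give only a plan: ``identify $g$ with a Frobenius pullback and reduce to the previous argument after a character twist.'' This cannot work as stated, both because the ``previous argument'' has no content and because even after recognizing that the $q$-expansion of $f$ is supported on multiples of the prime $v$ attached to $\tau$ (equivalently, that $f$ lies in the image of a partial Frobenius operator), one must produce a \emph{different} eigenform of the \emph{same} weight $(k,l)$ in the same Hecke eigensystem with all $\Theta_\tau$ nonvanishing --- this is the actual work in Diamond--Sasaki's Lemma 10.4.1 (one shows the set $\{\tau:\Theta_\tau f=0\}$ is Frobenius-stable, hence a union of $\Sigma_v$'s, and then modifies $f$ at the offending primes $v$), and none of it appears in your sketch.

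The second assertion is also under-argued. Non-vanishing of $\Theta_\tau(f)$ only yields some nonzero coefficient $r_m$ with $\bar\tau(m)\neq 0$; it does not give $r_1^{\CO_F}(f)\neq 0$. The standard route to $r_1\neq 0$ uses the eigenform relations $r_\frakm(f)=a_\frakm\, r_1(f)$ for $\frakm$ prime to $p\frakn$, and arranging that every twist $f_\xi$ of weight $-l-\mathbf{1}$ is an eigenform for $T_v$ at $v\mid p$ requires passing to a simultaneous $T_v$-eigenvector inside the (generalized) $\frakm_\rho$-eigenspace --- this is the content of \cite[Proposition 10.5.2]{Diamond-Sasaki} and does not follow from ``rigidity of the Hecke eigensystem.''
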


\begin{definition}\cite[Definition 10.6.1]{Diamond-Sasaki}
  Let $(k, l) \in \Z^\Sigma_{\geq 2} \times \Z^\Sigma$.
  We say a normalised eigenform $f \in M_{k,l}(U_1(\frakn); E)$ is a stabilised eigenform if 
  $r^J_m(f) = 0$ for all $(m,J)$ such that $m \in J_+^{-1}$ and $mJ$ not prime to $\frakn$. Equivalently, $T_v f = 0$ for all $v|\frakn$.
\end{definition}

\begin{definition}\cite[Definition 10.6.4]{Diamond-Sasaki}
  Let $(k, l) \in \Z^\Sigma_{\geq 2} \times \Z^\Sigma$.
  We say a stabilised eigenform $f \in M_{k,l}(U_1(\frakn); E)$ is
  strongly stabilised if $r^J_m(f) = 0$ for all $(m,J)$ such that $m \in J_+^{-1} \cup \{0\}$ and $mJ$ not prime to $p$.
\end{definition}

\begin{lemma}\label{lem: stabilised_f}
  Let $(k, l) \in \Z^\Sigma_{\geq 2} \times \Z^\Sigma$.
  If $\rho$ is geometrically modular
  of weight $(k, l)$, 
  then $\rho$ arises from 
  a stabilised eigenform $f\in M_{k,l}(U_1(\frakn); E)$
  for some $\frakn$ prime to $p$.
\end{lemma}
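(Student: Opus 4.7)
The plan is to deduce this from the preceding \Cref{lem: Theta(f)nonzero} via the standard level-raising stabilisation procedure. Since $k_\tau \geq 2$, \Cref{lem: Theta(f)nonzero} produces a normalised eigenform $f_0 \in M_{k,l}(U_1(\frakn_0); E)$, for some $\frakn_0$ prime to $p$, whose attached Galois representation is $\rho$. The only obstruction to $f_0$ being already stabilised is that the eigenvalues of $T_v$ at $v \mid \frakn_0$ need not vanish; we will modify $f_0$ at the cost of enlarging the level (but staying prime to $p$) to kill these eigenvalues.

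I would work one bad prime at a time. Given a normalised eigenform $f$ at level $U_1(\frakn)$ with $T_v f = a_v f$ and, at an auxiliary level, $S_v f = d_v f$, the idea is to pass to $U_1(\frakn v^2)$ and form a suitable $E$-linear combination of $f$ and its images under the ``shift by $v$'' degeneracy operator $V_v$, roughly of the shape $f - a_v V_v f + d_v \Nm_{F/\Q}(v)\, V_v^2 f$. At the level of $q$-expansions (using the component-wise description \eqref{eq: q-expn}), $V_v$ sends the coefficient at $(m, J)$ to the coefficient at $(m v^{-1}, J v)$ after the appropriate relabelling, so a direct computation shows that the resulting combination (i) is killed by $T_v$ at the enlarged level, (ii) has $q$-expansion supported on $(m,J)$ with $mJ$ prime to $v$, and (iii) preserves the $T_w, S_w$ eigenvalues at all other finite places $w$ and the normalisation $r_1^{\CO_F} = 1$ (since $V_v$ does not contribute to the $q$-coefficient at $m=1$). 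Iterating over the finite set of primes dividing $\frakn_0$ (and the newly introduced ones, which are the same primes), I arrive at a form $f \in M_{k,l}(U_1(\frakn); E)$ with $\frakn$ prime to $p$ and $r^J_m(f) = 0$ whenever $mJ$ is not prime to $\frakn$, which is exactly the stabilised condition.

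It remains to verify that $f$ is still a \emph{normalised} eigenform in the sense of \cite[Definition 10.5.1]{Diamond-Sasaki}: the normalisation and the eigenform property for $T_w$, $S_w$ at good primes are preserved by construction, and condition (iii) on the twisted eigenforms $f_\xi$ being $T_v$-eigenforms for $v\mid p$ descends from $f_0$, since the twisting operation of \Cref{subsec: twisting} commutes with the degeneracy operators $V_v$ for $v\nmid p$. Finally $\rho_f \cong \rho_{f_0} \cong \rho$, because the two systems of Hecke eigenvalues agree at all primes $w \nmid \frakn$ and these determine the semisimple Galois representation by Chebotarev.

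The main obstacle I anticipate is not conceptual but bookkeeping: one must carefully translate the classical stabilisation formulas into the geometric mod $p$ setting with its conventions for component-wise $q$-expansions over $\CC l(F)^+$ and for the character twists used to define Hecke actions at primes above $p$ for non-paritious weights. In particular, checking that the normalisation condition on $f_\xi$ is stable under the level-raising modification is the most delicate point, and is the step I would write out most carefully.
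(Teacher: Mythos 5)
Your approach is structurally the same as the paper's: both start from the normalised eigenform supplied by \Cref{lem: Theta(f)nonzero} and then stabilise at the primes dividing $\frakn$ by raising the level to $U_1(\frakn')$ with $\frakn' = \prod_i \frakp_i^{e_i+2}$ (your ``$\frakn v^2$'' at each bad prime). The one real difference is that the paper outsources the entire stabilisation step to \cite[Lemma 10.6.2]{Diamond-Sasaki}, whereas you sketch a proof of that lemma. Two loose ends in your sketch are exactly what that citation papers over: first, for $v\mid\frakn$ the operator $S_v$ is not defined at level $U_1(\frakn)$ (the paper only defines it for $v\nmid p\frakn$), so the coefficient $d_v$ in your combination $f - a_vV_vf + d_v\Nm_{F/\Q}(v)V_v^2f$ has no a priori meaning and must be supplied some other way --- this is presumably why the paper's proof allows ``enlarging $E$ if necessary''; second, the degeneracy operators $V_v$ and their compatibility with the component-wise $q$-expansions and with condition (iii) on the twists $f_\xi$ are nowhere set up in this paper, so in the present write-up the appeal to \cite[Lemma 10.6.2]{Diamond-Sasaki} cannot be replaced by your sketch without importing that machinery. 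Modulo those points, your argument is a correct reconstruction of the cited lemma.
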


\begin{proof}
  By \Cref{lem: Theta(f)nonzero}, we have
  $\rho$ arises from a normalised eigenform $f\in M_{k,l}(U_1(\frakn); E)$ 
  for some $\frakn$ prime to $p$.
  We can choose $\frakn' \subset \frakn$ so that $\frakn'$ and $\frakn$ satisfies the conditions in
  \cite[Lemma 10.6.2]{Diamond-Sasaki}.
  In particular, we can let $\frakn' = \prod_i \frakp_i^{e_i + 2}$ if $\frakn = \prod_i \frakp_i^{e_i}$.
  Then $\rho$ arises from a stabilised eigenform in $M_{k,l}(U_1(\frakn'); E)$ by \cite[Lemma 10.6.2]{Diamond-Sasaki} (enlarging $E$ if necessary).
\end{proof}

Suppose $k_\tau \geq 2$.
If $\rho$ arises from a mod $p$ eigenform $f$ of weight $(k, -\mathbf{1})$ with non-zero eigenvalue at $v$ for any $v|p$, then $\rho_f$ has a very explicit form, 
which suggests $\rho$ has a crystalline lift of Hodge type $(k - \mathbf{1}, \mathbf{0})$.

\begin{theorem}\cite[Corollary 10.7.2]{Diamond-Sasaki}\label{thm: ord-geom-rho}
  Let $(k,l)\in\Z^\Sigma_{\geq 2} \times \Z^\Sigma$.
  Suppose $U$ is an open compact subgroup of $\GL_2(\COFhat)$ containing $\GL_2(\CO_{F,p})$
  and $S$ is a finite set of primes containing $v|p$ and $v$ such that $\GL_2(\CO_{F,p}) \not\subset U$.
  Let $v_0$ be a prime in $F$ over $p$.
  Suppose
  $f \in M_{k,l}(U; E)$ is an eigenform for $T_v$ and $S_v$ for all $v\not\in S$.
  Suppose there is a character $\xi$ of weight $-l-\mathbf{1}$ such that $T_{v_0}f_\xi = a_{v_0}f_\xi$ with $a_{v_0} \neq 0$. 
  Then (after adapting to our convention) 
  \begin{align*}
    \rho_f|_{I_{F_{v_0}}} \simeq 
    \prod_{\tau\in \Sigma_{v_0}}\varepsilon_\tau^{-k_\tau-l_\tau}
    \left(\begin{matrix}
      \prod_{\tau\in \Sigma_{v_0}}\varepsilon_\tau^{k_\tau -1} & * \\
      0 & 1
    \end{matrix}\right).
  \end{align*}
\end{theorem}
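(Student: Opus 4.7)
The plan is to use the twisting relation $\rho_{f_\xi} \simeq \rho_{\xi'} \otimes \rho_f$ to reduce the statement to an assertion about the Galois representation attached to the twisted eigenform $f_\xi$, which has weight $(k, -\mathbf{1})$ and is by hypothesis a $T_{v_0}$-eigenform with nonzero eigenvalue $a_{v_0}$. Since $l' = -l - \mathbf{1}$, we have $\rho_{\xi'}|_{I_{F_{v_0}}} = \prod_{\tau \in \Sigma_{v_0}} \varepsilon_\tau^{l_\tau + 1}$, and a direct computation shows the claimed shape of $\rho_f|_{I_{F_{v_0}}}$ is equivalent to
$$
\rho_{f_\xi}|_{I_{F_{v_0}}} \;\simeq\; \begin{pmatrix} 1 & * \\ 0 & \prod_{\tau \in \Sigma_{v_0}} \varepsilon_\tau^{1 - k_\tau} \end{pmatrix}.
$$
Thus the task reduces to establishing this ``mod $p$ ordinary local-global'' statement for the $T_{v_0}$-ordinary form $f_\xi$ of weight $(k, -\mathbf{1})$.

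The next step is to bring $f_\xi$ into a setting where classical ordinary theory applies. Since $(k, -\mathbf{1})$ is in general non-paritious and therefore has no direct lift to characteristic zero, I would first apply weight-shifting: multiply $f_\xi$ by a product of partial Hasse invariants $\prod_\tau \Ha_\tau^{s_\tau}$ chosen so that the resulting weight $(k', -\mathbf{1})$ is paritious and satisfies the numerical lifting condition \eqref{eq: liftcone-intro}. This multiplication is injective (by the $q$-expansion principle), Hecke-equivariant for $T_v$ and $S_v$ at all $v \nmid p$, and preserves the attached Galois representation. One must separately check that $T_{v_0}$-ordinarity is preserved: since $\Ha_\tau$ is a section that is nonzero on the ordinary locus, the operator $T_{v_0}$ as constructed in \cite[\S 5.4]{Diamond_Kodaira-Spencer} commutes with this multiplication up to a unit, so the shifted form remains $T_{v_0}$-ordinary.

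With a shifted form of paritious weight in the liftable cone, apply the cohomology-vanishing result \Cref{thm: my-vanish} to lift to a classical Hilbert modular eigenform $F$ of weight $(k', -\mathbf{1})$ in characteristic zero. The classical ordinary local-global compatibility for Hilbert modular forms (Wiles, Taylor, Skinner--Wiles, Saito) then yields
$$
\rho_F|_{G_{F_{v_0}}} \;\simeq\; \begin{pmatrix} \psi_1 & * \\ 0 & \psi_2 \end{pmatrix},
$$
with $\psi_2$ unramified and with the prescribed inertial character for $\psi_1$ determined by the Hodge--Tate weights $k'_\tau - 1$ at each $\tau \in \Sigma_{v_0}$. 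Reducing modulo the maximal ideal of $\CO$ and reversing the inertial effect of the Hasse-invariant shift (which acts on the restriction to $I_{F_{v_0}}$ by a product of powers of $\varepsilon_\tau$ matching the chosen exponents $s_\tau$) recovers the target shape of $\rho_{f_\xi}|_{I_{F_{v_0}}}$.

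The main obstacle is ensuring that the paritious weight-shift genuinely preserves $T_{v_0}$-ordinarity: the construction of $T_{v_0}$ at a prime above $p$ is geometrically subtler than the pullback-style definition \eqref{eq: Hecke_pullback} used at primes away from $p$, and compatibility with multiplication by Hasse invariants requires verification at the level of the moduli problem. A cleaner route, which is presumably the one taken in \cite{Diamond-Sasaki_inprep}, would be to work throughout on the ordinary locus of the mod $p$ Hilbert modular variety, where the universal semi-abelian scheme admits a canonical connected-\'etale filtration, and to extract the local reducibility of $\rho_{f_\xi}|_{G_{F_{v_0}}}$ geometrically from that filtration --- thereby avoiding both the lift to characteristic zero and any reduction to paritious weight.
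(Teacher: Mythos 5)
The first thing to note is that the paper does not prove this statement at all: it is imported, up to a change of normalization, from \cite[Corollary 10.7.2]{Diamond-Sasaki}, so there is no internal proof to compare against. Judging your sketch on its own merits: the opening reduction is correct — with $l'=-l-\mathbf{1}$ one has $\rho_{f_\xi}|_{I_{F_{v_0}}}\simeq\bigl(\prod_{\tau\in\Sigma_{v_0}}\varepsilon_\tau^{l_\tau+1}\bigr)\otimes\rho_f|_{I_{F_{v_0}}}$, and the asserted shape of $\rho_f|_{I_{F_{v_0}}}$ is indeed equivalent to $\rho_{f_\xi}|_{I_{F_{v_0}}}\simeq\left(\begin{smallmatrix}1 & *\\ 0 & \prod\varepsilon_\tau^{1-k_\tau}\end{smallmatrix}\right)$. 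The overall strategy (shift into a liftable range, lift to characteristic zero, invoke classical ordinary local--global compatibility, descend) is also the right general shape and is close in spirit to what Diamond--Sasaki do.

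However, there are genuine gaps. The central one is the claim that $T_{v_0}$ ``commutes with multiplication by $\prod_\tau\Ha_\tau^{s_\tau}$ up to a unit.'' This does not follow from $\Ha_\tau$ being nonvanishing on the ordinary locus: for $v_0\mid p$ the operator $T_{v_0}$ is defined via an isogeny correspondence with a weight-dependent normalization ($p^{-f_{v_0}}\psi^*$), and its interaction with $\Ha_\tau$, $\Theta_\tau$ and $q$-expansions is precisely the hard technical content of Diamond--Sasaki's \S 10; it must be proved, not asserted. Second, even granting this, after applying Deligne--Serre you need the characteristic-zero $T_{v_0}$-eigenvalue of the lift to reduce to the mod $p$ eigenvalue $a_{v_0}$ (so that the lift is genuinely ordinary at $v_0$); this is again a weight-dependent compatibility, and the normalizations introduce powers of $p$ once the shifted weight $k'$ is large. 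Third, your final step of ``reversing the inertial effect of the Hasse-invariant shift'' is wrong as stated: multiplication by $\Ha_\tau$ preserves the prime-to-$p$ eigensystem and hence does not change $\rho_f$ at all, so there is nothing to reverse. What you actually need is the observation that the character $\prod_{\tau\in\Sigma_{v_0}}\varepsilon_\tau^{k_\tau-1}$ is invariant under $k\mapsto k+k_{\Ha_\tau}$, which holds because $\varepsilon_\tau^{-1}\varepsilon_{\Fr^{-1}\circ\tau}^{\,p}=1$. Finally, the existence of non-negative integers $s_\tau$ making the shifted weight paritious is a nontrivial lattice condition (the relevant matrix has determinant $p^d-1$) and deserves at least a sentence. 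So the proposal identifies the right route but has a real hole exactly at the point you flagged yourself, and the descent step as written is incorrect.
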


\begin{definition}
  Let $(k,l) \in \Z^\Sigma_{\geq 2}\times\Z^\Sigma$.
  We say $\rho$ is ordinarily geometrically modular at $v_0$ of weight $(k,l)$, if $\rho \simeq \rho_f$ for some
  $f\in M_{k,l}(U;E)$ as in \Cref{thm: ord-geom-rho}.
\end{definition}

\begin{corollary}\label{cor: ord-geom-alg}
  Let $(k,l) \in \Z^\Sigma_{\geq 2}\times\Z^\Sigma$.
  Suppose $k_\tau \leq p+1$ for all $\tau\in \Sigma$ and $k \neq \mathbf{2}$.
  If $\rho$ is ordinarily geometrically modular at $v_0$ of weight $(k,l)$ for some $v_0|p$, then $V_{k, -k-l}^{(v_0)}\in W_{v_0}(\rho)$.
\end{corollary}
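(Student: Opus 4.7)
The plan is to apply \Cref{thm: ord-geom-rho} to the ordinary eigenform $f$ witnessing ordinary geometric modularity, in order to identify the diagonal characters in the reducible description of $\rho|_{G_{F_{v_0}}}$, then to trace through the recipe in \Cref{def: WBDJ}(i) with the choice $J = \Sigma_{v_0}$, and finally to invoke \Cref{rmk: J=Sigmav} to handle the extension-class condition.

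By hypothesis there is an eigenform $f \in M_{k,l}(U;E)$ with $\rho \simeq \rho_f$ and a character $\xi$ of weight $-l - \mathbf{1}$ with $T_{v_0} f_\xi = a_{v_0} f_\xi$ and $a_{v_0} \neq 0$. The ordinary condition together with \Cref{thm: ord-geom-rho} gives that $\rho|_{G_{F_{v_0}}}$ is reducible,
\[ \rho|_{G_{F_{v_0}}} \simeq \begin{pmatrix} \chi_1 & * \\ 0 & \chi_2 \end{pmatrix}, \]
with
\[ \chi_1|_{I_{F_{v_0}}} = \prod_{\tau \in \Sigma_{v_0}} \varepsilon_\tau^{-1-l_\tau}, \qquad \chi_2|_{I_{F_{v_0}}} = \prod_{\tau \in \Sigma_{v_0}}\varepsilon_\tau^{-k_\tau - l_\tau}. \]
Since $2 \leq k_\tau \leq p+1$, the representation $V_{k,-k-l}^{(v_0)}$ is an irreducible Serre weight. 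Setting $(b,d) = (k, -k-l)$ and $J = \Sigma_{v_0}$ in \Cref{def: WBDJ}(i), the product $\prod_{\tau \in J}\varepsilon_\tau^{b_\tau - 1}\prod_{\tau \in \Sigma_{v_0}}\varepsilon_\tau^{d_\tau}$ collapses to $\prod_\tau\varepsilon_\tau^{-1-l_\tau} = \chi_1|_{I_{F_{v_0}}}$, while the analogous product with $\tau$ ranging over the empty set $\Sigma_{v_0}\setminus J$ equals $\chi_2|_{I_{F_{v_0}}}$. Therefore $(V_{k,-k-l}^{(v_0)}, \Sigma_{v_0}) \in W'_{v_0}(\chi_1, \chi_2)$.

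It then remains to check that the extension class $c \in H^1(G_{F_{v_0}}, \Fpbar(\chi_1\chi_2^{-1}))$ of $\rho|_{G_{F_{v_0}}}$ lies in $L_{V,\Sigma_{v_0}}$. When $V_{k,-k-l}^{(v_0)}$ is not a twist of the trivial representation---equivalently, when $k_\tau > 2$ for some $\tau \in \Sigma_{v_0}$---\Cref{rmk: J=Sigmav} gives $L_{V,\Sigma_{v_0}} = H^1(G_{F_{v_0}}, \Fpbar(\chi_1\chi_2^{-1}))$, so the condition holds automatically. The main obstacle is the edge case $k|_{\Sigma_{v_0}} = \mathbf{2}$ (allowed by the global hypothesis $k \neq \mathbf{2}$ when there is another prime above $p$): here $L_{V,\Sigma_{v_0}}$ has codimension one and coincides with the peu ramifié subspace, so one would need to exploit the ordinariness of $f_\xi$ at $v_0$---by lifting $f_\xi$ to a characteristic-zero form that is ordinary at $v_0$, whose local Galois representation is crystalline with small Hodge-Tate weights and hence peu ramifié---to conclude that $c$ lies in $L_{V,\Sigma_{v_0}}$.
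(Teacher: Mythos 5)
Your main argument is exactly the paper's: apply \Cref{thm: ord-geom-rho} to read off $\chi_1|_{I_{F_{v_0}}}$ and $\chi_2|_{I_{F_{v_0}}}$, observe that with $b=k$, $d=-k-l$ and $J=\Sigma_{v_0}$ the recipe of \Cref{def: WBDJ}(i) produces precisely these characters, so $(V^{(v_0)}_{k,-k-l},\Sigma_{v_0})\in W'_{v_0}(\chi_1,\chi_2)$, and then use \Cref{rmk: J=Sigmav} to dispose of the extension-class condition. The paper's proof is a one-line citation of these two ingredients, and the role of the hypothesis $k\neq\mathbf{2}$ is exactly to rule out the ``twist of trivial'' case in which $L_{V,\Sigma_{v_0}}$ has positive codimension.

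The one place you diverge is the edge case you flag, where $k_\tau=2$ for all $\tau\in\Sigma_{v_0}$ but $k\neq\mathbf{2}$ globally (possible only when $p$ has more than one prime above it). You are right that \Cref{rmk: J=Sigmav} alone does not settle this case; the paper's intended reading of the hypothesis is evidently that $V^{(v_0)}_{k,-k-l}$ is not a twist of the trivial representation, which is automatic in every application in the paper (there $v_0$ is the unique prime above $p$, so $\Sigma_{v_0}=\Sigma$). However, your sketched repair for this case does not go through as stated: lifting $f_\xi$ to a characteristic-zero form is obtained in this paper only via the cohomology vanishing of \Cref{thm: my-lift-goem-alg}, whose numerical condition \eqref{eq: liftcone} reads $p(k_\tau-2)>k_{\Fr^{-1}\circ\tau}-2$ and therefore fails whenever some $k_\tau=2$; ordinarity of the mod $p$ form does not by itself produce an ordinary crystalline lift. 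So if that residual case is regarded as in scope, it remains open after your argument; if, as the paper intends, $k\neq\mathbf{2}$ is read as excluding it, your proof coincides with the paper's.
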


\begin{proof}
  This is directly by \Cref{thm: ord-geom-rho} and \Cref{rmk: J=Sigmav}.
\end{proof}

\subsubsection{Geometric Serre weight conjecture}

Let $\leq_\Ha$ a partial
ordering on $\Z^\Sigma$ defined by $k' \leq_\Ha k$ if and only if $k - k' = \sum_{\tau\in \Sigma} n_\tau k_{\Ha_\tau}$, for some $n_\tau \geq 0$.
It is shown in \cite{Diamond-Kassaei_17} that if $\rho$ is geometrically modular of weight $k$, then $\rho$ is geometrically modular of a weight $k' \leq_\Ha k$ and $k'$ lies in a certain cone, which is called the \emph{minimal cone}.
\begin{definition}
  Define the minimal cone in $\Z^\Sigma$ be 
  $$
  \Xi_{\min} := \{k\in \Z^\Sigma \mid pk_\tau \geq k_{\Fr^{-1} \circ \tau}, \text{ for all } \tau\in \Sigma\},
  $$
  and let $\Ximinplus := \Ximin \cap \Z^\Sigma_{\geq 1}$.
\end{definition}

\begin{theorem}\cite[Theorem 5.1]{Diamond-Kassaei_17}\label{thm: DK-minimalcone}
  Let $k\in \Z^\Sigma$ such that $pk_\tau < k_{\sigma^{-1}\circ\tau}$ for some $\tau\in \Sigma$.
  Then we have a Hecke-equivariant isomorphism
  $$M_{k-k_{\Ha_\tau},l}(U, E) \xrightarrow{\sim} M_{k,l}(U, E) $$ induced by multiplication by $\Ha_\tau$.
\end{theorem}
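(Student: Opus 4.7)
The plan is to establish the isomorphism by combining the obvious injectivity of multiplication by the nonzero section $\Ha_\tau$ with a vanishing result for the restriction of $\CA^{k,l}_E$ to the Goren--Oort stratum $Z_{\{\tau\}}$. The short exact sequence
\begin{equation*}
  0 \to \CA^{\tor, k-k_{\Ha_\tau}, l}_E \xrightarrow{\,\cdot\,\Ha_\tau\,} \CA^{\tor, k, l}_E \to \left.\CA^{\tor, k, l}_E\right|_{\overline{Z}_{\{\tau\}}} \to 0
\end{equation*}
on $\YUtor_E$, where $\overline{Z}_{\{\tau\}}$ denotes the closure of the vanishing locus of $\Ha_\tau$, reduces the theorem to the vanishing $H^0(\overline{Z}_{\{\tau\}}, \CA^{\tor, k, l}_E|_{\overline{Z}_{\{\tau\}}}) = 0$ under the hypothesis $pk_\tau < k_{\Fr^{-1}\circ\tau}$. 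Hecke-equivariance of the resulting isomorphism is automatic from the Hecke-invariance of $\Ha_\tau$.

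The heart of the argument is the generic case $\Fr^{-1}\circ\tau \neq \tau$. By the Tian--Xiao description recalled in \Cref{thm: strata-HMV}, $Z_{\{\tau\}}$ is realised as the $\P^1$-bundle $\pi: \P(\CV^B_{\Fr^{-1}\circ\tau}) \to Y^B$, with $\omega_{\Fr^{-1}\circ\tau}|_{Z_{\{\tau\}}}$ isomorphic to the tautological line bundle twisted by a pullback from $Y^B$. On each $\P^1$-fiber $F$ this gives $\deg(\omega_{\Fr^{-1}\circ\tau}|_F) = -1$, while $\deg(\omega_\sigma|_F) = 0$ for $\sigma \neq \tau, \Fr^{-1}\circ\tau$ and $\deg(\delta_\sigma|_F) = 0$ for every $\sigma$, since these bundles descend from $Y^B$. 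The remaining degree is pinned down by Kodaira--Spencer: the identity $\Omega^d_{Y_U/\CO} \cong \CA^{\mathbf{2}, \mathbf{-1}}$, combined with the tangent-normal sequence for $F \subset Z_{\{\tau\}} \subset Y_U$ and the identification of the normal bundle $N_{Z_{\{\tau\}}/Y_U} \cong \CA^{k_{\Ha_\tau}, \mathbf{0}}|_{Z_{\{\tau\}}}$ coming from the fact that $\Ha_\tau$ cuts out $Z_{\{\tau\}}$, forces $\deg(\omega_\tau|_F) = p$. Summing gives
\begin{equation*}
  \deg\bigl(\CA^{k,l}_E|_F\bigr) = p\,k_\tau - k_{\Fr^{-1}\circ\tau},
\end{equation*}
which is at most $-1$ by hypothesis, so $\pi_*(\CA^{k,l}_E|_{Z_{\{\tau\}}}) = 0$ and hence $H^0(Z_{\{\tau\}}, \CA^{k,l}_E|_{Z_{\{\tau\}}}) = 0$. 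The extension to $\overline{Z}_{\{\tau\}}$ in the toroidal compactification is handled either by extending the Goren--Oort description across the cuspidal boundary or by applying the Koecher principle once the vanishing is known on the open stratum away from the boundary divisor.

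In the degenerate case $\Fr^{-1}\circ\tau = \tau$, the hypothesis reduces to $k_\tau < 0$, and both source and target of multiplication by $\Ha_\tau$ vanish: a nonzero $f \in M_{k,l}(U;E)$ would, upon multiplication by arbitrarily high powers of $\Ha_\tau$, produce forms of arbitrarily large $\tau$-weight that are divisible by correspondingly high powers of $\Ha_\tau$, incompatible with the finite-dimensionality dictated by $q$-expansions.

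The principal obstacle is the degree computation $\deg(\omega_\tau|_F) = p$, which packages the Frobenius--Verschiebung structure on the universal abelian scheme restricted to the Goren--Oort stratum. Once this number is in hand, the vanishing reduces to standard projective-bundle cohomology, and the Kodaira--Spencer derivation above keeps the argument self-contained.
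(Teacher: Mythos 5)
A preliminary remark: the paper does not prove this statement itself --- it is imported verbatim from Diamond--Kassaei --- so there is no internal proof to compare against; the closest internal touchstone is the computation in \S\ref{subsub: big_wt}, where the paper restricts a form to $Z_0$ and carries out exactly the kind of fibrewise degree bookkeeping you propose. In the generic case $\Fr^{-1}\circ\tau\neq\tau$ your argument is correct and is essentially the standard one: injectivity of multiplication by $\Ha_\tau$ is clear, the cokernel is supported on $Z_{\{\tau\}}$ (which is disjoint from $D_\infty$ because the $q$-expansion of $\Ha_\tau$ at every cusp is $1$, so your worry about extending across the boundary is moot), and the vanishing of $H^0$ on $Z_{\{\tau\}}$ follows from the fibre degree $pk_\tau-k_{\Fr^{-1}\circ\tau}<0$ on the $\P^1$-bundle of \Cref{thm: strata-HMV}. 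Your derivation of $\deg(\omega_\tau|_F)=p$ via Kodaira--Spencer and adjunction is a legitimate alternative to the direct route through the exact sequence $0\to\omega_\tau\to H^1_{\dR}|_{Z_{\{\tau\}}}\to\omega_{\Fr^{-1}\circ\tau}^{\otimes p}|_{Z_{\{\tau\}}}\to 0$ used in \S\ref{subsub: big_wt}; both routes quietly use that the remaining $\omega_\sigma$ and all $\delta_\sigma$ have degree zero on the fibres (true --- the $\delta_\sigma$ are even free --- but not contained in the statement of \Cref{thm: strata-HMV}, so it should be flagged as part of the full Tian--Xiao description).

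The genuine gap is the case $\Fr^{-1}\circ\tau=\tau$, which you dispose of in three lines. This case is not marginal: it is the \emph{only} case when $p$ splits completely, one of the two settings in which the paper actually needs the theorem (it enters \Cref{thm: split_geom-alg-2} through \Cref{thm: wt-in-min-cone}). The argument you give --- that a nonzero $f$ with $k_\tau<0$ would, after multiplication by high powers of $\Ha_\tau$, yield forms of large weight divisible by high powers of $\Ha_\tau$ --- contradicts nothing; such forms exist in abundance. The contradiction one wants runs the other way: one must show $f$ itself is divisible by $\Ha_\tau$, hence (since $k_\tau-(p-1)$ is again negative) by $\Ha_\tau^n$ for every $n$, violating the existence of the maximal exponent in \Cref{def: min_wt_f}. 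But establishing that first divisibility is precisely the content of the theorem in this case, and your fibrewise mechanism breaks down there: \Cref{thm: strata-HMV} explicitly excludes $\Fr^{-1}\circ\tau=\tau$, the normal bundle of $Z_{\{\tau\}}$ is now $\omega_\tau^{\otimes(p-1)}|_{Z_{\{\tau\}}}$, and the same adjunction identity you use shows that on a rational curve $F$ in $Z_{\{\tau\}}$ one has $2\sum_\sigma\deg(\omega_\sigma|_F)+(p-1)\deg(\omega_\tau|_F)=-2$, which is insoluble if all $\omega_\sigma$ with $\sigma\neq\tau$ were trivial on $F$. So some other $\omega_\sigma$ has nonzero degree on $F$, and the sign of $\deg(\CA^{k,l}_E|_F)$ is no longer read off from $k_\tau<0$ alone; moreover for $[F:\Q]=1$ the stratum is a finite set of points and the $\P^1$ argument is vacuous. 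Closing this case requires either the Goren--Oort description of the stratum $Z_{\{\tau\}}$ for a Frobenius-fixed $\tau$ with its own degree computation, or a separate argument (e.g.\ the classical $f^{p-1}\Ha_\tau^{-k_\tau}$ trick when all the relevant weights are negative); as written, the proposal does not prove the theorem in this case.
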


Let $f\in M_{k,l}(U, \Fpbar)$ be non-zero.
There is a unique maximal element in the following set by the partial ordering $\leq_{\Ha}$ (\emph{cf}.\,\cite{Andreatta-Goren05})
\begin{align*}
  \left\{
    n = (n_\tau) \in \Z_{\geq 0}^\Sigma \mid f = g \prod_{\tau\in \Sigma}\Ha_{\tau}^{n_\tau} \text{ for some } g\in M_{k - \Sigma n_\tau k_{\Ha_\tau}, l}(U, \Fpbar)
  \right\}.
\end{align*}

\begin{definition}\label{def: min_wt_f}
  Define $$\Phi(f) := k - \Sigma_\tau n_\tau k_{\Ha_\tau}$$
  for the maximal element $n = (n_\tau)$ as above.
\end{definition}

Then \Cref{thm: DK-minimalcone} immediately implies the following result.

\begin{theorem}\cite[Corollary 8.2]{Diamond-Kassaei_17}\label{thm: wt-in-min-cone}
  For $f\in M_{k,l}(U, \Fpbar)$ non-zero, we have
  $\Phi(f)\in \Xi_{\min}$.
\end{theorem}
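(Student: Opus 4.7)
The plan is to argue by contradiction using Theorem \ref{thm: DK-minimalcone}. Suppose $\Phi(f) = k - \sum_\tau n_\tau k_{\Ha_\tau}$ is not contained in $\Ximin$; call this weight $k'$. Then by definition of $\Ximin$ there exists some $\tau_0 \in \Sigma$ such that $pk'_{\tau_0} < k'_{\Fr^{-1}\circ\tau_0}$. By the definition of $\Phi(f)$, we can write $f = g \prod_{\tau\in \Sigma}\Ha_\tau^{n_\tau}$ for some $g \in M_{k', l}(U, \Fpbar)$ with $g$ non-zero (since $f$ is non-zero and the $\Ha_\tau$ are non-zero-divisors).

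The key step is to apply Theorem \ref{thm: DK-minimalcone} with weight $k'$ and place $\tau_0$. Strictly speaking the cited theorem is stated over $E$, but since $M_{k',l}(U, \Fpbar) = M_{k',l}(U, E) \otimes_E \Fpbar$ by flat base change of the cohomology of a line bundle on the quasi-projective scheme $Y_{U, E}$, the Hecke-equivariant isomorphism
\[
M_{k' - k_{\Ha_{\tau_0}}, l}(U, \Fpbar) \xrightarrow{\;\sim\;} M_{k', l}(U, \Fpbar), \qquad h \longmapsto \Ha_{\tau_0}\cdot h,
\]
extends over $\Fpbar$. Hence $g = \Ha_{\tau_0}\cdot h$ for some $h \in M_{k' - k_{\Ha_{\tau_0}}, l}(U, \Fpbar)$, which gives
\[
f = h \cdot \Ha_{\tau_0}^{n_{\tau_0}+1} \prod_{\tau\neq\tau_0} \Ha_\tau^{n_\tau}.
\]
This exhibits a strictly larger element than $n$ in the set defining $\Phi(f)$ (increase $n_{\tau_0}$ by one), contradicting the maximality of $n$ with respect to $\leq_\Ha$. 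Therefore $\Phi(f) \in \Ximin$.

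The only non-routine point is confirming that the maximal element $n$ in the defining set is well-defined, but this follows because $\Ha_\tau$ is a non-zero section of a non-trivial line bundle and repeatedly factoring out $\Ha_\tau$'s must terminate (equivalently, the partial order $\leq_\Ha$ is well-founded on the set of weights of non-zero sections dividing $f$, as already used in \cite{Andreatta-Goren05}). No other obstacles arise; the theorem is essentially an immediate corollary of Theorem \ref{thm: DK-minimalcone} combined with the maximality in the definition of $\Phi(f)$.
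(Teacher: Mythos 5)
Your argument is correct and is precisely the deduction the paper has in mind: it presents this statement as an immediate consequence of \Cref{thm: DK-minimalcone}, and your contradiction argument (a violation of the $\Ximin$ inequality at $\tau_0$ forces $g$ to be divisible by $\Ha_{\tau_0}$, contradicting the maximality of $n$ in the definition of $\Phi(f)$) is exactly how that implication is spelled out. The side remarks about base change from $E$ to $\Fpbar$ and the well-foundedness of the divisibility order are appropriate routine checks and do not change the argument.
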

In the forthcoming paper \cite{Diamond-Sasaki_inprep}, it is shown that if $f$ is a non-zero cuspform, then $\Phi(f) \in \Ximinplus$.

\begin{conjecture}[Diamond-Sasaki]\label{conj: DS}
  Let $\rho: G_F \rightarrow \GL_2(\Fpbar)$ be an irreducible, continuous, totally odd representation. 
  For any $l \in \Z^\Sigma$, there exists $k_{\min} = k_{\min}(\rho, l) \in \Ximinplus$ such that 
  $\rho$ is geometrically modular of weight $(k, l)$ if and only if $k \geq_\Ha k_{\min}$.
  Furthermore, if $k \in \Ximinplus$, then $k \geq_\Ha k_\min$ if and only if $\rho|_{G_{F_v}}$ has a crystalline lift of Hodge-Tate type $(k_\tau+l_\tau, l_\tau+1)_{\tau\in\Sigma_v}$ for all $v|p$.
\end{conjecture}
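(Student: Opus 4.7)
The plan is to combine the weight-shifting operators (partial Hasse invariants $\Ha_\tau$ and partial Theta operators $\Theta_\tau$) with the main geometric-to-algebraic modularity result of this paper and the Buzzard-Diamond-Jarvis conjecture (\Cref{thm: BDJ-conj}). First I would establish existence of a minimal weight: if $\rho$ is geometrically modular of weight $(k,l)$, witnessed by an eigenform $f \in M_{k,l}(U;E)$, then by \Cref{thm: wt-in-min-cone} and its cuspidal strengthening, $\Phi(f) \in \Ximinplus$ is geometrically modular, giving a candidate minimum $k_0$. The direction $k\geq_\Ha k_0 \Rightarrow$ geometric modularity of weight $(k,l)$ is then immediate by multiplying $f$ by the appropriate product of $\Ha_\tau$'s.

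The harder direction is uniqueness of $k_\min$. Given two candidate minima $k_0,k_0' \in \Ximinplus$, I would apply suitable products of $\Theta_\tau$'s to each to move both weights into the liftability cone $p(k_\tau-2)>k_{\Fr^{-1}\circ\tau}-2$, where characteristic-zero lifts exist and Jacquet--Langlands gives access to automorphic forms. The divisibility criterion \Cref{thm: Theta-Hasse-divisibility} together with the Grothendieck-group relations of \cite{Reduzzi15} would then allow a Jordan-Holder analysis of $V_{k, -k-l}$ that pins down the common minimum. Alternatively, one may hope to characterize $k_\min$ directly in terms of $\rho|_{G_{F_v}}$ (as in the second half of the conjecture), which makes uniqueness tautological.

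For the crystalline lift characterization, the plan is as follows. In one direction, the main theorem of the present paper (geometric modularity implies algebraic modularity for $k\in\Ximinplus$) combined with the BDJ conjecture (\Cref{thm: BDJ-conj}) translates geometric modularity of weight $(k,l)$ into the assertion that some Jordan-Holder factor of $V_{k,-k-l}$ lies in $W^\BDJ(\rho)$, and the explicit recipe in \Cref{def: WBDJ} — interpreted via Fontaine-Laffaille theory in the unramified setting — gives exactly the crystalline lift of Hodge-Tate type $(k_\tau+l_\tau, l_\tau+1)_{\tau\in\Sigma_v}$. In the converse direction, one produces a geometrically modular form of minimal weight from the crystalline lift: the ordinary case \Cref{thm: ord-geom-rho} provides eigenforms in the liftability cone, and then inductive application of the partial Theta operators $\Theta_\tau$, combined with non-vanishing (\Cref{lem: Theta(f)nonzero}) and the divisibility criterion \Cref{thm: Theta-Hasse-divisibility}, descends the weight to $k_\min$.

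The main obstacles will be three. First, uniqueness of $k_\min$ is genuinely subtle because $\leq_\Ha$ is only a partial order; the argument for weights like $k=\mathbf{2}$ or $(2,p+1)$ appears to require restriction to Goren-Oort strata via \Cref{thm: strata-HMV} rather than pure weight shifting, so a complete proof will likely mix both techniques and perhaps induct on the number of infinite places where the auxiliary quaternion algebra ramifies. Second, the current geometric-to-algebraic results are restricted to totally split or inert quadratic $p$ under mild Taylor-Wiles hypotheses, so the full conjecture requires substantial extension of this step. Third, the Fontaine-Laffaille translation between $W^\BDJ(\rho)$ and crystalline lifts degenerates at boundary weights $k_\tau\in\{2,p+1\}$ and in the irreducible local case \eqref{eq: irreducible-case}, so the endpoint cases of the Hodge-Tate type description will need a separate analysis using the explicit form of $W_v(\rho)$ rather than a black-box appeal.
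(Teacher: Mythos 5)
The statement you are attempting to prove is stated in the paper as \Cref{conj: DS}, i.e.\ as a \emph{conjecture}, not a theorem: the paper offers no proof of it, only the remark that the first assertion is ``suggested by'' \Cref{thm: DK-minimalcone} and the second by the Breuil--M\'ezard conjecture. The introduction attributes a proof of the first assertion (existence of $k_{\min}$, under mild technical hypotheses) to the forthcoming work \cite{Diamond-Sasaki_inprep}, and the second assertion remains open in general. So there is no argument in the paper to compare yours against, and the real question is whether your proposal closes the conjecture. It does not.

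The gaps are the ones you yourself flag, and they are fatal rather than technical. (1) The existence-and-minimality statement is not just ``uniqueness of a minimal element'': it asserts that the set of $k$ with $\rho$ geometrically modular of weight $(k,l)$ is exactly the upward cone $\{k : k \geq_\Ha k_{\min}\}$. The upward direction is indeed immediate from multiplication by $\Ha_\tau$, but the downward direction --- that \emph{every} geometrically modular weight dominates a single common $k_{\min}$ --- is not produced by your plan of applying $\Theta_\tau$'s and comparing Jordan--H\"older factors; \Cref{thm: DK-minimalcone} only lets you strip Hasse invariants off a \emph{given} form until its weight lands in $\Ximin$, and different eigenforms giving rise to $\rho$ could a priori land at incomparable minimal weights. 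Nothing in the paper, and nothing in your sketch, rules this out. (2) Your route to the crystalline-lift characterization passes through \Cref{thm: BDJ-conj} and a Fontaine--Laffaille dictionary, but this only addresses Serre weights (i.e.\ $2 \le k_\tau \le p+1$) under a Taylor--Wiles hypothesis, whereas the conjecture quantifies over all $k \in \Ximinplus$; for larger $k$ the equivalence between algebraic modularity and existence of a crystalline lift of the stated Hodge--Tate type is precisely an unproven instance of Breuil--M\'ezard, which you would be assuming rather than proving. (3) The geometric-to-algebraic input you invoke is established in this paper only for $p$ totally split or for $p$ inert in a real quadratic field, with parity of image hypotheses at $p=5$. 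What you have written is a reasonable research program consistent with the methods of the paper, but each of its three pillars rests on statements that are themselves open at the required level of generality, so it cannot be accepted as a proof of \Cref{conj: DS}.
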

The above conjecture can be viewed as a geometric Serre weight conjecture. 
The first statement is suggested by \Cref{{thm: DK-minimalcone}} and the second is suggested by the Breuil-M\'ezard Conjecture.

\section{Algebraic and Geometric modularity}\label{sec: alg_geo_mod}

In this section, we discuss the relation between the algebraic and geometric modularity. 

\begin{conjecture}[Diamond-Sasaki]\label{conj: geom-alg}
  Let $\rho: \Gal(\Fbar/F) \rightarrow \GL_2(\Fpbar)$ be an irreducible, continuous, totally odd representation and $(k,l) \in \Z^\Sigma_{\geq 2} \times \Z^\Sigma$.
  If $\rho$ is algebraically modular of weight $(k,l)$, then it is geometrically modular of weight $(k,l)$; the converse holds if in addition $k\in \Ximinplus$.
 \end{conjecture}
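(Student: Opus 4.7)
The plan is to establish the converse direction (geometric modularity implies algebraic modularity when $k \in \Ximinplus$), since the forward direction is already known by work of Diamond--Sasaki. The strategy proceeds in three phases: cohomology vanishing to lift forms in a sub-cone of $\Ximinplus$, weight shifting via $\Ha_\tau$ and $\Theta_\tau$ to reduce arbitrary weights to that sub-cone, and Jordan--H\"older analysis via BDJ to identify which factor $\rho$ is actually modular of. The hard cases are handled separately by restriction to Goren--Oort strata.

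First, I would prove a cohomology vanishing theorem on $\YUtor$, adapting the Lan--Suh method (originally for parallel weights) to non-parallel weights using Esnault--Viehweg vanishing together with Deding Yang's ampleness result for automorphic line bundles on mod $p$ Hilbert modular varieties. This yields liftability of cusp forms when $k$ satisfies the numerical condition $p(k_\tau - 2) > k_{\Fr^{-1}\circ\tau} - 2$ for all $\tau \in \Sigma$. Combined with the twist \eqref{eq: twist_xi_k/2} by a non-algebraic character $\xi$ of weight $l + k/2$ (which exists in characteristic $p$ even in the non-paritious case), this reduces the question to lifting an eigensystem for weight $k$ Hilbert modular forms to characteristic zero, where classical Jacquet--Langlands transfer to a suitable quaternion algebra followed by mod $p$ reduction and untwisting gives algebraic modularity of weight $(k,l)$.

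Next, for $k \in \Ximinplus$ outside the liftable cone, I would apply the weight shifting method. Using \Cref{lem: Theta(f)nonzero} to choose a normalised eigenform $f$ with $\Theta_\tau f \neq 0$ for all $\tau$, I form two auxiliary eigenforms
\[
f' = \prod_{\tau\in\Sigma} \Ha_\tau^{s_\tau} \Theta_\tau^{t_\tau} f, \qquad f'' = \prod_{\tau\in\Sigma} \Ha_\tau^{u_\tau} \Theta_\tau^{v_\tau} f,
\]
with $(s,t)$ and $(u,v)$ chosen so that both resulting weights $(k',l')$ and $(k'',l'')$ lie in the liftable cone, while producing disjoint collections of Jordan--H\"older constituents in $V_{k',-k'-l'}$ and $V_{k'',-k''-l''}$. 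By the liftability step together with \Cref{lem: mod_of_JH}, $\rho$ is modular of some factor in each of $\JH(V_{k',-k'-l'})$ and $\JH(V_{k'',-k''-l''})$. Using Reduzzi's Grothendieck group formulas to compute these sets explicitly, and invoking the BDJ conjecture (\Cref{thm: BDJ-conj}) together with the Dembele--Diamond--Roberts recipe, I would enumerate all candidate shapes of $W^\BDJ(\rho)|_{I_{F_v}}$; for $F$ totally real with $p$ totally split, every candidate intersects $\JH(V_{k,-k-l})$ nontrivially. Running BDJ in the reverse direction under the mild Taylor--Wiles hypothesis (plus the projective image condition when $p=5$) then yields algebraic modularity of $(k,l)$.

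The main obstacle is the inert quadratic case at small weights, particularly $k = (2, p+1)$, where the weight-shifting produces a common factor in $\JH(V_{k',-k'-l'}) \cap \JH(V_{k'',-k''-l''})$ that lies outside $\JH(V_{k,-k-l})$ and cannot be excluded by character recipes alone. To circumvent this, I would restrict $f$ to the zero locus of $\Ha_{\tau_0}$ and invoke \Cref{thm: strata-HMV}: this stratum is a $\mathbb{P}^1$-bundle over the quaternionic Shimura variety for a definite $B/F$, so pushing $f$ through the stratification produces a definite quaternionic eigenform $f^B$ whose Hecke eigensystem still gives $\rho$, exhibiting algebraic modularity in an explicit weight $(k^B, l^B)$ computable from the Tian--Xiao description. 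When $pk_0 \geq k_1 \geq p+1$, a Grothendieck-group computation shows $V_{k^B,-k^B-l^B}$ is a subquotient of $V_{k,-k-l}$, giving algebraic modularity of $(k,l)$ via \Cref{lem: mod_of_JH}. The induced-representation case where the standard Taylor--Wiles hypothesis fails is handled by direct analysis of quadratic base change, which goes through for $p \geq 5$. Generalisation beyond the totally split and quadratic inert settings would require systematising the Jordan--H\"older bookkeeping and inducting on the number of infinite places where the auxiliary quaternion algebra ramifies, which I view as the natural next step but outside the current scope.
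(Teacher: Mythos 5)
Your proposal reproduces, essentially step for step, the strategy the paper itself uses for the cases it actually establishes (Esnault--Viehweg vanishing plus Yang's ampleness for the liftable sub-cone, $\Ha_\tau$/$\Theta_\tau$ weight shifting with Reduzzi's Grothendieck-group relations and the BDJ/Demb\'el\'e--Diamond--Roberts analysis, and restriction to the Goren--Oort stratum for the problematic inert-quadratic weights such as $k=(2,p+1)$), and you correctly flag that this only settles the conjecture for $p$ totally split and for $p$ inert in a real quadratic field, not in general. This matches the paper's treatment, so no further comparison is needed.
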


\begin{proposition}[\cite{Diamond-Sasaki, Diamond-Sasaki_inprep}]\label{prop: alg-geom_DS}
  Let $\rho: \Gal(\Fbar/F) \rightarrow \GL_2(\Fpbar)$ be irreducible, continuous, totally odd.
  \begin{enumerate}
    \item If $\rho$ is geometrically modular of some weight, then it is algebraically modular of some (algebraic) weight.
    \item Let $(k,l)\in \Z^\Sigma_{\geq 2}\times \Z^\Sigma$. If $\rho$ is algebraically modular of weight $(k, l)$, then $\rho$ is geometrically modular of the same weight $(k, l)$.
  \end{enumerate}
\end{proposition}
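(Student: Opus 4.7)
My plan is to handle the two parts separately. For (i), the approach is weight-shifting followed by lifting and Jacquet-Langlands transfer. For (ii), I would follow the Diamond-Sasaki argument of lifting algebraic modularity to characteristic zero, applying Jacquet-Langlands in reverse, and reducing mod $p$, with a character twist to handle non-paritious weights.

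For (i), starting from geometric modularity of $\rho$ at $(k,l)$, I would first invoke \Cref{lem: Theta(f)nonzero} to assume $\rho$ arises from a normalised eigenform $f \in M_{k,l}(U_1(\frakn); E)$. I would then apply a finite sequence of partial theta operators $\Theta_\tau$ and multiplications by partial Hasse invariants $\Ha_\tau$, both Hecke-equivariant in $T_v, S_v$ for $v \nmid p$, to obtain a non-zero eigenform $f'$ of weight $(k',l')$ with $k'_\tau \geq 2$, with $(k', l')$ paritious, and with
\begin{align*}
  p(k'_\tau - 2) > k'_{\Fr^{-1}\circ\tau} - 2 \quad \text{for all } \tau \in \Sigma.
\end{align*}
The cohomology vanishing in this cone (\Cref{thm: my-vanish}) then lifts the Hecke eigensystem of $f'$ to characteristic zero, and \Cref{thm: global J-L} transfers the eigensystem to a suitable definite or indefinite quaternion algebra $B$; reducing modulo $p$ produces an element of $M^B_{k', l'}(U, \Fpbar)[\frakm_\rho]$, establishing algebraic modularity of $\rho$ at the algebraic weight $(k', l')$.

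For (ii), which is essentially proved in \cite{Diamond-Sasaki} (paritious case) and \cite{Diamond-Sasaki_inprep} (non-paritious case), the plan is to adapt their argument to our Hecke convention. In the paritious case, I would lift the Hecke eigensystem on $M^B_{k,l}(U; \Fpbar)$ to characteristic zero via a Deligne-Serre type argument, apply inverse Jacquet-Langlands (\Cref{thm: global J-L}) to realise the eigensystem as that of a classical Hilbert cusp form of weight $(k,l)$, and then reduce modulo $p$. For non-paritious $(k,l)$, I would twist by a mod $p$ character $\xi$ of an appropriate weight $l'$ so that $(k, l+l')$ becomes paritious: by \eqref{eq: twist_xi_B}, $\xi \otimes \rho$ is algebraically modular of the paritious weight; applying the paritious case gives geometric modularity of $\xi \otimes \rho$ at $(k, l+l')$; finally, untwisting via \eqref{eq: twist_xi} recovers geometric modularity of $\rho$ at $(k,l)$.

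The main obstacle in (i) is ensuring that the weight-shifting yields a non-zero form at the desired target weight in the correct Hecke eigenspace. This is precisely controlled by \Cref{thm: Theta-Hasse-divisibility}; the normalisation of $f$ guarantees $\Theta_\tau f \neq 0$ for all $\tau$, which lets one shift in directions that keep the form non-zero and eventually land inside the liftable cone. The main obstacle in (ii) is the absence of classical Hilbert modular forms in non-paritious weight in characteristic zero, which is circumvented by the character-twisting construction above; the subtle point is to choose a character $\xi$ that is simultaneously compatible with the twisting isomorphisms \eqref{eq: twist_xi_B} and \eqref{eq: twist_xi} on the algebraic and geometric sides, which is carried out in \cite{Diamond-Sasaki_inprep}.
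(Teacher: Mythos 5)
First, a point of reference: the paper does not prove this proposition. It is quoted from \cite{Diamond-Sasaki} and \cite{Diamond-Sasaki_inprep} and used as an input, so there is no internal proof to compare against. Judged on its own terms, your outline for (i) is essentially sound: weight-shift with $\Ha_\tau$ and $\Theta_\tau$ (non-vanishing preserved since $\Theta_\tau$ of a normalised eigenform is again a normalised eigenform), then lift via the cohomology vanishing and transfer by Jacquet--Langlands. The insistence on landing at a \emph{paritious} $(k',l')$ is an unnecessary detour, though: \Cref{thm: my-lift-goem-alg} already handles non-paritious weights in the liftable cone via $\CL^k$, and arranging paritiousness by theta/Hasse shifts is a fiddly combinatorial exercise (the total parity $\sum_\tau k_\tau \bmod 2$ is invariant under both operators) that you would need to carry out rather than assert.

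The genuine gap is in your treatment of (ii) for non-paritious weights. You propose to twist by a character of integral weight $l'$ so that $(k,l+l')$ becomes paritious. But paritiousness of $(k,l+l')$ means $k_\tau + 2(l_\tau + l'_\tau)$ is independent of $\tau$, and reducing mod $2$ this forces all $k_\tau$ to have the same parity; no integral $l'$ exists when the $k_\tau$ have mixed parities, whereas part (ii) is asserted for arbitrary $k\in\Z^\Sigma_{\geq 2}$. This is exactly the gap between \cite{Diamond-Sasaki} (same-parity case) and \cite{Diamond-Sasaki_inprep}. The correct device, which the paper sets up in \S\ref{subsec: twisting} and deploys in the proof of \Cref{thm: my-lift-goem-alg}, is a twist by a character $\xi$ with $\xi(\mu)=\mu^{l+k/2}$ --- half-integral exponents, which is why $L$ is enlarged to contain $\tau(\mu)^{1/2}$. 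Via \eqref{eq: twist_xi_B_k/2} this identifies $M^B_{k,l}(U;R)$ with $M^B_k(U;R)=M^B(U,V_{k,R})$, where $V_{k,R}$ carries the twist $\det^{1-k_\tau/2}_{[\tau]}$, and via \eqref{eq: twist_xi_k/2} it identifies $M_{k,l}(U;E)$ with $M_k(U;E)=H^0(Y_{U,E},\CL^k_E)$. Both $M^B_k(U;\CO)$ and $M_k(U;\CO)$ exist in characteristic zero for every $k$, so the lift--Jacquet--Langlands--reduce argument runs with no parity hypothesis. You should replace ``make the weight paritious'' by this half-integral twist.
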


We focus on the direction from geometric modularity to algebraic modularity in the \Cref{conj: geom-alg}.
First, we show that we can reduce to the case when $\rho$ is geometrically modular of weight $(k, \mathbf{0})$ by the following lemma.

\begin{lemma}\label{lem: reduce to l=0}
  Let $k\in \Z^\Sigma_{\geq 1}$ and $l,l'\in \Z^\Sigma$.
  Let $\rho : G_F \rightarrow \GL_2(\Fpbar)$ be an irreducible, continuous, totally odd representation.
  Let $\xi: \AFfp \rightarrow \Fpbar^\times$ be a continous character such that $\xi(a) = \bar{a}^{l'}$ for $a\in \COFppx$.
  Let
  $\xi': \A_F^\times /F^\times F_{\infty, + }^\times \rightarrow \Fpbar^\times$ be as in \S\ref{subsec: geom variant} such that 
  $\xi'(\alpha z a) = \xi(a)\bar{a}_p^{-l'}$, where $\alpha\in F^\times, z\in F_{\infty,+}^\times, a\in \AFf^\times$ with $a_p\in \CO_{F,p}^\times$,
  and $\rho_{\xi'}: G_F \rightarrow \Fpbar^\times$ be the corresponding character by class field theory.
  Then
  \begin{enumerate}
    \item $\rho$ is geometrically modular of weight $(k,l)$ if and only if $\rho \otimes \rho_{\xi'}$ is geometrically modular of weight $(k, l+l')$.
    \item if $k_\tau \geq 2$ for all $\tau \in \Sigma$, then $\rho$ is algebraically modular of weight $(k,l)$ if and only if $\rho \otimes \rho_{\xi'}$ is algebraically modular of weight $(k, l+l')$.
  \end{enumerate}
\end{lemma}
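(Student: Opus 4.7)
The plan is to derive both parts directly from the twisting-by-characters machinery set up in \S\ref{subsec: twisting} and \S\ref{subsec: geom variant}, together with the compatibility $\rho_{f_\xi} \simeq \rho_{\xi'} \otimes \rho_f$ recorded there. Since the conclusions are symmetric under $\xi \leftrightarrow \xi^{-1}$ (and correspondingly $l' \leftrightarrow -l'$, $\rho_{\xi'} \leftrightarrow \rho_{\xi'}^{-1}$), it suffices in each part to prove the forward implication.

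For part (i), I would start from a witness to the geometric modularity of $\rho$: an open compact $U$ and an eigenform $f \in M_{k,l}(U;E)$ (after enlarging $E$ to contain the image of $\xi$) such that $\rho \simeq \rho_f$. Shrinking $U$ so that $\det(U^p) \subset \ker(\xi)$, the construction in \S\ref{subsec: geom variant} (following \cite[Lemma 10.3.2]{Diamond-Sasaki}) produces a twisted form $f_\xi \in M_{k,l+l'}(U';E)$ that is again an eigenform for almost all $T_v, S_v$, with attached Galois representation $\rho_{f_\xi} \simeq \rho_{\xi'} \otimes \rho_f \simeq \rho \otimes \rho_{\xi'}$. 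This exhibits $\rho \otimes \rho_{\xi'}$ as geometrically modular of weight $(k,l+l')$, giving one direction. The reverse direction is obtained by twisting with $\xi^{-1}$, which has weight $-l'$ and whose associated Galois character is $\rho_{\xi'}^{-1}$.

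For part (ii), choose a definite or indefinite quaternion algebra $B$ (compatible with $\rho$) and a sufficiently small open compact $U \subset B_\A^\times$ with $U_p = \CO_{B,p}^\times$ such that $M^B_{k,l}(U,\Fpbar)[\frakm_\rho] \neq 0$; this is possible by the hypothesis $k_\tau \geq 2$, which is needed for $V_{k,l+\mathbf{1},\Fpbar}$ to be defined. Shrinking $U$ so that $\det(U^p) \subset \ker(\xi)$, the isomorphism \eqref{eq: twist_xi_B} of \S\ref{subsec: twisting}
\[
 e^B_\xi \colon M^B_{k,l}(U;\Fpbar) \xrightarrow{\sim} M^B_{k,l+l'}(U;\Fpbar)
\]
satisfies $T_v(e^B_\xi f) = \xi(\varpi_v)\, e^B_\xi T_v(f)$ and $S_v(e^B_\xi f) = \xi(\varpi_v)^2\, e^B_\xi S_v(f)$ for $v$ outside a finite set. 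Since $\rho_{\xi'}(\Frob_v) = \xi(\varpi_v)$ for such $v$, the computations
\[
 \tr\!\bigl((\rho \otimes \rho_{\xi'})(\Frob_v)\bigr) = \xi(\varpi_v)\tr\rho(\Frob_v), \quad
 \det\!\bigl((\rho \otimes \rho_{\xi'})(\Frob_v)\bigr) = \xi(\varpi_v)^2\det\rho(\Frob_v)
\]
show that $e^B_\xi$ carries the $\frakm_\rho$-eigenspace into the $\frakm_{\rho \otimes \rho_{\xi'}}$-eigenspace of $M^B_{k,l+l'}(U;\Fpbar)$, giving the desired algebraic modularity. The reverse implication again follows by twisting with $\xi^{-1}$.

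There is no serious obstacle here: the lemma is essentially a bookkeeping exercise once the twisting isomorphisms and the identification $\rho_{\xi'}(\Frob_v) = \xi(\varpi_v)$ are in hand. The only points requiring care are (a) enlarging $E$ and shrinking $U$ so that all characters and twists are defined, and (b) checking that the conventions for $\frakm_\rho$ (with the factor $\Nm_{F/\Q}(v)$ on $S_v$) are preserved under the twist; both are routine with the setup of \S\ref{subsec: twisting} and \S\ref{subsec: geom variant}.
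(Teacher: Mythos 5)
Your proposal is correct and follows essentially the same route as the paper's proof: part (i) via the twisted form $f_\xi$ and the identity $\rho_{f_\xi} \simeq \rho_{\xi'} \otimes \rho_f$ (with the converse by twisting back by $\xi^{-1}$), and part (ii) via the isomorphism $e^B_\xi$ of \eqref{eq: twist_xi_B} together with $\rho_{\xi'}(\Frob_v) = \xi(\varpi_v)$. The extra verification you include of how $\frakm_\rho$ transforms under the twist is a correct and slightly more explicit rendering of what the paper leaves implicit.
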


\begin{proof}
  \begin{enumerate}
    \item 
    If $\rho \simeq \rho_f$ for some eigenform $f$ of weight $(k,l)$, then $\rho \otimes \rho_{\xi'} \simeq \rho_{ f_\xi}$,
    where $f_\xi$ (as in \S\ref{subsec: geom variant}) is of weight $(k, l+l')$.
    On the other hand,
    if $\rho \otimes \rho_{\xi'} \simeq \rho_{ g}$, for some eigenform $g$ of weight $(k, l+l')$,
    then 
    $\rho \simeq \rho_g \otimes \rho_{\xi'}^{-1} \simeq \rho_{g_{(\xi')^{-1}}}$, where the eigenform $g_{(\xi')^{-1}}$ has weight $(k,l)$.
    
    \item Let $U$ be sufficiently small.
    Recall we have an element $e_{\xi}^B$ (see \Cref{subsec: twisting}) determined by ${\xi}\circ \det$ and an isomorphism \eqref{eq: twist_xi_B} by multiplying $e_{\xi}^B$:
    \begin{align*}
      e^B_{\xi}: M^B_{k,l}(U;\Fpbar) \xrightarrow{\sim} M^B_{k,l+l'}(U;\Fpbar)
    \end{align*} such that 
    $T_v(e^B_\xi f) =\xi(\varpi_v) e^B_\xi T_v(f)$ and 
    $S_v(e^B_\xi f) =\xi(\varpi_v)^2 e^B_\xi S_v(f)$ for $v\not\in S$, where $S$ consists of $\Sigma^B_\bff$, primes above $p$, 
    and $v$ such that $(\CO_{B,v})^\times \not\subset U$.
    Note that
    $\rho_{\xi'}(\Frob_v) = \xi(\varpi)$ for $v\not\in S$.
    Therefore, $M^B_{k,l}(U;R)[\frakm_\rho] \neq 0$ if and only if $M^B_{k,l+l'}(U;R)[\frakm_{\rho\otimes \rho_{\xi'}}] \neq 0$.    
  \end{enumerate}
\end{proof}

Suppose $k\in \Ximinplus \cap \Z^\Sigma_{\geq 2}$ and $\rho$ is geometrically modular of weight $(k,l)$.
By the previous lemma, to show that $\rho$ is algebraically modular of weight $(k,l)$,
it suffices to show that $\rho \otimes \rho_{\xi'}$ is algebraically modular of weight $(k,\mathbf{0})$,
where $\xi': \AFfp \rightarrow E^\times$ is a character such that $\xi'(a) = a^{-l}$, for $a\in \COFppx$.

\subsection{Cohomology vanishing} 
\label{subsec: coh-vanish}

In this section, we adapt the method in \cite{Lan-Suh_parallel}
to prove a cohomology vanishing result 
(\Cref{{thm: my-vanish}}) for not necessarily parallel weights.
Recall we defined the moduli space $\tilY_U$ and Hilbert modular variety $Y_U$ over $\CO$ in \S\ref{subsec: HMV}, and described the toroidal and minimal compactifications $\tilYUtor$, $\YUtor$, $\tilYUmin$, and $\YUmin$ in \S\ref{subsub: tor&min}, respectively.   

The ampleness of line bundles on Hilbert modular surfaces has been discussed in \cite{Andreatta-Goren04} and later generalised to Hilbert modular varieties in \cite{Yang_ampleness}.

\begin{theorem}\cite[Thm. 8.1.1, Prop. 8.2.4]{Andreatta-Goren04}\cite[Thm. 20]{Yang_ampleness}\label{thm: DedingYang}
   The line bundle $\dotw_E^{\min,\otimes k}$ on $\tilY_{U, E}^{\min}$ is ample if and only if $p k_\tau > k_{\Fr^{-1}\circ\tau}$ for all $\tau\in \Sigma$.
   It is nef if and only if $p k_\tau \geq k_{\Fr^{-1}\circ\tau}$ for all $\tau\in \Sigma$.
\end{theorem}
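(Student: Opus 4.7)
The strategy combines the Proj-construction of $\tilYUmin$ with the Goren-Oort $\P^1$-strata from \Cref{thm: strata-HMV}, which supply explicit test curves. From the footnote following \S\ref{subsub: tor&min} one has $\tilY_U^{\epsilon,\min} = \Proj\bigl(\bigoplus_{n\ge 0}\Gamma(\tilY_U^{\epsilon,\tor},\omega^{\tor,\otimes n})\bigr)$ with $\omega^{\tor}=\bigotimes_\tau\dotwtautor$, so the parallel-weight line bundle $\dotw_E^{\min,\otimes\mathbf 1}=\bigotimes_\tau\dotw_{\tau,E}^{\min}$ is ample on $\tilY_{U,E}^{\min}$, and weight $\mathbf 1$ lies strictly inside the candidate ample region because $p\cdot 1>1$.

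For necessity fix $\tau\in\Sigma$ with $\Fr\circ\tau\neq\tau$; by \Cref{thm: strata-HMV} the zero locus $Z_{\{\tau\}}$ of $\Ha_\tau$ is the projectivisation of the rank-two bundle $\CV^B_{\Fr^{-1}\tau}$ over a quaternionic Shimura variety $Y^B$, and on a $\P^1$-fibre $L$ the stated identification $\varphi_*\dotw_{\Fr^{-1}\tau}|_{Z_{\{\tau\}}}\cong\wedge^2(\pi^*\CV^B_{\Fr^{-1}\tau})(-1)$ gives $\dotw_{\Fr^{-1}\tau}|_L\cong\CO_{\P^1}(-1)$, while $\dotw_\sigma|_L$ is trivial for $\sigma\neq\tau,\Fr^{-1}\tau$ (it is pulled back from $Y^B$). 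The remaining restriction $\dotw_\tau|_L\cong\CO_{\P^1}(p)$ is pinned down by the Hasse-invariant relation $\dotw_\tau^{-1}\otimes\dotw_{\Fr^{-1}\tau}^{\otimes p}\cong\CO_Y(Z_{\{\tau\}})$, recording that $\Ha_\tau$ is a global section vanishing to first order on $Z_{\{\tau\}}$, together with the computation of $N_{Z_{\{\tau\}}/Y}|_L$. Hence
\[
  \deg_L\dotw_E^{\otimes k}\;=\;p\,k_\tau-k_{\Fr^{-1}\tau},
\]
so nefness of $\dotw_E^{\min,\otimes k}$ forces $pk_\tau\ge k_{\Fr^{-1}\tau}$ and ampleness forces strict inequality. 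In the residual case $\Fr\circ\tau=\tau$ (so $f_v=1$), the condition collapses to $k_\tau\ge 0$, which one obtains by testing on a curve lying in a suitable $\tau$-direction subvariety of $\tilY_{U,E}^{\min}$ on which only $\dotw_\tau$ contributes a non-trivial degree.

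For sufficiency of ampleness, assume $pk_\tau>k_{\Fr^{-1}\tau}$ strictly for all $\tau$. After replacing $k$ by $Nk$ for $N$ sufficiently large (which preserves ampleness), one may write $Nk=k'+\mathbf 1$ with $k'$ still satisfying $pk'_\tau\ge k'_{\Fr^{-1}\tau}$ for every $\tau$. The nef direction of the theorem then yields that $\dotw_E^{\min,\otimes k'}$ is nef, and tensoring with the ample parallel-weight bundle from the first paragraph gives ampleness of $\dotw_E^{\min,\otimes Nk}$, whence $\dotw_E^{\min,\otimes k}$ is ample. Equivalently, Kleiman's criterion identifies the ample cone with the interior of the nef cone, and that interior is $\{pk_\tau>k_{\Fr^{-1}\tau}\}$ once we know the parallel weight $\mathbf 1$ sits inside it.

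The principal obstacle is the \emph{positive} content of the nef direction: one must show that non-negativity of $pk_\tau-k_{\Fr^{-1}\tau}$ for every $\tau$ in fact implies nefness of $\dotw_E^{\min,\otimes k}$ on \emph{all} curves, not merely on the Goren-Oort $\P^1$-fibres. Equivalently, one needs that these $\P^1$-fibres, together with curves pulled back from the further Goren-Oort strata of the quaternionic bases $Y^B$, numerically span the Mori cone of $\tilY_{U,E}^{\min}$. This is precisely what is carried out in \cite{Yang_ampleness} by induction on the depth of the Goren-Oort stratification, ultimately reducing to zero-dimensional definite quaternionic Shimura varieties where the claim is trivial.
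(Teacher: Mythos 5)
This theorem is quoted by the paper from \cite{Andreatta-Goren04} and \cite{Yang_ampleness}; the paper gives no proof of its own, so there is no internal argument to compare against. Judged on its own terms, your necessity direction is essentially right and matches computations the paper does perform elsewhere: \Cref{thm: strata-HMV} gives $\dotw_{\Fr^{-1}\circ\tau}|_L\cong\CO_{\P^1}(-1)$ on a Goren--Oort $\P^1$-fibre $L$, the fibres avoid the cusps (the $q$-expansion of $\Ha_\tau$ is the constant $1$ there), and the complementary degree $\deg_L\dotw_\tau=p$ is obtained most cleanly not from the normal bundle but from the exact sequence $0\to\overline{\omega}_\tau\to H^1_{\dR}(\overline{A}/\overline{Y})_\tau\to\overline{\omega}_{\Fr^{-1}\circ\tau}^{\otimes p}\to 0$ on $Z_{\{\tau\}}$, exactly as in \S\ref{subsub: big_wt}, which yields $\dotw_\tau|_{Z_{\{\tau\}}}\cong\dotdelta_\tau\otimes\dotw_{\Fr^{-1}\circ\tau}^{\otimes -p}|_{Z_{\{\tau\}}}$ with $\dotdelta_\tau$ of fibre-degree zero. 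Your route instead invokes ``the computation of $N_{Z_{\{\tau\}}/Y}|_L$'' without giving it; that degree ($-2p$) is a consequence of, not an input to, the fibre degrees, so as written this step is circular. The reduction of ampleness-sufficiency to nef-sufficiency plus ampleness of the parallel weight $\mathbf 1$ (nef tensor ample is ample, after passing to $Nk$) is correct.

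The genuine gap is the one you flag yourself: the ``if'' direction of nefness, namely that $pk_\tau\ge k_{\Fr^{-1}\circ\tau}$ for all $\tau$ forces non-negative degree on \emph{every} curve in $\tilY^{\min}_{U,E}$, not merely on the Goren--Oort $\P^1$-fibres. This is the entire content of \cite[Thm.~20]{Yang_ampleness}; a priori the cone cut out by these finitely many fibre classes could strictly contain the nef cone, and establishing that it does not requires the induction over deeper strata and over the quaternionic bases $Y^B$ that you only describe. Since the sufficiency of the ampleness criterion is bootstrapped from nef-sufficiency, neither ``if'' direction is actually proven in your write-up. Separately, the case $\Fr\circ\tau=\tau$ (degree-one primes) is only gestured at: there $Z_{\{\tau\}}$ is not a $\P^1$-bundle of the form used above, and the ``$\tau$-direction curve'' on which $\dotw_\tau$ has positive degree and the remaining $\dotw_\sigma$ are trivial needs to be exhibited. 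As a faithful outline of the cited proof your proposal is fine; as a standalone proof it establishes only the ``only if'' directions.
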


Recall that $\CA^{\min, k,l}_E$ is a line bundle on $Y_{U, E}^{\min}$ if $U$ is sufficiently small.
We have the follow ampleness result for $\CA^{\min, k,l}_E$.

\begin{corollary}\label{cor: Akl-ampleness}
  Let $U$ be sufficiently small.
  The line bundle $\CA^{\min, k,l}_E$ on $Y_{U, E}^{\min}$ is ample if and only if $p k_\tau > k_{\Fr^{-1}\circ\tau}$ for all $\tau\in \Sigma$.
  It is nef if and only if $p k_\tau \geq k_{\Fr^{-1}\circ\tau}$ for all $\tau\in \Sigma$.
\end{corollary}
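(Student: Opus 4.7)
The plan is to reduce the statement to Theorem~\ref{thm: DedingYang} by stripping off the $\widetilde{\delta}_\tau$-factors and then passing to the finite surjective cover of $Y_{U,E}^{\min}$ by the components $\widetilde{Y}_{U,E}^{\epsilon, \min}$.

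First I would exploit the freeness of each $\widetilde{\delta}_\tau$. Since $\widetilde{\delta}_\tau$ is a free $\CO_{\tilY_U}$-module of rank one (as recorded in \S\ref{subsec: HMV}), the tensor product $\bigotimes_{\tau \in \Sigma} \widetilde{\delta}_\tau^{\otimes l_\tau}$ is trivial on $\tilY_U$, so $\widetilde{\CA}^{k, l}_E \cong \widetilde{\omega}_E^{\otimes k}$ as line bundles on $\tilY_{U,E}$. (The isomorphism need not respect the $\COFppx$-action, but this is irrelevant for ampleness of the underlying line bundle.) Applying $\tilde{\imath}_*$ and invoking the Koecher principle ($\tilde{\imath}_*\CO_{\tilY_U} = \CO_{\tilY_U^{\min}}$), one deduces that $\widetilde{\CA}^{\min, k, l}_E \cong \widetilde{\omega}_E^{\min, \otimes k}$ as line bundles on $\tilY_{U,E}^{\min}$.

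Next I would descend from $\tilY_{U,E}^{\min}$ to $Y_{U,E}^{\min}$. By the construction in \S\ref{subsub: tor&min}, the natural projection $\pi_\CE : \coprod_{\epsilon \in \CE} \widetilde{Y}_{U,E}^{\epsilon, \min} \to Y_{U,E}^{\min}$ is the $G_U$-quotient on each summand, and is therefore finite and surjective; moreover $\pi_\CE^* \CA^{\min, k, l}_E$ agrees with the restriction of $\widetilde{\CA}^{\min, k, l}_E$ to $\coprod_\epsilon \widetilde{Y}_{U,E}^{\epsilon, \min}$ by the very definition of the descended line bundle. Since ampleness of a line bundle is reflected along a finite surjective morphism (\emph{cf.}\ \cite[\href{https://stacks.math.columbia.edu/tag/0B3I}{Tag 0B3I}]{stacks-project}), and nefness admits the analogous descent via the intersection-number characterisation on projective varieties, $\CA^{\min, k, l}_E$ is ample (resp.\ nef) on $Y_{U,E}^{\min}$ if and only if $\widetilde{\omega}_E^{\min, \otimes k}$ is ample (resp.\ nef) on $\tilY_{U,E}^{\min}$.

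Theorem~\ref{thm: DedingYang} then supplies precisely the numerical criteria $pk_\tau > k_{\Fr^{-1} \circ \tau}$ (resp.\ $\geq$) for all $\tau \in \Sigma$, completing the proof. The only genuinely non-formal input is the descent of ampleness along the finite cover $\pi_\CE$, which is classical; the remainder of the argument amounts to identifying line bundles through the freeness of $\widetilde{\delta}_\tau$ and the Koecher principle.
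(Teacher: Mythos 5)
Your proposal is correct and follows essentially the same route as the paper's proof: both reduce to \Cref{thm: DedingYang} by observing that $\coprod_{\epsilon\in\CE}\tilY_{U,E}^{\epsilon,\min}\to Y_{U,E}^{\min}$ is finite surjective (so ampleness and nefness can be checked on the cover) and that the $\widetilde{\delta}$-part of the line bundle is free, hence irrelevant. The only difference is cosmetic — you strip off the $\widetilde{\delta}$-factors first and descend second, while the paper does the reverse — so there is nothing substantive to add.
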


\begin{proof}
  Recall in \S\ref{subsec: HMV}, we have 
  $\tilY_U = \coprod_{\epsilon} \tilY_U^\epsilon$ for $\epsilon\in \big(\AFf^{(p)}\big)^\times/\big(\det U^p\big) \big(\widehat{\Z}^{(p)}\big)^\times$ and 
  $Y_U = \coprod_{\epsilon\in \CE} G_U\backslash\tilY_U^\epsilon$, where $\CE$ is a finite set of elements in $\big(\AFf^{(p)}\big)^\times/\big(\det U^p\big) \big(\widehat{\Z}^{(p)}\big)^\times$. 
  Also, recall in \S\ref{subsub: tor&min}, we have $\tilYUmin = \coprod_{\epsilon} \tilYUemin$ for
  $\epsilon\in\big(\AFf^{(p)}\big)^\times/\big(\det U^p\big) \big(\widehat{\Z}^{(p)}\big)^\times$
  and $Y^{\min}_{U, E} = \coprod_{\epsilon\in \CE}G_U\backslash\tilY^{\epsilon, \min}_{U, E}$.
  The projection
  $\tilY^{\epsilon}_U \rightarrow Y_U^{\epsilon}$ is finite 
  and there are only finitely many cusps on $\tilY^{\epsilon, \min}_{U, E}$, 
  so we obtain $\tilY^{\epsilon, \min}_{U, E} \rightarrow Y^{\epsilon, \min}_{U, E}$ is proper and quasifinite, hence finite.
  Therefore, $\CA^{\min, k,l}_E$ is ample (resp.\,nef) on $Y^{\min}_{U, E}$ if and only if $\widetilde{\CA}^{\min, k,l}_E$ is ample (resp.\,nef) on $\tilY^{\epsilon, \min}_{U, E}$, for all $\epsilon\in \CE$.
  We also note that $\widetilde{\CA}^{\min, \mathbf{0},l}$ is a free $\CO_{\tilYUmin}$-module of rank one. 
  Hence the numerical condition follows from \Cref{thm: DedingYang}.
\end{proof}

\begin{corollary}\label{cor: Lk_ampleness}
  Let $U$ be sufficiently small.
  The line bundle $\CL^{\min, k}_E$ on $Y_{U, E}^{\min}$ is ample if and only if $p k_\tau > k_{\Fr^{-1}\circ\tau}$ for all $\tau\in \Sigma$.
  It is nef if and only if $p k_\tau \geq k_{\Fr^{-1}\circ\tau}$ for all $\tau\in \Sigma$.
\end{corollary}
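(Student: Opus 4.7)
The plan is to deduce this directly from \Cref{cor: Akl-ampleness} by exploiting the relation $\CA^{k,l}_R = \CL^{k}_R \otimes \CN_R$ recalled earlier in the section, together with the fact that $\CN_R$ (and likewise $\CN_R^{\min}$) is a \emph{free} line bundle, not merely locally free, whenever it is defined.

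First I would pick a convenient auxiliary $l \in \Z^\Sigma$ so that both $\CA^{\min, k, l}_E$ and $\CN^{\min}_E$ make sense as line bundles on $Y_{U,E}^{\min}$. Concretely, after shrinking $U$ if necessary so that $\mu^{l} = 1$ and $\mu^{k+2l} = 1$ in $E$ for all $\mu \in U \cap \COF^\times$ (for instance, choose $U$ small enough that $\mu \equiv 1 \pmod{p}$ for all such $\mu$, as in \Cref{rmk: linebundle/Ymin}), the line bundle $\CN^{\min}_E$ is defined and is free of rank one over $\CO_{\YUmin, E}$, hence trivial as an abstract line bundle. Note that the ampleness of $\CL^{\min, k}_E$ is independent of the choice of such $U$ (pullback along a finite surjection detects ampleness), so we are free to make such a choice.

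Next I would invoke the tensor identity $\CA^{k,l}_E = \CL^{k}_E \otimes \CN_E$ and push forward to the minimal compactification via $\imath_*$ to obtain
\[
\CA^{\min, k, l}_E \;\cong\; \CL^{\min, k}_E \otimes \CN^{\min}_E,
\]
using that $\CN^{\min}_E$ is invertible so tensoring commutes with $\imath_*$. Since $\CN^{\min}_E$ is free, this yields an isomorphism $\CA^{\min, k, l}_E \cong \CL^{\min, k}_E$ of line bundles on $Y_{U,E}^{\min}$. Consequently $\CL^{\min, k}_E$ is ample (respectively nef) if and only if $\CA^{\min, k, l}_E$ is ample (respectively nef).

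Finally, the conclusion follows immediately from \Cref{cor: Akl-ampleness} applied to the weight $(k,l)$: the latter asserts that $\CA^{\min, k, l}_E$ is ample (resp.\ nef) precisely when $pk_\tau > k_{\Fr^{-1}\circ\tau}$ (resp.\ $pk_\tau \geq k_{\Fr^{-1}\circ\tau}$) for all $\tau \in \Sigma$, which is the desired numerical criterion. There is no substantive obstacle here; the only thing to verify carefully is that $\CN^{\min}_E$ really is a trivial line bundle on $Y_{U,E}^{\min}$ (not merely on the open part $Y_{U,E}$), which is precisely what is recorded in the paragraph immediately preceding this corollary.
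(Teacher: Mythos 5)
Your argument is correct and is essentially the paper's own proof: the paper likewise writes $\CA^{\min,k,l}_E \cong \CL^{\min,k}_E \otimes \CN^{\min}_E$ with $\CN^{\min}_E$ free, hence trivial, and concludes directly from \Cref{cor: Akl-ampleness}. The extra care you take about shrinking $U$ and commuting the tensor product with $\imath_*$ is fine but already implicit in the paper's standing hypothesis that $U$ is sufficiently small.
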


\begin{proof}
  Recall in \S\ref{subsec: twisting} we have a free line bundle $\CN^{\min}_E$ over $Y^\min_{U,E}$ such that $\CA^{\min,k,l}_E = \CL^{\min, k}_E \otimes \CN^{\min}_E$.
  Then we obtain the result directly by the previous corollary.
\end{proof}

We now recall the vanishing theorem for projective smooth variety in characteristic $p$ proved by Esnault and Viehweg in \cite[Proposition 11.5]{Esnault-Viehweg}.

\begin{theorem}[Esnault-Viehweg]\label{thm: EV-vanish}
  Let $X$ be a proper smooth variety over a perfect field $k$ of characteristic $p > 0$, $E$ a simple normal crossings divisor on $X$, and $L$ a line bundle on $X$.
  Assume $X$ has dimension $d \leq p$.
  Also assume the following conditions:
  \begin{enumerate}
    \item $(X, E, L)$ lifts to $(\tilde{X}, \tilde{E}, \tilde{L})$ over $W_2(k)$.
    \item There is an integer $\nu_0$ and an effective divisor $E'$ supported on $E$ such that $ L^{\otimes (\nu_0+\nu)}(-E')$ is ample for every integer $\nu \geq 0$.
  \end{enumerate}
  Then we have 
  \begin{align}\label{eq: EV-vanish}
    H^i(X, \Omega^j_X(\log E)\otimes L^{\otimes(-1)}) = 0  \text{ for } i+j<d.
  \end{align}
\end{theorem}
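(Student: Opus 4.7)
The plan is to follow the characteristic-$p$ Hodge-theoretic approach of Deligne--Illusie, adapted by Esnault--Viehweg to the logarithmic setting with the weakened ampleness in hypothesis (ii). The key input is a Frobenius-induced splitting of the logarithmic de~Rham complex, which transports cohomological information from the twist $L^{-1}$ to arbitrary Frobenius pullbacks $L^{-p^m}$, where Serre duality followed by Serre vanishing can finish the argument.

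First I would invoke the logarithmic Deligne--Illusie decomposition: since $(X,E)$ lifts to $W_2(k)$ and $\dim X = d \leq p$, there is an isomorphism in $D(X)$,
$$
F_*\Omega^\bullet_X(\log E)\;\cong\;\bigoplus_{j=0}^{d}\Omega^j_X(\log E)[-j].
$$
Tensoring with $L^{-1}$ and using the projection formula together with $F^*L \cong L^{\otimes p}$ gives
$$
F_*\bigl(\Omega^\bullet_X(\log E)\otimes L^{-p}\bigr)\;\cong\;\bigoplus_j \Omega^j_X(\log E)\otimes L^{-1}[-j].
$$
Taking hypercohomology on $X$ and comparing with the Hodge-to-de~Rham spectral sequence for the left-hand side yields
$$
\sum_{i+j=n}\dim H^i\bigl(X,\Omega^j_X(\log E)\otimes L^{-p}\bigr)\;\geq\;\sum_{i+j=n}\dim H^i\bigl(X,\Omega^j_X(\log E)\otimes L^{-1}\bigr)
$$
for each $n$. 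Iterating the Frobenius $m$ times replaces $L^{-p}$ by $L^{-p^m}$ on the left.

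Next I would kill the left-hand side for $m$ large when $n<d$. Serre duality, combined with the identification $\Omega^d_X(\log E)\cong\omega_X(E)$, gives
$$
H^i\bigl(X,\Omega^j_X(\log E)\otimes L^{-p^m}\bigr)^\vee\;\cong\;H^{d-i}\bigl(X,\Omega^{d-j}_X(\log E)(-E)\otimes L^{p^m}\bigr).
$$
For $i+j<d$ we have $j\geq 0$, so $i<d$ and $d-i>0$. Condition (ii) guarantees that $L^{p^m}(-E')$ is ample once $p^m\geq \nu_0$, and the flexibility to subtract an effective $E'$ supported on $E$ persists as $m$ grows, so a Serre-vanishing argument kills the right-hand side for $m$ sufficiently large. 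Combining with the iterated inequality above forces the desired vanishing.

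The main obstacle is the dimension hypothesis $d\leq p$: the Deligne--Illusie splitting of $F_*\Omega^\bullet_X(\log E)$ is delicate, rests essentially on the $W_2(k)$-lift, and fails in general once $d$ exceeds $p$. A subsidiary subtlety lies in hypothesis (ii), which is strictly weaker than asking that $L$ itself be ample: the point of allowing an effective boundary correction $E'$ supported on $E$ is to accommodate line bundles that are ample only away from the cuspidal divisor, as is typical for automorphic line bundles on compactified Shimura varieties such as those arising in our application to $\YUtor$.
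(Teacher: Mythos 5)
You should first note that the paper does not prove this statement at all: it is quoted verbatim from \cite[Proposition 11.5]{Esnault-Viehweg}, so there is no internal proof to compare against. Your outline does follow the broad strategy of Esnault--Viehweg's own argument (logarithmic Deligne--Illusie decomposition, iteration of Frobenius, Serre duality, asymptotic vanishing), and steps 1--4 are essentially correct: the inequality
$\sum_{i+j=n}h^i(\Omega^j_X(\log E)\otimes L^{-1})\le\sum_{i+j=n}h^i(\Omega^j_X(\log E)\otimes L^{-p^m})$
and the duality
$H^i(X,\Omega^j_X(\log E)\otimes L^{-p^m})^\vee\cong H^{d-i}(X,\Omega^{d-j}_X(\log E)(-E)\otimes L^{p^m})$
are both right, and $i+j<d$ does force $d-i>0$.

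The genuine gap is in the last step. Knowing that each $L^{p^m}(-E')$ is ample does \emph{not} let you ``Serre-vanish'' the groups $H^{d-i}(X,\mathcal{G}\otimes L^{p^m}(-E'))$ for $\mathcal{G}=\Omega^{d-j}_X(\log E)(-E+E')$: Serre vanishing needs large tensor powers of a \emph{fixed} ample bundle, whereas here you have a sequence of pairwise distinct ample bundles, each occurring to the first power. The issue is not cosmetic: hypothesis (ii) only forces $L$ to be nef, and in the intended application $L$ is the pullback of an ample bundle along the contraction $\pi:\YUtor\to\YUmin$; for such semi-ample $L$ the groups $H^{q}(X,\mathcal{F}\otimes L^{N})$ with $q>0$ need not vanish for $N\gg0$ (they contain $H^0$ of $R^q\pi_*\mathcal{F}$ twisted by an ample bundle downstairs). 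Your phrase ``the flexibility to subtract an effective $E'$ persists as $m$ grows'' gestures at the fix but is not implemented anywhere in your formulas, which only ever involve $L^{-p^m}$ with the single fixed $E'$. The missing ingredient, and the technical heart of \cite[\S 11]{Esnault-Viehweg}, is that the Cartier/Deligne--Illusie decomposition holds more generally for $F^m_*\bigl(\Omega^\bullet_X(\log E)\otimes L^{-p^m}(B)\bigr)$ with $B$ an auxiliary effective divisor supported on $E$ whose coefficients are at most $p^m-1$, still with right-hand side $\bigoplus_j\Omega^j_X(\log E)\otimes L^{-1}[-j]$. Taking $B=c_mE'$ with $c_m\approx p^m/\nu_0$, the dual groups become $H^{d-i}(X,\mathcal{G}_m\otimes(L^{\nu_0}(-E'))^{c_m})$ with $\mathcal{G}_m$ ranging over a finite set of coherent sheaves, and only then does genuine Serre vanishing for high powers of the fixed ample bundle $L^{\nu_0}(-E')$ finish the proof. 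Without this strengthened decomposition your argument does not close.
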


We use the vanishing theorem to prove the following result:

\begin{theorem}\label{thm: my-vanish}
  Suppose $p \geq [F:\Q] = d$
  and 
  \begin{align}\label{eq: liftcone}
    p(k_\tau - 2) > k_{\Fr^{-1}\circ \tau} - 2, \text{ for all } \tau\in \Sigma.
  \end{align}
  Then we have 
  \begin{align}\label{eq: my-vanish}
    H^i(Y_{U, E}^{\tor}, \CL_{E}^{\tor, k}(-D_\infty)) = 0, \text{ for all } i > 0.
  \end{align}

\end{theorem}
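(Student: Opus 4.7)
The plan is to combine Serre duality with the Esnault--Viehweg vanishing theorem (\Cref{thm: EV-vanish}), adapting the strategy of Lan--Suh from the parallel-weight case. First, since $Y^{\tor}_{U,E}$ is smooth projective of dimension $d$ over the perfect field $E$, Serre duality gives
\[
H^i(Y^{\tor}_{U,E}, \CL^{\tor, k}_E(-D_\infty))^\vee \cong H^{d-i}(Y^{\tor}_{U,E}, \omega_{Y^{\tor}_{U,E}/E} \otimes \CL^{\tor, -k}_E(D_\infty)).
\]
The Kodaira--Spencer isomorphism yields $\Omega^d_{Y^{\tor}_U/\CO}(\log D_\infty) \cong \CA^{\tor, \mathbf{2}, \mathbf{-1}}$, so $\omega_{Y^{\tor}_{U,E}/E} \cong \CA^{\tor, \mathbf{2}, \mathbf{-1}}_E(-D_\infty)$. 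For the weight $(k,l) = (\mathbf{2}, \mathbf{-1})$ one has $l+k/2 = \mathbf{0}$, so the free line bundle $\CN$ appearing in $\CA^{\tor, \mathbf{2}, \mathbf{-1}}_E = \CL^{\tor, \mathbf{2}}_E \otimes \CN$ is canonically trivial. Therefore the Serre dual simplifies to $H^{d-i}(Y^{\tor}_{U,E}, \CL^{\tor, \mathbf{2}-k}_E)$, and it is enough to prove
\[
H^{j}(Y^{\tor}_{U,E}, \CL^{\tor, \mathbf{2}-k}_E) = 0 \quad \text{for all } j < d.
\]

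To establish this, I would invoke \Cref{thm: EV-vanish} with $X = Y^{\tor}_{U,E}$, snc divisor $D_\infty$, $L = \CL^{\tor, k-\mathbf{2}}_E$, and take the case $j=0$ of its conclusion (which is the only case needed: $\Omega^0 \otimes L^{-1} = \CL^{\tor, \mathbf{2}-k}_E$ gives cohomology vanishing in degrees $i<d$). The hypothesis $d \leq p$ is given. The lift of $(X, D_\infty, L)$ to $W_2(E)$ is automatic, as the toroidal compactification, its cuspidal divisor, and the line bundle $\CL^{\tor, k-\mathbf{2}}$ are all defined over the unramified Witt ring $W(E) \subset \CO$. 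What truly needs checking is the semipositivity hypothesis (2): there exist $\nu_0$ and an effective divisor $E'$ supported on $D_\infty$ such that $L^{\otimes(\nu_0 + \nu)}(-E')$ is ample for every $\nu \geq 0$.

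For this, I would use three ingredients: (i) by the $\CL$-analogue of \Cref{lem: Ltor and Lmin} one has $\CL^{\tor, k-\mathbf{2}}_E \cong \pi^* \CL^{\min, k-\mathbf{2}}_E$; (ii) by \Cref{cor: Lk_ampleness} together with the hypothesis \eqref{eq: liftcone}, the line bundle $\CL^{\min, k-\mathbf{2}}_E$ is ample on $Y^{\min}_{U,E}$; (iii) by property (7) of the toroidal compactification, there is an effective divisor $D'$ supported on $D_\infty$ with $\CO_{Y^{\tor}_U}(-D')$ being $\pi$-relatively ample. A standard argument (tensoring a $\pi$-ample bundle with a high power of the pullback of something ample on the base) then shows that for some $\nu_0 \gg 0$,
\[
L^{\nu_0}(-D') \;\cong\; \pi^*\CL^{\min, \nu_0(k-\mathbf{2})}_E \otimes \CO(-D')
\]
is ample on $Y^{\tor}_{U,E}$. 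Since $L$ itself is nef (being the pullback of an ample bundle under the proper $\pi$), $L^{\nu_0 + \nu}(-D') = L^{\nu_0}(-D') \otimes L^{\nu}$ is ample for every $\nu \geq 0$, taking $E' = D'$.

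Putting the pieces together, \Cref{thm: EV-vanish} yields $H^i(Y^{\tor}_{U,E}, L^{-1}) = 0$ for $i < d$, i.e.\ $H^{j}(Y^{\tor}_{U,E}, \CL^{\tor, \mathbf{2}-k}_E) = 0$ for $j < d$, and the opening Serre duality reduction then gives the desired vanishing $H^i(Y^{\tor}_{U,E}, \CL^{\tor,k}_E(-D_\infty)) = 0$ for $i > 0$. The principal technical point is the mixed ampleness verification (2): $L$ is only nef on $Y^{\tor}_{U,E}$ because $\pi$ contracts boundary strata, so ampleness is recovered only after twisting by the $\pi$-ample correction $-D'$; this is where property (7) of the compactification and the numerical condition \eqref{eq: liftcone} are both genuinely needed.
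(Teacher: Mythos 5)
Your proposal is correct and follows essentially the same route as the paper: verify the Esnault--Viehweg hypotheses using the $\pi$-relatively ample divisor $-D'$ together with the ampleness of $\CL^{\min,k-\mathbf{2}}_E$ from \Cref{cor: Lk_ampleness}, then combine the $j=0$ case of \eqref{eq: EV-vanish} with Serre duality and the Kodaira--Spencer identification $\CL^{\tor,k-\mathbf{2}}\otimes\CA^{\tor,\mathbf{2},\mathbf{-1}}\cong\CL^{\tor,k}$. The only difference is that you apply Serre duality before invoking Esnault--Viehweg rather than after, which is immaterial; your extra remarks on the nefness of $L$ and the triviality of $\CN$ for the weight $(\mathbf{2},\mathbf{-1})$ are correct and merely make explicit what the paper leaves implicit.
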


\begin{proof}
  Recall in \S\ref{subsub: tor&min} we have the morphism $\pi: \YUtor\rightarrow \YUmin$ and there is a divisor $D'$ supported on the simple normal crossing divisor $D_\infty$ and $-D'$ is $\pi$-relatively ample.
  By \Cref{cor: Lk_ampleness}, $\CL^{\min, k-\mathbf{2}}_E$ is ample over $Y^{\min}_{U, E}$,
  hence $(\CL^{\tor, k-\mathbf{2}}_{U, E})^{\otimes \nu}(-D') = (\pi^* \CL^{\min, k-\mathbf{2}}_E)^{\otimes \nu}(-D')$ is ample, for $\nu$ sufficiently large.
  Recall that $\YUtor$ and $\CL^{\tor, k-\mathbf{2}}$ are defined over $\CO$ (Note that $\CO$ can be taken to be unramified at $p$ so that $\CO/p^2 \cong W_2(E)$, since $p$ is unramified in $F$).
  Therefore, $(Y^\tor_{U, E}, D_\infty, \CL^{\tor, k-\mathbf{2}}_E)$ satisfies the conditions in \Cref{thm: EV-vanish}.
  Let $j = 0$ in \eqref{eq: EV-vanish}, together with Serre's duality and Kodaira-Spencer isomorphism, we have
  $$
    H^i(Y_{U, E}^{\tor}, \CL^{\tor, k-\mathbf{2}}_E \otimes \CA^{\tor, \mathbf{2}, \mathbf{-1}}_E(-D_\infty))=0, \text{ for } i > 0.
  $$
  By the construction of $\CL^{k}$, which is the descent of $\widetilde{\CA}^{\tor, k, \mathbf{0}}$ with $\COFppx$-action defined by $\mu^{-k/2}\alpha_\mu^t$ for $\mu\in \COFppx$, we have an isomorphism 
  $
  \CL^{\tor, k-2} \otimes \CA^{\tor, \mathbf{2}, \mathbf{-1}} \cong \CL^{\tor, k}.
  $
  Hence we obtain \eqref{eq: my-vanish}.
\end{proof}

\begin{corollary}\label{cor: my-liftability}
  Assume $p \geq d$.
  The natural map 
  $$
    S_{k}(U; \CO) \otimes_{\CO} E \rightarrow S_{k}(U; E)
  $$
  is surjective if $k$ satisfies \eqref{eq: liftcone}.
\end{corollary}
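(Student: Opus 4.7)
The plan is to derive \Cref{cor: my-liftability} as a formal consequence of the cohomology vanishing in \Cref{thm: my-vanish} via the theorem on cohomology and base change for a proper morphism to a discrete valuation ring. The ingredients over $\CO$ are already in place: by the choices made in \S\ref{subsub: tor&min}, the scheme $Y^\tor_{U,\CO}$ is smooth and projective over $\CO$ (the cone decomposition $\sfSigma$ was chosen smooth, projective, and $G_U$-compatible), and $\CL^{\tor,k}_\CO(-D_\infty)$ is a line bundle, hence a coherent $\CO_{Y^\tor_{U,\CO}}$-module that is flat over $\CO$. Writing $f : Y^\tor_{U,\CO} \to \Spec\CO$ for the structure morphism and $\CF := \CL^{\tor,k}_\CO(-D_\infty)$, I will apply the base-change formalism to the proper flat pair $(f,\CF)$.

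Given these inputs the deduction is short. First, \Cref{thm: my-vanish} gives $H^1(Y^\tor_{U,E}, \CF\otimes_\CO E) = 0$. The standard base-change/semicontinuity package then yields the following chain: the comparison map $\phi^1 : (R^1 f_*\CF)\otimes_\CO E \to H^1(Y^\tor_{U,E}, \CF\otimes_\CO E) = 0$ is trivially surjective, which in turn forces $\phi^0 : (R^0 f_*\CF)\otimes_\CO E \to H^0(Y^\tor_{U,E}, \CF\otimes_\CO E)$ to be an isomorphism. Unwinding the definitions $S_{k}(U;\CO) = H^0(Y^\tor_{U,\CO}, \CF)$ and $S_{k}(U;E) = H^0(Y^\tor_{U,E}, \CF\otimes_\CO E)$ then produces the claimed surjectivity (in fact an isomorphism after tensoring with $E$).

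There is no real obstacle here; the genuine work was \Cref{thm: my-vanish}, and this corollary is essentially bookkeeping. The only points that warrant a moment's verification are that all the objects involved are defined with the stated properties integrally over $\CO$: the projective smooth toroidal compactification $Y^\tor_{U,\CO}$ and its simple normal crossings cusp divisor $D_\infty$ come from \S\ref{subsub: tor&min}, while $\CL^{\tor,k}_\CO$ is a genuine (not merely special-fibre) line bundle provided $L$ has been enlarged so that $\tau(\mu)^{1/2} \in \CO$ for all $\mu \in \COFppx$ and $\tau \in \Sigma$ and $U$ has been taken sufficiently small, as arranged in \S\ref{subsec: twisting}. With these ingredients the argument is purely formal.
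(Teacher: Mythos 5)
Your proof is correct, and it reaches the same conclusion by an equivalent but differently packaged route. The paper argues directly: it takes the short exact sequence $0 \to \CL \xrightarrow{\cdot p} \CL \to \CL_E \to 0$ with $\CL = \CL^{\tor,k}(-D_\infty)$, passes to the long exact sequence, notes that $H^1(\YUtor,\CL)$ is finitely generated over $\CO$, and applies Nakayama to deduce $H^1(\YUtor,\CL)=0$ from the special-fibre vanishing of \Cref{thm: my-vanish}; surjectivity on $H^0$ then falls out of the same long exact sequence. You instead invoke Grothendieck's cohomology and base change for the proper flat pair $(f,\CF)$. The two arguments are mathematically the same — over a DVR base the base-change theorem is proved by exactly the multiplication-by-$p$ sequence the paper writes down — so neither buys more generality here; the paper's version is self-contained, while yours outsources the bookkeeping to a standard theorem. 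One small compression in your write-up: surjectivity of $\phi^1$ alone does not force $\phi^0$ to be an isomorphism; you need the intermediate step that $\phi^1$ surjective implies $\phi^1$ is an isomorphism, hence $(R^1f_*\CF)\otimes_\CO E = 0$, hence $R^1f_*\CF = 0$ by Nakayama (so in particular locally free), and only then does the base-change criterion give surjectivity (hence bijectivity) of $\phi^0$. Since $R^1f_*\CF$ vanishes outright, this is automatic, but the chain of implications should be stated. Your checks that $Y^\tor_{U,\CO}$ is projective and smooth over $\CO$ and that $\CL^{\tor,k}_\CO(-D_\infty)$ is an honest line bundle over $\CO$ (after enlarging $L$ and shrinking $U$) are exactly the integral inputs the paper also relies on.
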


\begin{proof}
  Let $\CL = \CL^{\tor, k}(-D_\infty)$.
  Consider the short exact sequence
  $$
    0 \rightarrow \CL \xrightarrow{\cdot p}\CL \rightarrow \CL_{E} \rightarrow 0.
  $$
  It induces the long exact sequence
    \begin{align*}
      0 \rightarrow H^0(\YUtor, \CL) \rightarrow &H^0(\YUtor, \CL) \rightarrow H^0(Y_{U, E}^{\tor}, \CL_{E}) \rightarrow \\
      & H^1(\YUtor, \CL) \rightarrow H^1(\YUtor, \CL) \rightarrow H^1(Y_{U, E}^{\tor}, \CL_{E}) \rightarrow \cdots
    \end{align*} 
  Note that $H^1(\YUtor, \CL)$ is of finite rank over $\CO$.
  By Nakayama's lemma, to prove $H^1(\YUtor, \CL) = 0$,
  it suffices to show $H^1(Y^{\tor}_{U, E}, \CL_{E}) = 0$, which is given by \Cref{thm: my-vanish}.
\end{proof}

\begin{theorem}\label{thm: my-lift-goem-alg}
  Let $\rho : G_F \rightarrow \GL_2(\Fpbar)$ be an irreducible, continuous, totally odd representation
  and $(k,l)\in \Z^\Sigma \times \Z^\Sigma$.
  Assume $\rho$ is geometrically modular of weight $(k,l)$. 
  If $k$ satisfies $p(k_\tau - 2) > k_{\Fr^{-1}\circ \tau} - 2$ for all $\tau\in \Sigma$, 
  then 
  $\rho$ is algebraically modular of weight $(k,l)$.
\end{theorem}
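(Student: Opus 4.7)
The plan is to reduce, via the twisting isomorphism \eqref{eq: twist_xi_k/2}, to lifting a cuspidal section of $\CL^{\tor, k}_E(-D_\infty)$ to characteristic zero, and then to transfer the resulting Hecke eigensystem to a quaternion algebra by Jacquet--Langlands.

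By \Cref{lem: stabilised_f} I first choose a stabilised, hence cuspidal, Hecke eigenform $f \in S_{k,l}(U_1(\frakn); E)$ with $\rho \simeq \rho_f$ for some ideal $\frakn$ prime to $p$. Fix a continuous character $\xi : (\AFfp)^\times \to E^\times$ with $\xi(\mu) = \mu^{l + k/2}$ for $\mu \in \COFppx$ (such a character exists because $E$ contains the required roots), and let $\rho_\xi : G_F \to E^\times$ be the Galois character attached to $\xi$ by class field theory, extended as in \S\ref{subsec: geom variant}. After shrinking to a sufficiently small $U \subseteq U_1(\frakn)$, the isomorphism \eqref{eq: twist_xi_k/2} converts $f$ into a cuspidal eigenform $f_0 \in S_k(U; E)$ with $\rho_{f_0} \simeq \rho \otimes \rho_\xi^{-1}$: concretely, if $T_v f = a_v f$, then $T_v f_0 = \xi(\varpi_v)^{-1} a_v f_0$.

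By \Cref{cor: my-liftability}, the map $S_k(U;\CO) \otimes_\CO E \twoheadrightarrow S_k(U;E)$ is surjective. Let $\T$ denote the $\CO$-subalgebra of $\End_\CO S_k(U;\CO)$ generated by the Hecke operators $T_v, S_v$ for $v \nmid \frakn p$, and let $\frakm \subset \T$ be the maximal ideal corresponding to the mod $p$ eigensystem of $f_0$. A standard Deligne--Serre/Nakayama argument then produces, after enlarging $\CO$ if necessary to a discrete valuation ring $\CO'$ with fraction field $L'$, a characteristic-zero cuspidal eigenform $\tilde f_0 \in S_k(U; L')$ whose Hecke eigensystem reduces mod $\varpi_{L'}$ to that of $f_0$. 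I then attach to $\tilde f_0$ a continuous Galois representation $\tilde\rho_0 : G_F \to \GL_2(L')$ lifting $\rho \otimes \rho_\xi^{-1}$; in the paritious case this is classical Hilbert modular form theory, and otherwise one further twists $\tilde f_0$ by a half-integer weight Hecke character defined over $L'$ (using that $L'$ contains $\tau(\mu)^{1/2}$ for all $\mu \in \COFppx$) to reduce to the paritious case.

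The cuspidal automorphic representation $\pi$ of $\GL_2(\AF)$ attached to $\tilde f_0$ is discrete series at every infinite place, so by the global Jacquet--Langlands correspondence (\Cref{thm: global J-L}) it transfers to an automorphic representation on a suitably chosen quaternion algebra $B$ compatible with $\rho$, yielding an eigenform $\tilde g \in M^B_k(U^B; \CO')[1/p]$ for an appropriate open compact subgroup $U^B \subset B^\times(\A_F)$, with the same Hecke eigensystem as $\tilde f_0$ away from ramification. Reducing $\tilde g$ mod $\varpi_{L'}$ and applying the quaternionic twist \eqref{eq: twist_xi_B_k/2} by $e_\xi^B$ produces a non-zero element of $M^B_{k,l}(U^B; \Fpbar)$ whose Hecke eigenvalue at $v \nmid \frakn p$ is $\xi(\varpi_v) \cdot \xi(\varpi_v)^{-1} a_v = a_v = \tr \rho(\Frob_v)$, showing that $\rho$ is algebraically modular of weight $(k,l)$. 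The main technical obstacle is the non-paritious case: constructing $\tilde\rho_0$ and invoking Jacquet--Langlands then require the auxiliary half-integer twist and careful bookkeeping of all the character twists through the correspondence.
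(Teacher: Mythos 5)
Your proposal is correct and follows essentially the same route as the paper's proof: twist by the character $\xi$ with $\xi(\mu)=\mu^{l+k/2}$ to pass to $S_k(U;E)$, lift to characteristic zero via \Cref{cor: my-liftability} and Deligne--Serre, transfer by Jacquet--Langlands to a quaternionic space $M^B_k$, then reduce mod $p$ and twist back by $e_\xi^B$ to land in $M^B_{k,l}(U^B;\Fpbar)[\frakm_\rho]$. The paper handles the non-paritious issue you raise implicitly, by working throughout with the bundle $\CL^k$ (which is defined over $\CO$ for arbitrary $k$ because the $\mu^{-k/2}$ twist is built into its descent datum), so no separate half-integer twist argument is spelled out there.
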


\begin{proof}
  Let $\xi: \AFfp \rightarrow E^\times$ be a continous character such that $\xi(\mu) = \mu^{l+k/2}$ for $\mu\in \COFppx$.
  Let $\rho \simeq \rho_f$ for some eigenform $f\in S_{k,l}(U;E)$,
  where $U$ is sufficiently small.
  Recall we have a free line bundle $\CN_E$ over $Y_{U,E}$ such that 
  $\CA_E^{k,l} = \CL_E^{k} \otimes \CN_E$.
  Let
  $e_\xi$ be a basis of the one-dimensional eigenspace in $H^0(Y_{U, E}, \CN_E)$ such that $\GL_2(\AFfp)$ acts by $\xi\circ \det$.
  By \S\ref{subsec: twisting}, we have an isomorphism given by multiplying $e_\xi$ 
  \begin{align*}
    S_k(U;E) \xrightarrow{\sim}
    S_{k,l}(U;E),
  \end{align*}
  such that $T_v(e_\xi f) =\xi(\varpi_v) e_\xi T_v(f)$ and  
  $S_v(e_\xi f) =\xi(\varpi_v)^2 e_\xi S_v(f)$ for $v\nmid p$ that $\GL_2(\CO_{F, v}) \subset U$.
  Let the eigenform $f_{-} \in S_k(U;E)$ be the 
  preimage of $f$ in the above isomorphism.
  Then $f_{-}$ can lift to an eigenform $\tilde{f}_{-}\in S_k(U;\CO)$ by \Cref{{cor: my-liftability}} and Deligne-Serre lifting lemma (enlarging $\CO$ if necessary).
  By Jacquet-Langlands correspondence,
  the Hecke eigensystem of 
  $\tilde{f}_{-}$ 
  appears from the Hecke eigensystem of 
  $M^B_{k}(U_{-}; \CO)$ 
  for some 
  quaternion algebra $B$ and open compact subgroup $U_{-} \subset B_\A^\times$ as in \S\ref{subsec: def&indef}.
  We can take $U_{-}$ sufficiently small, so that
  we have an isomorphism  given by
  the multiplication by $e_\xi^B$ as in \eqref{eq: twist_xi_B_k/2}:
\begin{align*}
  M^B_{k}(U_{-}; \CO) \xrightarrow{\sim} M^B_{k,l}(U_{-}; \CO)
\end{align*} such that 
$T_v(e^B_\xi f) =\xi(\varpi_v) e^B_\xi T_v(f)$ and 
$S_v(e^B_\xi f) =\xi(\varpi_v)^2 e^B_\xi S_v(f)$ for $v\nmid p$ such that $\GL_2(\CO_{F, v}) \subset U$ and $v\not\in \Sigma^B_\bff$, where
$e_\xi^B \in H^0(Y_U^B, \otimes_\tau \det_{[\tau]}^{l_\tau + k_\tau/2} R)$ defined by $\xi \circ \det$.
Therefore, the Hecke eigensystem of $f$ appears from 
$M^B_{k,l}(U_{-}; \CO) \otimes \Fpbar$,
which shows $\rho \simeq \rho_f$ is algebraically modular of weight $(k,l)$.
\end{proof}

\subsection{Weight shifting and Grothendieck group relations}

Let $\rho$ be irreducible and geometrically modular of weight $(k,l)$,
then 
$\rho\simeq \rho_f$ for some eigenform $f$ of weight $(k,l)$.
If $k$ satisfies \eqref{eq: liftcone}:
\begin{align*}
  p(k_\tau - 2) > k_{\Fr^{-1}\circ \tau} - 2, \text{ for all } \tau\in \Sigma,
\end{align*}
we proved the direction from geometric to algebraic modularity in \Cref{thm: my-lift-goem-alg}.  
However, it doesn't include the weights $k\in \Ximinplus \cap \Z^\Sigma_{\geq 2}$ that are ``near the boundary".
We can deal with such weights by the weight shifting method.

Let $v|p$.
Let $G_0(\Fpbar[\GL_2(\F_{v})])$ be the Grothendieck group on finite dimensional representations of $\GL_2(\F_{v})$ over $\Fpbar$,
on which we have a natural partial ordering $\leq$ defined by
$R \leq R'$ if $R' - R$ is in the submonoid of $G_0(\Fpbar[\GL_2(\F_{v})])$ consisting of classes of $\Fpbar$-representations of $\GL_2(\F)$.
We denote the isomorphism class of $V$ in the Grothendieck group by $[V]$.
Suppose $\tau\in\Sigma_v$.
By convention we let $[\Sym_{[\tau]}^ {-1}\F_v^2] = 0$.
If $n < -1$, we define
$$
\left[\Sym_{[\tau]}^{n}\right] = - \left[\det_{[\tau]}^{n+1}\Sym_{[\tau]}^{-n-2}\right].
$$
Moreover, we have the following Grothendieck group relation (\cite[(1.0.3)]{Reduzzi15})
\begin{align*}
  \left[\Sym_{[\tau]}^n \Sym_{[\Fr^{-1} \circ \tau]}^m \right] - \left[\det_{[\tau]}^1 \Sym_{[\tau]}^{n-1} \Sym_{[\Fr^{-1} \circ \tau]}^{m-p} \right] \\
  = \left[\Sym_{[\tau]}^{n+1} \Sym_{[\Fr^{-1} \circ \tau]}^{m-p} \right] - \left[\det_{[\tau]}^1 \Sym_{[\tau]}^{n} \Sym_{[\Fr^{-1} \circ \tau]}^{m-2p} \right]
\end{align*}
for $m,n\in \Z$.

Let $\rho$ be geometrically modular of weight $(k,l)$, where $k\in \Ximinplus \cap \Z^\Sigma_{\geq 2}$ and doesn't satisfy \eqref{eq: liftcone}.
Consider $f' = \prod_{\tau} \Ha_{\tau}^{s_\tau} \Theta_\tau^{t_\tau} f $ of weight $(k',l')$,
for some $s_\tau, t_\tau \geq 0$.
Note that we can choose $f$ so that $f'$ is non-zero by \Cref{lem: Theta(f)nonzero}.
Choose $s_\tau$ and $t_\tau$ such that the weights $k'$ satisfies \eqref{eq: liftcone}, 
then $\rho$ is algebraically modular of 
weight $(k', l')$.
Then
we have
$W^\BDJ(\rho) \cap \JH(V_{k', -k'-l'}) \neq \varnothing$, 
hence
we get some information of $\rho|_{I_{F_v}}$ for $v|p$, and compute the possible $W^\BDJ(\rho)$.
Our goal is to show that $\JH(V_{k,-k-l}) \cap W^\BDJ(\rho) \neq \varnothing$, which 
implies $\rho$ algebraically modular of weight $(k,l)$ if 
we further assume $\rho|_{G_{F(\zeta_p)}}$ satisfies the condition in \Cref{thm: BDJ-conj}.

\subsection{Split case}\label{subsec: split}
Let $F$ be a totally real field of degree $d$ in which $p$ totally splits.
Now we write $\Sym$ for $\Sym_{[\tau]}$ and $\det$ for ${\det}_{[\tau]}$.
Note that ${\det}^{p-1}$ is the trivial character.

\begin{lemma}\label{lem: split}
  Suppose $p \geq d$.
  Let $\rho$ be geometrically modular of weight $(k,l)$.
  If $k_\tau \geq 3$ for all $\tau$,
  then $\rho$ is algebraically modular of weight $(k,l)$.
\end{lemma}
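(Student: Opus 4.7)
The plan is to obtain \Cref{lem: split} as a direct corollary of \Cref{thm: my-lift-goem-alg}. The key observation is that in the totally split setting, the lift-cone inequality \eqref{eq: liftcone} collapses to a pointwise condition on each $k_\tau$ separately. Indeed, since $p$ totally splits in $F$, every residue field $\F_v$ (for $v \mid p$) equals $\F_p$, so $f_v = 1$, and the absolute Frobenius $\Fr$ on $\Fpbar$ fixes each embedding $\tau \in \Sigma_v$ under the identification $\Sigma = \overline{\Sigma}_p$. Consequently $\Fr^{-1}\circ\tau = \tau$ for every $\tau \in \Sigma$, and the inequality $p(k_\tau - 2) > k_{\Fr^{-1}\circ\tau} - 2$ becomes $(p-1)(k_\tau - 2) > 0$, which (using $p \geq 2$) is equivalent to $k_\tau \geq 3$.

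Given the hypothesis $k_\tau \geq 3$ for all $\tau$, the numerical condition \eqref{eq: liftcone} is therefore automatically satisfied. The remaining hypothesis $p \geq d$ of \Cref{thm: my-lift-goem-alg} --- needed to invoke the Esnault--Viehweg vanishing (\Cref{thm: EV-vanish}) inside the proof of the cohomology vanishing \Cref{thm: my-vanish} --- is granted by assumption. Applying \Cref{thm: my-lift-goem-alg} then yields algebraic modularity of $\rho$ of weight $(k,l)$. There is no genuine obstacle to execute this: the totally split hypothesis completely decouples different embeddings in the lift-cone condition, so the lemma is essentially a specialisation of \Cref{thm: my-lift-goem-alg} to this geometry. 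The only mild subtlety worth flagging is that $k_\tau = 2$ is genuinely excluded at this stage (the inequality in \eqref{eq: liftcone} is strict), and this is precisely why the weight-shifting arguments of the subsequent section will be required to handle weights with some $k_\tau = 2$.
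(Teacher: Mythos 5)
Your proposal is correct and takes exactly the paper's route: the paper's proof of this lemma is the one-line observation that it follows directly from \Cref{thm: my-lift-goem-alg}, and your verification that in the totally split case $\Fr^{-1}\circ\tau = \tau$ turns \eqref{eq: liftcone} into $(p-1)(k_\tau-2)>0$, i.e.\ $k_\tau\geq 3$, is precisely the implicit content of that one line. Nothing further is needed.
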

\begin{proof}
  It is directly given by \Cref{thm: my-lift-goem-alg}.
\end{proof}

\begin{theorem}\label{thm: split_geom-alg-2}
  Suppose $p \geq d$ is an odd prime.
  Let $\rho$ be geometrically modular of weight $(k,l)$.
  If $k_\tau \geq 2$ for all $\tau$,
  then 
  $\JH(V_{k,- k - l}) \cap W^\BDJ(\rho) \neq \varnothing$.
\end{theorem}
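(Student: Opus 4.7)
The plan is to exploit the splitting of $p$ to reduce the problem to a local one at each $v\mid p$, and then combine two independent weight shifts with the explicit recipe of \cite{Dembele-Diamond-Roberts} for $W_v(\rho)$.

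\smallskip

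\emph{Step 1 (reduction to a local problem).} Since $p$ totally splits in $F$, for each $v \mid p$ the residue field $\F_v$ equals $\F_p$ and $\Sigma_v = \{\tau_v\}$ is a singleton. Hence $V_{k,-k-l} = \bigotimes_{v\mid p} V^{(v)}_{k_\tau,-k_\tau-l_\tau}$ with $V^{(v)}_{k_\tau,-k_\tau-l_\tau} = \det^{-k_\tau-l_\tau}\Sym^{k_\tau-2}\Fpbar^2$, and \Cref{def: WBDJ} gives $W^\BDJ(\rho) = \prod_{v\mid p} W_v(\rho)$. Jordan--H\"older factors of tensor products are tensor products of Jordan--H\"older factors, so it suffices to show, for each $v\mid p$ with $\tau := \tau_v$, that some JH factor of $V^{(v)}_{k_\tau,-k_\tau-l_\tau}$ lies in $W_v(\rho)$. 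Fix such a $v$.

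\smallskip

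\emph{Step 2 (easy sub-case $k_\tau \geq 3$).} In the totally split setting the numerical lift-cone condition \eqref{eq: liftcone} collapses to $k'_{\tau'} \geq 3$ for all $\tau'$. Taking $f$ a normalised eigenform giving rise to $\rho$ (\Cref{lem: Theta(f)nonzero}) and applying $\Theta_{\tau'}$ at every $\tau' \neq \tau$ with $k_{\tau'} = 2$ yields a nonzero eigenform with the same Hecke eigensystem of weight $(k',l')$ satisfying $k'_{\tau'} \geq 3$ everywhere and $k'_\tau = k_\tau$. \Cref{lem: split} then produces algebraic modularity of $(k',l')$; \Cref{lem: mod_of_JH} and the direction of \Cref{thm: BDJ-conj} not requiring the Taylor--Wiles hypothesis place a JH factor of $V_{k',-k'-l'}$ in $W^\BDJ(\rho)$, whose $v$-component is the desired JH factor of $V^{(v)}_{k_\tau,-k_\tau-l_\tau}$.

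\smallskip

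\emph{Step 3 (hard sub-case $k_\tau = 2$, two weight shifts).} Now $V^{(v)}_{k_\tau,-k_\tau-l_\tau} = \det^{-2-l_\tau}$ is one-dimensional, so we must place this exact character in $W_v(\rho)$. Apply two simultaneous weight shifts at $\tau$, each combined with the $\Theta_{\tau'}$ shifts above at the remaining primes where $k_{\tau'}=2$: \emph{Shift~A} multiplies by $\Ha_\tau$, giving weight $(p+1, l_\tau)$ at $\tau$, and \emph{Shift~B} applies $\Theta_\tau$, giving weight $(p+3, l_\tau-1)$ at $\tau$. Both satisfy the lift cone, so \Cref{lem: split} and the argument of Step~2 give that Shift~A places the irreducible Steinberg Serre weight $V_{p+1,-2-l_\tau} = \det^{-2-l_\tau}\Sym^{p-1}\Fpbar^2$ in $W_v(\rho)$, while Shift~B places some JH factor of $\det^{-3-l_\tau}\Sym^{p+1}\Fpbar^2$ in $W_v(\rho)$. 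The Grothendieck relation of \cite{Reduzzi15} (unfolded via $[\Sym^1 \otimes \Sym^p] - [\det \otimes \Sym^{p-1}] = [\Sym^{p+1}]$ with $[\Sym^p] = [\Sym^1] + [\det \otimes \Sym^{p-2}]$ over $\GL_2(\F_p)$) identifies this JH series as $\{\det^{-3-l_\tau}\Sym^2,\ \det^{-2-l_\tau},\ \det^{-1-l_\tau}\Sym^{p-3}\}$.

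\smallskip

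\emph{Step 4 (case analysis via the DDR recipe).} If Shift~B directly yields $\det^{-2-l_\tau}\in W_v(\rho)$ we are done. Otherwise, combining the two constraints with the explicit recipe in \cite{Dembele-Diamond-Roberts} and \Cref{rmk: J=Sigmav}, case-split on $\rho|_{G_{F_v}}$. In the irreducible case, the Steinberg membership from Shift~A pins $\rho|_{I_{F_v}}\simeq\varepsilon^{-2-l_\tau}(\varepsilon_{\tau'_1}\oplus\varepsilon_{\tau'_1}^p)$ for some level-$2$ fundamental character $\varepsilon_{\tau'_1}$, which is precisely the data placing $V_{2,-2-l_\tau}\in W_v(\rho)$ via \Cref{def: WBDJ}(2). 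In the reducible case $\rho|_{G_{F_v}}\simeq\left(\begin{smallmatrix}\chi_1 & \ast \\ 0 & \chi_2\end{smallmatrix}\right)$, Shift~A forces $\chi_1\chi_2^{-1}|_{I_{F_v}}=\varepsilon^{\pm 1}$; in the split and peu-ramifi\'e sub-cases \Cref{rmk: J=Sigmav} gives $V_{2,-2-l_\tau}\in W_v(\rho)$ immediately. The residual tr\`es-ramifi\'e sub-case is matched against Shift~B: the only one of its three JH candidates whose $L_{V,J}$-condition is compatible with a tr\`es-ramifi\'e class is $V_{p-1,-1-l_\tau}$ via the opposite ordering $J = \varnothing$, and combining this extra constraint with the Steinberg constraint from Shift~A via the DDR recipe produces the required contradiction, forcing the extension class into the codimension-one peu-ramifi\'e subspace of \Cref{rmk: J=Sigmav} and hence $V_{2,-2-l_\tau}\in W_v(\rho)$.

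\smallskip

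The main obstacle is the tr\`es-ramifi\'e exclusion in Step~4: a single weight shift cannot distinguish peu- from tr\`es-ramifi\'e extensions, and the second shift Shift~B is introduced precisely to rule out the tr\`es-ramifi\'e branch. Since \Cref{rmk: J=Sigmav} treats only $J = \Sigma_v$, the extension-class subspace $L_{V_{p-1,-1-l_\tau},\varnothing}$ must be analysed by hand via the DDR framework, and the numerical collisions at $p = 5$ require a separate direct enumeration.
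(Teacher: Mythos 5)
Your proposal is correct and follows essentially the same route as the paper: the two weight shifts you use at each $\tau$ with $k_\tau=2$ (multiplication by $\Ha_\tau$ giving weight $p+1$, and $\Theta_\tau$ giving weight $p+3$ with $l_\tau-1$) are exactly the paper's $f'=\prod_{\tau\in S}\Ha_\tau f$ and $f''=\prod_{\tau\in S}\Theta_\tau f$, and your Grothendieck-group computation of $\JH(\det^{-1}\Sym^{p+1})$ and the elimination of the tr\`es-ramifi\'ee branch via the second shift coincide with the paper's argument. The only difference is bookkeeping: the paper reads off the four possibilities for $W_{v}(\rho)$ containing the Steinberg weight directly from \cite[Theorem 3.17]{Buzzard-Diamond-Jarvis}, whereas you re-derive the same case analysis from the $L_{V,J}$-conditions of the Demb\'el\'e--Diamond--Roberts recipe.
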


\begin{proof}
  If $k_\tau \geq 3$ for all $\tau$, it's proved in the previous lemma.
  Now assume the set $S = \{ \tau\in \Sigma \mid k_\tau = 2\}$ is nonempty.
  Suppose $\rho$ arises from a normalised eigenform $f\in M_{k,l}(U_1(\frakn);E)$ as in \Cref{lem: Theta(f)nonzero}.
  Let  $ f' := \prod_{\tau\in S} \Ha_{\tau} f$. 
  Then $f'$ is a non-zero normalised eigenform of weight $(k', l)$ such that $\rho_{f'} \simeq \rho_{f}$, 
  where  $k'_\tau = k_\tau$ for $\tau \not\in S$ and $k'_\tau = k_\tau + p-1 \geq 3$ for $\tau\in S$.
  Let $m := -k-l$ and $m' := -k'-l$.
  Note ${\det}^{p-1} = {\det}^0$, so we have ${\det}^{m'_\tau} = {\det}^{m_\tau}$.
  By \Cref{lem: split}, $\rho$ is algebraically modular of weight $(k',l)$.
  Then $\rho$ is modular of a Jordan-Holder factor of $V_{k',m}$ by \Cref{lem: mod_of_JH}, 
  i.e. $W^\BDJ(\rho) \cap \JH(V_{k',m}) \neq \varnothing$.
  Let $W^\BDJ(\rho) = \prod_{v|p}W_v(\rho)$.
  For any $\tau\in \Sigma$, let $v_\tau$ be the unique prime above $p$ determined by $\tau$.
  If $\tau\not\in S$, then we have 
  $\JH\big({\det}^{m_\tau}\Sym^{k_\tau-2}\big) \cap W_{v_\tau}(\rho) \neq \varnothing $ since $k'_\tau = k_\tau$.
  If $\tau\in S$, 
  we have
  $ {\det}^{m_\tau}\Sym^{ p - 1}\in W_{v_\tau}(\rho)$, which implies
  the following possibilities of $W_{v_\tau}(\rho)$ (\textit{cf.}\,\cite[Theorem 3.17]{Buzzard-Diamond-Jarvis}):
  \begin{enumerate}
    \item $\{ {\det}^{m_\tau} \Sym^{p-1} \}$, or
    \item $\{ {\det}^{m_\tau} \Sym^{p-1},
    {\det}^{m_\tau} \Sym^{0}, {\det}^{m_\tau + 1} \Sym^{p-3} \}$ $ (p > 3)$, or
    \item $\{ {\det}^{m_\tau} \Sym^{2},
    {\det}^{m_\tau} \Sym^{0}, {\det}^{m_\tau + 1} \Sym^{2}, {\det}^{m_\tau + 1}\Sym^{0} \} $  $(p = 3)$, or
    \item $\{ {\det}^{m_\tau} \Sym^{p-1}, {\det}^{m_\tau} \Sym^{0} \}$.
  \end{enumerate}
  In the above possibilities, we observe that ${\det}^{m_\tau} \Sym^{0}\in W_{v_\tau}(\rho)$ unless $W_{v_\tau}(\rho) = \{ {\det}^{m_\tau} \Sym^{p-1} \}$.

  On the other hand, by \Cref{lem: theta_shift} we obtain that $\rho$ is geometrically modular of weight $(k'',l'')$, 
  where
  $k''_\tau = k_\tau$, $l''_\tau = l_\tau$ for $\tau\not\in S$, and 
  $k''_\tau = k_\tau + p + 1$, $l''_\tau = l_\tau - 1$ for $\tau\in S$.
  Then $\rho$ is algebraically modular of weight $(k'',l'')$ since $k''_{\tau} \geq 3$ for all $\tau$. 
  In particular, 
  we have $\JH({\det}^{m_\tau - 1}\Sym^{p+1}) \cap W_{v_\tau}(\rho) \neq\varnothing$.
  In the Grothendieck group $G_0(\Fpbar[\GL_2(\F_p)])$ on finite dimensional representations of $\GL_2(\F_p)$ over $\Fpbar$, we have
  $$
  [{\det}^{-1}\Sym^{p+1}] = [\Sym^{0}] +\  [{\det}^{-1}\Sym^{2}] + \ [{\det}^{2-p}\Sym^{p-3}],
  $$
  by \cite[(1.0.3)]{Reduzzi15}.
  If $W_{v_\tau}(\rho) = \{ {\det}^{m_\tau} \Sym^{p-1} \}$, then $\JH({\det}^{m_\tau - 1}\Sym^{p+1}) \cap W_{v_\tau}(\rho) = \varnothing$ gives a contradiction.
  Therefore, we must have ${\det}^{m_\tau} \Sym^{0}\in W_{v_\tau}(\rho)$ for any $\tau\in S$,
  and hence we
  conclude that $\JH(V_{k,m}) \cap W^\BDJ(\rho) \neq \varnothing$.
\end{proof}

\begin{lemma}\label{lem: theta_shift}
  Let $\rho$ be geometrically modular of weight $(k,l)\in\Z^\Sigma_{\geq 2} \times \Z^\Sigma$.
  Let $S = \{ \tau\in \Sigma \mid k_\tau = 2\}$.
  Then $\rho$ is geometrically modular of weight $(k'',l'')$, 
  where
  $k''_\tau = k_\tau$, $l''_\tau = l_\tau$ for $\tau\not\in S$, and 
  $k''_\tau = k_\tau + p + 1$, $l''_\tau = l_\tau - 1$ for $\tau\in S$.
\end{lemma}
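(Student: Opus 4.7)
The plan is to apply the iterated partial theta operator $\prod_{\tau \in S} \Theta_\tau$ to a carefully chosen eigenform giving rise to $\rho$.

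First, by \Cref{lem: Theta(f)nonzero} I may pick a normalised eigenform $f \in M_{k,l}(U_1(\frakn); E)$, for some $\frakn$ prime to $p$, with $\rho \simeq \rho_f$. Since $p$ totally splits in $F$, we have $\Fr \circ \tau = \tau$ for every $\tau \in \Sigma$, so by the description of $\Theta_\tau$ recalled in the weight-shifting paragraph, each $\Theta_\tau$ sends a form of weight $(\underline{k}, \underline{l})$ to one of weight $(\underline{k}', \underline{l}')$ with $\underline{k}'_\tau = \underline{k}_\tau + p + 1$, $\underline{l}'_\tau = \underline{l}_\tau - 1$ and all other components unchanged. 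Because the operators $\Theta_\tau$ act on $q$-expansions by coefficient-wise scalar multiplication (by $\tau(m)$), they commute with each other; iterating over $\tau \in S$ therefore yields
\[
f'' := \Big(\prod_{\tau \in S} \Theta_\tau\Big)(f) \in M_{k'', l''}(U_1(\frakn); E)
\]
of exactly the claimed weight $(k'', l'')$.

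Next I would verify $f'' \neq 0$ using the $q$-expansion principle \eqref{eq: q-expn}. Since $f$ is normalised, $r_1^{\CO_F}(f) = 1$; applying $\prod_{\tau \in S}\Theta_\tau$ multiplies each coefficient $r_m^J$ by $\prod_{\tau \in S} \tau(m)$, which at $m = 1$ equals $1$. Hence $r_1^{\CO_F}(f'') = 1 \neq 0$ in $E$, so $f''$ is nonzero. This is the step where it matters that $f$ is \emph{normalised} rather than just an eigenform with each $\Theta_\tau(f) \neq 0$ individually: after iterating the theta operators there could in principle be cancellation, and choosing $f$ normalised rules this out uniformly at $m=1$.

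Finally, each $\Theta_\tau$ is Hecke-equivariant for $T_v$ and $S_v$ with $v \nmid p$, so $f''$ is again an eigenform at these primes with the same eigenvalues as $f$. The associated Galois representation then satisfies $\rho_{f''} \simeq \rho_f \simeq \rho$, proving that $\rho$ is geometrically modular of weight $(k'', l'')$. The only step that requires real care is the non-vanishing of $f''$; the weight bookkeeping and Hecke equivariance are immediate from the formulas recalled for $\Theta_\tau$ in the weight-shifting subsection.
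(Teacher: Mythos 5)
Your proof is correct and follows essentially the same route as the paper: the paper also takes a normalised eigenform $f$ as in \Cref{lem: Theta(f)nonzero}, applies $\prod_{\tau\in S}\Theta_\tau$, and observes that the result is again a (nonzero) normalised eigenform of the stated weight giving rise to $\rho$. Your explicit $q$-expansion check at $m=1$ is just the unwound justification of the paper's one-line claim that $\Theta_\tau$ preserves normalised eigenforms, so there is nothing further to add.
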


\begin{proof}
  Let $S = \{\tau_1, \cdots, \tau_i\}$.
  There is a normalised eigenform $f\in M_{k,l}(U_1(\frakn);E)$ as in \Cref{lem: Theta(f)nonzero} such that $\rho$ arises from $f$.
  If f is a normalized eigenform, then so is $\Theta_{\tau}(f)$ for any $\tau\in \Sigma$.
  Therefore, $\Theta_{\tau_1}\cdots\Theta_{\tau_i}(f)$ is a normalized eigenform that gives rise to $\rho$ with the desired weight.
\end{proof}

\begin{theorem}\label{thm: goem-alg-split}
  Let $p \geq d$, $p > 3$ and $F$ be a totally real field in which $p$ totally splits.
  Let $\rho: G_F \rightarrow \GL_2(\Fpbar)$ be an irreducible, continuous, totally odd representation.
  If $p = 5$, assume that the projective image of $\rho|_{G_{F(\zeta_p)}}$ is
  not isomorphic to $A_5$.
  If $\rho$ be geometrically modular of weight $(k,l)$,
  where $k_\tau \geq 2$ for all $\tau$, 
  then $\rho$ is algebraically modular of weight $(k,l)$.
\end{theorem}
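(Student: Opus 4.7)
The plan is to combine \Cref{thm: split_geom-alg-2}, which already supplies the crucial Jordan--H\"older intersection $\JH(V_{k,-k-l})\cap W^{\BDJ}(\rho)\neq\varnothing$, with the Buzzard--Diamond--Jarvis conjecture (\Cref{thm: BDJ-conj}), dealing separately with the case where $\rho|_{G_{F(\zeta_p)}}$ is reducible. By \Cref{thm: split_geom-alg-2} there is a Serre weight $\sigma\in \JH(V_{k,-k-l})\cap W^{\BDJ}(\rho)$; in view of \Cref{lem: mod_of_JH}, to prove algebraic modularity of weight $(k,l)$, that is, modularity of weight $V_{k,-k-l}$, it suffices to show that $\rho$ is modular of $\sigma$.

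If $\rho|_{G_{F(\zeta_p)}}$ is irreducible, then the hypotheses of \Cref{thm: BDJ-conj} are satisfied (the $A_5$ exclusion at $p=5$ is imposed precisely to guarantee this), and the second part of that theorem delivers modularity of $\sigma$, completing the argument via \Cref{lem: mod_of_JH}. The remaining case is when $\rho$ is irreducible while $\rho|_{G_{F(\zeta_p)}}$ is reducible; then $\rho\cong\Ind_{G_K}^{G_F}\chi$ for the unique quadratic subextension $K\subseteq F(\zeta_p)$ over $F$ and some character $\chi:G_K\to\Fpbar^\times$. Because $p$ is unramified in $F$ and $F(\zeta_p)/F$ is totally ramified above $p$, the extension $K/F$ is ramified at every prime above $p$.

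The plan in this induced case is to realise $\rho$ explicitly as arising from a CM eigenform. First, use \Cref{lem: reduce to l=0} to reduce to $l=\mathbf{0}$. Next, lift $\chi$ to a characteristic-zero algebraic Hecke character $\psi$ of $K$ whose infinity type matches the algebraic weight $k$, using the ramification of $K/F$ at the primes above $p$ to accommodate the required inertial weights. The automorphic induction (theta series) of $\psi$ then yields a Hilbert modular cuspidal eigenform $f$ over $F$ whose attached Galois representation is congruent to $\rho$ and whose weight is of the prescribed shape. Transferring $f$ via Jacquet--Langlands (\Cref{thm: global J-L}) to a suitable quaternion algebra $B/F$ compatible with $\rho$ produces a nonzero class in $M^B_{k,l}(U,\Fpbar)[\frakm_\rho]$, giving algebraic modularity of weight $(k,l)$.

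The main technical obstacle is the construction of the lift $\psi$ of $\chi$ with prescribed infinity type matching $k$, together with the verification that the resulting theta series has weight $(k,l)$ and that the allowed Serre weights on the Galois side really do contain $\sigma\in \JH(V_{k,-k-l})\cap W^{\BDJ}(\rho)$. Here the hypothesis $p\geq 5$ provides the flexibility needed in the ramified character $\chi$ at primes of $K$ above $p$, and $p\geq d$ together with $k_\tau\geq 2$ ensures that the infinity types available for $\psi$ are rich enough to cover the possibilities for $(k,l)$ produced by \Cref{thm: split_geom-alg-2}. Once $\psi$ is constructed, the identification of the weight and of the Hecke eigensystem with that of $\rho$ is a direct computation with the local Langlands correspondence for the induced representation at each place of $F$.
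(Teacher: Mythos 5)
The first half of your argument---extracting $\sigma\in \JH(V_{k,-k-l})\cap W^{\BDJ}(\rho)$ from \Cref{thm: split_geom-alg-2} and combining \Cref{thm: BDJ-conj} with \Cref{lem: mod_of_JH} when $\rho|_{G_{F(\zeta_p)}}$ is irreducible---is exactly what the paper does. The gap is in the induced case. There you only assert the key step, namely the construction of an algebraic Hecke character $\psi$ of $K$ lifting $\chi$ whose infinity type matches $k$, and this step cannot be carried out as described. First, the quadratic subextension of $F(\zeta_p)/F$ is $K=F(\sqrt{p^*})$ with $p^*=(-1)^{(p-1)/2}p$; when $p\equiv 1\bmod 4$ this field is totally real, every infinite place of $F$ splits in $K$, and automorphic induction of a Hecke character then has principal-series archimedean components, so it can never produce a holomorphic Hilbert modular form with $k_\tau\geq 2$ (only weight-one or Eisenstein data). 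Second, even when $K$ is CM, the compatibility at the (ramified) primes of $K$ above $p$ between the infinity type of $\psi$ and the restriction of $\chi$ to inertia is a genuine constraint, not a matter of ``flexibility''; it is precisely this constraint that decides the matter, and you do not address it.

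The paper's treatment of the induced case is purely local and shows the case is vacuous. Since \Cref{thm: my-lift-goem-alg} already handles all $\tau$ with $k_\tau\geq 3$ (condition \eqref{eq: liftcone} reads $k_\tau>2$ in the split case), one may assume some $k_{\tau_0}=2$. Writing $\frakp_{\tau_0}=v^2$ in $F'$ and $K=F_{\frakp_{\tau_0}}\cong\Q_p$, one computes $\rho|_{I_K}\simeq \Ind_{I_{K'}}^{I_K}\varepsilon_{K'}^{\,n}$ explicitly and compares it with the shape of $\rho|_{I_K}$ forced by $V^{(v_{\tau_0})}_{k,-k}\in W_{v_{\tau_0}}(\rho)$, which you already have from \Cref{thm: split_geom-alg-2}. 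This forces $k_{\tau_0}\in\{(p+1)/2,(p+3)/2\}$, hence $k_{\tau_0}\geq 3$ for $p>3$, contradicting $k_{\tau_0}=2$. So no induced $\rho$ can be geometrically modular of such a weight, BDJ always applies, and no CM lift is needed. Incidentally, this same computation shows that the object you propose to construct does not exist when some $k_{\tau_0}=2$, so your strategy cannot be repaired by supplying the missing construction; you need an argument of the paper's type (or some other way to dispose of the induced case).
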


\begin{proof}
  By \Cref{thm: my-lift-goem-alg}, we only need to prove the case when $S = \{\tau\in\Sigma \mid k_\tau = 2\}$ is non-empty.
  Without loss of generality, we may assume $\rho$ is geometrically modular of weight $(k, - k)$ using \Cref{lem: reduce to l=0}.
  By \Cref{thm: split_geom-alg-2}, we have $V_{k,\mathbf{0}} \in W^\BDJ(\rho)$.
  If $\rho|_{G_{F(\zeta_p)}}$ satisfies the condition in \Cref{thm: BDJ-conj}, then we obtain $\rho$ is algebraically modular of weight $(k, -k)$.
  Therefore, by our assumption, we only need to consider the case when $\rho|_{G_{F(\zeta_p)}}$ is reducible.
  
Suppose $\rho|_{G_{F(\zeta_p)}}$ is reducible.
Then there is a quadratic extension $F'/F$ in $F(\zeta_p)$ and a continuous character $\xi: G_{F'} \rightarrow \Fpbar^\times$
such that 
$\rho \simeq \Ind^F_{F'} \xi$.
Since $p$ is split, $p = \prod_{\tau\in \Sigma}\mathfrak{p}_\tau$ in $F$ and $F_{\mathfrak{p}_\tau} \cong \Q_{p}$ for all $\tau\in \Sigma$.
Note that $F'/F$ is totally ramified at $\mathfrak{p}_\tau$ for every $\tau\in\Sigma$. 
Let $\tau_0 \in S$ (i.e. $k_{\tau_0} = 2$).
Let $\mathfrak{p}_{\tau_0} = v^2$ for a prime $v$ of $F'$.
Denote $K := F_{\mathfrak{p}_{\tau_0}} \cong \Q_p$ and 
$K' := F'_v \cong \Q_{p^2}$.
Let $\varepsilon$ be the mod $p$ cyclotomic character, 
$\varepsilon_2$ be the fundamental character of level $2$, which is
$I_K \cong I_{\Q_{p^2}} \xrightarrow{\Art^{-1}} \Z_{p^2}^\times \rightarrow \F_{p^2}^\times$.
Let 
$\varepsilon_{K'}$ be the fundamental character on 
$I_{K'}$ induced by $\Art^{-1}$, respectively.
Note that $\varepsilon_2^{p+1} = \varepsilon$ and we also have the commutative diagram
\[\begin{tikzcd}
	{\varepsilon_{K'}:\ I_{K'}} & {\CO_{K'}^\times} & {\F_p^\times} \\
	{\varepsilon:\ I_{\Q_p}} & {\Z_p^\times} & {\F_p^\times}.
	\arrow[from=1-1, to=1-2]
	\arrow[shift left=4, hook, from=1-1, to=2-1]
	\arrow[from=1-2, to=1-3]
	\arrow["\Nm", from=1-2, to=2-2]
	\arrow["{(\cdot)^2}", from=1-3, to=2-3]
	\arrow[from=2-1, to=2-2]
	\arrow[from=2-2, to=2-3]
\end{tikzcd}\]
We have $\xi|_{I_{K'}} = \varepsilon_{K'}^n$ for some $n\in \Z_{\geq 0}$.
Depending on the parity of $n$, we get the form of $\rho|_{I_K}$:
\begin{align}\label{eq: rhoI_K-v1}
  \rho|_{I_K} \simeq 
  \left\{ \begin{matrix}
    &\varepsilon^{n/2} \oplus \varepsilon^{n/2+ (p-1)/2}, \text{ if } n \text{ is even} \\
    &\varepsilon_2^{a} \oplus \varepsilon_2^{pa}, \text{ if } n \text{ is odd}
  \end{matrix}\right.
\end{align}
where $a = n(p+1)/2$.
On the other hand, $\Sym_{[\tau_0]}^{k_{\tau_0} - 2} \in W^{(\tau_0)}(\rho)$, so
we know that $\rho|_{I_K}$ as the following form as in \eqref{eq: reducible-case} and \eqref{eq: irreducible-case},
\begin{align}\label{eq: rhoI_K-v2}
  \begin{aligned}
    &\rho|_{I_K} \simeq \left(\begin{matrix}
      \varepsilon^{k_{\tau_0}-1} & * \\ 0 & 1
    \end{matrix}\right) \text{ reducible case, }
    \\
    &\rho|_{I_K} \simeq \left(\begin{matrix}
      \varepsilon_2^{k_{\tau_0}-1} & 0 \\ 0 &  \varepsilon_2^{p(k_{\tau_0}-1)}
    \end{matrix}\right) \text{ irreducible case.}
  \end{aligned}
\end{align}
  Comparing the form of $\rho|_{I_K}$ in \eqref{eq: rhoI_K-v1} and \eqref{eq: rhoI_K-v2}, we obtain:
  \begin{enumerate}
    \item If $n$ is even, then $\rho|_{G_K}$ is reducible and $k_\tau = (p+1)/2$.
    \item If $n$ is odd, then $\rho|_{G_K}$ is irreducible
    and $k_\tau = (p+3)/2$.
  \end{enumerate}
  When $p>3$, we have $k_{\tau_0} \geq 3$, contradicts to $k_{\tau_0} = 2$.
  Therefore, we have the algebraic modularity of such weight.
\end{proof}

We have the following result including the partial or parallel weight one cases.
\begin{corollary}\label{cor: my_split_&one}
  Suppose $p \geq d$ and $p > 3$.
  Let $\rho$ be geometrically modular of weight $(k,l)\in \Z_{\geq 1}^\Sigma \times\Z^{\Sigma}$.
  If $p = 5$, assume that the projective image of $\rho|_{G_{F(\zeta_p)}}$ is not isomorphic to $A_5$.
  Then $\rho$ is algebraically modular of weight $(k',l)$ where
  $k'_\tau = k_\tau$ if $k_\tau \geq 2$ and $k'_\tau = p$ if $k_\tau = 1$.
\end{corollary}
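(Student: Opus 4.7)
The plan is to use the partial Hasse invariants to shift any weight-one coordinates up to weight $p$, and then invoke \Cref{thm: goem-alg-split}. Let $S_1 := \{\tau \in \Sigma \mid k_\tau = 1\}$. If $S_1 = \varnothing$ there is nothing to do, so assume $S_1 \neq \varnothing$.

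Since $\rho$ is geometrically modular of weight $(k,l)$, by \Cref{def: geom-modular} there is a sufficiently small $U \subset \GL_2(\widehat{\CO}_F)$ and a non-zero eigenform $f \in M_{k,l}(U;E)$ with $\rho_f \simeq \rho$. Because $p$ splits totally in $F$, for every $\tau \in \Sigma$ the unique prime $v_\tau \mid p$ with $\tau \in \Sigma_{v_\tau}$ satisfies $f_{v_\tau} = 1$, so $\Fr \circ \tau = \tau$. Hence the partial Hasse invariant $\Ha_\tau$ has weight vector $k_{\Ha_\tau}$ concentrated at $\tau$ with $k_{\Ha_\tau,\tau} = p-1$. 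Define
$$
f' := \Big(\prod_{\tau \in S_1} \Ha_\tau\Big) \cdot f \in M_{k', l}(U;E),
$$
where $k'_\tau = k_\tau$ for $\tau \notin S_1$ and $k'_\tau = 1 + (p-1) = p$ for $\tau \in S_1$. Multiplication by each $\Ha_\tau$ is an injective, Hecke-equivariant map on the space of Hilbert modular forms, so $f'$ is a non-zero eigenform with the same Hecke eigenvalues as $f$ away from $p$, and therefore $\rho_{f'} \simeq \rho_f \simeq \rho$. In particular, $\rho$ is geometrically modular of weight $(k',l)$, and $k'_\tau \geq 2$ for all $\tau \in \Sigma$.

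Now apply \Cref{thm: goem-alg-split} with the weight $(k',l)$: the hypotheses $p \geq d$, $p > 3$, and (when $p=5$) that the projective image of $\rho|_{G_{F(\zeta_p)}}$ is not $A_5$ are all inherited from the assumption of the corollary. The theorem yields that $\rho$ is algebraically modular of weight $(k',l)$, which is exactly the conclusion. No additional obstacle arises, since the only genuine content is \Cref{thm: goem-alg-split} itself; the role of the partial Hasse invariant here is just to move the weight-one entries into the range where that theorem applies.
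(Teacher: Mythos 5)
Your proposal is correct and follows essentially the same route as the paper: multiply the eigenform by $\prod_{\tau\in S_1}\Ha_\tau$ (using that $\Fr\circ\tau=\tau$ in the split case, so each $\Ha_\tau$ shifts only the $\tau$-component by $p-1$) to reach the weight $(k',l)$ with all $k'_\tau\geq 2$, and then apply \Cref{thm: goem-alg-split}. The paper's proof is a condensed version of exactly this argument.
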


\begin{proof}
  Let $\rho$ arise from a normalised eigenform $f$ of weight $(k,l)$.
  Let $T := \{\tau\in \Sigma \mid k_\tau = 1\}$.
  We consider the eigenform $g := \prod_{\tau\in T} \Ha_{\tau} f$, which is of the weight $(k',l)$.
  Then $\rho$ is geometrically and hence algebraically modular of weight $(k',l)$ by \Cref{thm: goem-alg-split}.
\end{proof}

\subsection{Inert quadratic case}

Let $F$ be a real quadratic field in which $p$ is inert.
Let $\Sigma = \{\tau_0, \tau_1\}$, where $\tau_1 = \Fr \circ \tau_0$, and $\tau_0 = \Fr \circ \tau_1$.
Let $(k,l) = ((k_0,k_1), (l_0, l_1))\in \Z^\Sigma \times \Z^\Sigma$, where $k\in \Ximinplus$, i.e. $p k_0 \geq k_1 \geq k_0/p$.
Note that ${\det}^{p^2-1} = {\det}^0$ and ${\det}_{[\tau_1]} = {\det}_{[\tau_0]}^p$.
We denote $\Symo := \Sym_{[\tau_0]}$, $\Syml := \Sym_{[\tau_1]}$ and $\e := {\det}_{[\tau_0]}$.
We also denote $\Ha_{\tau_i}$ by $\Ha_i$ and $\Theta_{\tau_i}$ by $\Theta_i$ for $i = 0,1$.

Altering our choice of $\tau_0$ if necessary, we assume that $2 
\leq k_0 \leq k_1$.
By \Cref{thm: my-lift-goem-alg}, geometric modularity of weight $((k,l))$ implies algebraic modularity of weight $((k,l))$, if $p(k_0 - 2) > k_1 -2$. 
Now we consider the weights such that
$p(k_0 -2) \leq k_1 - 2 \leq pk_0 - 2$.

We will use the weight shifting technique to prove $V_{k,l} \in W^\BDJ(\rho)$ for relatively small $k$ (see \Cref{thm: geq2_VinW}). 
For bigger weights $k$, we use the description of stratification on the special fibre of Hilbert modular variety (given in \cite{TX16})
to get a function on the underlying quaternionic Shimura variety, which directly gives algebraic modularity (see \Cref{subsub: big_wt}).

\begin{lemma}\label{lem: not-ord-not-min}
  Let $(k,l), (k',l')\in \Z^\Sigma_{\geq 2} \times \Z^\Sigma$ where
  $k'_0 = k_0$, $k'_1 = k_1 -2$, and $l'_0 = l_0$, $l'_1 = l_1 + 1$.
  Suppose $\rho$ is geometrically modular of weight both $(k, l)$ and $(k',l')$.
  If $\rho$ is not ordinarily modular at $p$ of weight $(k,l)$, then either $\rho$ is also geometrically modular of weight $(k' - k_{\Ha_1}, l')$, 
  or $p|k'_1$.
\end{lemma}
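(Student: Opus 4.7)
The approach is to relate the two given eigenforms via the partial Theta operator $\Theta_1$ and then extract divisibility by $\Ha_1$. By \Cref{lem: Theta(f)nonzero} one may choose eigenforms $f\in M_{k,l}(U;E)$ and $f'\in M_{k',l'}(U;E)$ realizing the Hecke eigensystem of $\rho$ with $\Theta_1 f'\neq 0$. Since $\Fr\circ\tau_1=\tau_0\neq\tau_1$, the recipe for $\Theta_1$ places $\Theta_1 f'$ in $M_{((k_0+p,\,k_1-1),(l_0,l_1))}(U;E)$, which is exactly the weight of $\Ha_1\!\cdot\! f$ (recall $k_{\Ha_1}=(p,-1)$). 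Hence $\Theta_1 f'$ and $\Ha_1 f$ are non-zero sections of the same line bundle with the Hecke eigensystem of $\rho$.

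The strategy is to show that $\Theta_1 f'$ is divisible by $\Ha_1$. Granted this, \Cref{thm: Theta-Hasse-divisibility} applied to $f'$ with $\tau=\tau_1$ yields either $f'$ itself divisible by $\Ha_1$ or $p\mid k'_1$. In the former case, writing $f'=\Ha_1 g$ produces a non-zero eigenform $g$ of weight $(k'-k_{\Ha_1},\,l')$ with the same Hecke eigensystem, so $\rho\simeq\rho_g$ is geometrically modular of that weight; in the latter case we land directly in the second alternative.

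The main obstacle is the divisibility step, namely showing that $\Theta_1 f'$ vanishes along the Goren-Oort stratum $Z_{\{\tau_1\}}=\{\Ha_1=0\}$. In the inert quadratic case, \Cref{thm: strata-HMV} identifies $Z_{\{\tau_1\}}$ with a $\P^1$-bundle $\P^1_{Y^B}$ over the (zero-dimensional) quaternionic Shimura variety $Y^B$ attached to the definite quaternion algebra $B=B_{\{\tau_0,\tau_1\}}$. The plan is to restrict $\Theta_1 f'$ to $Z_{\{\tau_1\}}$, push down through $\pi:\P^1_{Y^B}\to Y^B$ using the isomorphism $\varphi_*\omega_{\tau_0}|_{Z_{\{\tau_1\}}}\cong\wedge^2(\pi^*\CV^B_{\tau_0})(-1)$, and obtain a non-zero element of some $M^B(U^B,Q)$ still realizing $\rho$. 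Via Jacquet-Langlands (\Cref{thm: global J-L}), such a quaternionic form would correspond to a weight-$(k,l)$ $\GL_2$-eigenform with non-zero $T_p$-eigenvalue on a suitable twist, so $\rho$ would be ordinarily geometrically modular at $p$ of weight $(k,l)$ by \Cref{thm: ord-geom-rho}, contradicting the hypothesis. The technical heart is exactly this translation: carefully analyzing how the relevant line bundle restricts to $\P^1_{Y^B}$ and matching the Hecke action so that the non-vanishing on the stratum forces the ordinary $T_p$-eigenvalue.
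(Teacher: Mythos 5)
Your overall skeleton matches the paper's: compare $\Theta_1 f'$ with $\Ha_1 f$, deduce that $\Theta_1 f'$ is divisible by $\Ha_1$, and then invoke \Cref{thm: Theta-Hasse-divisibility} to conclude that either $\Ha_1\mid f'$ or $p\mid k_1'$. The weight bookkeeping is also correct. But the step you yourself flag as "the main obstacle" — why $\Theta_1 f'$ vanishes on $Z_{\{\tau_1\}}$ — is exactly the content of the lemma, and your proposed route to it does not work. The paper proves the much stronger identity $\Ha_1 f_\xi = \Theta_1(f'_\xi)$ after twisting to weights $(k,(0,0))$ and $(k',(0,1))$, by taking $f,f'$ to be \emph{stabilised} eigenforms (\Cref{lem: stabilised_f}) and observing that (i) non-ordinarity of $f$ forces $f_\xi$, hence $\Ha_1 f_\xi$, to be \emph{strongly} stabilised, and (ii) $\Theta_1(f'_\xi)$ is automatically strongly stabilised. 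Two strongly stabilised normalised eigenforms of the same weight with the same eigensystem have identical $q$-expansions, so they are equal by the $q$-expansion principle. This is the only place the non-ordinarity hypothesis enters, and your proposal never uses it in a way that could close the argument: merely knowing that $\Theta_1 f'$ and $\Ha_1 f$ lie in the same Hecke eigenspace does not make them proportional mod $p$ without such a multiplicity-one-type input.

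Your substitute argument via the Goren–Oort stratification and Jacquet–Langlands has two concrete failures. First, restricting $\Theta_1 f'$ (of weight $(k_0+p,\,k_1-1)$ in $k$) to $Z_{\{\tau_1\}}$ and pushing forward along $\pi:\P^1_{Y^B}\to Y^B$ requires the restricted line bundle to have non-negative degree on the fibres, i.e. $p(k_1-1)-(k_0+p)\geq 0$; this numerical condition is not guaranteed by the hypotheses of the lemma. Second, and more fundamentally, a non-zero mod $p$ eigenform on the definite quaternion algebra $B_{\{\tau_0,\tau_1\}}$ realizing $\rho$ carries no information about the $T_v$-eigenvalue at $v\mid p$ of the original form $f$: \Cref{thm: global J-L} is a statement about characteristic-zero automorphic representations and does not transfer a mod $p$ quaternionic form to a weight-$(k,l)$ Hilbert eigenform, let alone one with non-zero $T_p$-eigenvalue on a twist. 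Indeed, the paper's own use of this restriction technique (\Cref{prop: geom-alg-big}) draws no contradiction with non-ordinarity — it simply produces algebraic modularity of another weight. So non-vanishing of $\Theta_1 f'$ on the stratum does not contradict the hypothesis, and the divisibility claim is left unproved.
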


\begin{proof}
  By \Cref{lem: stabilised_f}, $\rho$ arises from a stabilised eigenform $f$ of weight $(k, l)$ and a stabilised eigenform $f'$ of weight $(k', l')$.
  Let $\xi$ be a character of weight $-l$.
  Then $\rho_{\xi'} \otimes \rho$ arises from $f_\xi$ of weight $(k,(0,0))$ and $f'_\xi$ of weight $(k', (0, 1))$.
  Since $f$ is not ordinary, we have $f_\xi$ is a strongly stabilised eigenform, and hence $\Ha_1 f_\xi$ (as $k_1 \geq 4$ by our assumption).
  Also, $\Theta_1(f'_\xi)$ is a strongly stabilised eigenform by \cite[Proposition 9.4.1]{Diamond-Sasaki}.
  By $q$-expansion principle, we obtain
  $$\Ha_1 f_\xi = \Theta_1(f'_\xi). $$
  If $p\nmid k_1'$, then by \Cref{thm: Theta-Hasse-divisibility}, we have $\Ha_1\mid f'_\xi$, hence 
  $\rho_{\xi'} \otimes \rho$ is geometrically modular of $(k' - k_{\Ha_1}, (0,1))$, and then
  $\rho$ is geometrically modular of weight $(k' - k_{\Ha_1}, l')$. 
\end{proof}

\subsubsection{Small weights}

Let $\rho$ be geometrically modular of weight $(k,l)$, where $ k = (k_0, k_1)$ and $2 \leq k_0 \leq k_1 \leq p$.

\begin{lemma}\label{lem: 2 and p+1}
  If $V_{(p+1,p+1), (l_0, l_1)}\in W^\BDJ(\rho)$, then either $V_{(2,2), (l_0, l_1)}\in W^\BDJ(\rho)$, or $W^\BDJ(\rho) = \{V_{(p+1,p+1), (l_0, l_1)}\}$.
\end{lemma}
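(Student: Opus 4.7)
The plan is to analyze $W^{\BDJ}(\rho) = W_v(\rho)$ directly from \Cref{def: WBDJ}, where $v$ is the unique prime of $F$ above $p$, splitting on whether $\rho|_{G_{F_v}}$ is irreducible or reducible.

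In the irreducible case, I will index $\Sigma'_v = \{\tau'_0,\tau'_1,\tau'_2,\tau'_3\}$ so that $\varepsilon_{\tau'_{i+1}} = \varepsilon_{\tau'_i}^p$, with $\tau'_0,\tau'_2$ extending $\tau_0$ and $\tau'_1,\tau'_3$ extending $\tau_1$. A direct computation of the four unordered pairs of diagonal characters arising from the formula in \Cref{def: WBDJ} for the four admissible $J'$ shows that the two distinct pairs $\{\varepsilon_{\tau'_0}^{p+p^2},\varepsilon_{\tau'_0}^{p^3+1}\}\cdot\psi$ and $\{\varepsilon_{\tau'_0}^{p+1},\varepsilon_{\tau'_0}^{p^2+p^3}\}\cdot\psi$ obtained for $V_{(p+1,p+1),(l_0,l_1)}$ (where $k_\tau - 1 = p$) coincide, in swapped order, with the two pairs obtained for $V_{(2,2),(l_0,l_1)}$ (where $k_\tau - 1 = 1$), after applying $\varepsilon_{\tau'_0}^{p^4-1}=1$. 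Hence $V_{(p+1,p+1),(l_0,l_1)} \in W_v(\rho) \iff V_{(2,2),(l_0,l_1)} \in W_v(\rho)$, giving the first alternative.

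In the reducible case $\rho|_{G_{F_v}} \simeq \left(\begin{smallmatrix}\chi_1 & c \\ 0 & \chi_2\end{smallmatrix}\right)$, the hypothesis provides $J \subseteq \Sigma_v$ with $(V_{(p+1,p+1),(l_0,l_1)},J) \in W'_v(\chi_1,\chi_2)$; a case check using $\varepsilon_{\tau_1}=\varepsilon_{\tau_0}^p$ yields $\chi_1/\chi_2|_{I_{F_v}} \in \{\varepsilon^{\pm1},\varepsilon_{\tau_0}^{\pm(p-1)}\}$. If $\chi_1/\chi_2|_{I_{F_v}} = \varepsilon_{\tau_0}^{\pm(p-1)}$, an analogous enumeration for $k=(2,2)$ shows $(V_{(2,2),(l_0,l_1)},\{\tau_1\}) \in W'_v(\chi_1,\chi_2)$ (or $\{\tau_0\}$ after swap) with the same twist, and since $\chi_1\chi_2^{-1}\notin\{1,\varepsilon\}$ lies outside the Fontaine-Laffaille obstruction, the associated $L$-subspace equals the full $H^1(G_{F_v},\Fpbar(\chi_1\chi_2^{-1}))$, so $V_{(2,2),(l_0,l_1)}\in W_v(\rho)$ and we are in the first alternative. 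If $\chi_1/\chi_2|_{I_{F_v}} = \varepsilon^{\pm1}$, then $(V_{(2,2),(l_0,l_1)},\Sigma_v)\in W'_v(\chi_1,\chi_2)$ after a possible swap of $\chi_1,\chi_2$, and \Cref{rmk: J=Sigmav} gives $V_{(2,2),(l_0,l_1)}\in W_v(\rho)$ unless $c$ lies in a codimension-one subspace $L_{V_{(2,2)},\Sigma_v}\subset H^1(G_{F_v},\Fpbar(\chi_1\chi_2^{-1}))$; if it does, we are done.

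It remains to show that if $c \notin L_{V_{(2,2)},\Sigma_v}$ then $W_v(\rho) = \{V_{(p+1,p+1),(l_0,l_1)}\}$. Enumerating $(V,J) \in W'_v(\chi_1,\chi_2)$ in this subcase, the only other candidate Serre weights for $p>3$ are $V_{(3,p),(d)}$ via $J=\{\tau_0\}$ and $V_{(p,3),(d')}$ via $J=\{\tau_1\}$ for prescribed twists $d, d'$. The main obstacle will be to establish, via the Fontaine-Laffaille description of $L_{V,J}$ from \cite[\S3.2]{Buzzard-Diamond-Jarvis}, the inclusions $L_{V_{(3,p)},\{\tau_0\}}, L_{V_{(p,3)},\{\tau_1\}} \subseteq L_{V_{(2,2)},\Sigma_v}$; granted these, $c$ lies outside both $L$-subspaces, so neither candidate is in $W_v(\rho)$, completing the proof of the singleton claim.
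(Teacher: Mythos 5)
Your proposal diverges from the paper, which disposes of this lemma in two lines by quoting the tr\`es ramifi\'ee dichotomy from \cite[Lemma A.4]{CEGS}: if $V_{(p+1,p+1),(l_0,l_1)}\in W^\BDJ(\rho)$ but $V_{(2,2),(l_0,l_1)}\notin W^\BDJ(\rho)$, then $\rho|_{G_{F_p}}$ is tr\`es ramifi\'ee, and in that case $W^\BDJ(\rho)$ is the singleton by the definition in \cite[\S3.2]{Buzzard-Diamond-Jarvis}. Your from-scratch argument has the right skeleton, and the irreducible case is handled correctly, but it has genuine gaps precisely where the cited result does the work.

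First, in the reducible sub-case $\chi_1\chi_2^{-1}|_{I_{F_v}}=\varepsilon_{\tau_0}^{\pm(p-1)}$ you assert that the relevant $L$-subspace is all of $H^1(G_{F_v},\Fpbar(\chi_1\chi_2^{-1}))$ because $\chi_1\chi_2^{-1}\notin\{1,\varepsilon\}$. That is false: here $|J|=1$ while $f_v=2$, so $\dim H^1=2$ by the local Euler characteristic but $\dim L_{V,J}=|J|=1$; the statement that $L_{V,J}=H^1$ unless $V$ is trivial up to twist (\Cref{rmk: J=Sigmav}) is special to $J=\Sigma_v$. In this sub-case one must actually compare the two one-dimensional subspaces $L_{V_{(p+1,p+1)},\{\tau_0\}}$ and $L_{V_{(2,2)},\{\tau_1\}}$ inside a two-dimensional $H^1$, and nothing in your argument does so. Second, in the case $\chi_1\chi_2^{-1}|_{I_{F_v}}=\varepsilon$ with $c\notin L_{V_{(2,2)},\Sigma_v}$, your enumeration of the remaining candidate weights omits $V_{(p-1,p-1),d''}$ arising via $J=\varnothing$, which must also be excluded before concluding the singleton claim. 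Third, and most importantly, you explicitly defer the inclusions $L_{V_{(3,p)},\{\tau_0\}},\,L_{V_{(p,3)},\{\tau_1\}}\subseteq L_{V_{(2,2)},\Sigma_v}$ as ``the main obstacle''; these inclusions (together with the two points above) are the entire mathematical content of the lemma, and they are exactly what \cite[Lemma A.4]{CEGS} --- or alternatively the $L^{\AH}$ description of \cite{Dembele-Diamond-Roberts} combined with \cite{CEGM17} --- supplies. As written, the proposal is an outline with the key steps unestablished rather than a proof.
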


\begin{proof}
  It is explained in the last paragraph of the proof of \cite[Lemma A.4]{CEGS} that, 
  if $V_{(p+1,p+1), (l_0, l_1)}\in W^\BDJ(\rho)$ and $V_{(2,2), (l_0, l_1)} \not\in W^\BDJ(\rho)$, then 
  $\rho|_{G_{F_p}}$ is tr\`es ramifi\'ee,
  in which case $ W^\BDJ(\rho) = \{V_{(p+1,p+1), (l_0, l_1)}\}$ by the definition in \cite[\S3.2]{Buzzard-Diamond-Jarvis}.
\end{proof}

\begin{lemma}\label{lem: twotwo}
  If $\rho$ is geometrically modular of weight $ (k,l) =  ((2,2), (0,0))$, then $V_{(2,2), (-2,-2)} \in W^\BDJ(\rho)$.
\end{lemma}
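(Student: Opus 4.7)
The plan is to combine a Hasse-invariant shift, a Theta-operator shift, and $\Cref{lem: 2 and p+1}$, finishing with an explicit Jordan--H\"older computation on $V_{(p+3,p+3),(-p-2,-p-2)}$. A useful observation is that as Serre weights, $V_{(2,2),(-p-1,-p-1)} \cong V_{(2,2),(-2,-2)}$, since the determinant characters on $\GL_2(\F_{p^2})$ agree modulo $p^2-1$: indeed $-(p+1)^2 \equiv -2(p+1) \pmod{p^2-1}$.

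First, by $\Cref{lem: Theta(f)nonzero}$, choose a normalised eigenform $f \in M_{(2,2),(0,0)}(U;E)$ giving rise to $\rho$. Multiplying by $\Ha_0 \Ha_1$ (of weight $(p-1,p-1)$) yields a non-zero eigenform of weight $((p+1,p+1),(0,0))$ with the same Hecke eigensystem. This weight satisfies the liftability inequality $p(p-1) > p-1$ of $\Cref{thm: my-vanish}$, so $\Cref{thm: my-lift-goem-alg}$ gives $\rho$ algebraically modular of weight $((p+1,p+1),(0,0))$. Since the Serre weight $V_{(p+1,p+1),(-p-1,-p-1)} = \Sym^{p-1}_{[\tau_0]} \Sym^{p-1}_{[\tau_1]} \det^{-p-1}_{[\tau_0]} \det^{-p-1}_{[\tau_1]}$ is irreducible over $\GL_2(\F_{p^2})$ (a tensor of two Steinberg-type components via distinct embeddings), we obtain $V_{(p+1,p+1),(-p-1,-p-1)} \in W^\BDJ(\rho)$. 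Applying $\Cref{lem: 2 and p+1}$ with $(l_0,l_1) = (-p-1,-p-1)$, either $V_{(2,2),(-p-1,-p-1)} \cong V_{(2,2),(-2,-2)} \in W^\BDJ(\rho)$ (and we are done), or $W^\BDJ(\rho) = \{V_{(p+1,p+1),(-p-1,-p-1)}\}$.

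To rule out the singleton alternative, I use the Theta-shifted eigenform $\Theta_0 \Theta_1 f$, which is non-zero by normalisation and has weight $((p+3,p+3),(-1,-1))$. This weight satisfies liftability since $p(p+1) > p+1$, so $\rho$ is algebraically modular of $((p+3,p+3),(-1,-1))$ and, by $\Cref{lem: mod_of_JH}$, some Jordan--H\"older factor of $V_{(p+3,p+3),(-p-2,-p-2)}$ lies in $W^\BDJ(\rho)$. The crux is then an explicit decomposition. Using the Weyl-module formula $[\Sym^{p+1}] = [L(p+1)] + [L(p-3)\det^2]$ for $\GL_2$ in characteristic $p$, which under restriction via $\tau_i$ gives
\begin{align*}
  [\Sym^{p+1}_{[\tau_i]}] = [\Sym^1_{[\tau_0]} \Sym^1_{[\tau_1]}] + [\Sym^{p-3}_{[\tau_i]} \det^2_{[\tau_i]}],
\end{align*}
and expanding $[\Sym^{p+1}_{[\tau_0]} \Sym^{p+1}_{[\tau_1]}]$ with Clebsch--Gordan for $\Sym^1 \otimes \Sym^1 = \Sym^2 + \det$ and $\Sym^1 \otimes \Sym^{p-3} = \Sym^{p-2} + \Sym^{p-4}\det$, the Jordan--H\"older factors of $V_{(p+3,p+3),(-p-2,-p-2)}$ have Symmetric-power parts $\Sym^a_{[\tau_0]} \otimes \Sym^b_{[\tau_1]}$ with
\begin{align*}
  (a,b) \in \{(0,0),(0,2),(2,0),(2,2),(1,p-2),(1,p-4),(p-2,1),(p-4,1),(p-3,p-3)\}.
\end{align*}

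For $p \geq 5$ none of these pairs equals $(p-1,p-1)$, so $V_{(p+1,p+1),(-p-1,-p-1)}$ is not a Jordan--H\"older factor of $V_{(p+3,p+3),(-p-2,-p-2)}$. Consequently, in the singleton scenario $W^\BDJ(\rho) = \{V_{(p+1,p+1),(-p-1,-p-1)}\}$ no Jordan--H\"older factor would lie in $W^\BDJ(\rho)$, contradicting algebraic modularity at $((p+3,p+3),(-1,-1))$. Thus the singleton case is impossible, and $V_{(2,2),(-2,-2)} \in W^\BDJ(\rho)$. The main computational step is the above Jordan--H\"older decomposition; the rest is a direct chain of applications of the weight-shifting and liftability machinery already established.
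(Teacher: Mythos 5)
Your argument is correct in substance and reaches the conclusion, but it takes a genuinely different route in the second half. The first half (multiplying by $\Ha_0\Ha_1$, lifting via \Cref{thm: my-lift-goem-alg} to get $V_{(p+1,p+1),(-2,-2)}\in W^\BDJ(\rho)$, then invoking \Cref{lem: 2 and p+1}) coincides with the paper. To rule out the singleton alternative, however, the paper uses the auxiliary eigenform $\Ha_0\Theta_1(f)$, of weight $((p+1,p+3),(0,-1))$, and shows via the Grothendieck-group relation that $\JH(\e^{M-p}\Symo^{p-1}\Syml^{p+1})$ contains no constituent with symmetric-power part $(p-1,p-1)$; you instead use $\Theta_0\Theta_1(f)$, of weight $((p+3,p+3),(-1,-1))$, and decompose $\Symo^{p+1}\Syml^{p+1}$ by Steinberg plus Clebsch--Gordan. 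Your decomposition is correct (I checked it against the relation \cite[(1.0.3)]{Reduzzi15}), and the non-vanishing of $\Theta_0\Theta_1(f)$ is justified since $\Theta_1(f)$ is again a normalised eigenform. Both arguments buy the same contradiction; yours is perhaps more self-contained in that it only uses the well-known structure of $\Sym^{p+1}$ rather than an explicit instance of the periodicity relation.

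The one genuine caveat is the restriction to $p\geq 5$. The lemma, unlike \Cref{lem: twothree} and \Cref{lem: twofour}, carries no hypothesis $p>3$, and the paper's choice of auxiliary weight makes the argument go through for $p=3$ as well: the constituents of $\e^{M-p}\Symo^{p-1}\Syml^{p+1}$ have symmetric-power parts $(p-2,1)$, $(0,0)$, $(0,2)$, $(p-1,p-3)$, none of which is $(p-1,p-1)$ even when $p=3$. In your decomposition the pair $(2,2)$ occurs, which equals $(p-1,p-1)$ for $p=3$, so your stated argument does not close that case. It is repairable: the unique constituent of $V_{(6,6),(-5,-5)}$ with symmetric-power part $(2,2)$ carries determinant twist $\e^{4}$, whereas the singleton weight is $\e^{0}\Symo^{2}\Syml^{2}$, and $4\not\equiv 0 \pmod 8$, so the contradiction survives once you track determinants. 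Since the main theorems of the paper assume $p>3$ anyway, this is a minor point, but as a proof of the lemma as stated you should either add the determinant bookkeeping for $p=3$ or switch to the paper's auxiliary weight.
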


\begin{proof}
  Let $\rho \simeq \rho_f$ for some normalised eigenform $f\in M_{(2,2), (0,0)}(U_1(\frakn); E)$ as in \Cref{lem: Theta(f)nonzero} so that $\Theta_1(f)\neq 0$.
  Consider the eigenforms $\Ha_0\Ha_1(f)$ and $\Ha_0\Theta_1(f)$.  
  Then
  $\rho$ is geometrically hence algebraically modular of weight $((p+1, p+1), (0,0))$ and $(p+1, p+3), (0,-1)$ by \Cref{thm: my-lift-goem-alg}.
  Let $M = -2-2p$ and $V' := \e^{M-p}\Symo^{p-1}\Syml^{p+1}$.
  Then $\e^{M}\Symo^{p-1}\Syml^{p-1}\in W^\BDJ(\rho)$ and $\rho \cap W(V')\neq \varnothing$.
  By \Cref{lem: 2 and p+1}, $V_{(2, 2), (-2, -2)} \in W^\BDJ(\rho)$ unless
  $W^\BDJ(\rho) = \{\e^{M}\Symo^{p-1}\Syml^{p-1}\}$.
  By Grothendieck group relation \cite[(1.0.3)]{Reduzzi15}, we have
  \begin{align*}
    \left[\e^{-p}\Symo^{p-1}\Syml^{p+1}\right] = 2 \left[\e^{1-p}\Symo^{p-2}\Syml^{1}\right] + \left[\Symo^0\Syml^{0}\right] +\\ \left[\e^{-p}\Symo^{0}\Syml^{2}\right] + \left[\e^{p}\Symo^{p-1}\Syml^{p-3}\right].
  \end{align*}
If $W^\BDJ(\rho) = \{\e^{M}\Symo^{p-1}\Syml^{p-1}\}$, then $W^\BDJ(\rho) \cap \JH(V') = \varnothing$, which is a contradiction.
Therefore, $V_{(2, 2), (-2, -2)} \in W^\BDJ(\rho)$. 
\end{proof}

\begin{lemma}\label{lem: twothree}
  Suppose $p > 3$.
  If $\rho$ is geometrically modular of weight $ (k,l) =  ((2,3), (0,0))$, then $V_{(2,3),(-2,-3)}\in W^\BDJ(\rho)$.
\end{lemma}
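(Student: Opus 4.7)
We apply the weight-shifting method in the spirit of \Cref{lem: twotwo}. By \Cref{lem: Theta(f)nonzero}, I would choose a normalised eigenform $f \in M_{(2, 3), (0, 0)}(U_1(\frakn); E)$ giving rise to $\rho$. The weight $((2, 3), (0, 0))$ fails \eqref{eq: liftcone} at $\tau_0$ (since $p \cdot 0 < 1 = 3 - 2$), so direct application of \Cref{thm: my-lift-goem-alg} is unavailable; instead I would shift to two auxiliary weights that do satisfy liftability.

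First, $\Ha_1 \Ha_0 f \in M_{(p+1, p+2), (0, 0)}(U_1(\frakn); E)$ is nonzero (as $f$ is normalised, so not divisible by any $\Ha_\tau$) and its weight satisfies \eqref{eq: liftcone} for $p > 3$. Hence $\rho$ is algebraically modular of this weight by \Cref{thm: my-lift-goem-alg}, and applying the Grothendieck relation \cite[(1.0.3)]{Reduzzi15} as in the computation of \Cref{lem: twotwo} yields
\begin{align*}
  [V_{(p+1, p+2), -(p+1, p+2)}] &= [\e^{-3p-2} \Symo^{p-1} \Syml^p] \\
  &= [V_{(2, 3), (-2, -3)}] + 2[V_{(p, 2), (-1, -3)}] + [V_{(p+1, p), (-2, -2)}].
\end{align*}
Similarly, $\Theta_1 f \in M_{(p+2, 4), (0, -1)}(U_1(\frakn); E)$ is nonzero by \Cref{lem: Theta(f)nonzero} and its weight satisfies \eqref{eq: liftcone}, so $\rho$ is algebraically modular of weight $((p+2, 4), (0, -1))$ and an analogous Grothendieck decomposition gives
\begin{align*}
  [V_{(p+2, 4), (-p-2, -3)}] &= [\e^{-4p-2}\Symo^p \Syml^2] \\
  &= [V_{(2, 3), (-2, -3)}] + [V_{(2, 5), (-2, -4)}] + [V_{(p, 4), (-1, -4)}].
\end{align*}
Both decompositions share the target $V_{(2, 3), (-2, -3)}$, and $W^\BDJ(\rho)$ meets the Jordan-H\"older factors of each.

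If $V_{(2, 3), (-2, -3)} \in W^\BDJ(\rho)$, we are done. Otherwise, $W^\BDJ(\rho)$ must contain some $V_a \in \{V_{(p, 2), (-1, -3)}, V_{(p+1, p), (-2, -2)}\}$ together with some $V_b \in \{V_{(2, 5), (-2, -4)}, V_{(p, 4), (-1, -4)}\}$. I would derive a contradiction in each of the four cross-combinations by comparing the constraints imposed on $\rho|_{I_{F_p}}$ by \Cref{def: WBDJ}. In the reducible case, for each Serre weight the recipe yields two possible unordered pairs $\{\chi_1|_{I_{F_p}}, \chi_2|_{I_{F_p}}\}$ (coming from the four choices of $J \subset \Sigma_p$, which collapse in pairs), and one checks by direct comparison of exponents modulo $p^2 - 1$ that no pair is common to $V_a$ and $V_b$; in the irreducible case, the analogous computation uses the four choices of $J' \subset \Sigma'_v$ and level-4 fundamental characters. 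The hard part will be the systematic completion of this compatibility check across all four combinations in both regimes; should direct comparison fall short in some configuration, introducing an additional shift such as $\Ha_0 \Theta_1 f$ of weight $((p+1, p+4), (0, -1))$ (which also satisfies \eqref{eq: liftcone}) should shrink the candidate sets further and close the argument.
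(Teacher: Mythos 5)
Your choice of auxiliary forms ($\Ha_0\Ha_1 f$ and $\Theta_1 f$) and both Grothendieck decompositions coincide with the paper's, and the decompositions are correct. The gap is in the endgame. You assert that for each of the four cross-combinations $(V_a,V_b)$ the inertial constraints of \Cref{def: WBDJ} are incompatible, so that $V_a,V_b\in W^\BDJ(\rho)$ is impossible. This fails for $V_a=V_{(p+1,p),(-2,-2)}$ and $V_b=V_{(p,4),(-1,-4)}$ in the irreducible case: taking $J'=\{\tau_0',\tau_3'\}$ in \eqref{eq: irreducible-case} (with $\varepsilon_4$ the level-four fundamental character and $\tau_j'$ the embeddings of $\F_{p^4}$ into $\Fpbar$, $\tau_0'$ extending $\tau_0$ and $\tau_{j+1}'=\Fr\circ\tau_j'$) yields for \emph{either} weight the same unordered pair of $\varepsilon_4$-exponents $\{-3p^3-2p^2-p-1,\,-p^3-p^2-3p-2\}$; these two exponents lie in a single orbit under multiplication by $p^2$ modulo $p^4-1$, so this type is genuinely realized by an irreducible $\rho|_{G_{F_p}}$ induced from the unramified quartic extension. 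Hence no contradiction is available from this combination. What is true instead is that this common inertial type also places the target in $W_p(\rho)$: for $V_{(2,3),(-2,-3)}$ with $J'=\{\tau_0',\tau_1'\}$ one obtains exactly the same pair of exponents, which contradicts your standing assumption that the target is absent. Your proposal contains no such ``the target is forced in'' branch, and the suggested fallback of adding a further shift such as $\Ha_0\Theta_1 f$ cannot supply one: additional shifts only impose further constraints, all of which this $\rho$ satisfies precisely because $V_{(2,3),(-2,-3)}$ genuinely belongs to its weight set.

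There is a second soft spot. In the reducible case, $V\in W_v(\rho)$ requires not only the character identities defining $W_v'(\chi_1,\chi_2)$ but also $c\in L_{V,J}$ for the extension class $c$, so incompatibility of the inertial characters is the only situation your method can settle without further input. For these particular weights the four reducible comparisons do appear to fail already at the level of exponents mod $p^2-1$, but had any been compatible you would have needed exactly the analysis of the subspaces $L_{V,J}$ via \cite[\S 8]{Dembele-Diamond-Roberts}. That is the route the paper takes: it enumerates every possible $W^\BDJ(\rho)$ containing a given $V_a$ and observes that each either contains $V_{(2,3),(-2,-3)}$ or is disjoint from $\JH(V_{(p+2,4),(-p-2,-3)})$, which handles the reducible extension-class subtleties and the irreducible coincidence above uniformly.
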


\begin{proof}
  Let $\rho \simeq \rho_f$ for some normalised eigenform $f\in M_{(2,3), (0,0)}(U_1(\frakn); E)$ as in \Cref{lem: Theta(f)nonzero} so that $\Theta_1(f)\neq 0$.
  Consider the eigenforms $\Ha_0 \Ha_1 f$ and $\Theta_1(f)$.
  The representation $\rho$ is
  geometrically hence algebraically modular of weight $((p+1, p+2), (0,0))$ and $(p+2, 4), (0,-1)$.
  Let $m_i = 1-k_i$ for $i = 0,1$ and $M =  m_0 + pm_1$. 
  We have the Grothendieck group relations
  \begin{align*}
    \left[\Symo^{p-1}\Syml^{p}\right] = 2 \left[\e^{1}\Symo^{p-2}\Syml^{0}\right] + \left[\Symo^{0}\Syml^{1}\right]  + \left[\e^{p}\Symo^{p-1}\Syml^{p-2}\right],
    \\
    \left[\e^{-p}\Symo^{p}\Syml^{2}\right] = \left[\Symo^{0}\Syml^{1}\right] + \left[\e^{-p}\Symo^{0}\Syml^{3}\right] + \left[\e^{1-p}\Symo^{p-2}\Syml^{2}\right].
  \end{align*}
  Since $\rho$ is modular of weight $e^M \Symo^{p-1}\Syml^{p}$, it is modular of at least one of $\e^{M+1}\Symo^{p-2}\Syml^{0}$, $e^M\Symo^{0}\Syml^{1}$ and $\e^{M + p}\Symo^{p-1}\Syml^{p-2}$.

  Suppose $\e^{M+1}\Symo^{p-2}\Syml^{0} \in W^\BDJ(\rho)$. 
  We can use the results in \cite[\S 8]{Dembele-Diamond-Roberts} to 
  work out all the possibilities for $W^\BDJ(\rho)$.
  (We list the computation result for $W^\BDJ(\rho)$ here once and omit it in the later proofs, since it is a straightforward but not enlightening enumeration using the results in \cite[\S 8]{Dembele-Diamond-Roberts}.)
  Since $\e^{M+1}\Symo^{p-2}\Syml^{0} \in W^\BDJ(\rho)$, then $\rho$ can be of Case I, III, IV or V in \cite{Dembele-Diamond-Roberts} and the
  possibilities of $W^\BDJ(\rho)$ are as follows: 
  \begin{enumerate}
    \item If
    $\rho$ is of Case I as in \cite[\S8.2]{Dembele-Diamond-Roberts}, then $\e^{-M}W^\BDJ(\rho) = $
      \begin{enumerate}
        \item $\{\e^{1}\Symo^{p-2}\Syml^{0}\}$, or
        \item $\{\e^{1}\Symo^{p-2}\Syml^{0}, \e^p \Symo^{p-1}\Syml^{p-2}\}$, or
        \item $\{\e^{1}\Symo^{p-2}\Syml^{0}, \Symo^{0}\Syml^{1}\}$, or
        \item $\{\e^{1}\Symo^{p-2}\Syml^{0}, \Symo^{0}\Syml^{1}, \e^p \Symo^{p-1}\Syml^{p-2}, \e^{2-2p}\Symo^{p-1}\Syml^{p-4}\}$, or 
      \end{enumerate}
      \begin{enumerate}[(a')]
        \item $\{\e^{p+1}\Symo^{p-3}\Syml^{p-2}, \e^{1}\Symo^{p-2}\Syml^{0}\}$, or
        \item $\{\e^{p+1}\Symo^{p-3}\Syml^{p-2}, \e^{1}\Symo^{p-2}\Syml^{0}, \e^{p-1}\Symo^{1}\Syml^{p-1}, \e^{p}\Symo^{p-1}\Syml^{p-2}\}$.
      \end{enumerate}

    \item If $\rho$ is of Case III as in \cite[\S8.3]{Dembele-Diamond-Roberts}, then $\e^{-M}W^\BDJ(\rho) = $
      \begin{enumerate}
        \item $\{\e^{p}\Symo^{p-1}\Syml^{p-2}, \e^{1}\Symo^{p-2}\Syml^{0}, \e^{p-1}\Symo^{1}\Syml^{p-1} \}$, or
        \item $\{\e^{p}\Symo^{p-1}\Syml^{p-2}, \e^{1}\Symo^{p-2}\Syml^{0}, \e^{p-1}\Symo^{1}\Syml^{p-1}, \e^{p+1}\Symo^{p-3}\Syml^{p-2} \}$, or
      \end{enumerate}
      \begin{enumerate}[(a')]
        \item $\{\e^{2p}\Symo^{p-1}\Syml^{p-4}, \e^{p}\Symo^{p-1}\Syml^{p-2}, \Symo^{0}\Syml^{1}, \e^{1}\Symo^{p-2}\Syml^{0} \}$.
      \end{enumerate}

    \item If $\rho$ is of Case IV as in \cite[\S8.5]{Dembele-Diamond-Roberts}, then $\e^{-M}W^\BDJ(\rho) = $
    
    $\{ \e^{1}\Symo^{p-2}\Syml^{0}, \e^{1}\Symo^{1}\Syml^{p-3}, \Symo^{0}\Syml^{1}, \e^{p+1}\Symo^{p-3}\Syml^{p-2} \}$.

    \item If $\rho$ is of Case V as in \cite[\S8.5]{Dembele-Diamond-Roberts}, then $\e^{-M}W^\BDJ(\rho) = $
    
    $\{ \e^{p-1}\Symo^{1}\Syml^{p-1}, \e^{p}\Symo^{p-1}\Syml^{p-2}, \e^{1}\Symo^{p-2}\Syml^{0} \}$.
  \end{enumerate}
  We observe that
  if $\rho$ is modular of weight $\e^{M+1}\Symo^{p-2}\Syml^{0}$, then 
  either $\e^M \Symo^{0}\Syml^{1} \in W^\BDJ(\rho)$ or 
  $W^\BDJ(\rho)\cap \JH(\e^{M-p} \Symo^{p}\Syml^{2}) = \varnothing$, the latter of which leads to a contradiction.
  
  Similarly, if $\rho$ is modular of weight $\e^{M+p}\Symo^{p-1}\Syml^{p-2}$, we can also compute the possibilities of $W^\BDJ(\rho)$ and observe that  
  either $\e^M \Symo^{0}\Syml^{1} \in W^\BDJ(\rho)$ or $W^\BDJ(\rho)\cap \JH(\e^{M-p}\Symo^{p}\Syml^{2}) = \varnothing$.
  Therefore, we always have $V_{(2,3),(-2,-3)}\in W^\BDJ(\rho)$.
\end{proof}

\begin{lemma}\label{lem: twofour}
  Suppose prime $p > 3$.
  If $\rho$ is geometrically modular of weight $ (k,l) =  ((2,4), (0,0))$, then $V_{(2,4),(-2,-4)}\in W^\BDJ(\rho)$.
\end{lemma}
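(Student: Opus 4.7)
The plan is to follow the strategy of Lemma \ref{lem: twothree} with the analogous pair of weight shifts adapted to the slightly larger weight $(2,4)$. By Lemma \ref{lem: Theta(f)nonzero}, realise $\rho \simeq \rho_f$ for a normalised eigenform $f \in M_{(2,4),(0,0)}(U_1(\frakn); E)$ with $\Theta_1(f) \neq 0$. The two shifted eigenforms I will use are $\Ha_0\Ha_1 f$, of weight $((p+1,p+3),(0,0))$, and $\Theta_1(f)$, of weight $((p+2,5),(0,-1))$. Since $p > 3$, each of these weights satisfies the liftcone inequality $p(k_\tau - 2) > k_{\Fr^{-1}\circ\tau} - 2$, so Theorem \ref{thm: my-lift-goem-alg} gives that $\rho$ is algebraically modular of both weights. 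Setting $M := -4p - 2$, this shows that $\rho$ is modular of some Jordan--H\"older constituent of each of
\[
V_1 := \e^{M}\Symo^{p-1}\Syml^{p+1}, \qquad V_2 := \e^{M-p}\Symo^{p}\Syml^{3}.
\]

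The next step is to expand $[V_1]$ and $[V_2]$ in the Grothendieck group $G_0(\Fpbar[\GL_2(\F_{p^2})])$ using the relation \cite[(1.0.3)]{Reduzzi15}. A direct computation shows that the target Serre weight $V_{(2,4),(-2,-4)} = \e^{M}\Syml^{2}$ appears as a Jordan--H\"older constituent of both $V_1$ and $V_2$, alongside a short list of other irreducibles in each case. For each Jordan--H\"older factor of $V_1$ that could lie in $W^{\BDJ}(\rho)$, I enumerate the possible sets $W^{\BDJ}(\rho)$ via the Cases I--V classification in \cite[\S 8]{Dembele-Diamond-Roberts}, exactly as in the proof of Lemma \ref{lem: twothree}. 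For each resulting candidate $W^{\BDJ}(\rho)$, I impose the additional constraint that it must also meet $\JH(V_2)$ non-trivially: configurations where this fails yield a contradiction and are discarded, and in each surviving configuration I verify that $V_{(2,4),(-2,-4)} \in W^{\BDJ}(\rho)$.

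The main obstacle is the case-by-case bookkeeping. Each candidate Serre weight in $\JH(V_1)$ produces its own finite list of possible $W^{\BDJ}(\rho)$, determined by the reducibility type and ramification behaviour of $\rho|_{G_{F_p}}$, and the constraint coming from $V_2$ must be checked against each. A priori this procedure could fail, as it does for the inert weight $(2,p+1)$ flagged in the introduction, where the two Grothendieck decompositions share an obstructing non-target constituent that the BDJ recipe cannot rule out. For $k = (2,4)$ with $p > 3$, however, the non-target Jordan--H\"older constituents of $V_1$ and $V_2$ lie in BDJ configurations that can be separated by the explicit recipes in \cite[\S 8]{Dembele-Diamond-Roberts}, so in every surviving case the simultaneous modularity constraints force $V_{(2,4),(-2,-4)} \in W^{\BDJ}(\rho)$, as required.
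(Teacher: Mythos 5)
Your skeleton matches the paper's: the same two shifted forms $\Ha_0\Ha_1 f$ and $\Theta_1(f)$, the same target representations $V_1 = \e^{M}\Symo^{p-1}\Syml^{p+1}$ and $V_2 = \e^{M-p}\Symo^{p}\Syml^{3}$ with $M=-2-4p$, the same Grothendieck-group decompositions, and the same Demb\'el\'e--Diamond--Roberts enumeration. However, there is a genuine gap at exactly the point you wave through as ``bookkeeping.'' The decomposition of $[V_1]$ contains the constituent $\e^{M+p}\Symo^{0}\Syml^{0}$, i.e.\ the twist of the trivial Serre weight corresponding to algebraic weight $((2,2),(0,1))$. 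This constituent does not lie in $\JH(V_2)$, and the paper does \emph{not} dispose of it by DDR enumeration at all: it is the analogue of the obstructing constituent that makes the weight $(2,p+1)$ intractable by weight shifting, and nothing in your argument shows that the candidate sets $W^{\BDJ}(\rho)$ containing it and meeting $\JH(V_2)$ must also contain $\e^{M}\Symo^{0}\Syml^{2}$.

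The paper instead removes this constituent by a separate mechanism. It first splits off the ordinary case: if $\rho$ is ordinarily geometrically modular at $p$ of weight $((2,4),(0,0))$, then \Cref{cor: ord-geom-alg} gives the conclusion immediately. In the non-ordinary case it invokes \Cref{lem: not-ord-not-min} (the $q$-expansion identity $\Ha_1 f_\xi = \Theta_1(f'_\xi)$ for strongly stabilised forms, combined with \Cref{thm: Theta-Hasse-divisibility}): if $\rho$ were also geometrically modular of weight $((2,2),(0,1))$, it would be geometrically modular of a weight with a component $2-p<1$, contradicting \Cref{thm: wt-in-min-cone}. Only after $\e^{M+p}\Symo^{0}\Syml^{0}$ is excluded does the three-way case analysis on the remaining constituents of $V_1$, cross-checked against $\JH(V_2)$ via \cite[\S 8]{Dembele-Diamond-Roberts}, close. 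To repair your proof you need to add both the ordinary/non-ordinary dichotomy and the theta-cycle exclusion of weight $((2,2),(0,1))$; without them the claim that ``in every surviving case'' the target weight is forced is unsubstantiated precisely where the method is known to be fragile.
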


\begin{proof}
  If $\rho$ is ordinarily geometrically modular of weight $((2,4), (0,0))$, then by \Cref{cor: ord-geom-alg},
  we have $V_{(2,4),(-2,-4)}\in W^\BDJ(\rho)$.
  Now we only need to prove the result when $\rho$ is non-ordinarily geometrically modular of weight $((2,4), (0,0))$.
  By \Cref{lem: not-ord-not-min}, $\rho$ cannot be geometrically modular of weight $((2,2), (0,1))$, otherwise $\rho$ is also geometrically modular of weight $((3, 2-p), (0,1))$, and \Cref{thm: wt-in-min-cone} gives a contradiction.
  Let $m_i = -k_i$ for $i=0,1$ and $M =  m_0 + pm_1$. 
  We know $\rho$ cannot by modular of weight $\e^{M+p} \Symo^0\Syml^0$.
  Let $\rho \simeq \rho_f$ for some normalised eigenform $f\in M_{(2,4), (0,0)}(U_1(\frakn); E)$ as in \Cref{lem: Theta(f)nonzero} so that $\Theta_1(f)\neq 0$.
  Denote $V:= \e^{M}\Symo^{p-1}\Syml^{p+1}$ and $V' := \e^{M-p}\Symo^{p}\Syml^{3}$.
  Consider the eigenforms $\Ha_0\Ha_1f$ and $\Theta_1(f)$, then we know $\rho$ is modular of weights $V$ and $V'$ by \Cref{thm: my-lift-goem-alg}.
  We have Grothendieck group relations
  \begin{align*}
    \left[\Symo^{p-1}\Syml^{p+1}\right] = & \left[\Symo^{0}\Syml^{2}\right] + \left[\e^p\Symo^{0}\Syml^{0}\right] + 2 \left[\e^1\Symo^{p-2}\Syml^{1}\right] + \\ 
    & \left[\e^{2p}\Symo^{p-1}\Syml^{p-3}\right],
\end{align*}
\begin{align*}
    \left[\e^{-p}\Symo^{p}\Syml^{3} \right] = \left[\Symo^{0}\Syml^{2}\right] + \left[\e^{-p}\Symo^{0}\Syml^{4}\right] + \left[\e^{1-p}\Symo^{p-2}\Syml^{3}\right].
\end{align*}
Since $\rho$ is modular of weight $V$ and not geometrically modular of weight $((2,2), (0,1))$, it is modular of at least one of the weights $\e^M\Symo^{0}\Syml^{2}$, $\e^{M+1}\Symo^{p-2}\Syml^{1}$ and $\e^{M+2p}\Symo^{p-1}\Syml^{p-3}$.
If $\e^{M+1}\Symo^{p-2}\Syml^{1}$ or $\e^{M+2p}\Symo^{p-1}\Syml^{p-3}$ $\in W^\BDJ(\rho)$, we use \cite[\S 8]{Dembele-Diamond-Roberts} to get all the possibilities of $W^\BDJ(\rho)$ and observe that
either $\e^{M}\Symo^{0}\Syml^{2} \in W^\BDJ(\rho)$ or $W^\BDJ(\rho) \cap \JH(V') = \varnothing$.
The latter case gives contradiction since $\rho$ is also modular of weight $V'$.
Therefore we must have $\e^{M}\Symo^{0}\Syml^{2} \in W^\BDJ(\rho)$.
\end{proof}

\begin{theorem}\label{thm: geq2_VinW}
  Let $k = (2, k_1)$, where $2 \leq k_1\leq p$.
  If $\rho$ is geometrically modular of weight $(k, (0,0))$, then $V_{k, -k} \in W^\BDJ(\rho)$.
\end{theorem}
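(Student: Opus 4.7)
The cases $k_1 \in \{2, 3, 4\}$ have been treated in the preceding three lemmas, so assume $5 \leq k_1 \leq p$. By \Cref{lem: Theta(f)nonzero}, $\rho \simeq \rho_f$ for a normalised eigenform $f \in M_{(2, k_1), (0,0)}(U_1(\frakn); E)$ with $\Theta_1(f) \neq 0$. I form the two auxiliary eigenforms
\[
g := \Ha_0 \Ha_1 f \quad\text{and}\quad h := \Theta_1(f),
\]
of weights $((p+1,\, p+k_1-1),\,(0,0))$ and $((p+2,\, k_1+1),\,(0,-1))$, respectively. A direct check (using $5 \leq k_1 \leq p$) confirms that both weights satisfy $p(k'_\tau - 2) > k'_{\Fr^{-1}\circ \tau} - 2$ for every $\tau$, so \Cref{thm: my-lift-goem-alg} gives that $\rho$ is algebraically modular of each. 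Writing $M := -2 - p k_1$, \Cref{lem: mod_of_JH} yields
\[
W^{\BDJ}(\rho) \cap \JH(V_g) \neq \varnothing \quad\text{and}\quad W^{\BDJ}(\rho) \cap \JH(V_h) \neq \varnothing,
\]
where $V_g \simeq \e^{M}\Symo^{p-1}\Syml^{p+k_1-3}$ and $V_h \simeq \e^{M-p}\Symo^{p}\Syml^{k_1-1}$.

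Using the Grothendieck group relation from \cite[(1.0.3)]{Reduzzi15} together with the identity $[\Symo^p] = [\Syml^{1}] + [\e\,\Symo^{p-2}]$ (and its analogue in the boundary case $k_1 = p$, where one further decomposes $\Syml^{p}$), I expand
\begin{align*}
[V_g] &= 2\,[\e^{M+1}\Symo^{p-2}\Syml^{k_1-3}] + [\e^{M}\Syml^{k_1-2}] + [\e^{M+p}\Syml^{k_1-4}] \\
&\qquad + [\e^{M+p(k_1-2)}\Symo^{p-1}\Syml^{p-k_1+1}], \\
[V_h] &= [\e^{M}\Syml^{k_1-2}] + [\e^{M-p}\Syml^{k_1}] + [\e^{M-p+1}\Symo^{p-2}\Syml^{k_1-1}].
\end{align*}
The key observation is that the target weight $V_{k,-k} \simeq \e^{M}\Syml^{k_1-2}$ appears as a common Jordan--H\"older constituent of both $V_g$ and $V_h$.

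Finally, I argue that the two intersection constraints force $V_{k,-k} \in W^{\BDJ}(\rho)$. For each candidate constituent $\sigma \in \JH(V_g) \setminus \{V_{k,-k}\}$, I enumerate via the explicit recipe of \cite[\S8]{Dembele-Diamond-Roberts} all possible shapes of $W^{\BDJ}(\rho)$ compatible with $\sigma \in W^{\BDJ}(\rho)$, and verify in each such shape the dichotomy: either $V_{k,-k}$ already lies in $W^{\BDJ}(\rho)$, or else $W^{\BDJ}(\rho) \cap \JH(V_h) = \varnothing$, contradicting the modularity of $V_h$. This runs in parallel with the case analyses in the preceding lemmas. The main obstacle is precisely this final enumeration: as $k_1$ varies over $[5, p]$ the relevant Jordan--H\"older factors shift position, and (as in the $k_1 = 4$ case) the non-ordinary situation must be treated separately via \Cref{lem: not-ord-not-min} in order to exclude spurious candidates arising from the off-target constituents of $\JH(V_g)$.
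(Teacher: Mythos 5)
Your setup is sound as far as it goes: the decompositions of $[V_g]$ and $[V_h]$ are correct, and $V_h=\e^{M-p}\Symo^{p}\Syml^{k_1-1}$ is exactly the paper's second auxiliary weight. (The paper uses $\Ha_1 f$ rather than $\Ha_0\Ha_1 f$, giving $\e^{M}\Symo^{p}\Syml^{k_1-3}$ with only three Jordan--H\"older factors; your choice adds the extra constituent $\e^{M+p(k_1-2)}\Symo^{p-1}\Syml^{p-k_1+1}$, which makes the subsequent case analysis strictly harder for no gain.) The genuine gap is the final step, which is the entire content of the theorem and which you assert rather than prove. The claimed dichotomy --- for every DDR-admissible shape of $W^{\BDJ}(\rho)$ containing an off-target constituent of $V_g$, either $V_{k,-k}\in W^{\BDJ}(\rho)$ or $W^{\BDJ}(\rho)\cap\JH(V_h)=\varnothing$ --- is not something that can be verified by enumeration alone, and the paper's own proof shows why: several of the surviving configurations are \emph{not} combinatorially inconsistent and must instead be excluded by geometric arguments. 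Concretely, the paper first reduces to the non-ordinary case via \Cref{cor: ord-geom-alg}, then rules out the constituent $\e^{M+p}\Syml^{k_1-4}$ by translating it into geometric modularity of weight $((2,k_1-2),(0,1))$ and invoking \Cref{lem: not-ord-not-min} together with \Cref{thm: wt-in-min-cone} to land outside $\Ximin$; the remaining branch forces geometric modularity of $((p,k_1-1),(1,0))$, which is then fed as \emph{input} into a second dichotomy on $\JH(V_h)$, each of whose cases again splits into an ordinary subcase (killed by \Cref{thm: ord-geom-rho} plus the DDR recipe) and a non-ordinary subcase (killed again by \Cref{lem: not-ord-not-min} and the minimal cone). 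None of these intermediate weights $((2,k_1-2),(0,1))$, $((p,k_1-1),(1,0))$, $((2,k_1+2),(0,-1))$, $((p,k_1+1),(1,-1))$ appears in your plan, and without them the "spurious candidates" you mention in your closing sentence cannot be excluded.

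In short, your proposal correctly identifies the weight-shifting inputs and the target constituent $\e^{M}\Syml^{k_1-2}$, but it replaces the load-bearing part of the argument --- an interleaved chain of ordinarity dichotomies, \Cref{lem: not-ord-not-min}, and \Cref{thm: wt-in-min-cone} applied to specific intermediate geometric weights --- with an unexecuted enumeration that, taken literally, does not close. You would need to restructure the endgame along the paper's lines (argue by contradiction assuming $V_{k,-k}\notin W^{\BDJ}(\rho)$, track which \emph{geometric} weights each surviving Serre weight forces, and use the minimal-cone obstruction to eliminate the non-ordinary branches) before this counts as a proof.
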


\begin{proof}
  Let's assume $\rho$ is not algebraically modular of weight $((2,k_1), (0,0))$, and prove by contradiction.
  By 
  \Cref{lem: twotwo}, \Cref{lem: twothree}, \Cref{lem: twofour}, 
  and \Cref{cor: ord-geom-alg}, 
  it is sufficient to prove for $p \geq k_1 \geq 5$ and $\rho$ is non-ordinarily geometrically modular of weight $((2,k_1), (0,0))$ by \Cref{cor: ord-geom-alg}.
  Let $m_i = -k_i$ for $i = 0,1$ and $M =  m_0 + pm_1$.
  Suppose $\rho \simeq \rho_f$ for some normalised eigenform $f\in M_{(2,k_1), (0,0)}(U_1(\frakn); E)$ as in \Cref{lem: Theta(f)nonzero} so that $\Theta_1(f)\neq 0$.
  Consider the eigenforms $\Ha_1 f$ and $\Theta_1(f)$.
  Then $\rho$ is modular of weight $V = \e^{M}\Symo^{p}\Syml^{k_1-3}$ and $V' = \e^{M-p}\Symo^{p}\Syml^{k_1 - 1}$.
  We have Grothendieck relations
  \begin{align}
    \begin{aligned}\label{eq: (2,k1)_1}
      \left[\Symo^{p}\Syml^{k_1 - 3} \right]= \left[\e^{p} \Symo^{0}\Syml^{k_1 -4}\right] +  \left[\Symo^{0}\Syml^{k_1-2}\right] + \\  \left[\e^{1}\Symo^{p-2}\Syml^{k_1-3}\right], 
    \end{aligned}
  \end{align}
  \begin{align}
    \begin{aligned}\label{eq: (2,k1)_2}
      \left[\e^{-p}\Symo^{p}\Syml^{k_1 - 1}\right] = \left[\Symo^{0}\Syml^{k_1 - 2}\right] + \left[\e^{-p}\Symo^{0}\Syml^{k_1}\right] + \\ \left[\e^{1-p}\Symo^{p-2}\Syml^{k_1 - 1}\right].
    \end{aligned}
  \end{align}
By \eqref{eq: (2,k1)_1} and our assumption that $\rho$ is not algebraically modular of weight $((2,k_1), (0,0))$, we have
$\rho$ is modular of either weight $\e^{M+p} \Symo^{0}\Syml^{k_1 -4}$ or $\e^{M+1}\Symo^{p-2}\Syml^{k_1-3}$.
If $\rho$ is modular of $\e^{M+p} \Symo^{0}\Syml^{k_1 -4}$ and hence geometrically modular of weight $((2,k_1-2), (0,1))$ by \Cref{prop: alg-geom_DS}(ii),
then \Cref{lem: not-ord-not-min} shows that $\rho$ is geometrically modular of $ ((2-p, k_1-1),(0,1))$, which is a contradiction by \Cref{thm: wt-in-min-cone}. 
Therefore, $\rho$ is modular of weight $\e^{M+1}\Symo^{p-2}\Syml^{k_1-3}$, i.e. 
algebraically modular of weight $((p, k_1- 1), (1, 0))$, 
and hence it is geometrically modular of weight $((p, k_1- 1), (1, 0))$ by \Cref{prop: alg-geom_DS}(ii).
By \eqref{eq: (2,k1)_2} and our assumption that $\rho$ is not algebraically modular of weight $((2,k_1), (0,0))$, 
we have
$\rho$ is algebraically modular either of weight $((2,k_1+2),(0,-1))$, or of weight $((p,k_1+1), (1,-1))$.
\begin{enumerate}
  \item Suppose $\rho$ is algebraically modular of weight $((2,k_1+2),(0,-1))$.
  \begin{enumerate}
    \item Suppose $k_1 \leq p-1$. If $\rho$ is non-ordinarily geometrically modular of weight $((2,k_1+2),(0,-1))$,
    then together with $\rho$ being  geometrically modular of weight $((2, k_1), (0,0))$ and $p\nmid k_1$,  
    we obtain $\rho$ geometrically modular of weight $((2-p, k_1+1), (0,0))$ by \Cref{lem: not-ord-not-min}, 
    which gives a contradiction by \Cref{thm: wt-in-min-cone}. 
    On the other hand, if $\rho$ is ordinarily geometrically modular of weight $((2,k_1+2),(0,-1))$,
    then $\rho|_{I_{F_p}}$ has a particular form by \Cref{thm: ord-geom-rho}, 
    which enables us to compute the possible $W^\BDJ(\rho)$ by \Cref{def: WBDJ}(i).
    We obtain $\e^{M+1}\Symo^{p-2}\Syml^{k_1-3} \not\in W^\BDJ(\rho)$, which is a contradiction.
    \item Suppose $k_1 = p$. We have 
    $
      \left[\e^{-p}\Symo^{0}\Syml^{p}\right] = \left[\Symo^{0}\Syml^{p-2}\right]  + \left[\e^{-p}\Symo^{1}\Syml^{0}\right].
    $
    Since we assume $\rho$ is not algebraically modular of weight $((2,k_1), (0,0))$, we have $\rho$ is modular of weight $\e^{M-p}\Symo^{1}\Syml^{0}$. 
    We use \cite[\S 8]{Dembele-Diamond-Roberts} to compute all the possible $W^\BDJ(\rho)$ such that $\e^{M-p}\Symo^{1}\Syml^{0} \in W^\BDJ(\rho)$, and we obtain
    either $\e^M \Symo^{0}\Syml^{p-2} \in W^\BDJ(\rho)$
    or $\e^{M+1}\Symo^{p-2}\Syml^{k_1-3} \not\in W^\BDJ(\rho)$, both of which give contradictions.
  \end{enumerate}
  \item Suppose $\rho$ is algebraically of weight $((p, k_1+1), (1,-1))$.
  If $\rho$ is non-ordinarily geometrically modular of weight $((p, k_1+1), (1,-1))$,
  then together with $\rho$ being geometrically modular of weight $((p, k_1 -1), (1,0))$ and $p \nmid k_1 -1$, we obtain
  $\rho$ is geometrically modular of weight $((0, k_1),(1,0))$ by \Cref{lem: not-ord-not-min}, 
  which contradicts to \Cref{thm: wt-in-min-cone}. 
  If $\rho$ is ordinarily geometrically modular of weight $((p, k_1+1), (1,-1))$, 
  we compute possible $W^\BDJ(\rho)$ such that $\e^{M+1-p}\Symo^{p-2}\Syml^{k_1 -1}$ by \cite[\S8]{Dembele-Diamond-Roberts} and 
  get either $\e^M \Symo^{0}\Syml^{k_1-2} \in W^\BDJ(\rho)$ or $\e^{M+1}\Symo^{p-2}\Syml^{k_1-3}\not\in W^\BDJ(\rho)$, which gives contradictions.
\end{enumerate}
Therefore, we conclude that $\rho$ is algebraically modular of weight $((2,k_1), (0,0))$.
\end{proof}

\begin{corollary}\label{cor: one_VinW}
  Let $p > 3$ and $\rho$ be geometrically modular of weight $((1, k_1), (0,0))$, where $p \geq k_1 \geq 3$. 
  \begin{enumerate}
    \item If $p \geq k_1 \geq 4$, then $\rho$ is algebraically modular of weight $((p+1, k_1-1), (0,0))$.
    \item If $k_1 = 3$, then $V_{(p+1, k_1-1), (-1, -k_1)} \in W^\BDJ(\rho)$.
  \end{enumerate} 
\end{corollary}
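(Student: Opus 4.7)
The plan is to weight-shift using a partial Hasse invariant and then appeal to the liftability result \Cref{thm: my-lift-goem-alg}. By \Cref{lem: Theta(f)nonzero}, I fix a normalized eigenform $f \in M_{(1, k_1), (0, 0)}(U_1(\mathfrak{n}); E)$ with $\rho \simeq \rho_f$ and $\Theta_\tau(f) \neq 0$ for every $\tau \in \Sigma$. Because $\Fr \circ \tau_1 = \tau_0 \neq \tau_1$, the partial Hasse invariant $\Ha_{\tau_1}$ has weight $(p, -1)$ in $k$, so $g := \Ha_{\tau_1} f$ is a non-zero Hecke-equivariant form of weight $((p+1, k_1 - 1), (0, 0))$, and $\rho \simeq \rho_g$ is geometrically modular of this shifted weight.

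For part (i), with $k_1 \geq 4$ and $k_1 \leq p$, the weight $(p+1, k_1-1)$ satisfies the numerical condition of \Cref{thm: my-lift-goem-alg}: at $\tau_0$ one has $p(p-1) > k_1 - 3$, and at $\tau_1$ one has $p(k_1-3) \geq p > p-1$. Hence \Cref{thm: my-lift-goem-alg} immediately yields algebraic modularity of weight $((p+1, k_1-1), (0, 0))$.

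For part (ii), with $k_1 = 3$, the shifted weight $(p+1, 2)$ lies on the boundary where the liftability inequality fails at $\tau_1$ (one has $p\cdot 0 \not> p-1$). Instead I apply the partial theta operator $\Theta_{\tau_1}$ to $f$, producing a non-zero form of weight $((p+1, 4), (0, -1))$; this weight does satisfy liftability ($p(p-1) > 2$ at $\tau_0$ and $2p > p-1$ at $\tau_1$), so \Cref{thm: my-lift-goem-alg} gives algebraic modularity of $\rho$ in weight $((p+1, 4), (0, -1))$, equivalently modularity of the Serre weight $V_{(p+1, 4), (-p-1, -3)}$. The forward direction of \Cref{thm: BDJ-conj} then places this Serre weight in $W^{\BDJ}(\rho)$. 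Using the explicit recipe of \Cref{def: WBDJ} together with the relation $\varepsilon_{\tau_1} = \varepsilon_{\tau_0}^{p}$, this membership constrains $\rho|_{I_{F_p}}$ to a short list of shapes indexed by a subset $J \subseteq \Sigma$ in the reducible case or $J' \subseteq \Sigma'_v$ in the irreducible case; one then checks, by matching fundamental-character exponents modulo $p^2-1$, that each compatible shape also realizes $V_{(p+1, 2), (-1, -3)}$ in $W^{\BDJ}(\rho)$. In the reducible subcase where ambiguity remains between peu and très ramifiée extension classes, I use the additional geometric modularity of $g$ in weight $((p+1, 2), (0, 0))$, combined with \Cref{lem: not-ord-not-min}, \Cref{thm: ord-geom-rho}, and the strategy of \Cref{lem: 2 and p+1}, to rule out the bad branch.

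The main obstacle lies in this last reducible-case analysis for part (ii): distinguishing which $(J, V)$ is realized requires all of the weight-shifted information available (both $g$ and $\Theta_{\tau_1} f$), together with a Grothendieck-group-style argument in the spirit of the proofs of \Cref{lem: twotwo}, \Cref{lem: twothree}, and \Cref{lem: twofour}, to ensure the extension class does not lie in a degenerate subspace that would exclude $V_{(p+1,2),(-1,-3)}$ from $W^{\BDJ}(\rho)$.
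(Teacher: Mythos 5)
Part (i) of your proposal is correct and coincides with the paper's argument: multiply by $\Ha_{\tau_1}$ to reach weight $((p+1,k_1-1),(0,0))$ and apply \Cref{thm: my-lift-goem-alg}, whose numerical condition holds exactly when $k_1\geq 4$.

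Part (ii) has a genuine gap. Your only solid input is $V':=V_{(p+1,4),(-p-1,-3)}\in W^{\BDJ}(\rho)$, obtained from $\Theta_{\tau_1}f$, and you assert that (up to a peu/tr\`es ramifi\'ee ambiguity) this single membership already forces $V_{(p+1,2),(-1,-3)}\in W^{\BDJ}(\rho)$. That is not the case: enumerating, via Demb\'el\'e--Diamond--Roberts, the possible sets $W^{\BDJ}(\rho)$ that contain $V'$ produces candidates, in both the reducible and the irreducible case, which do \emph{not} contain the target weight, and these are not all accounted for by the extension-class dichotomy. The paper rules them out with a second independent constraint that your proposal never produces: multiplying $f$ by $\Ha_{\tau_0}\Ha_{\tau_1}$ gives a form of weight $((p,p+2),(0,0))$, which, unlike your $(p+1,2)$, \emph{does} satisfy the liftability inequality \eqref{eq: liftcone} when $p>3$; hence $\rho$ is algebraically modular of that weight and $W^{\BDJ}(\rho)\cap\JH(\e^{M}\Symo^{p-2}\Syml^{p})\neq\varnothing$ with $M=-1-3p$. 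Using $[\Symo^{p-2}\Syml^{p}]=[\e^{1}\Symo^{p-3}\Syml^{0}]+[\Symo^{p-1}\Syml^{0}]+[\e^{p}\Symo^{p-2}\Syml^{p-2}]$, one then checks that every candidate $W^{\BDJ}(\rho)$ containing $V'$ but not $\e^{M}\Symo^{p-1}\Syml^{0}$ has empty intersection with this set of constituents, a contradiction. Your substitute second input, the geometric modularity of $\Ha_{\tau_1}f$ in weight $((p+1,2),(0,0))$, yields no Serre-weight information on its own because that weight lies outside the liftable cone, and the combination of \Cref{lem: not-ord-not-min}, \Cref{thm: ord-geom-rho} and the strategy of \Cref{lem: 2 and p+1} that you invoke is not shown (and is unlikely) to close this; the missing idea is precisely the extra multiplication by $\Ha_{\tau_0}$ to land in the liftable region.
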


\begin{proof}
  Let $\rho \simeq \rho_f$ for some normalised eigenform $f\in M_{(1,k_1), (0,0)}(U_1(\frakn); E)$ as in \Cref{lem: Theta(f)nonzero} so that $\Theta_1(f)\neq 0$.
  Consider the eigenform $\Ha_1 f$ and then we see that $\rho$ is geometrically modular of weight $((p+1, k_1-1), (0,0))$.
  By \Cref{thm: my-lift-goem-alg}, we have $\rho$ is algebraically modular of weight $((p+1, k_1-1), (0,0))$ if $p \geq k_1 \geq 4$. 

  Now let $k_1= 3$ and $M = -1 - 3p$.
  Consider $\Ha_0\Ha_1 f$ and $\Theta_1 f$,
  we have $\rho$ geometrically hence algebraically modular of weight $((p, p + 2), (0,0))$ and $((p+1, 4), (0, -1))$ by \Cref{thm: my-lift-goem-alg} and $p > 3$.
  Then $\rho$ is modular of weights $V := \e^M \Symo^{p-2}\Syml^{p}$ and $V' = \e^{M-p}\Symo^{p-1}\Syml^{2}$.
  We have Grothendieck group relation
  \begin{align*}
    \left[\Symo^{p-2}\Syml^{p}\right] = \left[\e^1\Symo^{p-3}\Syml^{0}\right] + \left[\Symo^{p-1}\Syml^{0}\right] + \left[\e^{p}\Symo^{p-2}\Syml^{p-2}\right].
  \end{align*}
  Since $V' \in W^\BDJ(\rho)$, we compute the possible $W^\BDJ(\rho)$ by \cite[\S8]{Dembele-Diamond-Roberts} and find that
  either $\Symo^{p-1}\Syml^{0} \in W^\BDJ(\rho)$ or $W^\BDJ(\rho)\cap \JH(V) = \varnothing$.
\end{proof}

\begin{theorem}\label{thm: geom-alg-small}
  Let $p > 3$ and
  $F$ be a real quadratic field in which $p$ is inert.
  Let $\rho: G_F \rightarrow \GL_2(\Fpbar)$ be an irreducible, continuous, totally odd representation.
  If $p = 5$, assume that the projective image of $\rho|_{G_{F(\zeta_p)}}$ is not isomorphic to $A_5$.
  Let $(k_0,k_1)\in \Ximinplus$ and $2\leq k_0, k_1\leq p$.
  If $\rho$ is geometrically modular of weight $((k_0,k_1), (0,0))$,
  then $\rho$ is algebraically modular of weight $((k_0,k_1), (0,0))$.
\end{theorem}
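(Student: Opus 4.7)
The proof will mirror the structure of \Cref{thm: goem-alg-split}, reducing to Theorems already established in this section and treating a Galois-theoretic exceptional case by hand. First, by symmetry between $\tau_0$ and $\tau_1$, we may assume $k_0 \leq k_1$. If $k_0 \geq 3$, then since $k_1 \leq p$, for both $\tau \in \Sigma$ we have $p(k_\tau - 2) \geq p > p - 2 \geq k_{\Fr^{-1}\circ\tau} - 2$, so the numerical condition \eqref{eq: liftcone} holds, and \Cref{thm: my-lift-goem-alg} yields algebraic modularity of weight $((k_0, k_1), (0, 0))$ directly.

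We are thus reduced to the case $k_0 = 2$. Then \Cref{thm: geq2_VinW} gives $V_{(2, k_1), (-2, -k_1)} \in W^\BDJ(\rho)$. If $\rho|_{G_{F(\zeta_p)}}$ satisfies the adequacy hypothesis in \Cref{thm: BDJ-conj}, the theorem is immediate.

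The remaining case is when $\rho|_{G_{F(\zeta_p)}}$ is reducible (the $A_5$ exception for $p = 5$ being excluded by hypothesis). Following the split-case argument, we write $\rho \simeq \Ind^{G_F}_{G_{F'}} \xi$ for the unique quadratic subfield $F' \subset F(\zeta_p)$, equal to $F(\sqrt{p})$ or $F(\sqrt{-p})$ according to $p \bmod 4$; this $F'/F$ is totally ramified at the inert prime above $p$. Set $K := F_p \cong \Q_{p^2}$ and $K' := F'_v$, a ramified quadratic extension of $K$, and write $\xi|_{I_{K'}} = \varepsilon_{K'}^n$. Because $K'/K$ is totally tamely ramified, an explicit computation with uniformizers shows that the conjugation action of $I_K/I_{K'}$ on the tame inertia of $K'$ is trivial; consequently $\rho|_{I_K}^{\mathrm{ss}}$ is always of the form $\tilde\xi \oplus \tilde\xi \chi$, where $\tilde\xi$ is an extension of $\xi|_{I_{K'}}$ to $I_K$ and $\chi$ is the order-two character of $I_K/I_{K'}$. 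For $n$ even one has $\tilde\xi = \varepsilon_{\tau_0}^{n/2}$ (level one); for $n$ odd, $\tilde\xi = \eta_0^{n(p^2+1)/2}$ is a level-two character of $I_K$, where $\eta_0$ is a level-two fundamental character satisfying $\eta_0^{p^2+1} = \varepsilon_{\tau_0}$.

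Finally we compare $\rho|_{I_K}^{\mathrm{ss}}$ with the two possible shapes compatible with $V_{(2, k_1), (-2, -k_1)} \in W_v(\rho)$ given by \Cref{def: WBDJ}. For $n$ even we match against case (i), reducing to a congruence that forces $k_1 - 1 \equiv \pm p \pmod{p^2 - 1}$, impossible for $k_1 - 1 \in \{1, \dots, p-1\}$ when $p > 3$. For $n$ odd we match against case (ii), where one must check each of the four choices of $J' \subset \Sigma'_v$ and conclude that the required residue of $(k_1 - 1)p$ modulo $p^2 + 1$ is $\pm(p^2 \pm 1)/2$; since $p \nmid (p^2 \pm 1)/2$ for $p > 3$, this again has no solution. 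The reducible case therefore cannot arise, and the proof is complete. The main obstacle is the $n$-odd sub-case: one must carefully encode the two characters $\tilde\xi, \tilde\xi\chi$ using a level-two fundamental character and then enumerate through the four $J'$'s, rather than just the two subsets $J \subset \Sigma_v$ that appeared in the split-case argument.
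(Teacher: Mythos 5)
Your argument is correct and follows essentially the same route as the paper's: reduce to $k_0=2$ (the paper handles $k_0\geq 3$ via the same appeal to \Cref{thm: my-lift-goem-alg}), invoke \Cref{thm: geq2_VinW} and \Cref{thm: BDJ-conj}, and dispose of the residual dihedral case by comparing $\rho|_{I_{F_p}}$ computed from $\Ind_{F'}^F\xi$ with the fundamental-character recipe of \Cref{def: WBDJ}. The only organizational difference is that you derive a contradiction in the induced case because you have already fixed $k_0=2$, whereas the paper runs the same comparison for general $(k_0,k_1)$ and concludes the weight is forced into $\{(p+1)/2,(p+3)/2\}^2$, which lies in the liftable cone \eqref{eq: liftcone} for $p>3$ — the exact congruences you quote are slightly garbled but the conclusion (no solution with $k_1-1\in\{1,\dots,p-1\}$ for $p>3$) agrees with the paper's.
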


\begin{proof}
  Without loss of generality, we may assume $k_0 \leq k_1$.
  By \Cref{thm: geq2_VinW}, if $\rho$ is geometrically modular of weight $(k, (0,0))$, 
  then $V_{k, -k}\in W^\BDJ(\rho)$. 
  If $\rho|_{G_{F(\zeta_p)}}$ satisfies the condition in \Cref{thm: BDJ-conj}, then we obtain $\rho$ is algebraically modular of weight $(k, -k)$.
  Therefore, by our assumption, we only need to consider the case when $\rho|_{G_{F(\zeta_p)}}$ is reducible.

Suppose $\rho|_{G_{F(\zeta_p)}}$ is reducible.
Then there is a quadratic extension $F'/F$ in $F(\zeta_p)$ and a continuous character $\xi: G_{F'} \rightarrow \Fpbar^\times$
such that
$\rho \simeq \Ind^F_{F'} \xi$.
Since $F$ is a real quadratic field in which $p$ is inert, we have $F_p \cong \Q_{p^2}$.
Note that $F'/F$ is totally ramified at $p$ 
(in fact $F' = F(\sqrt{p})$ if $p \equiv 1$ mod $4$ and 
$F' = F(\sqrt{-p})$ if $p \equiv 3$ mod $4$).
Let $p = v^2$ in $F'$ for a prime $v$ in $F'$ and denote $K' := F'_v$.
Let $\varepsilon_{K'}$ denote the fundamental character on $I_{K'}$ induced by the $\mathrm{Art}^{-1}$.
We also let $\varepsilon_2: I_{F_p} \cong I_{\Q_{p^2}} \xrightarrow{\Art^{-1}} \Z_{p^2}^\times \rightarrow \F_{p^2}^\times$ be the fundamental character of level $2$ and $\varepsilon_4: I_{F_p} \cong I_{\Q_{p^4}} \xrightarrow{\Art^{-1}} \Z_{p^4}^\times \rightarrow \F_{p^4}^\times$ be the fundamental character of level $4$.
Note that $\varepsilon_4^{p^2+1} = \varepsilon_2$ and we also have the commutative diagram
\[\begin{tikzcd}
	{\varepsilon_{K'}:\ I_{K'}} & {\CO_{K'}^\times} & {\F_{p^2}^\times} \\
	{\varepsilon_2:\ I_{F_p}} & {\Z_{p^2}^\times} & {\F_{p^2}^\times}.
	\arrow[from=1-1, to=1-2]
	\arrow[shift left=4, hook, from=1-1, to=2-1]
	\arrow[from=1-2, to=1-3]
	\arrow["\Nm", from=1-2, to=2-2]
	\arrow["{(\cdot)^2}", from=1-3, to=2-3]
	\arrow[from=2-1, to=2-2]
	\arrow[from=2-2, to=2-3]
\end{tikzcd}\]
We have $\xi|_{I_{K'}} = \varepsilon_{K'}^n$ for some $n\in \Z_{\geq 0}$.
Depending on the parity of $n$, we get the form of $\rho|_{I_{F_p}}$:
\begin{align}\label{eq: rhoI_K-v3}
  \rho|_{I_{F_p}} \simeq 
  \left\{ \begin{matrix}
    &\varepsilon_2^{n/2} \oplus \varepsilon_2^{n/2+ (p^2-1)/2}, \text{ if } n \text{ is even} \\
    &\varepsilon_4^{a} \oplus \varepsilon_4^{p^2a}, \text{ if } n \text{ is odd}
  \end{matrix}\right.
\end{align}
where $a = n(p^2+1)/2$.
On the other hand, $V_{k, -k}\in W^\BDJ(\rho)$, 
  so $\rho|_{I_{F_p}}$ has an explicit form described in \eqref{eq: reducible-case} and \eqref{eq: irreducible-case}, 
  \begin{align}\label{eq: rhoI_K-v4}
    \begin{aligned}
      &\rho|_{I_{F_p}} \simeq \varepsilon_{2}^{-k_0 - pk_1}\left(\begin{matrix}
        \prod_{\tau_i\in J}\varepsilon_{2}^{p^{i}(k_i-1)} & * \\ 0 & \prod_{\tau_i\not\in J}\varepsilon_{2}^{p^{i}(k_i-1)}
      \end{matrix}\right) \text{ reducible case}
      \\
      &\rho|_{I_{F_p}} \simeq \varepsilon_{2}^{-k_0 - pk_1}\left(\begin{matrix}
        \prod_{\tau'_j\in J'} \varepsilon_4^{p^{\pi(j)}(k_{\pi(j)}-1)} & 0 \\ 0 &  \prod_{\tau'_j \not\in J'} \varepsilon_4^{p^{\pi(j)}(k_{\pi(j)}-1)}
      \end{matrix}\right) \text{ irreducible case}
    \end{aligned}
  \end{align}
  for some $J\subseteq \Sigma = \{\tau_0, \tau_1\}$ and $J'\subseteq \Sigma' = \{\tau_0', \tau_1', \tau_2', \tau_3'\}$,
  where $\Sigma'$ is the set of embeddings of a inert quadratic extension of $F_p$ into $\Qpbar$(which can be identified with the set of embedding of $\F_{p^4}$ into $\Fpbar$),
  $\tau'_{j+1} = \Fr \circ \tau'_{j}$, 
  $J'$ containing
  exactly one element extending each element of $\Sigma$,
  and $\pi(j) := i$ if $\tau'_j|_{F_p} \cong \tau_i$ for $i = 0,1$ and $j = 0, 1, 2, 3$.

  \begin{align}
    \rho|_{I_{F_v}} \simeq \prod_{\tau\in\Sigma'_v} \varepsilon_\tau^{l_\tau}
    \left(\begin{matrix}
      \prod_{\tau\in J} \varepsilon_\tau^{k_\tau-1} & 0 \\
      0 & \prod_{\tau \not\in J'} \varepsilon_\tau^{k_\tau-1}
    \end{matrix}\right).
  \end{align}
  Comparing the form of $\rho|_{I_K}$ in \eqref{eq: rhoI_K-v3} and \eqref{eq: rhoI_K-v4}, we obtain:
  \begin{enumerate}
    \item If $n$ is even, then $\rho|_{G_{F_p}}$ is reducible, and 
    \begin{enumerate}
      \item when $J = \Sigma$ or $\varnothing$, then $k_0 = k_1 = (p+1)/2$, and $n \equiv 0$ mod $p^2-1$;
      \item otherwise, $k_0 = k_1 = (p+3)/2$, and $n \equiv p+1$ mod $p^2-1$.
    \end{enumerate}

    \item If $n$ is odd, then $\rho|_{G_{F_p}}$ is irreducible,
    $k_0 = (p+1)/2$, $k_1 = (p+3)/2$, and $n \equiv p$ mod $p^2-1$.
  \end{enumerate}
  When $p>3$, the weights $(k_0,k_1)$ above satisfy \eqref{eq: liftcone}, hence by \Cref{thm: my-lift-goem-alg} we have the algebraic modularity of weight $((k_0,k_1), (0,0))$.
\end{proof}

\begin{corollary}\label{cor: geom-alg-partialone}
  Let $p > 3$ and
  $F$ be a real quadratic field in which $p$ is inert.
  If $p = 5$, assume that the projective image of $\rho|_{G_{F(\zeta_p)}}$ is not isomorphic to $A_5$.
  If $\rho$ is geometrically modular of weight $((1, k_1), (0,0))$, 
  where $3 \leq k_1 \leq p$,
  then $\rho$ is algebraically modular of weight $((p+1, k_1-1), (0,0))$.
\end{corollary}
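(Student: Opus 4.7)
The argument splits on $k_1$. When $k_1 \geq 4$ this is exactly \Cref{cor: one_VinW}(i): multiplying the eigenform realizing $\rho$ at weight $((1, k_1), (0,0))$ by $\Ha_1$ produces an eigenform of weight $((p+1, k_1-1), (0,0))$ with the same Hecke eigensystem, and since $(p+1, k_1-1)$ satisfies \eqref{eq: liftcone} (the inequalities $p(p-1) > k_1 - 3$ and $p(k_1-3) > p-1$ both hold for $k_1 \geq 4$), \Cref{thm: my-lift-goem-alg} delivers algebraic modularity.

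The subtle case is $k_1 = 3$, where $(p+1, 2)$ fails \eqref{eq: liftcone} and the cohomology-vanishing argument is unavailable. I invoke \Cref{cor: one_VinW}(ii) to obtain $V_{(p+1, 2), (-1, -3)} \in W^{\BDJ}(\rho)$. Since $-1 + p(-3) = -p-1 + p(-2)$, this Serre weight coincides with $V_{(p+1, 2), (-p-1, -2)}$, and hence algebraic modularity of weight $((p+1, 2), (0,0))$ is equivalent to modularity of this single Serre weight. If $\rho|_{G_{F(\zeta_p)}}$ is irreducible (with projective image not isomorphic to $A_5$ when $p = 5$), \Cref{thm: BDJ-conj} applies immediately to finish.

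If instead $\rho|_{G_{F(\zeta_p)}}$ is reducible, then $\rho \simeq \Ind_{G_{F'}}^{G_F} \xi$ for a quadratic extension $F'/F$ contained in $F(\zeta_p)$, which is necessarily totally ramified at $p$. Following the inertia-type analysis of \Cref{thm: geom-alg-small}, writing $\xi|_{I_{K'}} = \varepsilon_{K'}^n$ places $\rho|_{I_{F_p}}$ into one of the two families of \eqref{eq: rhoI_K-v3} according to the parity of $n$, while $V_{(p+1, 2), (-1, -3)} \in W^{\BDJ}(\rho)$ forces $\rho|_{I_{F_p}}$ into a shape of the form \eqref{eq: rhoI_K-v4} indexed by some $J \subseteq \Sigma$ (reducible BDJ branch) or $J' \subseteq \{\tau'_0, \tau'_1, \tau'_2, \tau'_3\}$ (irreducible branch). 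A direct exponent comparison shows the two descriptions are incompatible for $p > 3$: in the reducible branch the ratio $\chi_1/\chi_2|_{I_{F_p}}$ is either trivial (when $J \in \{\{\tau_0\}, \{\tau_1\}\}$) or $\varepsilon_2^{\pm 2p}$ (when $J \in \{\Sigma, \varnothing\}$), of orders $1$ and $(p^2-1)/2$ respectively, neither equal to the order-$2$ character $\varepsilon_2^{(p^2-1)/2}$ forced by the induced structure; in the irreducible branch, the four admissible choices of $J'$ either force $\chi_1 = \chi_2$ (contradicting distinctness of the two summands) or reduce the matching condition modulo $p^2+1$ to the integer equation $p^2 - 4p + 1 = 0$, which has no solution. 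Hence the induced possibility is excluded, $\rho|_{G_{F(\zeta_p)}}$ must be irreducible, and \Cref{thm: BDJ-conj} completes the proof. The main technical burden is the $\varepsilon_4$-exponent bookkeeping in the irreducible branch; the reducible branch is essentially automatic once one observes that $\gcd(2p, p^2-1) = 2$ makes $\varepsilon_2^{2p}$ of order $(p^2-1)/2$ when $p > 3$.
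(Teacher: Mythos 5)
Your proposal is correct and follows essentially the same route as the paper: reduce to $k_1=3$ via \Cref{cor: one_VinW}, apply \Cref{thm: BDJ-conj} when $\rho|_{G_{F(\zeta_p)}}$ is irreducible, and rule out the induced case by comparing $\rho|_{I_{F_p}}$ as an induction from the ramified quadratic extension against the inertial shape forced by $V_{(p+1,2),(-1,-3)}\in W^\BDJ(\rho)$. The only difference is that you carry out the exponent comparison explicitly for the weight $(p+1,2)$, whereas the paper simply cites the classification of possible $k$ already established in the proof of \Cref{thm: geom-alg-small} and observes that $(p+1,2)$ is not on the list.
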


\begin{proof}
  By \Cref{cor: one_VinW}, we only need to prove the case when $k_1 = 3$, in which case we have $V_{(p+1, k_1-1), (-1, -k_1)} \in W^\BDJ(\rho)$.
  If $\rho|_{G_{F(\zeta_p)}}$ satisfies the condition in \Cref{thm: BDJ-conj}, then we obtain $\rho$ is algebraically modular of weight $((p+1, 2), (0,0))$.
  Therefore, by our assumption, we only need to consider the case when $\rho|_{G_{F(\zeta_p)}}$ is reducible using the same method as in the proof of \Cref{thm: geom-alg-small}.
  In fact, we prove the following statement in \Cref{thm: geom-alg-small}: 
  if $\rho$ is the induction of a character of a ramified quadratic extension and $V_{k, l}\in W^\BDJ(\rho)$,
  then $k$ has to be one of the following weights, $((p+1)/2, (p+1)/2)$, $((p+3)/2, (p+3)/2)$, $((p+1)/2, (p+3)/2)$, or $((p+3)/2, (p+1)/2)$.
  None of the above weights can be equal to $(p+1, 2)$.
  Therefore, we conclude $\rho$ is algebraically modular of weight $((p+1, k_1-1), (0,0))$.
\end{proof}

\begin{remark}
  The weight shifting method doesn't work for weight $k = (2, p+1)$.
  Suppose $\rho$ geometrically modular of weight $((2, p+1), (0,0))$.
  We use $\Ha_1$ and $\Theta_1$ to obtain $\rho$ algebraically modular of weights 
  $((p+2, p),(0,0))$ and $((p+2, p+2), (0,-1))$.
  Let $M = -3-p$.
  We have
  $$
  \left[\Symo^{p}\Syml^{p-2}\right] = \left[\Symo^0\Syml^{p-1}\right]  + \left[\e^p \Symo^0\Syml^{p-3}\right] + \left[\e^1 \Symo^{p-2}\Syml^{p-2}\right],
  $$
\begin{align*}
    \left[\e^{-p}\Symo^p\Syml^{p}\right] = \left[\Symo^{0}\Syml^{p-1}\right] + \left[\e^{-p}\Symo^1\Syml^1 + {\e^p\Symo^0\Syml^{p-3}}\right] \\
    +   \left[\e^{1-p}\Symo^{p-1}\Syml^0\right] + \left[\e^{2-p}\Symo^{p-3}\Syml^{0}\right] + \left[\e^1\Symo^{p-2}\Syml^{p-2}\right].
\end{align*}
We can't rule out the case that 
$W^\BDJ(\rho) = \{\e^{M-p} \Symo^{p-2}\Syml^{p-2} \}$.
We will deal with this weight using another method in \S\ref{subsub: big_wt}.
\end{remark}

\subsubsection{Big weights}\label{subsub: big_wt}
Suppose $\rho \simeq \rho_f$ is geometrically modular of weight $(k,l)$,
for some cuspidal eigenform $f \in S_{k,\mathbf{0}}(U; \Fpbar)$ and sufficiently small $U$.
We restrict $f$ to a stratum of the Hilbert modular variety to obtain an eigenform on the quaternionic Shimura variety of certain weight whose associated Galois representation is isomorphic to $\rho$. 

Let $(k, l) = ((k_0, k_1), (l_0, l_1))\in \Z^\Sigma_{\geq 2} \times \Z^\Sigma$.
Without loss of generality, we may assume $k_1 \geq k_0 \geq 2$.
Let $h_0 = (-1, p)$ be the weight of $\Ha_0$.
Let $\overline{Y} := Y_{U, \Fpbar}$ and 
$\overline{A}$ be the universal abelian scheme over $\overline{Y}$.
Denote $\overline{\omega}^k = \CA^{k,\mathbf{0}}_{\Fpbar}$
and $\overline{\delta}^l = \CA^{\mathbf{0}, l}_{\Fpbar}$.
Let $Z_0$ be the vanishing locus of $\Ha_0$.
Consider the short exact sequence on $\overline{Y}$:
\begin{align*}
  0 \rightarrow \overline{\omega}^{k - h_0} \rightarrow \overline{\omega}^{k} \rightarrow \CF \rightarrow 0,
\end{align*}
where the first map is multiplication by partial Hasse invariant $\Ha_0$
and $\CF$ is supported on $Z_0$.
We have a long exact sequence
\begin{align*}
  0 \rightarrow H^0(\overline{Y}, \overline{\omega}^{k - h_0}) \rightarrow H^0(\overline{Y}, \overline{\omega}^{k}) \rightarrow H^0(Z_0, \omega^{k}|_{Z_0})
  \rightarrow H^1(\overline{Y}, \overline{\omega}^{k - h_0}) \rightarrow \cdots
\end{align*}
We also note that $Z_0$ is disjoint from the cusps, beacuse the $q$-expansion of $\Ha_0$ at any cusp is the constant $1$ (\emph{cf.}\,\cite[\S9.3]{Diamond-Sasaki}). 
If a cuspidal eigenform $f\in H^0(Y_U, \omega^{k})$ is not in the image of $H^0(\overline{Y}, \overline{\omega}^{k - h_0})$, 
then $ f|_{Z_0}\in H^0(Z_0, \overline{\omega}^{k}|_{Z_0})$ is non-trivial. 

On $Z_0$, we have a short exact sequence
\begin{align*}
  0 \rightarrow \overline{\omega}_{0} \rightarrow H^1_{\dR}(\overline{A}/\overline{Y})_{\tau_0} \rightarrow \overline{\omega}_1^{\otimes p} \rightarrow 0.  
\end{align*}
Hence we have
$$
\overline{\omega}_0|_{Z_0} \cong 
\wedge^2 H^1_{\dR}(\overline{A}/\overline{Y})_{\tau_0} \otimes \overline{\omega}_1^{\otimes -p}|_{Z_0}
= \overline{\delta}_0\otimes \overline{\omega}_1^{\otimes -p}|_{Z_0}.
$$
Since $\overline{\delta}_0 = \overline{\delta}_1^p$, we get
$H^0(Z_0, \overline{\omega}^{k}|_{Z_0})
= H^0(Z_0, \overline{\delta}_1^{\otimes  p k_0} \overline{\omega}_1^{\otimes -pk_0 + k_1}|_{Z_0})  $.

Recall the result on the stratification \Cref{thm: strata-HMV} that 
we have a Hecke-equivariant isomorphism 
$\varphi : Z_0 \rightarrow \P^1_{Y^B}$
which identifies $Z_0$ with the projectivization of the locally free rank two vector bundle
$\CV^B_{\tau_1}$ over $Y^B$,
where 
$B$ is a definite quaternion algebra over $F$, 
and $Y^B: = Y^B_{U^B, \Fpbar}$ is the associated Shimura variety over $\Fpbar$ of some level $U^B$ away from $p$.
Under such isomorphism, we identify 
$\varphi_* H^1_{\dR}(\overline{A}/\overline{Y})_{\tau_1}|_{Z_0}$ with $\pi^*\CV^B_{\tau_1}$,
where $\pi: \P^1_{Y^B} = \P_{Y^B}(\CV_{\tau_1})\rightarrow Y^B$ is the natural projection.
We also identify
$\varphi_*\overline{\omega}_{1}$ with the tautological line bundle $\CJ$ on $\P^1_{Y^B}$.
In particular, $\CJ\cong \wedge^2 \CV^B_{\tau_1} (-1)$ because of the following exact sequence
\begin{align*}
  0 \rightarrow \CJ \rightarrow \CV^B_{\tau_1} \rightarrow \CO(1) \rightarrow 0.
\end{align*}
Hence 
$\varphi_*(\overline{\omega}_1|_{Z_0}) 
\cong 
\wedge^2 \CV^B_{\tau_1} (-1)$. Considering the pushforward of $\varphi$, we have
\begin{align*}
  H^0(Z_0, \overline{\omega}^{k}) 
  &= H^0(\P^1_{Y^B}, (\wedge^2 \CV^B_{\tau_1})^{\otimes pk_0} \otimes (\wedge^2 \CV^B_{\tau_1} (-1))^{\otimes -pk_0+k_1} ) \\
  &= H^0(\mathbb{P}^1_{Y^B}, (\wedge^2 \CV^B_{\tau_1})^{\otimes k_1} \otimes \CO(pk_0 - k_1) ).
\end{align*}
The isomorphism
$\pi_*\CO(n) \cong \Sym^n_{\CO_{Y^B}}\CV_{\tau_1}$ is Hecke-equivariant in the sense that the diagram \eqref{eq: com-diag} below commutes.
More precisely, for sufficiently small open compact subgroups $U_1,U_2$ and $g\in \GL_2(\AF)$ such that $gU_1 g^{-1}\subset U_2$,
we let $\CV_{\tau_1, i}$ be the locally free sheaves over $Y^B_{U_i}$ and 
$\P^1_{Y^B_{U_i}} = \P_{Y^B_{U_i}}(\CV_{\tau_1, i})$.
Let $\rho_g: Y^B_{U_1} \rightarrow Y^B_{U_2}$, 
then $\rho_g^*(\CV_{\tau_1, 2}) \xrightarrow{\sim} \CV_{\tau_1, 1}$.
We denote $\pi_i : \P^1_{Y^B_{U_i}} \rightarrow Y^B_{U_i}$ the natural projection for $i = 1,2$ and 
$\tilde{\rho}_g: \P^1_{Y^B_{U_1}} \rightarrow \P^1_{Y^B_{U_2}}$ the induced morphism such that following diagram commutes:
\[\begin{tikzcd}
	{\mathbb{P}^1_{Y^B_{U_1}}} & {\mathbb{P}^1_{Y^B_{U_2}}} \\
	{Y^B_{U_1}} & {Y^B_{U_2}}
	\arrow["{\tilde{\rho}_g}", from=1-1, to=1-2]
	\arrow["{\pi_1}"', from=1-1, to=2-1]
	\arrow["{\pi_2}", from=1-2, to=2-2]
	\arrow["{\rho_g}"', from=2-1, to=2-2]
\end{tikzcd}\]
We use $\alpha_i$ to denote the isomorphism $\alpha_i: (\pi_i)_*\CO(n)\xrightarrow{\sim} \Sym^n\CV_{\tau_1, i}$ for
$i = 1,2$.
Then the commutativity of the following diagram can be checked locally:
\begin{equation}\label{eq: com-diag}
  \begin{tikzcd}
    {} & {\rho_g^*((\pi_2)_*\CO(n))} & {\rho_g^*(\Sym^n \CV_{\tau_1, 2})} \\
    & {(\pi_1)_*(\tilde{\rho}_g^*\CO(n))} & {\Sym^n (\rho_g^*\CV_{\tau_1, 2})} \\
    & {(\pi_1)_*\CO(n)} & {\Sym^n \CV_{\tau_1, 1}}
    \arrow["{\rho_g^*(\alpha_2)}", from=1-2, to=1-3]
    \arrow["\simeq"', from=1-2, to=2-2]
    \arrow["\simeq", from=1-3, to=2-3]
    \arrow["\simeq"', from=2-2, to=3-2]
    \arrow["\simeq", from=2-3, to=3-3]
    \arrow["{\alpha_1}"', from=3-2, to=3-3]
  \end{tikzcd}.
\end{equation}
Hence, we have
\begin{align*}
  H^0(Z_0, \overline{\omega}^{k}) =
  H^0(Y^B, (\wedge^2 \CV^B_{\tau_1})^{\otimes k_1} \otimes \Sym^{pk_0 - k_1} \CV^B_{\tau_1}),
\end{align*}
which is Hecke-equivariant.
Hence we have
$$H^0(Y^B, (\wedge^2 \CV_{\tau_1})^{\otimes k_1} \otimes \Sym^{pk_0 - k_1} \CV_{\tau_1})[\mathfrak{m}_\rho] \neq 0. $$

Let $Y^B_{U^B, \CO}$ be the Shimura variety associated to $B$ of level $U^B$ over $\CO$, then it is zero-dimensional.
Note we have
$Y^B_{U^B, \CO}(\C) = B^\times_+ \backslash (\frakH \times B_{\bff}^\times)/U$
and an one-to-one bijection between $Y^B_{U^B, \CO}(\C)$ and $Y^B_{U^B, \CO}(\Fpbar)$.
Recall the definition
$M^B(U, Q) = \{
f: B_{\A}^\times \rightarrow Q \mid f(b x u) = u_p^{-1} f(x)\text{ for } b \in B^\times,  x \in B_\A^\times,  u\in B_\infty^\times U\}$.
Also, recall that $\CV^B_{\tau_1}$ is defined by 
the pullback of the vector bundle $\CV_{\tilde{\tau}_1}^\circ$ over 
a unitary Shimura variety $Y'_{U^B}(G'_{\{\tau_0, \tau_1\}})_{\Fpbar}$,
where $\CV_{\tilde{\tau}_1}^\circ$ is the descent of $\CH_{\dR}(A'/S')_{\tau_1}^\circ$ for the universal abelian variety $A'$ over $S' = \widetilde{Y}'_{U^B}(G'_{\{\tau_0, \tau_1\}})_{\Fpbar}$.
We have the action of 
$B_\bff^{(p),\times}$ on $\CV^B_{\tau_1}$ induced by 
$G'_{\{\tau_0, \tau_1\}}(\A_{\bff})$ on 
$\CH_{\dR}(A'/S')_{\tau_1}^\circ$.
Therefore, 
we have a $B_\bff^{(p),\times}$-equivariant isomorphism 
$$
\varinjlim_{U} M^B(U, \det_{[1]}^{m}\otimes\Syml^{n}\Fpbar^2) = 
\varinjlim_{U} H^0(Y^B_{U, \Fpbar}, (\wedge^2\CV^B_{\tau_1})^{\otimes m} \otimes \Sym^{n}\CV^B_{\tau_1})
$$
for open compact subgroups $U \subset B_\bff^\times$ such that $U_p = \CO_{B,p}^\times$.
Hence we have
$H^0(Y^B, (\wedge^2 \CV_{\tau_1})^{\otimes k_1} \otimes \Sym^{pk_0 - k_1} \CV_{\tau_1})[\mathfrak{m}_\rho] \neq 0$
is equivalent to 
$$ M^B(U^B, V_{(2, pk_0-k_1 + 2), (0, k_1)})[\mathfrak{m_\rho}] \neq 0,$$
which is 
\begin{align}\label{eq: MB_neq0}
  M^B_{(2, pk_0-k_1 + 2),(-1, k_1 -1)}(U^B; \Fpbar)[\mathfrak{m_\rho}] \neq 0
\end{align}
by the definition
$M^B_{k,l}(U; \Fpbar) = M^B(U, V_{k,l+1})$.

\begin{lemma}\label{lem: Groth_relation}
  Suppose $pk_0 \geq k_1 \geq p+1$. Then
  \begin{align}\label{eq: Groth_relation}
    \left[{\e}^{pk_1 - p- 1} \Syml^{pk_0 - k_1}\right] \leq \left[\Symo^{k_0-2}\Syml^{k_1 - 2}\right].
  \end{align} 
\end{lemma}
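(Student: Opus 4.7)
My plan is to establish this Grothendieck group inequality by a direct computation using the fundamental relation of \cite[(1.0.3)]{Reduzzi15}, combined with the conventions for negative exponents and Steinberg's tensor product theorem.

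Concretely, I would apply the Reduzzi relation with $\tau = \tau_0$, $n = k_0 - 2$, $m = k_1 - 2$, obtaining
\[
  [\Symo^{k_0-2}\Syml^{k_1-2}] = [\Symo^{k_0-1}\Syml^{k_1-2-p}] + [\e\Symo^{k_0-3}\Syml^{k_1-2-p}] - [\e\Symo^{k_0-2}\Syml^{k_1-2-2p}].
\]
In the main regime $p+1 \leq k_1 \leq 2p$, the exponent $k_1 - 2 - 2p \leq -2$, so the convention $[\Syml^n] = -[\e^{p(n+1)}\Syml^{-n-2}]$ together with $\e^{p^2-1} = 1$ rewrites the last term as $+[\e^{pk_1-p-1}\Symo^{k_0-2}\Syml^{2p-k_1}]$. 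Under the further assumption $k_0 \leq p+1$, the equality $pk_0 - k_1 = (2p - k_1) + p(k_0 - 2)$ is a valid base-$p$ expansion with digits in $[0, p-1]$, so Steinberg's tensor product theorem yields the representation isomorphism $\Syml^{pk_0-k_1} \cong \Syml^{2p-k_1} \otimes \Symo^{k_0-2}$. The remaining terms $[\Symo^{k_0-1}\Syml^{k_1-2-p}]$ and $[\e \Symo^{k_0-3}\Syml^{k_1-2-p}]$ are effective (genuine representations for $k_1 \geq p+2$, and vanishing at the endpoint $k_1 = p+1$ via $[\Syml^{-1}] = 0$), which yields the claimed inequality.

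The boundary $k_1 = p+1$ gives equality $[\Symo^{k_0-2}\Syml^{p-1}] = [\Syml^{pk_0-p-1}]$ directly via Steinberg (since $\e^{pk_1-p-1} = \e^{p^2-1} = 1$). For the remaining regime (when $k_1 > 2p$ or $k_0 > p+1$), I would proceed by induction on $k_0 + k_1$: iterate the Reduzzi relation to successively reduce $k_1$ (resp.\ $k_0$) by $p$, extract the contribution matching the target after convention-normalisation, and invoke the inductive hypothesis on the residual terms.

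The main obstacle is the bookkeeping of signs and exponents during iteration, especially when both the $\Symo$- and $\Syml$-exponents need to be normalised via the $[\Sym^n]$-convention and the determinant exponent reduced modulo $p^2 - 1$. The delicate part is ensuring that the residual class after extracting the target $[\e^{pk_1-p-1}\Syml^{pk_0-k_1}]$ remains effective at each stage — this is where Steinberg's decomposition plays the decisive role, since it matches the exponent combinatorics dictated by the base-$p$ expansion against the Grothendieck identity produced by Reduzzi's relation.
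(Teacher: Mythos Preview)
Your appeal to Steinberg's tensor product theorem is the gap. Steinberg gives $L(a+pb)\cong L(a)\otimes L(b)^{(p)}$ for the \emph{simple} modules $L(n)$, not for the symmetric powers $\Sym^n$; these differ as soon as $n\geq p$. Your claimed isomorphism $\Syml^{pk_0-k_1}\cong\Syml^{2p-k_1}\otimes\Symo^{k_0-2}$ already fails on dimensions: for $p=5$, $k_0=3$, $k_1=7$ it asserts $\Syml^{8}\cong\Syml^{3}\otimes\Symo^{1}$, but the left side has dimension $9$ and the right side dimension $8$. The correct replacement is the two-term Grothendieck identity the paper proves by induction on $t$,
\[
  [\Syml^{pt+r}] \;=\; [\Symo^{t}\Syml^{r}] \;+\; [\e^{pr+p}\Symo^{t-1}\Syml^{p-r-2}],
\]
which shows $[\Syml^{pk_0-k_1}]$ is strictly \emph{larger} than the irreducible class you extracted, not equal to it.

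Your single application of Reduzzi's relation is nonetheless the right first move in the range $p+1\leq k_1\leq 2p$: after normalising the negative exponent you obtain three effective terms, and plugging $t=k_0-2$, $r=2p-k_1$ into the identity above (and twisting by $\e^{pk_1-p-1}$, noting $2p^2-1\equiv 1$) shows that \emph{two} of those three terms, namely $[\e^{pk_1-p-1}\Symo^{k_0-2}\Syml^{2p-k_1}]$ and $[\e\,\Symo^{k_0-3}\Syml^{k_1-p-2}]$, sum exactly to the target $[\e^{pk_1-p-1}\Syml^{pk_0-k_1}]$, leaving the single effective class $[\Symo^{k_0-1}\Syml^{k_1-p-2}]$ as the excess. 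For $k_1>2p$ the paper does not run your proposed induction on $k_0+k_1$; instead it first invokes \cite[Lemma~2.3]{Diamond-Reduzzi} (an iterated Reduzzi relation, valid here since $k_1\geq p+1$ forces $k_0-t-2\geq 0$) to reduce to $[\e^{k_0-t-2}\Symo^{t}\Syml^{2p-r-2}]$, applies Reduzzi once more, and then matches against the two-term identity exactly as above.
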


\begin{proof}
  Let $pk_0 - k_1 = tp + r$, where
  $t, r\in \Z_{\geq 0}$ and $r \leq p-1$.
  Since $k_1 \geq p+1$, $k_0-t-2 \geq 0$, we have the following,
  \begin{align}
    \begin{aligned}\label{eq: Groth_ineq}
      &\left[\Symo^{k_0-2}\Syml^{k_1-2}\right] \geq 
      \left[{\e}^{k_0-t-2}\Symo^{t}\Syml^{2p - r -2}\right] \\
       \geq &\left[{\e}^{k_0-t-1}\Symo^{t-1}\Syml^{p - r -2}\right] + \left[{\e}^{pk_1-p-1}\Symo^{t}\Syml^{r}\right].
    \end{aligned}
  \end{align}
  The first inequality uses \cite[Lemma 2.3]{Diamond-Reduzzi}, 
  and the second inequality is by a periodic relation of the Grothendieck group \cite[(1.0.3)]{Reduzzi15}.
  Using an induction argument on $t$, we can show for any $t \in \Z_{\geq 0}$ and $r\in \Z$, we have
  \begin{equation}
    \begin{aligned}\label{eq: Grothendieck-eq}
      \left[\Symo^0\Syml^{pt+r}\right] 
      = \left[\Symo^{t}\Syml^{r}\right] + \left[{\e}^{pr + p}\Symo^{t-1}\Syml^{p - r -2}\right] .
    \end{aligned}
  \end{equation}
If $t = 0$, then \eqref{eq: Grothendieck-eq} is true for any $r$ by the convention $\left[\Sym_{[0]}^{-1}\right] = 0$.
Now let $n \geq 1$, and assume \eqref{eq: Grothendieck-eq} is true for any $t \leq n$ and $r\geq 0$. Then, by the induction assumption and the Grothendieck relation \cite[(1.0.3)]{Reduzzi15}, we have
\begin{align*}
  & \left[\Symo^0\Syml^{p(n+1)+r}\right] = 
  \left[\Symo^0\Syml^{pn + p +r}\right] \\
  = & \left[\Symo^{n}\Syml^{r + p}\right] + \left[{\e}^{pr + 1 + p}\Symo^{n-1}\Syml^{ - r -2}\right] \\
  = & \left[\Symo^{n+1}\Syml^{r}\right] + \left[{\e}^{pr + p}\Symo^{n}\Syml^{p - r -2}\right].
\end{align*}
Therefore, we prove \eqref{eq: Grothendieck-eq}. Together with \eqref{eq: Groth_ineq}, we obtain \eqref{eq: Groth_relation}.
\end{proof}

\begin{proposition}\label{prop: geom-alg-big}
  Let $(k_0,k_1)\in \Ximinplus$ and $pk_0 \geq k_1 \geq p+1$.
  If $\rho \simeq \rho_f$, for some $f\in M_{(k_0, k_1), (0,0)}(U_1(\frakn), \Fpbar)$, where $\frakn$ is prime to $p$ and 
  $f$ is not divisible by $\Ha_0$,
  then $\rho$ is algebraically modular of weight $((k_0, k_1), (0,0))$.
\end{proposition}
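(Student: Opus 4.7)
The strategy is to restrict the form $f\in M_{(k_0,k_1),(0,0)}(U_1(\frakn),\Fpbar)$ giving rise to $\rho$ to the vanishing locus $Z_0$ of the partial Hasse invariant $\Ha_0$, identify $Z_0$ with a $\P^1$-bundle over a definite quaternionic Shimura variety via the Goren--Oort stratification (Theorem \ref{thm: strata-HMV}), and then translate the resulting quaternionic Serre weight into a Jordan--H\"older relation with $V_{(k_0,k_1),(-k_0,-k_1)}$ via Lemma \ref{lem: Groth_relation}. The hypothesis that $f$ is not divisible by $\Ha_0$ ensures that $f|_{Z_0}$ is a non-zero Hecke eigenform in $H^0(Z_0,\overline{\omega}^{(k_0,k_1)}|_{Z_0})$, and $Z_0$ is disjoint from the cusps (the $q$-expansion of $\Ha_0$ is identically $1$) so that no boundary contributions intervene. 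All the preparatory identifications in the paragraphs preceding the statement --- $\overline{\omega}_0|_{Z_0}\cong \overline{\delta}_0\otimes\overline{\omega}_1^{-p}$, $\overline{\delta}_0=\overline{\delta}_1^p$, $\varphi_\ast\overline{\omega}_1\cong\wedge^2\CV^B_{\tau_1}(-1)$, and the Hecke-equivariant $\pi_\ast\CO(n)\cong\Sym^n\CV^B_{\tau_1}$ (using $pk_0-k_1\geq 0$) --- then apply verbatim to produce a non-zero class witnessing equation \eqref{eq: MB_neq0}. This tells us that $\rho$ is algebraically modular of weight $((2,\,pk_0-k_1+2),\,(-1,\,k_1-1))$, equivalently modular for the definite quaternion algebra $B$ of the Serre weight $V_{(2,\,pk_0-k_1+2),\,(-1,\,-pk_0-1)}$; note that $B$ is ramified only at the two infinite places of $F$, so the compatibility condition of Definition \ref{def: mod of V} is automatic.

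It remains to compare Jordan--H\"older factors. As $\GL_2(\F_{p^2})$-representations, using $\e^{p^2-1}=\mathrm{triv}$, one computes
\begin{align*}
V_{(2,\,pk_0-k_1+2),\,(-1,\,-pk_0-1)} &\cong \e^{-1-p-k_0}\Syml^{pk_0-k_1},\\
V_{(k_0,k_1),\,(-k_0,-k_1)} &\cong \e^{-k_0-pk_1}\Symo^{k_0-2}\Syml^{k_1-2}.
\end{align*}
Since $(-k_0-pk_1)+(pk_1-p-1)\equiv -1-p-k_0\pmod{p^2-1}$, twisting both sides of the inequality of Lemma \ref{lem: Groth_relation} by the one-dimensional character $\e^{-k_0-pk_1}$ yields
$$
\bigl[\e^{-1-p-k_0}\Syml^{pk_0-k_1}\bigr] \;\leq\; \bigl[V_{(k_0,k_1),\,(-k_0,-k_1)}\bigr]
$$
in $G_0(\Fpbar[\GL_2(\F_{p^2})])$. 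Every Jordan--H\"older factor of the left-hand Serre weight is therefore also a Jordan--H\"older factor of $V_{(k_0,k_1),\,(-k_0,-k_1)}$. Applying Lemma \ref{lem: mod_of_JH} first in the direction that modularity of $V_{(2,\,pk_0-k_1+2),\,(-1,\,-pk_0-1)}$ gives modularity of some Jordan--H\"older factor $W$, and then in the reverse direction to promote modularity of $W$ to modularity of $V_{(k_0,k_1),\,(-k_0,-k_1)}$, we conclude that $\rho$ is algebraically modular of weight $((k_0,k_1),(0,0))$.

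The genuine content of the argument is contained in the preceding geometric computation and in Lemma \ref{lem: Groth_relation}; the step here amounts to careful bookkeeping of determinant twists modulo $\e^{p^2-1}$ and an application of Lemma \ref{lem: mod_of_JH}. The chief subtlety to verify is that the eigensystem of $f$ passes cleanly through the restriction to $Z_0$, the identification $\varphi$, and the pushforward $\pi_\ast\CO(n)\cong\Sym^n\CV^B_{\tau_1}$ without picking up an unintended cyclotomic or determinant twist; this is precisely the content of the Hecke-equivariance assertions in Theorem \ref{thm: strata-HMV} and in the commutative diagram \eqref{eq: com-diag}, both of which are at our disposal.
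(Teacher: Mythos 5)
Your proposal is correct and follows exactly the paper's route: the paper's own proof of this proposition is just the two-line deduction from \eqref{eq: MB_neq0} and \Cref{lem: Groth_relation}, with all the geometric input (restriction to $Z_0$, the Goren--Oort identification, the pushforward to $Y^B$) carried out in the text immediately preceding the statement, precisely as you reproduce it. Your explicit verification of the determinant-twist bookkeeping modulo $\e^{p^2-1}$ and the appeal to \Cref{lem: mod_of_JH} in both directions is exactly what the paper leaves implicit.
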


\begin{proof}
  By \eqref{eq: MB_neq0} and \Cref{lem: Groth_relation}, we have
  \begin{align*}
    M^B_{(k_0, k_1),(0, 0)}(U^B, \Fpbar)[\mathfrak{m_\rho}] \neq 0
  \end{align*}
  for some compact open subgroup $U^B$.
  By definition, $\rho$ is algebraically modular of weight $((k_0, k_1), (0,0))$.
\end{proof}

\begin{theorem} \label{thm: geom-alg-allwt}
  Let $p > 3$ and
  $F$ be a real quadratic field in which $p$ is inert.
  Let $(k_0,k_1) \in \Ximinplus \cap \Z_{\geq 2}^2$ and $(l_0, l_1) \in \Z^2$.
  Let $\rho : G_F \rightarrow \GL_2(\Fpbar)$ be an irreducible, continuous, totally odd representation.
  If $p = 5$, assume that the projective image of $\rho|_{G_{F(\zeta_p)}}$ is not isomorphic to $A_5$.
  If $\rho$ is geometrically modular of weight $((k_0,k_1), (l_0, l_1))$, then $\rho$ is algebraically modular of weight $((k_0,k_1), (l_0, l_1))$.
\end{theorem}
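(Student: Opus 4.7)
The strategy is to combine the two main tools developed in this section---the weight-shifting method behind \Cref{thm: geom-alg-small} for small weights, and the Goren--Oort stratum restriction of \Cref{prop: geom-alg-big} for large weights---to cover every $(k_0, k_1) \in \Ximinplus \cap \Z_{\geq 2}^2$.

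First I would reduce to the case $(l_0, l_1) = (0, 0)$. Choose a continuous character $\xi : \AFfp \to \Fpbar^\times$ with $\xi(\mu) = \mu^{-l}$ for $\mu \in \COFppx$, let $\rho_{\xi'}$ be the corresponding Galois character from class field theory, and set $\tilde{\rho} := \rho \otimes \rho_{\xi'}$. By \Cref{lem: reduce to l=0}, the geometric (resp.\,algebraic) modularity of $\rho$ at weight $((k_0,k_1),(l_0,l_1))$ is equivalent to that of $\tilde{\rho}$ at weight $((k_0,k_1),(0,0))$, so it suffices to prove the theorem with $l = (0,0)$. By interchanging the roles of $\tau_0$ and $\tau_1$ if necessary, I may further assume $2 \leq k_0 \leq k_1$.

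Next I would split into two regimes. If $k_1 \leq p$, then $2 \leq k_0 \leq k_1 \leq p$ is precisely the range of \Cref{thm: geom-alg-small}, which directly yields the algebraic modularity of $\tilde{\rho}$ at weight $((k_0,k_1),(0,0))$. If instead $k_1 \geq p+1$, the minimal-cone inequality $pk_0 \geq k_1$ together with $k_1 \geq p+1$ places us in the hypotheses of \Cref{prop: geom-alg-big}. To invoke that proposition, I must produce an eigenform $f \in M_{(k_0,k_1),(0,0)}(U_1(\frakn);\Fpbar)$ with $\rho_f \simeq \tilde{\rho}$ such that $f$ is \emph{not} divisible by $\Ha_0$. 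The existence of \emph{some} normalised eigenform $f$ of weight $(k_0, k_1)$ giving rise to $\tilde{\rho}$ is supplied by \Cref{lem: Theta(f)nonzero}; to arrange non-divisibility by $\Ha_0$, I would analyse the multiplication-by-$\Ha_0$ map on the $\mathfrak{m}_{\tilde{\rho}}$-eigenspace, using the $q$-expansion principle and the fact that $\Ha_0$ has $q$-expansion identically $1$ at every cusp, together with the result from \cite{Diamond-Sasaki_inprep} that $\Phi(f) \in \Ximinplus$ for any non-zero cuspform, to show that the $\mathfrak{m}_{\tilde{\rho}}$-eigenspace of $M_{(k_0,k_1),(0,0)}$ is not entirely contained in the image of multiplication by $\Ha_0$ when $(k_0,k_1)\in\Ximinplus$ itself.

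The \emph{main obstacle} is exactly this last point. If it happened that every eigenform of weight precisely $(k_0, k_1)$ giving $\tilde{\rho}$ were divisible by $\Ha_0$, dividing it out would only produce modularity at the strictly smaller weight $(k_0+1, k_1-p)$, and there is no obvious mechanism (one cannot invert Hasse invariants, and algebraic modularity at a reduced weight need not imply it at the original weight) to climb back to $(k_0, k_1)$. Handling this requires proving that the cokernel of $\Ha_0 \cdot : M_{(k_0+1,k_1-p),(0,0)} \to M_{(k_0,k_1),(0,0)}$ localised at $\mathfrak{m}_{\tilde{\rho}}$ is non-trivial whenever $\tilde{\rho}$ is geometrically modular of weight $(k_0,k_1)$---which is exactly the content encoded by the minimal-cone condition and the stratification picture, and is the sharpness one needs from $(k_0,k_1) \in \Ximinplus$.
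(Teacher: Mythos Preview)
Your reduction to $l=(0,0)$ and the split into small and large weights is the right shape, but there is a genuine gap in the large-weight branch, and it stems from a misconception you state explicitly.

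You write that ``algebraic modularity at a reduced weight need not imply it at the original weight.'' In this setting that is false, and the mechanism for climbing back up is precisely \Cref{lem: Groth_Ha} (which in turn rests on the Grothendieck group inequalities of \cite{Diamond-Reduzzi}). If $k' \leq_{\Ha} k$ with $k',k \in \Ximinplus\cap\Z_{\geq 2}^2$ and $k'\neq(2,2)$, then $[V_{k',\mathbf{0}}] \leq [V_{k,\mathbf{0}}]$; since $k-k'$ is a non-negative combination of Hasse weights, a direct computation shows $k_0+pk_1\equiv k'_0+pk'_1 \pmod{p^2-1}$, so the determinant twists in $V_{k,-k}$ and $V_{k',-k'}$ agree and the inequality transfers. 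By \Cref{lem: mod_of_JH}, algebraic modularity at $(k',(0,0))$ therefore implies it at $(k,(0,0))$.

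Once you have this, the obstacle you identify dissolves. The paper does \emph{not} try to produce a form at weight exactly $(k_0,k_1)$ that is indivisible by $\Ha_0$; your proposed argument for this (that $(k_0,k_1)\in\Ximinplus$ forces the Hasse-cokernel at $\mathfrak{m}_{\tilde\rho}$ to be nonzero) is simply false in general, since $\Phi(f)$ can be strictly below $(k_0,k_1)$ in the Hasse order. Instead, take any eigenform $f$ of weight $(k_0,k_1)$ giving $\tilde\rho$ and descend to $(\kappa_0,\kappa_1):=\Phi(f)\in\Ximinplus$, where by construction the quotient form is divisible by no $\Ha_\tau$. Now apply the appropriate result at $(\kappa_0,\kappa_1)$: \Cref{thm: geom-alg-small} if $2\leq\kappa_0,\kappa_1\leq p$, \Cref{prop: geom-alg-big} if $\kappa_i\geq p+1$ (the indivisibility hypothesis is now automatic), and---a case your dichotomy misses entirely---\Cref{cor: geom-alg-partialone} or a direct Hasse shift followed by \Cref{thm: my-lift-goem-alg} when one of the $\kappa_i$ equals $1$. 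Each of these yields algebraic modularity at some $k'\leq_{\Ha}(k_0,k_1)$, and \Cref{lem: Groth_Ha} carries it up to $(k_0,k_1)$ (with a separate easy argument when $k'=(2,2)$).
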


\begin{proof}
  By \Cref{lem: reduce to l=0}, we can assume $(l_0 , l_1) = (0,0)$.
  Let $\rho \simeq \rho_f$ for a normalised (cuspidal) eigenform $f\in M_{(k_0,k_1), (0,0)}(U_1(\frakn), \Fpbar)$.
  Let $(\kappa_0, \kappa_1) = \Phi(f)$ defined as in \Cref{def: min_wt_f}.
  Then $(\kappa_0, \kappa_1) \leq_{\Ha} (k_0, k_1)$ and $(\kappa_0, \kappa_1) \in \Ximinplus \subset \Z_{\geq 1}^2$.
  Also, by the definition of $\Phi(f)$, we have $\rho$ is geometrically modular of weight $((\kappa_0, \kappa_1), (0,0))$.
  Altering $\tau_0$ and $\tau_1$ if necessary, we can assume $\kappa_0 \leq \kappa_1$.
  \begin{enumerate}
    \item Suppose $(\kappa_0, \kappa_1) \in \Z^2_{\geq 2}$. Then by \Cref{thm: geom-alg-small} and \Cref{prop: geom-alg-big}, $\rho$ is algebraically modular of weight $(k', (0,0))$, where $k' = (\kappa_0, \kappa_1)$.
    \item Suppose $\kappa_0 = 1$ and $\kappa_1 \geq 3$. Then by \Cref{cor: geom-alg-partialone}, $\rho$ is algebraically modular of weight $(k', (0,0))$, where $k' = (\kappa_0 + p, \kappa_1 - 1)$. Also, because $(k_0, k_1)\in \Z^\Sigma_{\geq 2} \geq_{\Ha} (\kappa_0, \kappa_1)$, we know $(k_0, k_1) \geq_{\Ha} k'$. 
    \item Suppose $\kappa_0 = 1$ and $\kappa_1 \leq 2$. Then by \Cref{thm: my-lift-goem-alg}, $\rho$ is algebraically modular of weight $(k', (0,0))$, where $k' = (\kappa_0 + p - 1, \kappa_1 + p - 1)$. Also, because $(k_0, k_1)\in \Z^\Sigma_{\geq 2} \geq_{\Ha} (\kappa_0, \kappa_1)$, we know $(k_0, k_1) \geq_{\Ha} k'$. 
  \end{enumerate}
  In all the cases $k' \leq_{\Ha} (k_0, k_1)$.
  If $k'\neq (2,2)$, then \Cref{lem: Groth_Ha} shows that $\rho$ being algebraically modular of weight $(k',(0,0))$ implies that $\rho$ is algebraically modular of weight $(k,(0,0))$. 
  Suppose $k' = (2,2)$, then either we have $k = (2,2)$, in which case the result follows from \Cref{thm: geom-alg-small}, or we have $k \geq_{\Ha} (p+1, p+1)$, in which case $\rho$ is algebraically modular of weight $((p+1, p+1), (0,0))$ by applying weight shifting method on $(k', (0,0))$ and the desired result follows from \Cref{lem: Groth_Ha}.
\end{proof}

\begin{lemma}\label{lem: Groth_Ha}
  Let $k, k' \in \Z_{\geq 2}^\Sigma \cap \Ximinplus$ and $k' \leq_{\Ha} k$.
  Suppose $k' \neq (2,2)$.
  Then $[V_{k', \mathbf{0}}] \leq [V_{k,\mathbf{0}}]$.
\end{lemma}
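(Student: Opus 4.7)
The plan is to prove the lemma by induction on $N := n_0 + n_1$, where $k - k' = n_0 k_{\Ha_0} + n_1 k_{\Ha_1}$ with $n_0, n_1 \in \Z_{\geq 0}$ (these are uniquely determined since $k_{\Ha_0}$ and $k_{\Ha_1}$ form a $\Q$-basis of $\Q^2$). The base case $N = 0$ is immediate.

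For the inductive step, I would first establish the numerical observation that, under the hypotheses $k' \in \Z_{\geq 2}^2 \cap \Ximinplus$, $k' \neq (2,2)$, and $N \geq 1$, at least one coordinate of $k$ satisfies $k_\tau \geq p+2$. Indeed, from
\[
(k_0 + k_1) - (k'_0 + k'_1) = (p-1)(n_0 + n_1) \geq p - 1,
\]
the case $N = 1$ gives $k_\tau - k'_\tau = p$ in exactly one coordinate, while for $N \geq 2$ the bound $k'_0 + k'_1 \geq 5$ (from $k' \neq (2,2)$) forces $k_0 + k_1 \geq 2p + 3$, whence $\max(k_0, k_1) \geq p+2$.

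By the symmetry $\tau_0 \leftrightarrow \tau_1$, I may assume $k_0 \geq p+2$ and that $n_1 \geq 1$ (the case $n_1 = 0$ forces $n_0 \geq 1$, hence $k_1 \geq p+2$, and one runs the symmetric argument). Setting $k'' := k - k_{\Ha_1} = (k_0 - p, k_1 + 1)$, one verifies $k'' \in \Z_{\geq 2}^2$, $k'' \neq (2,2)$ (since $k''_1 \geq 3$), and $k' \leq_{\Ha} k''$ with $n_0 + (n_1 - 1) = N - 1$ Hasse steps. The induction hypothesis gives $[V_{k', \mathbf{0}}] \leq [V_{k'', \mathbf{0}}]$, so the problem reduces to the single-step inequality $[V_{k-k_{\Ha_1}, \mathbf{0}}] \leq [V_{k, \mathbf{0}}]$ whenever $k, k - k_{\Ha_1} \in \Z_{\geq 2}^2$.

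For the single-step inequality, I would apply the Grothendieck group relation \cite[(1.0.3)]{Reduzzi15} at $\tau = \tau_1$ with parameters $a = k_1 - 2$, $b = k_0 - 2$, obtaining after rearrangement
\[
[V_{k, \mathbf{0}}] - [V_{k - k_{\Ha_1}, \mathbf{0}}] = [\e^p]\bigl([V_{(k_0-p,\,k_1-1), \mathbf{0}}] - [V_{(k_0-2p,\,k_1), \mathbf{0}}]\bigr).
\]
The difference on the right-hand side is of exactly the same shape: $(k_0-p, k_1-1)$ and $(k_0-p, k_1-1) - k_{\Ha_1}$. Iterating the relation $j$ times yields
\[
[V_{k, \mathbf{0}}] - [V_{k - k_{\Ha_1}, \mathbf{0}}] = [\e^{jp}]\bigl([V_{(k_0-jp,\,k_1-j), \mathbf{0}}] - [V_{(k_0-(j+1)p,\,k_1-j+1), \mathbf{0}}]\bigr),
\]
and I would iterate until the arguments reach the boundary where one coordinate drops below $2$. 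Using the conventions $[\Syml^{-1}] = 0$ and $[\Sym^n] = -[\det^{n+1}\Sym^{-n-2}]$ for $n \leq -2$, the boundary term is computed explicitly and shown to be effective (i.e.\ a non-negative integer combination of Serre-weight classes); the hypothesis $k \in \Ximinplus$ (hence $pk_0 \geq k_1$) ensures the iteration terminates in the favorable regime in which the $k_1$-boundary is reached before the $k_0$-side becomes negative.

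The main obstacle is this last boundary analysis: while the recursive self-similarity of the Reduzzi relation makes the reduction transparent, pinning down the signs at termination requires a case split depending on whether $\lfloor (k_0-2)/p \rfloor$ or $k_1 - 2$ is smaller, and the careful use of the reflection formula in the two sub-cases. The argument for subtracting $k_{\Ha_0}$ instead of $k_{\Ha_1}$ is strictly analogous, with the roles of $\tau_0, \tau_1$ and the Reduzzi relation at $\tau = \tau_0$ interchanged.
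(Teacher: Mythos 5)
Your overall strategy --- induct on the number of partial Hasse steps separating $k'$ from $k$ and reduce to a one-step inequality --- is the same as the paper's, but the inductive step as you set it up has a genuine gap. You peel a Hasse weight off the top, setting $k'' := k - k_{\Ha_1} = (k_0-p,\,k_1+1)$, verify $k''\in\Z_{\geq 2}^2$ and $k''\neq(2,2)$, and then apply the induction hypothesis to the pair $(k',k'')$. But the induction hypothesis is the lemma itself, whose hypotheses also require $k''\in\Ximinplus$, and your selection rule ($k_0\geq p+2$ and $n_1\geq 1$) does not guarantee this: $pk''_0\geq k''_1$ reads $p(k_0-p)\geq k_1+1$, much stronger than $pk_0\geq k_1$. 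This is not a removable formality, because the intermediate inequality can simply be false. Take $p=5$, $k'=(7,11)$ and $k=k'+5k_{\Ha_0}+k_{\Ha_1}=(7,35)$; both weights lie in $\Z_{\geq 2}^2\cap\Ximinplus$, and since $k_0=p+2$ and $n_1=1$ your rule produces $k''=(2,36)\notin\Ximinplus$, with
\[
\dim V_{(7,11),\mathbf{0}} = 60 > 35 = \dim V_{(2,36),\mathbf{0}},
\]
so $[V_{k',\mathbf{0}}]\leq[V_{k'',\mathbf{0}}]$ fails outright. The correct peel here is $k-k_{\Ha_0}=(8,30)$, which stays in the cone; so the choice of which Hasse weight to remove must be governed by cone membership of the result, not merely by $k_i\geq p+2$ and $n_{1-i}\geq 1$, and your ``by symmetry I may assume'' conceals exactly the case analysis that is needed. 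The paper avoids this by running the induction bottom-up from $k'$, using the dichotomy $k'\in C_0$ or $k'\in C_1$ (which covers $(\Z_{\geq 2}^\Sigma\cap\Ximinplus)\setminus\{(2,2)\}$) to decide which Hasse weight to add, together with the observation that if $k'\notin C_0$ then $t_1>0$.

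A second, smaller issue concerns your single-step inequality. As stated (``whenever $k,\,k-k_{\Ha_1}\in\Z_{\geq 2}^2$'') it is false: comparing dimensions, $(k_0-p-1)k_1\leq(k_0-1)(k_1-1)$ forces $k_0\leq pk_1+1$, i.e.\ the $\tau_1$-side inequality $pk_1\geq k_0$ of the minimal cone is essential --- note this is the opposite inequality to the $pk_0\geq k_1$ you invoke in your boundary discussion. Under the correct cone condition the one-step statement is precisely \cite[Lemma 2.5]{Diamond-Reduzzi}, which the paper cites as a black box; your plan of iterating the relation \cite[(1.0.3)]{Reduzzi15} and analysing the terminal boundary term would in effect re-derive that lemma, but you have not carried out the boundary analysis, which you yourself identify as the crux. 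So even granting the induction were repaired, the argument is a sketch of, rather than a substitute for, the cited input.
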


\begin{proof}
  Let $C_0 = \{k\in \Ximinplus \mid p(k_0 - 2) > k_1 - 2 \geq 0 \}$ and $C_1 = \{k\in \Ximinplus \mid p(k_1 - 2) > k_0 - 2 \geq 0 \}$.
  Note that $C_0 \cup C_1  = (\Z^\Sigma_{\ge 2} \cap \Ximinplus) - \{(2,2)\}$.
  Let $h_i = k_{\Ha_i}$ be the weight of partial Hasse invariant $\Ha_i$ for $i = 1,0$.
  If $\kappa \in C_i$, 
  then $[V_{\kappa, \mathbf{0}}] \leq [V_{\kappa + h_i, \mathbf{0}}]$ by \cite[Lemma 2.5]{Diamond-Reduzzi}.

  Let $k = k' + t_0 h_0 + t_1 h_1$, where $t_0, t_1 \geq 0$.
  We prove the lemma by induction on $s = t_0 + t_1 \in \Z_{\geq 0}$.
  If $s = 0$, then the result follows directly.
  Assume for $s< N$ we have the result.
  Now let $s = N (>0)$.
  Without loss of generality, suppose $t_0 > 0$.
  If $k' \in C_0$, then $[V_{k', \mathbf{0}}] \leq [V_{k' + h_0, \mathbf{0}}]$ by \cite[Lemma 2.5]{Diamond-Reduzzi}, and $[V_{k' + h_0, \mathbf{0}}] \leq [V_{k, \mathbf{0}}]$ by the induction assumption, therefore $[V_{k', \mathbf{0}}] \leq [V_{k, \mathbf{0}}]$.
  If $k' \not\in C_0$, then $k'\in C_1$ and $t_1 >0$ (otherwise $k = k' + t_0 h_0 \not\in \Ximinplus$),
  hence we have 
  $[V_{k', \mathbf{0}}] \leq [V_{k' + h_1, \mathbf{0}}] \leq [V_{k, \mathbf{0}}]$ by \cite[Lemma 2.5]{Diamond-Reduzzi} and the induction assumption.
\end{proof}

\begin{corollary}
  Let $p > 3$.
  If $p = 5$, assume that the projective image of $\rho|_{G_{F(\zeta_p)}}$ is not isomorphic to $A_5$.
  If $\rho$ is geometrically modular of weight $((k_0,k_1), (0,0))$,
  where $(k_0, k_1) \in \Ximinplus$, 
  then $\rho$ is algebraically modular of weight $((k_0',k_1'), (0,0))$, where
  \begin{itemize}
    \item $(k_0',k_1') = (k_0, k_1)$, if $ k_0, k_1 \geq 2$;
    \item $(k_0',k_1') = (p+1, k_1 - 1)$, if $k_0 = 1, k_1 \geq 3$; 
    \item $(k_0',k_1') = (k_0 - 1 , p+1)$, if $k_1 = 1, k_0 \geq 3$;
    \item $(k_0',k_1') = (k_0 + p-1, k_1 +p-1)$, if $1 \leq k_0, k_1 \leq 2$. 
  \end{itemize}
\end{corollary}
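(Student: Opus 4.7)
The plan is to reduce each of Cases 2, 3, and 4 to the hypotheses of \Cref{thm: geom-alg-allwt} by multiplying the mod $p$ eigenform that gives rise to $\rho$ by a suitable product of partial Hasse invariants. Since multiplication by $\Ha_\tau$ is injective and Hecke-equivariant with respect to $T_v$ and $S_v$ for $v\nmid p$, the shifted form is again a non-zero eigenform with the same Hecke eigensystem away from $p$, so its attached Galois representation is still $\rho$; moreover the twist component $l$ is unchanged because partial Hasse invariants have $l=\mathbf{0}$. Case~1 is immediate from \Cref{thm: geom-alg-allwt}, so all the content is in Cases 2, 3, 4.

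For Case~2 I would take an eigenform $f\in M_{(1,k_1),(0,0)}(U_1(\frakn);E)$ with $\rho_f\simeq\rho$ (supplied by \Cref{lem: Theta(f)nonzero}) and replace it by $\Ha_1\cdot f$, which has weight $((p+1,k_1-1),(0,0))$ because $\Ha_1$ has weight $(p,-1)$. A direct check shows that $(p+1,k_1-1)\in\Ximinplus$ with both components $\geq 2$, using $3\leq k_1\leq p$ (the upper bound being forced by $(1,k_1)\in\Ximinplus$), so \Cref{thm: geom-alg-allwt} delivers algebraic modularity of weight $((p+1,k_1-1),(0,0))$. Case~3 is symmetric: multiply by $\Ha_0$ (of weight $(-1,p)$) to land in weight $((k_0-1,p+1),(0,0))$, verify the analogous inequalities using $3\leq k_0\leq p$, and apply \Cref{thm: geom-alg-allwt} again.

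For Case~4 I would multiply $f$ by $\Ha_0\Ha_1$, whose total weight is $(p-1,p-1)$, obtaining a non-zero eigenform of weight $((k_0+p-1,k_1+p-1),(0,0))$. Since $p>3$, both components are at least $p$, so this shifted weight lies in $\Ximinplus$ and in fact satisfies the stronger liftability inequality $p(k_\tau-2)>k_{\Fr^{-1}\circ\tau}-2$ for every $\tau$; either \Cref{thm: my-lift-goem-alg} applied directly, or \Cref{thm: geom-alg-allwt}, then gives algebraic modularity of the shifted weight.

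I do not expect a serious obstacle: the argument only invokes the injectivity and Hecke-equivariance of multiplication by partial Hasse invariants, together with routine numerical verifications that each shifted weight lies in $\Ximinplus$ with both components $\geq 2$.
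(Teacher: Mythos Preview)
Your proposal is correct and follows essentially the same approach as the paper: the paper's proof is the one-line remark ``This is explained in the proof of \Cref{thm: geom-alg-allwt}'', and the steps inside that proof are precisely the Hasse-invariant shifts you describe (via \Cref{cor: geom-alg-partialone} for Case~2 and \Cref{thm: my-lift-goem-alg} for Case~4), together with the symmetry for Case~3. Your variant for Case~2 of appealing to \Cref{thm: geom-alg-allwt} on the shifted weight $(p+1,k_1-1)$ rather than to the intermediate \Cref{cor: geom-alg-partialone} is a harmless (and slightly cleaner) repackaging, since the hard case $k_1=3$ is already absorbed into \Cref{thm: geom-alg-allwt}.
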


\begin{proof}
  This is explained in the proof of \Cref{thm: geom-alg-allwt}.
\end{proof}

As an application, we can use our result on the relation between algebraic and geometric modularity to strenghten 
\cite[Theorem 11.4.1]{Diamond-Sasaki}
for $p > 3$ and
under mild hypotheses.

\begin{corollary}
  Let $p > 3$.
  Suppose that $2 \leq k_1 \leq p$ and that $\rho : G_F \rightarrow \GL_2(\Fpbar)$ is irreducible and modular. 
  Assume that $\rho|_{G_{F(\zeta_p)}}$ is irreducible, and  
  when $p = 5$, further assume that the projective image of $\rho|_{G_{F(\zeta_5)}}$ in $\mathrm{PGL}(\overline{\F}_5)$ is not isomorphic to $A_5$.
  Then $\rho$ is geometrically modular of weight $((1, k_1), (0, 0))$ if and only if
  $\rho|_{G_{F_p}}$ has a crystalline lift of weight $((1, k_1), (0, 0))$.  
\end{corollary}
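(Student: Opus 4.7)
The proof will combine the new geometric-to-algebraic comparison proved in this paper with the existing \cite[Theorem 11.4.1]{Diamond-Sasaki} and the Buzzard-Diamond-Jarvis conjecture. The backward direction---that existence of a crystalline lift of weight $((1,k_1),(0,0))$ implies geometric modularity of that weight---is exactly the direction provided by \cite[Theorem 11.4.1]{Diamond-Sasaki}, whose hypotheses ($\rho$ irreducible, modular, $\rho|_{G_{F(\zeta_p)}}$ satisfying the condition in \Cref{thm: BDJ-conj}) are precisely those of our corollary. So no new input is needed there.

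The forward direction is the substantive content. Assume $\rho$ is geometrically modular of weight $((1,k_1),(0,0))$. If $k_1 \geq 3$, I apply \Cref{cor: geom-alg-partialone} directly to conclude that $\rho$ is algebraically modular of weight $((p+1,k_1-1),(0,0))$. For $k_1 = 2$, I would first shift by partial Hasse invariants (e.g.\ multiply a witnessing form by $\Ha_0 \Ha_1$) to land on a weight inside the liftability cone of \eqref{eq: liftcone}, then invoke \Cref{thm: my-lift-goem-alg} or \Cref{thm: geom-alg-allwt} to recover algebraic modularity of weight $((p+1,1),(0,0))$. Either way, the assumption on $\rho|_{G_{F(\zeta_p)}}$ lets us apply the ``algebraically modular implies $W^{\BDJ}$'' direction of \Cref{thm: BDJ-conj}, giving $V_{(p+1,k_1-1),-(p+1,k_1-1)} \in W^{\BDJ}(\rho)$. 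Unwinding \Cref{def: WBDJ} then pins down $\rho|_{I_{F_p}}$ to one of a short list of explicit forms, and in each case the standard Fontaine-Laffaille correspondence produces a crystalline lift of the desired Hodge-Tate type $\{(0,0),(0,k_1-1)\}$ corresponding to weight $((1,k_1),(0,0))$.

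The main obstacle I anticipate is the companion-form step at the end: translating membership of the \emph{non-partial-weight-one} Serre weight $V_{(p+1,k_1-1),-(p+1,k_1-1)}$ in $W^{\BDJ}(\rho)$ into the existence of a crystalline lift of the \emph{partial-weight-one} Hodge-Tate type. On the Galois side this is the analogue of Gross's companion-form phenomenon: the two Hodge-Tate types differ by the $\tau_0$-component $(0,p) \leftrightarrow (0,0)$ and one must verify that the local deformation ring (or equivalently, the explicit $L_{V,J}$ spaces in \Cref{def: WBDJ}) supports both types. When $\rho|_{G_{F_p}}$ is irreducible this is automatic from the second clause of \Cref{def: WBDJ}, while for the reducible case one uses the codimension-one distinguished subspaces and the hypothesis that $\rho|_{G_{F(\zeta_p)}}$ is irreducible (which rules out the très-ramifiée obstruction) to conclude that the split or peu-ramifiée companion lift is available. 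The $k_1 = 2$ boundary case may also require slightly finer bookkeeping since the weight shift temporarily leaves $\Z_{\geq 2}^\Sigma$.
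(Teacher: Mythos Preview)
Your forward direction has a genuine gap. You only produce one auxiliary weight, $((p+1,k_1-1),(0,0))$, via the Hasse shift (or \Cref{cor: geom-alg-partialone}), and then try to jump from the single Serre weight $V_{(p+1,k_1-1),-(p+1,k_1-1)}\in W^{\BDJ}(\rho)$ to a crystalline lift of partial weight one by an ad hoc companion-form argument. That jump does not go through: knowing this one Serre weight only constrains $\rho|_{I_{F_p}}$ up to the usual $W^{\BDJ}$ ambiguity, and in the reducible case the $L_{V,J}$ spaces do not by themselves force the existence of a crystalline lift with Hodge--Tate type $\{0,0\}$ at $\tau_0$. The paper's proof (following \cite[Theorem~11.4.1]{Diamond-Sasaki}) requires a \emph{second} auxiliary weight, namely $((p+1,k_1+1),(0,-1))$, obtained by applying $\Theta_1$ to the eigenform; it is precisely the pair of crystalline lifts of these two weights that \cite[Lemma~11.2.6, Remark~11.2.7]{Diamond-Sasaki} converts into a crystalline lift of weight $((1,k_1),(0,0))$. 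You are missing the $\Theta_1$ input entirely, and without it the companion-form step you flagged as ``the main obstacle'' is not merely an obstacle but an actual failure point.

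Two smaller issues. First, for $k_1=2$ your proposed target $((p+1,1),(0,0))$ is not an algebraic weight (it has $k_{\tau_1}=1$); the paper instead shifts by $\Ha_0\Ha_1$ to $((p,p+1),(0,0))$. Second, your claim that the backward direction is ``exactly'' \cite[Theorem~11.4.1]{Diamond-Sasaki} with ``no new input'' glosses over the structure: that theorem's proof is rerun here with the conjectural inputs replaced by \cite{Diamond-Sasaki_inprep} (for algebraic $\Rightarrow$ geometric) and the present paper's \Cref{thm: geom-alg-allwt} (for geometric $\Rightarrow$ algebraic), rather than simply cited wholesale.
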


\begin{proof}
  We follow the proof of \cite[Theorem 11.4.1]{Diamond-Sasaki} and replace \cite[Conjecture 7.5.2]{Diamond-Sasaki} in their proof with the relation between algebraic and geometric modularity proved in \cite{Diamond-Sasaki_inprep} and in this paper (\Cref{thm: geom-alg-allwt}).

  Suppose first that $\rho|_{G_{F_p}}$ has a crystalline lift of weight $((1, k_1), (0, 0))$.
  Then \cite[Lemma 11.2.6]{Diamond-Sasaki} implies that 
  $\rho|_{G_{F_p}}$ has crystalline lifts of weight 
  $((p + 1, k_1 - 1), (0, 0))$ (resp.\,$((p , p+1), (0, 0))$) 
  if $k_1 > 2$ (resp.\,$k_1 = 2$) and $((p+ 1, k_1 + 1), (0, -1))$, and that $\rho|_{G_{F_p}}$ has no subrepresentation on which $I_{F_p}$ acts as $\varepsilon_{\tau_1}$.
  By \Cref{thm: BDJ-conj}, we get
  $\rho$ is algebraically modular of the above weights,
  and hence geometrically modular of these weights by \cite{Diamond-Sasaki_inprep}.

  Now suppose that $\rho$ is geometrically modular of weight $((1, k_1), (0, 0))$.
  By considering $\Ha_1$ (resp.\,$\Ha_0\Ha_1$) if $k_0 > 2$ (resp.\,$k_1 = 2$) and $\Theta_1$, 
  we obtain $\rho$ is geometrically modular of weight 
  $((p + 1, k_1 - 1), (0, 0))$ (resp.\,$((p , p+1), (0, 0))$) 
  if $k_1 > 2$ (resp.\,$k_1 = 2$) and $((p+ 1, k_1 + 1), (0, -1))$.
  Then $\rho$ is algebraically modular of above weights by \Cref{thm: geom-alg-allwt},
  and hence has crystalline lifts of these weights by \Cref{thm: BDJ-conj}.
  Then \cite[Lemma 11.2.6, Remark 11.2.7]{Diamond-Sasaki} shows
  $\rho|_{G_{F_p}}$ has a crystalline lift of weight $((1, k_1), (0, 0))$.
\end{proof}

\bibliographystyle{alpha}
\bibliography{mybib}

\end{document}